\let\pa\partial
\let\na\nabla
\let\eps\varepsilon
\newcommand{\vk}[1]{\bm{v}^{(#1)}}
\newcommand{\D}{\mathbb{D}}
\newcommand{\N}{{\mathbb N}}
\newcommand{\R}{{\mathbb R}}
\newcommand{\diver}{\operatorname{div}}
\newcommand{\vcg}[1]{{\pmb #1}}
\newcommand{\tn}[1]{{\mathbb #1}}
\newcommand{\rhotot}{\rho}
\newcommand{\St}{\mathbb{S}}
\newcommand{\vel}{\bm{v}}
\newcommand{\J}{\bm{J}}
\newcommand{\B}{{\mathcal B}}
\newcommand{\Pro}{\mathrm{P}}
\newtheorem{theorem}{Theorem}[section]
\newtheorem{lemma}[theorem]{Lemma}
\newtheorem{remark}[theorem]{Remark}
\newtheorem{definition}[theorem]{Definition}
\numberwithin{equation}{section}
\begin{document}

\title[Compressible fluid model for mixtures]{
Existence analysis of a stationary \\
compressible fluid model for heat-conducting \\
and chemically reacting mixtures}

\author[M. Buli\v{c}ek]{Miroslav Buli\v{c}ek}
\address{M.B.: Charles University, Faculty of Mathematics and Physics, Sokolovsk\'a 83,
186 75 Praha 8, Czech Republic}
\email{mbul8060@karlin.mff.cuni.cz}

\author[A. J\"ungel]{Ansgar J\"ungel}
\address{A.J.: Institute for Analysis and Scientific Computing, Vienna University of
	Technology, Wiedner Hauptstra\ss e 8--10, 1040 Wien, Austria}
\email{juengel@tuwien.ac.at}

\author[M. Pokorn\'y]{Milan Pokorn\'y}
\address{M.P.: Charles University, Faculty of Mathematics and Physics, Sokolovsk\'a 83,
186 75 Praha 8, Czech Republic}
\email{pokorny@karlin.mff.cuni.cz}

\author[N. Zamponi]{Nicola Zamponi}
\address{N.Z.: University of Mannheim,
School of Business Informatics and Mathematics,
B6, 28, 68159 Mannheim, Germany}
\email{nzamponi@mail.uni-mannheim.de}

\date{\today}

\thanks{The authors acknowledge partial support from the bilateral Czech-Austrian
project of the Austrian Agency for International Cooperation in Education
and Research (\"OAD), grant CZ 15/2018, and the project no.\ 8J18AT018 financed by the Ministry of Education, Youth and Sports of Czech Republic.
The second author was partially supported by
the Austrian Science Fund (FWF), grants P30000, P33010, W1245, and F65. The work
of the third and the fourth author was partially supported by the grant of the
Czech Science Foundation no.\ 19-04243S. The fourth author acknowledges support from the Alexander von Humboldt Foundation}

\begin{abstract}
The existence of large-data weak solutions to a steady compressible
Navier--Stokes--Fourier system for chemically reacting fluid mixtures is proved.
General free energies are considered satisfying some structural assumptions,
with a pressure containing a $\gamma$-power law.
The model is thermodynamically consistent and contains the Maxwell--Stefan
cross-diffusion equations in the Fick--Onsager form
as a special case. Compared to previous works, a very general model class is
analyzed, including cross-diffusion effects, temperature gradients,
compressible fluids, and different molar masses.
A priori estimates are derived from the entropy balance and the total
energy balance. The compactness for the total mass density follows from
an estimate for the 
pressure in $L^p$ with $p>1$, the effective viscous
flux identity, and uniform bounds related to Feireisl's oscillations defect measure.
These bounds rely heavily on the convexity of the free energy and the strong convergence
of the relative chemical potentials.
\end{abstract}

\keywords{Navier--Stokes--Fourier system, multicomponent fluid, existence of weak
solutions, free energy, effective viscous flux identity, oscillations defect measure.}

\subjclass[2000]{35A01, 35K51, 35Q30, 76N10}

\maketitle

\tableofcontents

\section{Introduction}

This paper is concerned with the mathematical analysis of a stationary
model for fluid mixtures, coupling Maxwell--Stefan-type diffusion with the
Navier--Stokes--Fourier model of compressible heat-conducting fluids.
A key feature of the present paper is that the fluid mixture is
modeled in a thermodynamically consistent way.
Compared to \cite{DDGG20}, we include
temperature effects, and compared to \cite{PiPo17,PiPo18}, our constitutive equations
are different and we allow for temperature gradients inside the diffusive fluxes, which yields a cross-diffusion coupling between the equations for the partial mass densities and the equation for the  energy. The most important issue which allows for better results than in previous papers for steady compressible models of chemically reacting mixtures is the convexity of the Helmholtz free energy, similarly as in \cite{DDGG20}, where an evolutionary model was studied, however, under the assumption that the temperature is constant. On the other hand, we neglect chemical reactions on the boundary which is an important issue in the aforementioned papers.

\subsection{Balance equations}

The stationary balance equations for the
partial mass densities $\rho_i$, the momentum $\rho\vel$ and the
total energy $\rho E$ are
\begin{align}
 \diver(\rho_i\vel + \J_i) &= r_i ,
\quad i=1,\ldots,N, \label{1.massbal} \\
 \diver(\rhotot\vel\otimes\vel - \St) + \na p &= \rhotot \bm{b}, \label{1.mombal} \\
 \diver\left(\rhotot E\vel + \bm{Q}- \St \vel + p \vel\right) &= \rhotot \bm{b}\cdot \vel,
\label{1.totEbal}
\end{align}
where the physical meaning of the variables is explained in Table \ref{table}.

\begin{center}
	\begin{table}[thb]
		\begin{tabular}{|l|l|}
			\hline
			Variable & Physical meaning \\ \hline
			$\rho_i$ & partial mass density of the $i$th species \\
			$\rhotot = \sum_{i=1}^N\rho_i$ & total mass density \\
			$m_i$ & molar mass of the $i$th species \\
			$n_i$ & number density, $n_i=\rho_i/m_i$ \\
			$\overline{n}$ & total number density, $\overline{n}=\sum_{i=1}^N n_i$ \\
			$\J_i$ & partial flux of the $i$th species \\
			$\vel$ & barycentric velocity \\
			$r_i$ & reaction term for the $i$th species \\
			$p$ & pressure \\
			$\St$ & viscous stress tensor \\
			$\bm{b}$ & momentum force term \\
			$\bm{\mathcal{J}}$ & entropy flux\\
			$\Xi$ & entropy production\\
			$\bm{Q}$ & internal heat flux \\
			$\mu_i$ & chemical potential of the $i$th species \\
			$\theta$ & thermodynamic temperature \\
			$s$ & specific entropy \\
			$e$ & specific internal energy \\
			$E = e + |\vel|^2/2$ & specific total energy\\
			$\psi$ & specific Helmholtz free energy \\
			\hline
		\end{tabular}
		\vskip2mm
		\caption{Physical meaning of the variables.}\label{table}
	\end{table}
\end{center}

Equations \eqref{1.massbal}--\eqref{1.totEbal}
are solved in a bounded domain $\Omega\subset\R^3$ and
are supplemented with the following boundary conditions on $\partial \Omega$:
\begin{align}
  & \J_i\cdot\bm{\nu} = 0, \quad i=1,\ldots,N, \label{bc.J} \\
  & \vel\cdot\bm{\nu} = 0, \quad
	(\mathbb{I}-\bm{\nu}\otimes\bm{\nu})(\mathbb{S}\bm{\nu} + \alpha_1\vel) = \bm{0},
	\label{bc.vel} \\
	& \bm{Q}\cdot\bm{\nu} + \alpha_2(\theta_0-\theta) = 0, \label{bc.theta}
\end{align}
where  $\alpha_1 \geq 0$ and $\alpha_2 >0$ are constants, $\theta_0 \in L^1(\partial \Omega)$ is a strictly positive function, and
$\bm{\nu}$ is the exterior unit normal vector to $\pa\Omega$. The matrix
$\mathbb{I}-\bm{\nu}\otimes\bm{\nu}$ projects onto the orthogonal complement
of $\operatorname{span}\{\bm{\nu}\}$.
Equations \eqref{bc.J} are no-flux boundary conditions,
\eqref{bc.vel} are the Navier slip boundary conditions (partial if $\alpha_1>0$, complete for $\alpha_1=0$), and
\eqref{bc.theta} means that the normal component of the
heat flux $\bm Q$ is proportional to the temperature
difference $\theta-\theta_0$, where $\theta_0$ has the physical meaning of the outer 
temperature.

We also prescribe the total mass
\begin{equation}\label{1.int}
  \frac{1}{|\Omega|}\int_\Omega\rhotot\, dx = \overline\rho,
\end{equation}
where $|\Omega|$ denotes the measure of $\Omega$, and $\overline{\rho}$ is assumed to be positive. Note that the total mass is in fact $|\Omega| \overline{\rho}$ and the quantity $\overline \rho$ has the meaning of the average density.

In some situations, especially when the integrability of the density is low, we replace the total energy balance by the entropy inequality
\begin{equation}
-\diver\bm{\mathcal J} + \Xi \leq 0.\label{1.entrbal}
\end{equation}
Note that we replaced an equality by an inequality. Hence, we may obtain too many solutions to our problem which are non-physical. However, due to mathematical reasons (and physically, it is not surprising either), as explained below,  we cannot expect equality for the balance of entropy. To avoid this problem, similarly as for the single-component steady Navier--Stokes--Fourier system (see, e.g., \cite{JNP14}), we also add the total energy balance \eqref{1.totEbal} integrated over $\Omega$,
\begin{equation} \label{1.totEint}
 \int_{\pa\Omega}\alpha_1|\vel|^2\, ds
= \int_{\pa\Omega}\alpha_2(\theta_0-\theta)\,dx
+ \int_\Omega\rhotot\bm{b}\cdot\vel\, dx.
\end{equation}
We will discuss this issue later; at this point, we just note that \eqref{1.entrbal} together with \eqref{1.totEint} possesses the property of weak-strong compatibility, i.e.,  any smooth solution to \eqref{1.massbal}, \eqref{1.mombal}, \eqref{1.entrbal}, and \eqref{1.totEint} is in fact a smooth solution to \eqref{1.massbal}--\eqref{1.totEbal}.

\subsection{Notation}

We denote by a bold letter a vector $\bm{a}\in\R^3$ with
components $(a_1,a_2,a_3)$, by a blackboard bold letter a matrix
$\mathbb{A}\in\R^{m\times n}$ with coefficients $(A_{ij})$ or
$(\mathbb{A}_{ij})$, and by $\vec{a}$ a vector $(a_1,\ldots,a_N)$ in $\R^N$. Recall that $N$ denotes the number of species in equation \eqref{1.massbal}.
The unit matrix in $\R^{m\times m}$, where $m\in\N$, is denoted by $\mathbb{I}$.
Given two matrices $\mathbb{A}$, $\mathbb{B}\in \R^{m\times n}$, we define
$\mathbb{A}:\mathbb{B} \equiv \sum_{i=1}^{m}\sum_{j=1}^{n}A_{ij}B_{ij}$.
We also set  $\R_+=(0,\infty)$ and $\R_{+,0}=[0,\infty)$.


\subsection{Constitutive equations}

We specify the entropy flux $\bm{\mathcal{J}}$, entropy production $\Xi$,
viscous stress tensor $\St$, heat flux $\bm{Q}$, diffusion fluxes $\J_i$, and reaction terms $r_i$.

The {\bf entropy flux} $\bm{\mathcal{J}}$ is the sum of the contributions of free transport, diffusion fluxes, and heat flux,
\begin{equation*}
  \bm{\mathcal{J}} = \rhotot s\vel - \sum_{i=1}^N\frac{\mu_i}{\theta}\J_i
	+ \frac{1}{\theta}\bm Q.
\end{equation*}

The {\bf entropy production} $\Xi$ keeps into account the contributions from the diffusion fluxes, the heat flux, the viscous stress and the reaction terms,
\begin{equation*}
  \Xi = -\sum_{i=1}^N\J_i\cdot\nabla\frac{\mu_i}{\theta} +
  \bm Q \cdot \nabla\frac{1}{\theta} + \frac{\St : \nabla\vel}{\theta}
  -\sum_{i=1}^N r_i\frac{\mu_i}{\theta}.
\end{equation*}
According to the second law of thermodynamics \cite{Kre81}, the entropy production
$\Xi$ must be nonnegative. This is achieved by requiring that
\begin{align}\label{entr.prod.nn}
-\sum_{i=1}^N\J_i\cdot\nabla\frac{\mu_i}{\theta} +
\bm Q \cdot \nabla\frac{1}{\theta}\geq 0,\quad
\frac{\St : \nabla\vel}{\theta}\geq 0,\quad
-\sum_{i=1}^N r_i\frac{\mu_i}{\theta}\geq 0.
\end{align}
The following definitions of $\St$, $\bm{Q}$, $\bm{J}_i$, and $r_i$ are chosen
in such a way that these requirements are satisfied.

The {\bf viscous stress tensor} is assumed to be a linear function of the
symmetric velocity gradient,
\begin{equation}\label{1.S}
  \St = 2\lambda_1(\theta)\bigg(\mathbb{D}(\vel) - \frac13\diver(\vel)\mathbb{I}\bigg)
	+ \lambda_2(\theta)\diver(\vel)\mathbb{I},
\end{equation}
where $\lambda_1(\theta)$ and $\lambda_2(\theta)$ are the temperature-dependent
shear and bulk viscosity coefficients, respectively.

The {\bf heat flux} consists of the Fourier law and the molecular diffusion term,
\begin{equation}\label{1.Q}
  \bm{Q} = -\kappa(\theta)\na\theta - \sum_{i=1}^N M_i\na\frac{\mu_i}{\theta},
\end{equation}
where $\kappa(\theta)$ is the thermal conductivity and the coefficients
$M_i$ depend on $\vec\rho$ and $\theta$. The molecular diffusion term plays
an important role in the energy identity.

The {\bf diffusion flux} is a
linear combination of the thermodynamic forces $\na(\vec\mu/\theta)$
and $\na(1/\theta)$,
\begin{equation}\label{1.J}
  \J_i = -\sum_{j=1}^N M_{ij}\na\frac{\mu_j}{\theta} - M_i\na\frac{1}{\theta},
	\quad i=1,\ldots,N,
\end{equation}
where $M_{ij}=M_{ij}(\vec\rho,\theta)$ are diffusion coefficients.
By Onsager's principle, the coefficient matrix $(M_{ij})$ is symmetric \cite{Kre81}.

To fulfill the mass conservation equation $\diver(\rhotot\vel)=0$,
the sum of the diffusion fluxes and the sum of the reaction terms should vanish, i.e.\
$\sum_{i=1}^N\J_i=\bm{0}$, $\sum_{i=1}^N r_i=0$.
Hence, the diffusion matrix has a nontrivial kernel, and we assume that
\begin{equation}\label{1.M1}
  \sum_{i=1}^N M_{ij} = \sum_{i=1}^N M_i = 0, \quad j=1,\ldots,N.
\end{equation}
To be consistent with the first inequality in \eqref{entr.prod.nn} and relations
\eqref{1.Q} and \eqref{1.J}, we assume that $\kappa(\theta) >0$ and that the mobility matrix $(M_{ij})$ is nonnegative. More precisely,
\begin{equation}\label{1.M2}
\exists C_M>0 : \quad
  \sum_{i,j=1}^N M_{ij}z_iz_j \ge C_M|\Pi\vec{z}|^2\quad\mbox{for all }\vec{z}\in\R^N,
\end{equation}
where $\Pi = \mathbb{I}-\vec{1}\otimes\vec{1}/N$ is the orthogonal projector
on $\operatorname{span}\{\vec{1}\}^\perp$.
We refer to \eqref{1.J} as the Maxwell--Stefan fluxes in Fick--Onsager form.
In Maxwell--Stefan systems, the driving forces are usually given by linear
combinations of the diffusion fluxes. This formulation can be written as \eqref{1.J}
for a specific choice of $M_{ij}$ and $M_i$ \cite{HeJu20}.
For the structure of the mobility matrix and its connection to the
Maxwell--Stefan theory, we also refer to \cite{BoDr15}.

According to the third inequality in \eqref{entr.prod.nn}, the entropy production due to reactions, $-\sum_{i=1}^N r_i \mu_i/\theta $, must be nonnegative. Therefore, we suppose for the {\bf reaction terms} that
\begin{equation}\label{1.r}
\begin{cases}
  r_i = r_i(\Pi(\vec\mu/\theta), \theta),\quad i=1,\ldots,N,\\
  \exists C_r>0,\,\zeta>0,\,\beta>0 : \quad
  -\sum_{i=1}^N r_i(\Pi(\vec{q}),\theta)q_i \ge C_r|\Pi\vec q|^2, \\
  |r_i(\Pi(\vec{q}),\theta)| \leq C_r(|\Pi\vec{q}|^{5(6-\zeta)/6}
	+ \theta^{5(3\beta-\zeta)/6}) \quad\mbox{for all }\vec{q}\in\R^N, \theta>0,
\end{cases}
\end{equation}
for some $\zeta >0$ which is possibly very small, 
$\beta>0$, and all $i =\{1,2,\dots,N\}$.
This condition is satisfied for the reaction terms used in \cite{DDGG20},
\begin{equation}\label{1.exr}
  r_i = -\sum_{j=1}^l\frac{\pa\Phi}{\pa D_j}(D^R)\gamma_i^j, \quad\mbox{where }
	D_j^R = \sum_{k=1}^N \gamma_k^j\frac{\mu_k}{\theta}, \
	i=1,\ldots,N,\ j=1,\ldots,l,
\end{equation}
where $\Phi$: $\R^l\to\R$ is a convex potential with suitable growth
and $\vec{\gamma}^j\in\R^N$ is a vectorial
coefficient associated to the $j$th reaction; see Remark \ref{rem.reac}.

The remaining variables---{\bf chemical potentials}~$\mu_i$, {\bf pressure}~$p$,
{\bf total internal energy}~$e$, and {\bf entropy}~$s$---are determined from the
Helmholtz free energy density $\rhotot\psi$ which is assumed to be a function of
$\vec\rho$ and $\theta$:
\begin{equation}\label{1.p}
\begin{aligned}
  & \mu_i = \frac{\pa(\rhotot\psi)}{\pa\rho_i}, \quad
	p = -\rhotot\psi + \sum_{i=1}^N\rho_i\mu_i, \\
  & \rhotot e = \rhotot\psi - \theta\frac{\pa(\rhotot\psi)}{\pa\theta}, \quad
	\rhotot s = -\frac{\pa(\rhotot\psi)}{\pa\theta}.
\end{aligned}
\end{equation}
The definition of the pressure is known as the Gibbs--Duhem relation.
In this paper, we allow for general free energies satisfying Hypothesis (H6) below.
A specific example is given in Remark \ref{rem.energy}.

For the mathematical analysis, we rename the {\bf free energy density} by writing
$h_\theta(\vec\rho)=(\rhotot\psi)(\vec\rho,\theta)$ and denote by
\begin{equation*} 
  h_\theta^*(\vec\mu) =  \sup_{\vec\rho\in\R_+^N}(\vec\rho\cdot\vec\mu
	- h_\theta(\vec\rho))
\end{equation*}
the Legendre transform of $h_\theta$, which in fact equals the pressure $p$.
It is well defined on
$D_\theta^* = \{\vec\mu\in\R^N:\exists\vec\rho\in\R_+^N:
\vec\mu=\na h_\theta(\vec\rho)\}$ \cite[\S26]{Roc70}.
If $h_\theta\in C^2(\R_+^N)$ depends
smoothly on $\theta$, then $h_\theta^*\in C^2(D_\theta^*)$ also depends smoothly
on $\theta$. Moreover, for any $\vec\mu\in D^*_\theta$ and $\vec\rho\in\R_+^N$
\cite[Theorem 26.5]{Roc70},
$$
  \vec\mu = \na h_\theta(\vec\rho) \quad\mbox{if and only if}\quad
	\vec\rho = \na h_\theta^*(\vec\mu).
$$
Note that $\nabla h_\theta(\vec{\rho})$ means
$\na_{\vec{\rho}}h_\theta(\vec{\rho})$,
i.e., the derivatives are taken with respect to $\rho_i$, $i=1,2,\dots, N$, 
while $\nabla h^*_\theta(\vec{\mu})$ means $\na_{\vec{\mu}}h_\theta^*(\vec{\mu})$. 
We discuss the properties of $h^*_{\theta}$ in~Lemma~\ref{lem:H6}, 
and it will be shown that it is well defined under Hypothesis (H6) below.


\subsection{Hypotheses}

Before formulating the main hypotheses, we introduce three important positive 
parameters, $\gamma,\beta,\nu>0$, that are supposed to be fixed from now and that 
are used in the whole paper only in the context of the following assumptions. 
The parameter $\gamma$ describes the growth of the pressure with respect to 
the density and we assume that
$$
  \gamma > \frac32.
$$
The parameter $\beta>0$ describes the growth of the heat conductivity with respect 
to the temperature, and the parameter $\nu>0$ is related to $\gamma$ and $\beta$ via
\begin{align}\label{def.nu}
\nu := \gamma\min\left\{\frac{2\gamma-3}{\gamma},
	\frac{3\beta-2}{3\beta+2}\right\}.
\end{align}
The constant $\nu$ is the improvement of the bound for the pressure by the 
Bogovskii estimate, see Lemma \ref{lem.dens}.

We impose the following mathematical hypotheses:

\begin{labeling}{(A44)}
\item[(H1)] {\it Domain:} $\Omega\subset\R^3$ is a bounded domain
with a $C^2$ boundary that is not axially symmetric.

\item[(H2)] {\it Data:} $\alpha_1 \geq 0$, $\alpha_2 >0$,
$\theta_0\in L^1(\partial \Omega)$,
$\mbox{ess}\inf_{\partial \Omega}\theta_0>0$, $\bm{b}\in L^\infty(\Omega;\R^3)$.

\item[(H3)] {\it Viscosity and heat conductivity:} $\lambda_1$, $\lambda_2$, $\kappa\in C^0(\R_+)$ and there exist constants $c_1$, $c_2$, $\kappa_1$, $\kappa_2$, 
such that for all $\theta>0$,
$$
\begin{aligned}
  c_1(1+\theta)&\le \lambda_1(\theta)\le c_2(1+\theta), \\
	0\leq \lambda_2(\theta) &\le c_2 (1+\theta), \\
	\quad \kappa_1(1+\theta)^\beta&\le \kappa(\theta)\le\kappa_2(1+\theta)^\beta.
	\end{aligned}
$$

\item[(H4)] {\it Diffusion coefficients:} There exists $\zeta>0$ such that for all 
$i,j=1,\ldots,N$, the coefficients $M_{ij}$, $M_i\in C^0(\R_{+,0}^N\times\R_{+})$ 
satisfy \eqref{1.M1}, \eqref{1.M2}, and
$$
  |M_{ij}(\vec\rho,\theta)|+\frac{|M_i(\vec\rho,\theta)|}{\theta}
	\le \widetilde{C}_M ( \rho^{(\gamma+\nu-\zeta)/3} + \theta^{(3\beta-\zeta)/3} + 1)
$$
for all $(\vec\rho,\theta)\in\R_+^N\times\R_+$ and some constant $\widetilde{C}_M$.

\item[(H5)] {\it Reaction terms:} $\vec{r}=(r_1,\ldots,r_N)\in C^0(\R^N\times\R_+;\R^N)$
satisfies \eqref{1.r} and $\sum_{i=1}^Nr_i=0$.

\item[(H6)] {\it Free energy density:} For any fixed $\theta>0$,
$h_\theta\in C^2(\R_+^N)$ is a strictly convex function with respect to $\vec{\rho}$. Furthermore, we specify the asymptotic behavior near zero and infinity as well as 
some kind of uniform convexity:
\begin{itemize}
\item[(H6a)] For all $\theta>0$, there holds that
\begin{align}
\begin{aligned}
0&=\lim_{\rhotot\to 0_+} h_{\theta}(\vec{\rho})=\lim_{\rhotot \to 0} \nabla h_{\theta}(\vec{\rho})\cdot \vec{\rho} \qquad \textrm{and} \qquad \infty= \lim_{\rhotot \to \infty} \frac{ h_{\theta}(\vec{\rho})}{\rhotot}.\label{LMB1}
\end{aligned}
\end{align}
In addition, we require that for all $\theta\in (0,\infty)$ and $R>0$,
\begin{align}\label{LMB2}
\limsup_{\rho_i \to 0_+} \left(\sup_{\{\rho_j\in (0,R); \, j\neq i\}}\frac{\partial h_{\theta}}{\partial \rho_i}(\rho_1,\ldots, \rho_N)\right)&=-\infty  \qquad \textrm{for all } i\in \{1,\ldots,N\},
\end{align}
and we assume that there exists a positive constant $K$ such that for all $\theta\in \mathbb{R}_+$, $\vec{\rho}\in \mathbb{R}^N_+$, and $\vec{x}\in \mathbb{R}^N$,
\begin{align}
 \sum_{i,j=1}^N\frac{\partial^2 h_{\theta}}{\partial \rho_i \partial \rho_j}(\vec{\rho})x_i x_j&>\frac{K \theta}{\rho} |\vec{x}|^2.\label{LMB21}
\end{align}
Next, we assume that for all $\kappa\in (0,1)$, there exists a constant $k>0$ 
such that for all $\theta\in (\kappa, \kappa^{-1})$ and 
$\vec{\rho}\in \mathbb{R}^N_+$ fulfilling $\rho\in (0,\kappa^{-1})$,
\begin{equation}\label{AUXc}
\left|(\nabla^2 h_{\theta}(\vec{\rho}))^{-1} \frac{\partial \nabla h_{\theta}}{\partial \theta}(\vec{\rho}) \right|\le k.
\end{equation}
Furthermore, we require that for every $\kappa\in (0,1)$, there exists $C>0$ s
uch that for all $\theta\in (\kappa,\kappa^{-1})$ and $\vec{\rhotot}\in\R_+^N$,
\begin{equation} \label{H7C}
  \rho_i \in (\kappa,\kappa^{-1}) \mbox{ implies that }
	\mu_i:= \frac{\pa h_\theta}{\pa\rho_i}(\vec{\rho}) \ge -C.
\end{equation}
\item[(H6b)] There exists $\frac{1}{2}<\alpha_0<1$, $\beta_0\geq 0$ such that
for every $\kappa\in (0,1)$, there exists $k>0$ such that
\begin{equation} \label{H6.b}
\theta \in (\kappa,\kappa^{-1}) \mbox{ implies that }
\sum_{i=1}^N\rho_i^{\alpha_0} \bigg|\frac{\pa^2 h_\theta}{\pa\theta\pa\rho_i}
(\vec\rho)\bigg|\leq k(1 + |\vec\rho|^{\beta_0}).
\end{equation}
In addition, we suppose the following growth conditions. If $\gamma >\frac 32$
and $\beta > \frac 23$, we assume that for all $\vec\rho\in\R_{+,0}^N$, $\theta>0$,
\begin{equation} \label{grow_h_2}
  \bigg|\frac{\pa h_{\theta}}{\pa\theta}(\vec{\rhotot})\bigg|
  \leq C_h(1+ \rhotot^{(5\beta-2)(\gamma+\nu-\zeta)/(6\beta)}
  + \theta^{5(3\beta-\zeta)/6-1}+ |\ln\theta|^a).
\end{equation}
If $\gamma >5/3$ and $\beta>1$, we replace \eqref{grow_h_2} by
\begin{equation} \label{grow_h_1}
\begin{aligned}
  |h_{\theta}(\vec{\rhotot})|  &\leq C_h (1+ \rhotot^{5(\gamma+\nu-\zeta)/6}
  + \theta^{5(3\beta-\zeta)/6}), \\
  \bigg|\frac{\pa h_{\theta}}{\pa\theta}(\vec{\rhotot})\bigg|
  & \leq C_h(1+ \rhotot^{5(\gamma+\nu-\zeta)/6}
  + \theta^{5(3\beta-\zeta)/6} + |\ln\theta|^a),
\end{aligned}
\end{equation}
for some constants $C_h$, $\zeta$, $\nu>0$, $a< 5$; $\nu$ and $\zeta$
are as above.
\end{itemize}

\item[(H7)] We assume that there exists $\omega\in (1,\gamma)$, $\tilde{c}_p>0$ and $\tilde{C}_p>0$ such that for all $\theta\in \mathbb{R}_+$,  
$\vec{\rho}\in \mathbb{R}^N_+$, and $\vec{x}\in \mathbb{R}^N$,
\begin{equation*}
    \tilde{c}_p|\vec{x}|^2 \left(\frac{\theta }{\rho} + \rho^{\gamma-2}\right)\le \sum_{i,j=1}^N \frac{\partial^2 h_{\theta}}{\partial \rho_i \partial \rho_j}(\vec{\rho})x_i x_j\le \tilde{C}_p |\vec{x}|^2 \left(\frac{\theta }{\rho} + \rho^{\gamma-2}+\rho^{\omega-2}\right).
\end{equation*}
\end{labeling}

We prove in Lemma \ref{lem:H6} that it follows from Hypotheses (H6)--(H7) that 
the {\it pressure} $p$, defined by \eqref{1.p}, satisfies for some 
$c_p$, $C_p>0$, depending only on $\tilde{c}_p$, $\tilde{C}_p$, $N$, and $\gamma$, 
the following inequalities:
$$
  c_p(\rhotot\theta + \rhotot^\gamma)\le p(\vec\rho,\theta)
	\le C_p(1+\rhotot\theta+\rhotot^\gamma)
$$
for all $(\vec\rho,\theta)\in\R_{+,0}^N\times\R_+$.

\begin{remark}[Discussion of the hypotheses]\rm \label{rem.hypo}
\begin{labeling}{(A44)}
\item[(H1)] If we approximate $\Omega$ by smooth domains, it is possible to extend our existence result to less regular domains as, e.g., Lipschitz ones. As this would technically complicate the paper, we skip this and only point out the paper \cite{Novo}, where such a technique is used in the case of compressible Navier--Stokes equations.
\item[(H2)] The force term $\bm{b}$ is assumed to be dependent only on $x$ just for
simplicity. It may also depend on $(\vec\rho,\theta)$; then $\bm{b}(x,\vec\rho,\theta)$
needs to be measurable in the first variable, continuous in the last $N+1$ variables, and bounded. We may assume that $\alpha_1=0$ (but not
$\alpha_2=0$); then we need that $\Omega$ is not axially symmetric
to apply the Korn inequality \cite[(4.17.19)]{NoSt04}. In case $\alpha_1>0$, we have the Korn inequality also for axially symmetric domains, but the estimate of the velocity gradient depends additionally on the tangential trace of the velocity. However, as shown below in Lemma \ref{lem.est1}, the estimate of the trace of $\vel$ depends also on the density and therefore, the problem becomes slightly more complex and leads to additional restrictions on the exponents $\beta$ and $\gamma$ (cf. \cite{JNP14}). To avoid such problems, we prefer to assume that the domain is not axially symmetric.
\item[(H3)] The linear growth of the viscosities leads to optimal a priori estimates.
It is known that for gases, the viscosity is an increasing function of the temperature and indeed, we could assume another polynomial growth, however, at the price of a 
significantly complicated proof; see, e.g., \cite{KrNePo} for a partial result in this direction in the case of single constituent fluid.
Finally, the lower bound avoids degeneracies in the coefficients.
\item[(H4)] The growth assumption on $M_{ij}$, $M_i$ is necessary to have strong relative compactness $M_{ij}(\vec\rho_\delta,\theta_\delta)$, $M_i(\vec\rho_\delta,\theta_\delta)$ in $L^3(\Omega)$ (for a suitable subsequence of $(\vec\rho_\delta,\theta_\delta)$).
\item[(H5)] We show in Remark \ref{rem.reac} that the reaction terms \eqref{1.exr}
satisfy condition \eqref{1.r}.
This condition excludes vanishing reaction terms. In fact, it is needed
to derive an $L^2(\Omega)$ bound for $\Pi\vec q$ and, together with \eqref{1.M2},
an $H^1(\Omega)$ bound for $\Pi\vec q$ (see Lemma \ref{lem.est1}).
Condition \eqref{1.r} can be replaced by
a Robin-type boundary condition for the diffusion fluxes
involving the chemical potentials, as done in \cite[Hypothesis (H8)]{BPZ17}.
\item[(H6)] This hypothesis is split onto two parts. Part (H6a) deals in some sense with almost minimal natural structural assumptions on the free energy. They are used to show that $\na_{\vec{\rhotot}} h_{\theta}(\vec{\rhotot})$ is a strictly monotone invertible mapping  from $\R^N_+$ onto $\R^N$. Moreover, we prescribe a lower bound on the convexity of the free energy with respect to the density, which describes at least the behavior of an ideal gas. We emphasize that the convexity is a quite natural assumption in order to fulfill the second law of thermodynamics. A more restrictive assumption on the convexity of the free energy is stated in Hypothesis (H7).  

The second part (H6b) is a rather technical assumption, and we believe that it could be avoided in the analysis. It is remarkable that (H6) is satisfied by the prototypical example of a free energy from Remark \ref{rem.energy} presented below. From a mathematical view point, it is more natural to impose suitable assumptions on the convex conjugate $h^*_{\theta}$, namely that it fulfills \eqref{H6A} and \eqref{H6C}, which are just consequences of (H6), as shown in Lemma~\ref{lem:H6}. However, Hypothesis (H6) is stated for the free energy $h_{\theta}$ instead of $h^*_{\theta}$ for the sake of readability of the paper. 
\item[(H7)] We stated the assumption on the lower and upper growth of the second derivatives of $h_{\theta}$ just to formulate all assumptions in terms of the free energy. Nevertheless, these growth assumptions are almost equivalent to the estimates on the pressure also mentioned in (H7) and then proved in Lemma~\ref{lem:H6}. Below, we present an example of the free energy for which
$p(\vec\rho,\theta)=\overline{n}\theta + (\gamma-1)\overline{n}^\gamma$. 
This pressure satisfies Hypothesis (H7). The condition $\gamma>3/2$ is needed to derive a bound of $\rho$ in $L^{\gamma +\nu}(\Omega)$; see Lemma \ref{lem.dens}. This allows us to control the pressure in a better space than $L^1(\Omega)$; see Lemma \ref{lem.p}. In the case of a single-constituent fluid, the term $\overline{n}^\gamma$ is usually called a``cold pressure'', as it represents the contribution of the pressure for very low temperatures. There exists a physical explanation for this term, however, this is beyond the scope of this paper.
Therefore, we consider this term rather as a mathematical tool to obtain estimates than a physically relevant term. In fact, the whole mathematical theory of weak solutions for compressible fluids is based on the presence of such a term.
\end{labeling}
\end{remark}

\begin{remark}[Example of a free energy]\label{rem.energy}\rm
An example of the free energy density fulfilling Hypothesis (H6) is given by
\begin{equation}\label{1.fe}
  \rhotot\psi = \theta\sum_{i=1}^N n_i\log n_i + \overline{n}^\gamma
	- c_W\rhotot\theta\log\theta,
\end{equation}
where $\gamma>1$ is some exponent and $c_W>0$ is the heat capacity;
see Appendix \ref{app.energy} for the proof.
The first term is related to the mixing of the components, the second one is
needed for the mathematical analysis (to obtain an estimate for the
total mass density; a certain physical justification of this term can be found in
\cite[Chapter 1]{FeNo09}),
and the third one describes the thermal energy. We have assumed
that the specific volumes of all components are the same. In the paper
\cite[Formula (20)]{DDGG20}, the first term is defined in a different way:
$$
  \theta \overline{n}\sum_{i=1}^N\frac{n_i}{\overline{n}}\log\frac{n_i}{\overline{n}}
	= \theta\sum_{i=1}^N n_i\log n_i - \theta \overline{n}\log \overline{n}.
$$
This expression does not contribute
to the pressure, while our mixing entropy term gives the pressure contribution
$n\theta$ of an ideal gas. With the free energy density \eqref{1.fe}, we obtain
\begin{align*}
  \mu_i &= \frac{\theta}{m_i}(\log n_i+1) + \frac{\gamma}{m_i}\overline{n}^{\gamma-1}
	- c_W\theta\log\theta, \\
	p &= \overline{n}\theta + (\gamma-1)\overline{n}^\gamma, \\
  \rhotot e &= \overline{n}^\gamma + c_W\rhotot\theta, \\
	\rhotot s &= -\sum_{i=1}^N n_i\log n_i + c_W\rhotot(\log\theta+1).
\end{align*}
The first term of the pressure $p$ represents the ideal gas law, while the second 
term is the so-called ``cold pressure'', which was already discussed in 
Remark \ref{rem.hypo}.
\end{remark}

\begin{remark}[Example of reaction terms]\label{rem.reac}\rm
We claim that \eqref{1.exr}
satisfies \eqref{1.r}. Here, $\Phi\in C^1(\R_+)$ is a uniformly convex
potential such that $\Phi(0)=0$ and $\na\Phi(0)=0$. The vectors
$\vec{\gamma}^1,\ldots,\vec{\gamma}^l\in\R$ satisfy $\sum_{i=1}^N\gamma_i^j=0$
for $j=1,\ldots,l$, and $\operatorname{span}\{\vec{\gamma}^1,\ldots,\vec{\gamma}^l\}
= \operatorname{span}\{\vec{1}\}^\perp$.
Indeed, set $\vec{q}=\vec\mu/\theta$. Note that due to $\sum_{i=1}^N\gamma_i^j=0$, the functions $r_i$ in fact depend only on $\Pi(\vec{q})$. A Taylor expansion around $D^R=0$ yields
$$
  -\sum_{i=1}^N r_iq_i
	= \sum_{j=1}^l\frac{\pa\Phi}{\pa D_j^R}(D^R)D^R_j
	\ge \Phi(D^R) \ge c|D^R|^2 = \frac{C}{\theta^2}
	\sum_{k=1}^l|\vec{\mu}\cdot\vec{\gamma}^k|^2,
$$
where $c>0$ is the convexity constant for $\Phi$. Since $(\vec{\gamma}^k)_{k=1}^N$
spans $\operatorname{span}\{\vec{1}\}^\perp$ and
$\Pi(\vec{q})$ lies in $\operatorname{span}\{\vec{1}\}^\perp$,
there exists another constant $C>0$ such that
$$
  \frac{1}{\theta^2}\sum_{k=1}^N|\vec{\mu}\cdot\vec{\gamma}^k|^2
	= \sum_{k=1}^N|\vec{q}\cdot\vec{\gamma}^k|^2
	\ge C|\Pi(\vec{q})|^2.
$$	
We infer that $-\sum_{i=1}^N r_i\mu_i/\theta \ge c|\Pi(\vec{q})|^2$, proving the claim.

The definition of $r_i$ differs
slightly from that one in \cite{DDGG20} because of the role played by the
temperature. In fact, the entropy inequality provides an estimate
for $\vec{\gamma}^j\cdot\vec{\mu}/\theta$, which in turn yields an $L^2(\Omega)$
bound for $\Pi(\vec\mu/\theta)$ (see Lemma \ref{lem.est1}).
We require that $\sum_{i=1}^N\gamma_i^j=0$ for all $j=1,\ldots,l$ in order to
achieve $\sum_{i=1}^N r_i=0$, needed for mass conservation.
As a consequence, $\vec{\gamma}^j\in\operatorname{span}\{\vec{1}\}^\perp$.
We assume that the linear hull of all $\vec{\gamma}^j$ is in fact
 equal to $\operatorname{span}\{\vec{1}\}^\perp$, which implies that $l\ge N-1$.
This condition is necessary since
Theorem 11.3 in \cite{DDGG20}, which gives an $L^2(\Omega)$ estimate for
$\Pi(\vec\mu/\theta)$, cannot be generalized in a straightforward way to
the non-isothermal case.
\end{remark}

For all $p\in [1,\infty]$, we introduce the following spaces:
$$
W^{1,p}_{\bm{\nu}}(\Omega;\R^3)
= \{\bm{u}\in W^{1,p}(\Omega;\R^3): \bm{u}\cdot\bm{\nu}=0\mbox{ on }\pa\Omega\},\quad
H_{\bm{\nu}}^1(\Omega;\R^3) = W^{1,2}_{\bm{\nu}}(\Omega;\R^3).
$$


\subsection{Solution concept and main result}

Before we formulate our main result, we explain two types of solutions. We consider so-called weak solutions, i.e.\ solutions that fulfill equations \eqref{1.massbal}--\eqref{1.totEbal} with boundary conditions \eqref{bc.J}--\eqref{bc.theta} in the weak sense, and so-called variational entropy solutions (we use the terminology from \cite{NoPo_JDE}), i.e.\ solutions that fulfill \eqref{1.massbal}, \eqref{1.mombal} weakly, the integrated form of the total energy balance \eqref{1.totEint}, and the entropy inequality \eqref{1.entrbal}. For a certain choice of parameters, we always obtain the latter, while the former will be satisfied only if some terms from the total energy balance are integrable, hence for a smaller set of the parameters.

Let us explain the definition of our solutions in more detail. Before doing so, we detail the weak formulation of our equations. We consider

\begin{itemize}
\item
{\it the weak formulation of the mass balance},
\begin{align}
   \sum_{i=1}^N\int_\Omega\bigg(-\rho_i\vel + \sum_{j=1}^NM_{ij}
	\na\frac{\mu_j}{\theta} + M_i\na\frac{1}{\theta}\bigg)
	\cdot\na\phi_i \, dx
	= \sum_{i=1}^N\int_\Omega r_i\phi_i \, dx, \label{w.rhoi}
\end{align}
for all $\phi_1,\ldots,\phi_N\in W^{1,\infty}(\Omega)$;
\item
{\it the weak formulation of the momentum balance},
\begin{align}
\label{w.rhov}
\int_\Omega(-\rhotot\vel\otimes\vel+\St):\na\bm{u}\,dx
+ \int_{\pa\Omega}\alpha_1\vel\cdot\bm{u}\,ds
= \int_\Omega(p\diver\bm{u} + \rhotot\bm{b}\cdot\bm{u})\,dx,
\end{align}
for all $\bm{u}\in W_{\bm{\nu}}^{1,\infty}(\Omega)$;
\item
{\it the weak formulation of the total energy balance},
\begin{align}
\label{w.rhovE}
 \int_\Omega(-\rhotot E \bm{v} - \bm{Q} +\St \bm{v} -p\bm{v})\cdot\na \varphi \, dx
+\int_{\pa\Omega}(\alpha_1|\bm{v}|^2+ \alpha_2 (\theta-\theta_0))\varphi\, ds
= \int_\Omega\rhotot\bm{b}\cdot\bm{v} \varphi\, dx,
\end{align}
for all $\varphi\in W^{1,\infty}(\Omega)$; and
\item
{\em the weak formulation of the entropy inequality},
\begin{equation}
\begin{aligned}\label{w.rhos}
\int_\Omega &\bigg(\rho s \vel + \sum_{i=1}^N \frac{\mu_i}{\theta} \bigg( \sum_{j=1}^N M_{ij}\na \frac{\mu_j}{\theta} + M_i\na\frac{1}{\theta} \bigg) - \frac{1}{\theta}\bigg( \kappa(\theta)\na\theta + \sum_{i=1}^N M_i\na \frac{\mu_i}{\theta}\bigg)\bigg) \cdot\nabla\Phi\, dx  \\
  &\phantom{xx}{}+\int_\Omega\bigg(\sum_{i,j=1}^N M_{ij}\nabla \frac{\mu_i}{\theta}\cdot\nabla \frac{\mu_j}{\theta}
+ \kappa(\theta)|\nabla\log\theta|^2
+ \frac{\St : \na\vel}{\theta} -\sum_{i=1}^N r_i \frac{\mu_i}{\theta}\bigg)\Phi\,dx \\
& \leq \alpha_2\int_{\pa\Omega}\frac{\theta-\theta_0}{\theta}\Phi\, ds,
\end{aligned}
\end{equation}
for every $\Phi\in W^{1,\infty}(\Omega)$ with $\Phi\geq 0$ a.e.~in $\Omega$.
\end{itemize}
We also use the term {\it the total energy equality}, which is nothing but equation 
\eqref{w.rhovE} with $\psi \equiv 1$, i.e.
\begin{align}
\label{t.rhovE}
\int_{\pa\Omega}(\alpha_1|\bm{v}|^2+ \alpha_2 (\theta-\theta_0))\, ds
= \int_\Omega\rhotot\bm{b}\cdot\bm{v}\,  dx.
\end{align}
Finally, recall that if we choose $\phi_i\equiv 1$ in \eqref{w.rhoi} and
add the weak formulation for all constituents, we obtain in the sum
{\it the weak formulation of the continuity equation},
\begin{align} \label{w.rho}
\int_\Omega \rhotot \bm{v}\cdot \nabla\Phi\, dx = 0,
\end{align}
for all $\Phi \in W^{1,\infty}(\Omega)$. However, we shall work with a slightly stronger definition of a solution to this equation, with {\it the renormalized solution to the continuity equation}, which is an important tool in the theory of weak solutions to the compressible Navier--Stokes equations to show compactness of the sequence of densities. Hence, we consider only renormalized solutions in what follows, i.e., solutions satisfying in addition to the weak formulation \eqref{w.rho}
for $u\in W^{1,\infty}(\Omega)$ and $b \in C^{1}(\mathbb{R})$, $b(0)=0$ with $b'$ having compact support,
\begin{equation}\label{con-ren}
\int_{\Omega} \big(b(\rhotot)  \vel \cdot \nabla u - u(\rhotot b'(\rhotot)-b(\rhotot)) \diver \vel\big)\, dx  =0.
\end{equation}

\begin{definition}[Weak and variational entropy solutions]
We call the functions
$$
\rho_1,\dots, \rho_N \in L^\gamma(\Omega), \quad \vel \in H^1_{\bm{\nu}}(\Omega), \quad \log \theta,\theta^{\beta/2} \in H^1(\Omega)
$$
such that
$$
\rhotot |\vel|^2 \vel, \, \tn{S}(\theta,\nabla \vel)\vel, \, p(\vec{\rhotot},\theta)\vel  \in L^1(\Omega;\R^3)
$$
a {\em renormalized weak solution} to problem \eqref{1.massbal}--\eqref{1.J}, \eqref{1.r}--\eqref{1.p} if there holds the weak formulations of the species equation \eqref{w.rhoi}, momentum equation \eqref{w.rhov}, total energy balance \eqref{w.rhovE}, and the total density $\rhotot := \sum_{i=1}^N \rho_i$ is a renormalized solution to \eqref{con-ren}.

We call the functions
$$
\rho_1,\dots, \rho_N \in L^\gamma(\Omega), \quad \vel \in H^1_{\bm{\nu}}(\Omega), \quad \log \theta,\theta^{\beta/2} \in H^1(\Omega)
$$
such that
$$
\rhotot |\vel|^2 \in L^1(\Omega)
$$
a {\em renormalized variational entropy solution} to problem \eqref{1.massbal}--\eqref{1.J}, \eqref{1.r}--\eqref{1.p} if there holds the weak formulations of the species equation \eqref{w.rhoi}, momentum equation \eqref{w.rhov}, entropy inequality \eqref{w.rhos}, and  the total energy equality \eqref{t.rhovE}, and the total density $\rhotot := \sum_{i=1}^N \rho_i$ is a renormalized solution to \eqref{con-ren}.
\end{definition}

The main result of this paper is the following theorem.

\begin{theorem}[Large-data existence of solutions]\label{thm.ex}
Let Hypotheses (H1)--(H7) hold. Let $\beta > 2/3$ and $\gamma > 3/2$.
Then there exists a renormalized variational entropy solution
to \eqref{1.massbal}--\eqref{1.J}, \eqref{1.r}--\eqref{1.p}. Moreover, if $\beta >1$
and $\gamma > 5/3$, then the solution is also a renormalized weak solution.
\end{theorem}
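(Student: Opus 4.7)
The plan is to construct the solution via a multi-level approximation scheme, derive uniform a priori estimates from the entropy inequality combined with the integrated total energy balance, and then pass to the limit using strong compactness of the total density obtained through the effective viscous flux identity and control of Feireisl's oscillations defect measure. At the approximate level I would regularize the species mass balance by adding an $-\varepsilon\Delta\rho_i$ term with homogeneous Neumann boundary conditions, augment the pressure by $\delta(\rhotot^\Gamma+\rhotot^2)$ with $\Gamma$ large enough to ensure weak compactness of the approximate density in the continuity equation, and truncate the cross-diffusion fluxes and reaction terms. Existence at the fully approximate level can be achieved through a Leray--Schauder fixed-point argument applied to a Galerkin discretization of the momentum equation, in the spirit of \cite{PiPo17,PiPo18}. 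Positivity of $\theta$ follows from a minimum principle for the regularized internal-energy equation, while strict positivity of each $\rho_i$ is forced by the singular behavior \eqref{LMB2} of $\na h_\theta$ near the boundary of $\R_+^N$, together with the entropy production bound.

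Second, I would extract uniform a priori estimates. From the entropy inequality \eqref{w.rhos} tested against $\Phi\equiv 1$, hypothesis (H3) gives $L^2$ bounds on $\na\log\theta$ and $\na\theta^{\beta/2}$; the dissipation term $\St:\na\vel/\theta$ combined with Korn's inequality (using the non-axial-symmetry in (H1)) yields $\vel\in H_{\bm\nu}^1(\Omega;\R^3)$; the nonnegativity \eqref{1.M2} together with the reaction bound \eqref{1.r} produces an $H^1$ estimate for the projected chemical potentials $\Pi(\vec\mu/\theta)$; and the integrated total energy balance \eqref{1.totEint} controls the trace of $\theta$ in $L^1(\pa\Omega)$. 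A Bogovskii-type test function in the momentum equation \eqref{w.rhov} then upgrades the $L^1$ pressure bound to $L^{1+\nu/\gamma}(\Omega)$, where $\nu$ is given by \eqref{def.nu}, so that $\rhotot\in L^{\gamma+\nu}(\Omega)$. Hypothesis (H7) is precisely what ensures that the pressure genuinely grows like $\rhotot^\gamma+\rhotot\theta$, so that this density bound is meaningful.

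Third, I would pass to the limit in the approximations. Along a subsequence one obtains $\rhotot_n\rightharpoonup\rhotot$ in $L^{\gamma+\nu}$, $\vel_n\rightharpoonup\vel$ in $H^1$, pointwise convergence of $\theta_n$ from the $H^1$ bound on $\theta^{\beta/2}$ via Rellich's theorem, and strong $L^2$ convergence of $\Pi(\vec\mu_n/\theta_n)$. Once the total density converges strongly, the individual partial densities can be identified from the limit of $\vec\mu$ via the Legendre duality $\vec\rho=\na h_\theta^*(\vec\mu)$ and the invertibility built into (H6). Strong convergence of $\rhotot$ is the heart of the argument and would be obtained by adapting the Feireisl--Lions program: first, establish the effective viscous flux identity $\overline{p\,T_k(\rhotot)}-\overline{p}\,\overline{T_k(\rhotot)}=\bigl(\frac{4}{3}\lambda_1+\lambda_2\bigr)\bigl(\overline{T_k(\rhotot)\diver\vel}-\overline{T_k(\rhotot)}\,\diver\vel\bigr)$ by testing the momentum equation with $\na\Delta^{-1}T_k(\rhotot_n)$ and handling the resulting commutator; second, bound Feireisl's oscillations defect measure $\mathrm{osc}_{\gamma+1}[\rhotot_n\to\rhotot](\Omega)$ uniformly in $n$; and third, propagate this information through the renormalized continuity equation \eqref{con-ren} with $b(s)=s\log s$ to obtain $\overline{\rhotot\log\rhotot}=\rhotot\log\rhotot$, and hence a.e.\ convergence of $\rhotot_n$.

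Finally, in the regime $\gamma>5/3$ and $\beta>1$ the improved Sobolev embeddings together with the better density estimate make $\rhotot E\vel$, $\St\vel$ and $p\vel$ all lie in $L^1(\Omega)$, so the limit functions actually satisfy the weak form \eqref{w.rhovE} of the total energy balance and constitute a renormalized weak solution. The main obstacle is the uniform bound on the oscillations defect measure in this multicomponent, temperature-dependent setting: without the strict convexity of $h_\theta$ from (H6)--(H7) and the strong $L^2$ convergence of the relative chemical potentials $\Pi(\vec\mu/\theta)$, one could not prevent the partial densities from oscillating independently of the total density, and the classical single-component Feireisl--Lions argument would break down. It is precisely the interplay between the convexity of the free energy and the compactness of $\Pi(\vec\mu/\theta)$ that makes the limit passage in the pressure and in the species equations possible.
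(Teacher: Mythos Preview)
Your overall architecture is sound, and you correctly single out the convexity of $h_\theta$ and the strong compactness of $\Pi(\vec\mu/\theta)$ as the decisive ingredients. However, your concrete implementation differs from the paper's in two places, and the second one is a real gap for the stated range of exponents.

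\textbf{Approximation.} The paper does \emph{not} regularize by $-\eps\Delta\rho_i$; it explicitly remarks that the artificial-viscosity route of \cite[Section~3.3.1]{FeNo09} is obstructed by the cross-diffusion structure. Instead it takes $q_i=\mu_i/\theta$ and $\log\theta$ as primary unknowns (so $\vec\rho=\na(h_\theta^{(\eta)})^*(\theta\vec q)$ is automatically componentwise positive) and uses a six-parameter scheme: Galerkin in $\vel$, $H^2$ and $p$-Laplace regularizations of $\vec q$ and $\log\theta$, a free-energy regularization $h_\theta+\eta\overline n^L+\cdots$, and lower-order damping terms designed so that the approximate entropy and total-energy identities still close. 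Your scheme might be salvageable, but the entropy structure with a raw $-\eps\Delta\rho_i$ term is not obvious.

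\textbf{Compactness of $\rho$.} Here your plan deviates substantively. You propose to bound the classical defect measure $\mathrm{osc}_{\gamma+1}[\rho_n\!\to\!\rho]$ and then run the $b(s)=s\log s$ renormalization. The paper does neither, and stresses that avoiding the $\mathrm{osc}_{\gamma+1}$ bound is precisely what yields the sharp thresholds $\gamma>3/2$, $\beta>2/3$. The obstruction is that $p=h_\theta^*(\vec\mu)$ depends on \emph{all} partial densities, so there is no pointwise inequality $(p(\vec\rho_\delta,\theta)-p(\vec\rho_\eps,\theta))(T_k(\rho_\delta)-T_k(\rho_\eps))\ge c|T_k(\rho_\delta)-T_k(\rho_\eps)|^{\gamma+1}$ available. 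What the paper proves instead (Lemma~\ref{lem.Wk}) is only
\[
0\;\le\;\theta\bigl(\overline{\rho_\delta T_k(\rho_\delta)}-\rho\,\overline{T_k(\rho_\delta)}\bigr)\;\le\;K_2\,W_k,\qquad W_k:=\overline{p_\delta T_k(\rho_\delta)}-p\,\overline{T_k(\rho_\delta)},
\]
together with the monotonicity $0\le W_k\le W_{k+1}$. These are obtained by writing $p=h_\theta^*(\vec\mu)$, splitting $\vec\mu_\eps-\vec\mu_\delta$ into its $N$th component and the strongly convergent piece $\Pro(\vec\mu_\eps-\vec\mu_\delta)$, and invoking \eqref{H6C} (only the ideal-gas part $\theta/\rho$ of the Hessian lower bound in \eqref{LMB21} is used, not the $\rho^{\gamma-2}$ part). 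The effective viscous flux identity then gives $\int_\Omega W_k/(\lambda_1+\lambda_2)\,dx\le C$; monotone convergence plus the auxiliary quantities $Q_k,O_k$ of Lemma~\ref{lem.Qk} show this sequence is Cauchy and, after proving directly that the limit $(\rho,\vel)$ is renormalized (Lemma~\ref{lem.renorm}), that in fact $W_k\equiv 0$. Strong convergence of $\rho$ follows, and then strong convergence of the full vector $\vec\rho$ is deduced in a separate step (Lemma~\ref{thm.conv.total}) from the strict convexity of $h_\theta$ and the compactness of $\Pi\vec\mu$. Your $\mathrm{osc}_{\gamma+1}$/$s\log s$ route, even if it can be pushed through, would require additional restrictions on $\gamma$ and $\beta$ and would not deliver the theorem as stated.
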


\begin{remark}\rm 
Replacing the estimates of the total density computed from the Bogovskii operator estimates by a different technique used, e.g., in \cite{NoPo_SIMA} (see also \cite{JNP14,MPZ_Hand,PiPo17,PiPo18}), we could treat the problem with arbitrary $\gamma >1$ and certain bounds on $\beta$ depending on $\gamma$. However, it would significantly
complicate and extend this quite technical and long paper. Therefore we prefer not to do it here.
\end{remark}

\begin{remark} \rm
Note that in \eqref{w.rhoi} we may use test functions
$\phi_i=0$ and $\phi_j$ being a non-zero constant for all $j\neq i$.
This leads to a sort of compatibility condition,
$$
\int_\Omega r_j \, dx =0, \qquad j=1,2,\dots, N.
$$
We do not require this condition directly, but due to our assumptions on $r_j$ and due
to the construction, we are able to fulfil these conditions;
see Remark \ref{last_remark}.
\end{remark}


\subsection{Key ideas of the proof}

We first prove the existence of solutions to an approximate
Navier--Stokes--Fourier system, then derive a priori estimates from	
the entropy balance and the total energy equality, and finally pass to the limit
of vanishing approximation parameters.
The approximate system is obtained from the mass densities and momentum balance equations \eqref{1.massbal}, \eqref{1.mombal}, as well as from the following {\em internal energy balance equation:}
\begin{equation*}
  \diver(\rhotot e\vel + \bm{Q}) - (\St-p\,\mathbb{I}):\na\vel = 0 , 
\end{equation*}
which we consider in place of the total energy balance \eqref{1.totEbal} and entropy inequality \eqref{1.entrbal}. It is obtained by computing the scalar product between $\vel$ and \eqref{1.mombal} and then subtracting the resulting equation from \eqref{1.totEbal}.

We use several levels of approximation. They are described in detail in Section
\ref{sec.scheme}, which deals with the existence of solutions. In the construction of a solution to the approximate problem, we use for the velocity the Galerkin approximation
with dimension $n\in\N$; we add lower-order regularizations with parameter $\eps>0$
and higher-order regularizations with parameters $(\chi,\delta,\xi)$
to the other equations, leading to $H^2(\Omega)$ regularity; and we regularize the
free energy with parameter $\eta>0$ to obtain higher integrability of the total
mass density.
Moreover, we construct not the temperature, but rather its logarithm, which allows
us to deduce the a.e.\ positivity of the temperature.

We cannot use the artificial viscosity regularization
in the total mass continuity equation as in \cite[Section 3.3.1]{FeNo09},
yielding a linear elliptic equation with drift, because of the cross-diffusion
terms, which significantly complicates our analysis. In fact, we need
two approximation levels, distinguishing between higher-order and lower-order
regularizations.

The existence of a solution to the approximate scheme is proved in Section
\ref{sec.scheme}. The positivity of the temperature is obtained from the
bound $\log\theta\in W^{1,4}(\Omega)\hookrightarrow L^\infty(\Omega)$.
The uniform estimates
from the entropy and total energy balance allow us to pass to the limits
$\delta\to 0$ and $n\to\infty$ (in this order). For the limits
$\chi\to 0$, $\eps\to 0$, and $\xi\to 0$ (in this order), we need
an estimate for the total mass density in $L^{\gamma+\nu}(\Omega)$ for some $\nu>0$.
It yields a uniform bound for the pressure in a space better than $L^1(\Omega)$.
This is shown by using a test function involving the Bogovskii operator in the
weak formulation of the momentum balance equation (see \cite[Section 7.3.3]{NoSt04}).

The most difficult part of the proof is the strong convergence of the approximate
total mass density $(\rhotot_\eps)$.
It is based on an effective viscous flux identity or weak compactness
identity for the pressure \cite[Section 3.7.4]{FeNo09} (Lemma \ref{lem.eff}) and
some properties related to Feireisl's oscillations defect measure. More precisely,
we shall prove that (the bar denotes the weak limit for $\delta\to 0_+$ and $\varepsilon\to 0_+$) 
\begin{equation}\label{1.pT1}
  \overline{p(\vec\rho_\eps,\theta_\eps)T_k(\rhotot_\eps)}
	- \overline{p(\vec\rho_\eps,\theta_\eps)}\ \overline{T_k(\rhotot_\eps)}
\end{equation}
vanishes, where the "overline" signifies
the weak limit and $T_k$ is some truncation operator (Lemma \ref{lem.Wk}), and that
\begin{equation}\label{1.pT2}
  \overline{(p(\vec\rho_\eps,\theta)-p(\vec\rho_\delta))(T_k(\rhotot_\eps)
	- \rhotot_\eps T_k'(\rhotot_\eps) - T_k(\rhotot_\delta)
	+ \rhotot_\delta T_k'(\rhotot_\delta))}
\end{equation}
vanishes (Lemma \ref{lem.Qk}).
The proofs of the strong limits of
\eqref{1.pT1} and \eqref{1.pT2} rely heavily on the
convexity of the free energy density and the strong convergence of the relative
chemical potentials. These limits are necessary to deal with the cross-diffusion terms,
and the proofs seem to be new.
Another ingredient is the proof that the weak limit $\rhotot$
of $(\rhotot_\eps)$ is a renormalized solution to the mass continuity equation.
Using a special test function and renormalization function, we are able to
control the oscillations defect measure and to prove the strong convergence
of $(\rhotot_\eps)$ to $\rhotot$.
Finally, the convergence of the vector of mass densities follows (again)
from the convexity of the free energy and the compactness of the relative
chemical potentials (Lemma \ref{thm.conv.total}).

We underline that the method used in this paper is essentially new in the aspect how we treat the strong convergence of the total density. The main new idea in this direction is the fact that we may avoid the use of the oscillation defect measure estimates which requires additional restrictions on the coefficients $\gamma$ and $\beta$. In earlier papers dealing with compressible fluids (both single constituent and mixtures) this tool is needed in the case when the density does not belong to $L^2(\Omega)$, since the classical DiPerna--Lions theory for renormalization is not available. In our situation, this corresponds to $\gamma \leq  5/3$ and $\beta \leq 1$. Note that this allows us to prove the result under minimal conditions and in fact also to improve the known results for the steady compressible Navier--Stokes--Fourier equations.


\subsection{State of the art and originality of the paper}

The literature on compressible Navier--Stokes and Navier--Stokes--Fourier
systems is very extensive. First results on the existence of solutions to
the stationary compressible Navier--Stokes equations (for a single species)
without  assumptions
on the size of the data goes back to P.-L.~Lions \cite{Lio98}. He assumed the pressure
relation $p(\rho)=\rho^\gamma$ with $\gamma>5/3$ for the stationary flow.
The most difficult part of the proof, the strong convergence of the sequence
of approximate densities, is based on a weak continuity property of the
effective viscous flux $p(\rho)+(2\lambda_1+\lambda_2)\diver\bm{v}$ and
the theory of renormalized solutions applied to the continuity equation.
A first improvement on the pressure exponent $\gamma$
is due to B\v{r}ezina and Novotn\'y \cite{BrNo08}, who assumed that
$\gamma>(1+\sqrt{13})/3$. Their proof is based on some ideas due to Plotnikov and Sokolowski \cite{PlSo} and on Feireisl's idea of the oscillations defect measure estimate (see
\cite{Fei01} in the evolutionary case), described  in the steady case in \cite{NoSt04} for a special class of non-volume forces. Further improvements of the lower bound for the  exponent $\gamma$ are due to Frehse, Steinhauer, and Weigant \cite{FrStWe}.
The existence of weak solutions to the steady compressible Navier--Stokes
equations for any $\gamma>1$ was shown in \cite{JiZh11}
(for space periodic boundary conditions), in \cite{JeNo13}
(for slip boundary conditions), and in \cite{PlWe} (for Dirichlet boundary conditions). In the case of evolutionary Navier--Stokes equations, the existence of a solution was proved in \cite{Lio98} for $\gamma \geq 9/5$ and the lower bound was decreased to $\gamma > 3/2$ in \cite{FeNoPe}.

The existence theory for the Navier--Stokes--Fourier system  employs the techniques of both Lions and Feireisl.
The first result for the stationary compressible Navier--Stokes--Fourier system
was proved by P.-L.~Lions \cite{Lio98} under the assumption that the density
is bounded in some $L^q$ space for sufficiently large values of $q$.
Without this assumption, when the density is a priori controlled only in
$L^1(\Omega)$, the existence of weak solutions was shown in \cite{MuPo09}
for $\gamma>3$ and in \cite{MuPo10} for $\gamma>7/3$.
These results were improved in a series of papers, see \cite{NoPo_JDE,NoPo_SIMA}
for Dirichlet boundary conditions and \cite{JNP14} for the Navier boundary conditions,
showing the existence of a variational entropy solution
(satisfying the entropy inequality and the total energy equality)
for any $\gamma>1$ and the existence of a weak solution (satisfying the
total energy balance) for any $\gamma>5/4$ (Navier boundary conditions) or $\gamma>4/3$
(Dirichlet boundary conditions). We refer to \cite{MPZ_Hand} for further information.

For results on the time-dependent compressible Navier--Stokes and
Navier--Stokes--Fou\-rier equations, we refer
to the monographs \cite{FKP16,FeNo09,NoSt04}. One difficulty
is the proof of the strong convergence of the sequence of approximate temperatures
which makes necessary the application of the div-curl lemma and the
effective viscous flux relation by using a cancellation property different from
the isothermal model.
The transient Navier--Stokes equations
with density-dependent viscosities satisfying a certain structure condition
allow for new a priori estimates thanks to the
Bresch--Desjardin entropy \cite{BrDe07}. However, these estimates are not
available for the steady problem.
The evolutionary problem for a heat conducting fluid with basically the same restriction on the adiabatic coefficient $\gamma$ was considered in
\cite{Fe_book,FeNo09} for different formulations of the energy balance (internal energy inequality and entropy inequality, respectively).

All these results concern the single-species case. The theory of fluid mixtures
requires some careful modeling to maintain thermodynamic consistency;
we refer to \cite{BoDr15,BuHa,Gio99} for the thermodynamic theory of fluid mixtures.
One of the first results was proved in \cite{Zat11}, namely
the existence of weak solutions to the stationary Navier--Stokes equations
assuming Fick's law and $\gamma>7/3$.
This result was improved in \cite{GPZ15} for Maxwell--Stefan-type fluxes
and $\gamma>5/3$. Another improvement was achieved in \cite{PiPo17,PiPo18}
for variational entropy solutions with $\gamma>1$
and for weak solutions with $\gamma>4/3$. These results are based on the assumption of same molar masses for each constituent.
Concerning the evolutionary problem, the first global in time result for arbitrarily large data is due to \cite{FePeTr} for Fick's law.
An existence result for a general thermodynamically consistent transient
Navier--Stokes model with $\gamma>3/2$ was recently presented in \cite{DDGG20}.
Electrically charged incompressible mixtures were analyzed in \cite{BPZ17}.
We also mention the work \cite{MaPr18} in which a multicomponent viscous compressible
fluid model with separate velocities of the components was studied.

Our existence result generalizes previous works on fluid mixtures. Indeed,
the mobility matrix in \cite{GPZ15,PiPo17,PiPo18}
has no contributions to the temperature
gradients, the pressure, and internal energy are not defined through the free energy,
and the molar masses are assumed to be equal.
These restrictions were removed in \cite{BPZ17}, but the authors consider
incompressible fluids. In the works \cite{DDGG20}, a very general
compressible fluid model is analyzed but no temperature effects have been
taken into account.

In this paper, we combine all the features studied in the above-mentioned works,
namely we allow for temperature gradients, a thermodynamically consistent modeling
starting from the Helmholtz free energy, compressible fluids, and
different molar masses. Moreover, we obtain a new proof for the strong
convergence of the sequence of approximate densities by exploiting the convexity
of the free energy.

\medskip
The paper is organized as follows. 
Some auxiliary results, in particular on the convex conjugate
$h_\theta^*$, are shown in Section \ref{sec.aux},
and a priori estimates for smooth solutions are
derived in Section \ref{sec.smooth}. We prove in Section \ref{sec.comp}
the compactness of the sequence of total mass densities satisfying the
Navier--Stokes--Fourier mixture model. This step highlights the key features
and novelties of the proof without obstructing it by the numerous approximating terms.
The construction of smooth approximate solutions and the deregularization
limits are presented in Sections \ref{sec.scheme} and \ref{sec_conv}, respectively.
In the Appendix, we recall further auxiliary results needed in this paper and
we show that the free energy density in Remark \ref{rem.energy}
satisfies Hypothesis (H6).


\section{Auxiliary results}\label{sec.aux}

\begin{lemma}\label{lem:H6}
Let $h_{\theta}$ satisfy Hypotheses (H6)--(H7). Then the following holds:

{\rm (i)} The mapping
$\na h_{\theta}:\R^N_+ \to \R^N$ is a bijection,
the convex conjugate $h_{\theta}^*$ of $h_\theta$ is well defined, and it holds that
$\na h_{\theta}^* = (\na h_{\theta})^{-1} : \R^N\to \R^N_+$.

{\rm (ii)} For all $R>0$, there exist $K_1>0$ and a continuous function $\omega$
fulfilling $\omega(0)=0$ such that for all $\theta_1,\theta>0$ and
$\mu \in \R^N$,
\begin{equation}\label{H6A}
\begin{aligned}
  &\mbox{if}\quad
  \theta+\theta_1+\theta_1^{-1}+\theta^{-1}+\sum_{i=1}^N
	\frac{\pa h_\theta^*}{\pa\mu_i}(\vec\mu)\le R\quad\mbox{then}\quad
  \bigg|\frac{\pa^2 h_\theta^*}{\pa\mu_i\pa\mu_j}(\vec\mu)\bigg|
	\le K_1, \\
	&	|\na h_\theta^*(\vec\mu)-\na h_{\theta_1}^*(\vec\mu)|
	+|h_\theta^*(\vec\mu)-h_{\theta_1}^*(\vec\mu)|\le K_1\omega(\theta_1-\theta).
\end{aligned}
\end{equation}

{\rm (iii)} There exists $K_2>0$ such that for all $\theta>0$ and 
$\vec{\mu}\in \R^N$,
\begin{equation}\label{H6C}
 \theta \sum_{i,j=1}^N\frac{\pa^2 h_\theta^*}{\pa\mu_i\pa\mu_j}(\vec\mu)
	\le K_2 \sum_{i=1}^N\frac{\pa h_\theta^*}{\pa\mu_i}(\vec\mu).
\end{equation}

{\em (iv)} If additionally Hypothesis (H7) holds then for all $\vec\rho\in\R_{+,0}^N$ 
and $\theta>0$,
\begin{equation}\label{prrr}
  c_p(\rhotot\theta + \rhotot^\gamma)\le p(\vec\rho,\theta)
	\le C_p(1+\rhotot\theta+\rhotot^\gamma).
\end{equation}
\end{lemma}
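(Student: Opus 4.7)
\textbf{Plan for Lemma \ref{lem:H6}.} My strategy is to handle the four parts largely in sequence, with (i) supplying the machinery for (ii) and (iii), and (iv) being a self-contained computation from Hypothesis (H7).

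For part (i), I would fix $\theta>0$ and $\vec\mu\in\R^N$ and minimize $F(\vec\rho) := h_\theta(\vec\rho)-\vec\mu\cdot\vec\rho$ over $\R^N_+$. The superlinear growth in \eqref{LMB1} ($h_\theta(\vec\rho)/\rhotot\to\infty$) makes $F$ coercive, while the boundary blow-up \eqref{LMB2} ($\partial h_\theta/\partial\rho_i\to-\infty$ as $\rho_i\to0_+$) pushes the gradient inward on each face, so any minimizing sequence stays in a compact subset of $\R_+^N$. Strict convexity \eqref{LMB21} then yields a unique interior critical point, giving surjectivity; injectivity is also immediate from strict convexity. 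The statement $\na h_\theta^*=(\na h_\theta)^{-1}$ is then the standard conjugacy identity (Theorem~26.5 in \cite{Roc70}), and the domain of $h_\theta^*$ is all of $\R^N$.

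For part (ii), by the inverse function theorem $\na^2 h_\theta^*(\vec\mu)=[\na^2 h_\theta(\vec\rho)]^{-1}$ where $\vec\rho=\na h_\theta^*(\vec\mu)$. On the set described in \eqref{H6A} we have $\theta^{-1},\rhotot\le R$, so $\theta/\rhotot\ge 1/R^2$; combined with \eqref{LMB21} this gives $\na^2 h_\theta\ge (K/R^2)\mathbb I$ as a quadratic form, hence $\na^2 h_\theta^*\le (R^2/K)\mathbb I$, yielding the componentwise bound. For the continuity-in-$\theta$ estimate, I would differentiate the identity $\na h_\theta(\vec\rho(\theta,\vec\mu))=\vec\mu$ to obtain
\begin{equation*}
\partial_\theta\vec\rho = -\bigl(\na^2 h_\theta(\vec\rho)\bigr)^{-1}\partial_\theta\na h_\theta(\vec\rho),
\end{equation*}
which by \eqref{AUXc} is bounded by $k$ in the relevant compact set, giving Lipschitz continuity of $\vec\rho=\na h_\theta^*(\vec\mu)$ in $\theta$. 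For $h_\theta^*$ itself the envelope theorem gives $\partial_\theta h_\theta^*(\vec\mu)=-(\partial_\theta h_\theta)(\vec\rho(\theta,\vec\mu))$, which is controlled locally by (H6b); constructing a modulus $\omega$ is then routine from these local bounds. I expect this continuity argument to be the main technical obstacle, since one must verify that the intermediate $\vec\rho(\theta,\vec\mu)$ stays in a region where both (H6a) and (H6b) can be applied uniformly.

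For part (iii), I use the same relation $\na^2 h_\theta^*=[\na^2 h_\theta]^{-1}$. The lower bound \eqref{LMB21} gives $\na^2 h_\theta\ge (K\theta/\rhotot)\mathbb I$, so $\na^2 h_\theta^*\le (\rhotot/(K\theta))\mathbb I$ as a quadratic form, and therefore
\begin{equation*}
\theta\sum_{i,j=1}^N\frac{\pa^2 h_\theta^*}{\pa\mu_i\pa\mu_j}(\vec\mu)=\theta\,\vec 1^{\top}\na^2 h_\theta^*(\vec\mu)\vec 1\le \theta\cdot\frac{\rhotot}{K\theta}\cdot N=\frac{N}{K}\sum_{i=1}^N\frac{\pa h_\theta^*}{\pa\mu_i}(\vec\mu),
\end{equation*}
using $\rho_i=\pa h_\theta^*/\pa\mu_i$; one takes $K_2=N/K$. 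For part (iv), starting from $p=-h_\theta(\vec\rho)+\vec\rho\cdot\na h_\theta(\vec\rho)$ and the normalization $h_\theta(\vec 0)=0$ from \eqref{LMB1}, I rewrite $h_\theta(\vec\rho)=\int_0^1 \vec\rho\cdot\na h_\theta(t\vec\rho)\,dt$ and then apply $\na h_\theta(\vec\rho)-\na h_\theta(t\vec\rho)=\int_t^1\na^2 h_\theta(s\vec\rho)\vec\rho\,ds$ and swap the order of integration to get
\begin{equation*}
p(\vec\rho,\theta)=\int_0^1 s\,\vec\rho^{\top}\na^2 h_\theta(s\vec\rho)\vec\rho\,ds.
\end{equation*}
Inserting the two-sided bound from (H7), together with $\rhotot^2/N\le|\vec\rho|^2\le\rhotot^2$, reduces the claim to the elementary integrals $\int_0^1 s^{\gamma-1}ds=1/\gamma$, $\int_0^1 s^{\omega-1}ds=1/\omega$, and $\int_0^1 1\,ds=1$. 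Since $\omega\in(1,\gamma)$, the extra term $\rhotot^\omega$ is absorbed by $1+\rhotot^\gamma$, yielding \eqref{prrr}.
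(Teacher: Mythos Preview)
Your proposal is correct and follows essentially the same route as the paper's proof: for (i) both argue via a minimizing/maximizing sequence kept interior by \eqref{LMB1}--\eqref{LMB2}; for (ii) and (iii) both invert the Hessian and read off the eigenvalue bounds from \eqref{LMB21}, with the $\theta$-continuity handled via $\partial_\theta\nabla h_\theta^*=-(\nabla^2 h_\theta)^{-1}\partial_\theta\nabla h_\theta$ and the envelope identity $\partial_\theta h_\theta^*=-\partial_\theta h_\theta$, controlled by \eqref{AUXc} and (H6b). The only cosmetic difference is in (iv): the paper obtains the integral formula $p(\vec\rho,\theta)=\int_0^1 t\,\vec\rho^\top\nabla^2 h_\theta(t\vec\rho)\vec\rho\,dt$ by writing $p(\vec\rho,\theta)=p(\tau\vec\rho,\theta)+\int_\tau^1\partial_t p(t\vec\rho,\theta)\,dt$ and letting $\tau\to0_+$ via \eqref{LMB1}, whereas you reach the same formula by expanding $h_\theta$ and $\nabla h_\theta$ along the ray and swapping integrals; both then plug in (H7) identically.
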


\begin{proof}
Let $\theta>0$ be fixed in the whole proof. 

{\bf (i)} Let us assume that (H6a) holds and recall the definition of 
$h_{\theta}^*$, 
$$
  h^*_{\theta}(\vec{\mu}):=\sup_{\vec{\rho}\in \R^N_+}
  (\vec{\mu} \cdot \vec{\rho} - h_{\theta}(\vec{\rho})).
$$
Note that the definition is meaningful for all $\vec{\mu}\in \mathbb{R}^N$. 
It follows from \eqref{LMB1} that
$$
  h^*_{\theta}(\vec{\mu})\ge \lim_{\rhotot \to 0_+}(\vec{\mu} 
	\cdot \vec{\rho} - h_{\theta}(\vec{\rho}))=0,
$$
and thus, $h^*_{\theta}$ is nonnegative. Next, if the supremum of 
$\vec\rho\mapsto \vec{\mu} \cdot \vec{\rho} - h_{\theta}(\vec{\rho})$
is attained at some point $\vec{\rho} \in \R^N_+$, then evidently (since all 
partial derivatives must vanish)
\begin{equation}\label{EV1}
  \vec{\mu}=\nabla h_{\theta}(\vec{\rho}).
\end{equation}
Thus, we need to show that the supremum is attained at some element of 
$\R^N_+$. For this, consider a maximizing sequence 
$(\vec{\rho}^n)_{n\in\N}\subset \R^N_+$ such that
$$
  0\le h^*_{\theta}(\vec{\mu})=\lim_{n\to \infty} \left(\vec{\rho}^n 
	\cdot \vec{\mu} - h_{\theta}(\vec{\rho}^n)\right).
$$
By \eqref{LMB1} again, for any $\vec{\mu}$, we can find $R>0$ such that 
if $\rhotot >R$ then $h_{\theta}(\vec{\rho})/\rhotot > |\vec{\mu}|+1$.
Consequently, if $\rhotot^n >R$ then
$$
  \vec{\rho}^n \cdot \vec{\mu} - h_{\theta}(\vec{\rho}^n)
	= \rhotot^n \bigg(\frac{\vec{\rho}^n}{\rhotot^n} \cdot \vec{\mu} 
	- \frac{h_{\theta}(\vec{\rho}^n)}{\rhotot^n} \bigg) 
	\le \rhotot^n (|\vec{\mu}| - |\vec{\mu}|-1)\le -R<0,
$$
which contradicts $h_{\theta}^* \ge 0$. Hence, there exists $n_0\in\N$ such that 
for $n\ge n_0$, we have $\rhotot^n \le R$. Thus, $(\vec{\rho}^n)$ is a bounded 
sequence, and there exists $\vec{\rho}\in \R^N_{+,0}$ such that (for a 
subsequence that we do not relabel) $\vec{\rho}^n \to \vec{\rho}$ as $n\to\infty$.

To conclude, we need to verify that the limit satisfies $\rho_i>0$ for all
$i=1,\ldots,N$. Then $\vec{\rho}\in \R^N_+$ and the proof of the first part is finished. To this end, we assume by contradiction that for some $i=1,\ldots, N$, 
we have $\rho_i^n \to 0_+$.
Since we already know that $\rhotot^n \le R$ for some $R>0$ and all $n\ge n_0$, 
we can use \eqref{LMB2} to find $n_1\in\N$ such that for all $n\ge n_1$, we have
$$
  0<\rho^{n_1}_i \le 1\quad \textrm{and}\quad 
	\frac{\pa h_{\theta}}{\pa s}(\rho_1^n,\ldots, \rho_{i-1}^n,s,\rho_{i+1}^n,
	\ldots, \rho^n_N) 
	< -\frac{|\vec{\mu}| +1}{\rho_i^{n_1}} 
$$
for all $s\in (0,\rho^{n_1}_i)$. Set 
$$
  \vec{\varrho^n} :=  (\rho_1^n,\ldots, \rho_{i-1}^n,\rho^{n_1}_i,
  \rho_{i+1}^n,\ldots, \rho^n_N).
$$
It follows from the definition of $h_{\theta}^*$ that
\begin{align*}
  h^*_{\theta}(\vec{\mu}) 
	&= \lim_{n\to \infty} \big(\vec{\rho}^n \cdot \vec{\mu} 
	- h_{\theta}(\vec{\rho}^n)\big)
	= \lim_{n\to \infty} \big(\vec{\varrho^n} \cdot \vec{\mu} 
	- h_{\theta}(\vec{\varrho^n})+(\vec{\rho}^n-\vec{\varrho^n}) \cdot \vec{\mu} 
	- h_{\theta}(\vec{\rho}^n)+h_{\theta}(\vec{\varrho^n})\big) \\
  &\le h^*_{\theta}(\vec{\mu}) + \limsup_{n\to \infty} 
	\bigg((\rho_i^n-\rho_i^{n_1})\mu_i +\int_{\rho_i^n}^{\rho_i^{n_1}}
	\frac{\pa h_{\theta}}{\pa s}(\rho_1^n,\ldots, \rho_{i-1}^n,s,\rho_{i+1}^n,\ldots, 
	\rho^n_N)ds\bigg) \\
  &\le h^*_{\theta}(\vec{\mu}) + \limsup_{n\to \infty} 
	\bigg(|\vec{\mu}|- \frac{\rho_i^{n_1}-\rho_i^n}{\rho_i^{n_1}}(|\vec{\mu}|+1)\bigg)
	= h^*_{\theta}(\vec{\mu})-1,
\end{align*}
which is a contradiction.

We infer that \eqref{EV1} is valid for some $\vec{\rho}\in \R^N_+$. In addition, 
since $h_{\theta}$ is strictly convex and $\R^N_+$ is convex, the vector 
$\vec{\rho}$ is unique. Then the definition of $h_{\theta}^*$ yields
\begin{equation}\label{EPP1}
  h^*_{\theta}(\vec{\mu}) = \vec\mu \cdot (\na h_{\theta})^{-1}(\vec{\mu}) 
	- h_{\theta}((\na h_{\theta})^{-1}(\vec{\mu})).
\end{equation}
Consequently, 
\begin{equation}
  \na h^*_{\theta} (\vec{\mu}) = (\na h_{\theta})^{-1} (\vec{\mu}), \label{EP2}
\end{equation}
and we conclude that $\na h^*_{\theta}: \R^N \to \R^N_+$ is a bijection.

{\bf (ii)} Next, we prove \eqref{H6A}. For this, we define the symmetric matrices 
$$
  \mathbb{A}^{\theta}(\vec{\rho}):=\nabla^2 h_{\theta}(\vec{\rho}), \quad 
	\mathbb{B}^{\theta}(\vec{\mu}):=\nabla^2 h^*_{\theta}(\vec{\mu}).
$$
By \eqref{EP2}, they are inverse to each other at 
$\vec\mu=\nabla h_{\theta}(\vec{\rho})$:
\begin{equation}\label{EP3}
  \mathbb{A}^{\theta}(\vec{\rho})\mathbb{B}^{\theta}(\nabla h_{\theta}(\vec{\rho}))
	= \mathbb{A}^{\theta}(\nabla h^*_{\theta} (\vec{\mu}))
	\mathbb{B}^{\theta}(\vec{\mu})= \mathbb{I},
\end{equation}
where $\mathbb{I}$ is the identity matrix and $\mathbb{A} \mathbb{B}$ is the 
standard matrix multiplication. 

Let the assumptions of \eqref{H6A} be satisfied, i.e.\ for fixed $R>1$,
let $\theta\in[R^{-1},R]$ and let $\vec\rho:=\na h^*_\theta(\vec\mu)$ fulfill
$\rho=\sum_{i=1}^N\rho_i\le R$. Assumption \eqref{LMB21} can be formulated as
$$
  L:=\inf_{\{\theta,\,\vec{\rho},\,\vec{x}: \theta\in (R^{-1},R), \, 
	\rho_i \in (0,R), \, |\vec{x}|=1\}} \sum_{i,j=1}^N
	\frac{\partial^2 h_{\theta}}{\partial \rho_i \partial \rho_j}(\vec{\rho})x_i x_j
	\ge \frac{K}{R^2}.
$$
Therefore, the (real) eigenvalues of the symmetric matrix
$\mathbb{A}^{\theta}(\vec{\rho})$ are greater or equal to $L$. 
Since $\mathbb{B}^{\theta}$ is the inverse of $\mathbb{A}^{\theta}$,
the eigenvalues of $\mathbb{B}^{\theta}(\nabla h_{\theta})
=\mathbb{B}^{\theta}(\vec{\mu})$ are less or equal to $L^{-1}$.
 Consequently, $|\mathbb{B}^{\theta}(\vec{\mu})|\le CL^{-1}$,
which proves the first part of \eqref{H6A}.

Next, we prove the second part of \eqref{H6A}. We start with some preparations.
It follows from \eqref{EPP1} that
$$
  h_{\theta}(\vec{\rho}) = \vec\rho \cdot \nabla h_{\theta}(\vec{\rho}) 
	- h^*_{\theta}(\nabla h_{\theta}(\vec{\rho})).
$$
We differentiate this identity with respect to $\theta$:
\begin{align*}
  \frac{\partial h_{\theta}}{\partial \theta}(\vec{\rho})
	&= \vec\rho \cdot \frac{\partial \nabla h_{\theta}}{\pa\theta}(\vec{\rho}) 
	- \frac{\partial h^*_{\theta}}{\partial \theta} (\nabla h_{\theta}(\vec{\rho})) 
	-\nabla h^*_{\theta}(\nabla h_{\theta}(\vec{\rho})) 
	\cdot \frac{\partial \nabla h_{\theta}(\vec{\rho})}{\partial \theta} \\
  &=- \frac{\partial h^*_{\theta}}{\partial \theta} (\nabla h_{\theta}(\vec{\rho})),
\end{align*}
since $\nabla h^*_{\theta}(\nabla h_{\theta}(\vec{\rho})) = \vec\rho$
by the properties of the convex conjugate. Recalling that 
$\nabla h_{\theta}$ is a bijection from $\mathbb{R}_+^N$ to $\mathbb{R}^N$,
it follows for $\mu\in \mathbb{R}^N$ and $\theta >0$ that
\begin{equation}\label{EPP1th}
  \frac{\partial h^*_{\theta}(\vec{\mu})}{\partial \theta} 
	= -\frac{\partial h_{\theta}}{\partial \theta}(\nabla h_{\theta}^*(\vec{\mu})).
\end{equation}
We differentiate this identity with respect to $\mu$ and use \eqref{EP2}: 
\begin{equation}\label{EPP1mu}
  \frac{\partial \nabla h^*_{\theta}}{\partial \theta}(\vec{\mu}) 
	= -\nabla^2 h_{\theta}^*(\vec{\mu})
	\frac{\partial \nabla h_{\theta}}{\partial \theta}(\nabla h_{\theta}^*(\vec{\mu}))
	= -(\nabla^2 h_{\theta}(\nabla h^*_{\theta}(\vec{\mu})))^{-1}
	\frac{\partial \nabla h_{\theta}}{\partial \theta}(\nabla h_{\theta}^*(\vec{\mu})).
\end{equation}

Now, we are ready to prove the second part of \eqref{H6A}. Let $R>0$ be given.
The assumptions of \eqref{H6A} imply that $\rho:=\sum_{i=1}^N \rho_i \le R$
for $\vec{\rho}= \nabla h_{\theta}^*(\vec{\mu})$ and that there exists a constant
$C>0$ depending on $R$ such that for all $t\in (R^{-1},R)$, it follows that 
$\rho_t\le CR$, where 
$\rho_t:=\sum_{i=1}^N (\rho_t)_i$ and $\vec{\rho}_t:=\nabla h_t^*(\vec{\mu})$.
Hence, we can use \eqref{AUXc} and \eqref{grow_h_2} or \eqref{grow_h_1} in 
\eqref{EPP1th}--\eqref{EPP1mu} to deduce that for all $t\in (R^{-1},R)$,
$$
  \left|\frac{\partial \nabla h^*_{t}(\vec{\mu})}{\partial t} \right| 
	+ \left| \frac{\partial h^*_{t}(\vec{\mu})}{\partial t}\right| \le C(R).
$$
Thus, $\nabla h^*_{t}(\vec{\mu})$ and $h^*_{t}(\vec{\mu})$ are Lipschitz 
continuous with respect to $t$ and consequently, the second part of \eqref{H6A} holds.

{\bf (iii)} In the third step, we prove \eqref{H6C}. For this,
we define $\vec{\rho}:= \nabla h^*_{\theta}(\vec{\mu})$. 
It follows from \eqref{EP3} that \eqref{H6C} is equivalent to
\begin{equation}\label{H6Ce}
  \theta \sum_{i,j=1}^N (\mathbb{A}^{\theta}(\vec{\rho}))^{-1}_{ij}
	\le K_2 \rho.
\end{equation}
By \eqref{LMB21}, the lowest eigenvalue $\lambda_{\min}$ of 
$\mathbb{A}^{\theta}$ satisfies $\lambda_{\min}\ge K\theta/\rho$.
Thus, the largest eigenvalue $\lambda_{\max}$ of $(\mathbb{A}^{\theta})^{-1}$ 
satisfies $\lambda_{\max} = \lambda_{\min}^{-1} \leq \rho/(K\theta)$.
Denoting $\vec{1}:=(1,\ldots, 1)$, we infer that
$$
  \sum_{i,j=1}^N (\mathbb{A}^{\theta}(\vec{\rho}))^{-1}_{ij}
	= (\mathbb{A}^{\theta}(\vec{\rho}))^{-1}\vec{1} \cdot \vec{1} 
	\le \lambda^{-1}_{\min} |\vec{1}|^2 
	\le \frac{N\rho}{K\theta}
$$
and \eqref{H6Ce} follows with $K_2:=N/K$.

{\bf (iv)} It remains to show the estimates for the pressure. Let 
$\vec{\rho}\in \mathbb{R}^N_+$ and $\theta\in \mathbb{R}_+$ be arbitrary. 
Then, for $\tau\in (0,1)$, because of \eqref{1.p},
\begin{align*}
  p(\theta,\vec{\rho}) & =p(\theta,\tau\vec{\rho})
	+ \int_{\tau}^1 \frac{\partial}{\partial t} p(\theta,t\vec{\rho}) dt \\
  &= \nabla h_{\theta}(\tau\vec{\rho})\cdot \tau \vec{\rho} 
	- h_{\theta}(\tau \vec{\rho}) + \int_{\tau}^1 
	\frac{\partial}{\partial t}\big(\nabla h_{\theta}(t\vec{\rho})\cdot t \vec{\rho} 
	- h_{\theta}(t \vec{\rho})\big)dt \\
  &= \nabla h_{\theta}(\tau\vec{\rho})\cdot \tau \vec{\rho} 
	- h_{\theta}(\tau \vec{\rho}) + \int_{\tau}^1 t\sum_{i,j=1}
	\frac{\partial^2 h_{\theta}}{\partial\rho_i \partial \rho_j}(t\vec{\rho}) 
	\rho_i\rho_j dt.
\end{align*}
Hypothesis (H6a) allows us to pass to the limit $\tau\to 0_+$ such that
$$
  p(\theta,\vec{\rho}) = \int_0^1 t\sum_{i,j=1}
	\frac{\partial^2 h_{\theta}}{\partial\rho_i \partial\rho_j}(t\vec{\rho}) 
	\rho_i\rho_j dt.
$$ 
We infer from Hypothesis (H7) that
$$
  c_p|\vec{\rho}|^2 \int_0^1 \frac{\theta t}{t\rho} + t^{\gamma-1}\rho^{\gamma-2} dt  
	\le p(\theta,\vec{\rho})\le c_P|\vec{\rho}|^2 \int_0^1 
	\bigg(\frac{\theta t}{t\rho} + t^{\gamma-1}\rho^{\gamma-2}
	+ t^{\omega-1}\rho^{\omega-2}\bigg) dt.
$$
We use Young's inequality and observe that $\omega\in (1,\gamma)$
to deduce \eqref{prrr} with properly chosen $c_p$ and $C_p$.
\end{proof}

The following property is needed in the construction of the approximate solution.

\begin{lemma} \label{l A1}
Let the free energy $h_\theta$ fulfil Hypotheses (H6) and (H7); 
thus the pressure satisfies \eqref{prrr}. Then 
for any sequence $(\theta^{(n)})_{n\in\N}\subset (0,\infty)$,
$(\vec\mu^{(n)})_{n\in\N}\subset\R^N$, if both $\theta^{(n)}$ and $\rhotot^{(n)}$
are bounded and $\min_{1\leq i\leq N}\mu_i^{(n)}\to \infty$ then
$\rhotot^{(n)}\to 0$.	
\end{lemma}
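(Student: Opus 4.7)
The plan is to combine the Gibbs--Duhem relation with the pressure bound \eqref{prrr} from Lemma \ref{lem:H6}(iv) to derive a linear estimate of the form $\rhotot^{(n)}\min_i\mu_i^{(n)}\le C$, from which the conclusion $\rhotot^{(n)}\to 0$ is immediate.

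By Lemma \ref{lem:H6}(i), set $\vec\rho^{(n)}:=\nabla h^*_{\theta^{(n)}}(\vec\mu^{(n)})$, so that $\vec\mu^{(n)}=\nabla h_{\theta^{(n)}}(\vec\rho^{(n)})$ and $\rhotot^{(n)}=\sum_{i=1}^N\rho_i^{(n)}$. The definition of the pressure in \eqref{1.p} immediately yields the Gibbs--Duhem identity
\[
\vec\rho^{(n)}\cdot\vec\mu^{(n)} \;=\; h_{\theta^{(n)}}(\vec\rho^{(n)}) + p(\vec\rho^{(n)},\theta^{(n)}).
\]
Since $\min_i\mu_i^{(n)}\to\infty$, for $n$ large enough all $\mu_i^{(n)}$ are positive, so $\vec\rho^{(n)}\cdot\vec\mu^{(n)}\ge \rhotot^{(n)}\min_i\mu_i^{(n)}$. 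Bounding the pressure from above via \eqref{prrr} then gives
\[
\rhotot^{(n)}\min_i\mu_i^{(n)} \;\le\; h_{\theta^{(n)}}(\vec\rho^{(n)}) + C_p\bigl(1+\rhotot^{(n)}\theta^{(n)}+(\rhotot^{(n)})^\gamma\bigr).
\]
The second summand is uniformly bounded by the standing assumption that $\rhotot^{(n)}$ and $\theta^{(n)}$ are bounded; for the first summand I would appeal to the continuity of $h_\theta$ on $\R_+^N$, the boundary asymptotics \eqref{LMB1}, and (under the implicit compactness of $\theta^{(n)}$ in $(0,\infty)$ used in the construction) the uniformity of the resulting bound in $\theta$. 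This yields $\rhotot^{(n)}\min_i\mu_i^{(n)}\le C$, and the divergence of $\min_i\mu_i^{(n)}$ forces $\rhotot^{(n)}\to 0$.

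The main obstacle is the uniform upper bound on $h_\theta$ on the relevant bounded region: individual components of $\vec\rho^{(n)}$ may approach $0$ while $\rhotot^{(n)}$ stays away from $0$, so the extension by \eqref{LMB1} does not suffice directly, and one must use the Hessian bounds from Hypothesis (H7) along a radial segment from a fixed interior reference point to $\vec\rho^{(n)}$. An alternative route that bypasses any explicit estimate on $h_\theta$ is to exploit the convexity of the conjugate $h_\theta^{*}=p$: for the fixed reference $\vec\rho_0=c\vec 1$ with $c>0$,
\[
p(\vec\rho,\theta)\;\ge\; p(\vec\rho_0,\theta)+\vec\rho_0\cdot\bigl(\vec\mu-\nabla h_\theta(\vec\rho_0)\bigr),
\]
and the estimate $\vec\rho_0\cdot\vec\mu^{(n)}\ge cN\min_i\mu_i^{(n)}\to\infty$ forces $p(\vec\rho^{(n)},\theta^{(n)})\to\infty$, directly contradicting \eqref{prrr}; this confirms the claim and in fact shows that, under the stated boundedness assumptions, the hypothesis $\min_i\mu_i^{(n)}\to\infty$ is essentially vacuous, so that the conclusion holds a fortiori.
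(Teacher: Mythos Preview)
Your main approach is exactly the paper's: combine Gibbs--Duhem with the upper bound on $p$ from \eqref{prrr} to get $\rhotot^{(n)}\min_i\mu_i^{(n)}\le p+h_{\theta^{(n)}}(\vec\rho^{(n)})\le C$. The ``main obstacle'' you flag---bounding $h_{\theta^{(n)}}(\vec\rho^{(n)})$ when individual components may vanish---is in fact a non-issue: the paper simply invokes the growth bound \eqref{grow_h_1} from Hypothesis (H6b), which controls $|h_\theta(\vec\rho)|$ in terms of $\rhotot$ and $\theta$ alone, with no reference to individual $\rho_i$. So there is nothing to integrate along radial segments or to extract from (H7); one line suffices.

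Your alternative convexity route is correct but does not, as you claim, bypass estimates on $h_\theta$: writing out $p(\vec\rho_0,\theta)-\vec\rho_0\cdot\nabla h_\theta(\vec\rho_0)$ via Gibbs--Duhem gives exactly $-h_\theta(\vec\rho_0)$, so your lower bound reads $p(\vec\rho^{(n)},\theta^{(n)})\ge -h_{\theta^{(n)}}(\vec\rho_0)+\vec\rho_0\cdot\vec\mu^{(n)}$, and you still need $h_{\theta^{(n)}}(\vec\rho_0)$ bounded above. That said, this is a genuine simplification over your worry in the first approach, since the bound is now required only at one fixed interior point $\vec\rho_0$ rather than uniformly on a set where components may degenerate; but once \eqref{grow_h_1} is on the table, both routes are equally short. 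Your remark that the hypothesis is ``essentially vacuous'' is correct and is in fact how the lemma is used downstream (in Lemma~\ref{lem.refrho}, to force $\rhotot\to\infty$ by contradiction), though phrasing the conclusion as $\rhotot^{(n)}\to 0$ is still the cleanest formulation.
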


\begin{proof}
It follows from \eqref{1.p}, \eqref{grow_h_1} and \eqref{prrr} that
$$
\rhotot\min_{1\leq i\leq N}\mu_i\leq\sum_{i=1}^N\rho_i\mu_i =
p + h_\theta(\vec\rho)\leq C( 1+\rhotot\theta + \rhotot^\gamma +
\rhotot^{5(\gamma+\nu-\zeta)/6} + \theta^{5(3\beta - \zeta)/6}),
$$
which immediately implies the statement.
\end{proof}

The following lemma plays an important role in the construction of the
approximate solutions.

\begin{lemma}\label{lem.refrho}
Let the free  energy $h_\theta$ fulfil Hypothesis (H6).
Then for every $\overline\rhotot>0$ and $w\in L^\infty(\Omega)$,
the algebraic equation
\begin{equation}\label{eq.q0}
\int_{\Omega}\sum_{i=1}^N\frac{\pa h_\theta^*}{\pa\mu_i}(\theta q_0\vec{1})\, dx
= |\Omega|\overline{\rhotot},\quad \theta:= e^w ,\quad \vec{1} = (1,\ldots,1)\in\R^N,
\end{equation}
has a unique solution $q_0\in\R$. Moreover, for every $\overline\rhotot>0$ and
$i=1,\ldots,N$, the mappings
\begin{align*}
  & \widetilde{q}_0: L^\infty(\Omega)\to\R, \quad w\mapsto q_0, \\
  & \widetilde{\mathfrak{R}}_i:L^\infty(\Omega)\to L^\infty(\Omega), \quad
	w\mapsto \frac{\pa h_{\exp(w)}^*}{\pa\mu_i}
	\big(\exp(w) \widetilde {q}_0(w)\vec{1}\big)
\end{align*}
are continuous.
\end{lemma}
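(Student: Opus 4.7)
The plan is to rewrite \eqref{eq.q0} as $F(q_0)=|\Omega|\overline{\rhotot}$, where for $\theta(x):=e^{w(x)}$ I set
$F(q):=\int_\Omega \rho(q;x)\,dx$ with $\rho(q;x):=\sum_{i=1}^N(\pa h^*_{\theta(x)}/\pa\mu_i)(\theta(x)q\vec{1})$,
and then to show that $F:\R\to(0,\infty)$ is continuous, strictly increasing, and surjective. Because $w\in L^\infty(\Omega)$, $\theta(x)$ lies a.e.\ in a common compact subinterval $[\kappa,\kappa^{-1}]\subset(0,\infty)$, which is what turns pointwise estimates into uniform ones.

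Strict monotonicity of $F$ follows by differentiation under the integral,
$F'(q)=\int_\Omega \theta(x)\,(\na^2 h^*_{\theta(x)}(\theta(x)q\vec{1})\vec{1})\cdot\vec{1}\,dx>0$,
because $\na^2 h^*_\theta$ is the inverse of the positive definite matrix $\na^2 h_\theta$ (cf.\ Hypothesis (H6a) \eqref{LMB21}). For the asymptotic behavior I would argue pointwise in $x$: the strictly increasing positive function $q\mapsto\rho(q;x)$ has limits $L_\pm(x)\in[0,\infty]$. If $L_+(x)<\infty$, Lemma \ref{l A1} applied along $q_n\to+\infty$ (with $\theta(x)$ bounded and all $\mu_i^{(n)}=\theta(x)q_n\to\infty$) would force $\rho(q_n;x)\to 0$, contradicting the monotone lower bound $\rho(q_n;x)\ge\rho(0;x)>0$. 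If $L_-(x)>0$, pick $\tilde\kappa\in(0,L_-(x)/N)$ small enough that $\tilde\kappa^{-1}>\rho(0;x)$; then along any $q_n\to-\infty$ one has $\sum_i\rho_i(q_n;x)\ge L_-(x)>N\tilde\kappa$, so at least one component $\rho_{i_n}(q_n;x)$ lies in $(\tilde\kappa,\tilde\kappa^{-1})$, and by the pigeonhole principle some fixed index $i_0$ appears infinitely often; Hypothesis (H6a) \eqref{H7C} then forces $\mu_{i_0}=\theta(x)q_n\ge -C$, a contradiction. Monotone/dominated convergence give $F(-\infty)=0$ and $F(+\infty)=\infty$. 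Continuity of $F$ in $q$ follows from the joint continuity of $\na h^*_\theta(\vec\mu)$ in $(\theta,\vec\mu)$ provided by \eqref{H6A} in Lemma \ref{lem:H6}(ii), combined with dominated convergence using the pressure bound \eqref{prrr} to keep $\rho(\cdot;x)$ uniformly bounded on compact $q$-intervals. The intermediate value theorem together with strict monotonicity then delivers the unique $q_0\in\R$.

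For continuity of $\widetilde{q}_0$, let $w_n\to w$ in $L^\infty(\Omega)$ and set $q_n:=\widetilde{q}_0(w_n)$, $\theta_n:=e^{w_n}$. Choosing $q^\pm\in\R$ with $F(q^-)<|\Omega|\overline{\rhotot}<F(q^+)$, the same strict inequalities hold for the analogous functionals $F_n$ (built from $\theta_n$) for $n$ large, by \eqref{H6A} and dominated convergence; consequently $q_n\in[q^-,q^+]$ eventually. For any subsequential limit $q_{n_k}\to q^*$, the uniform convergence $\theta_{n_k}(x)q_{n_k}\to\theta(x)q^*$, together with the locally uniform continuity of $\na h^*_\theta(\vec\mu)$ in $(\theta,\vec\mu)$ from \eqref{H6A}, lets me pass to the limit in $F_{n_k}(q_{n_k})=|\Omega|\overline{\rhotot}$ to obtain $F(q^*)=|\Omega|\overline{\rhotot}$, whence $q^*=q_0$ by uniqueness; therefore the whole sequence converges. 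The continuity of $\widetilde{\mathfrak{R}}_i:L^\infty(\Omega)\to L^\infty(\Omega)$ is then immediate from the uniform convergence $\theta_n(x)\widetilde{q}_0(w_n)\to\theta(x)\widetilde{q}_0(w)$ on $\Omega$ and the same locally uniform continuity of $\pa h^*_\theta/\pa\mu_i$. The main obstacle is the $q\to-\infty$ analysis, since Lemma \ref{l A1} only addresses $\mu_i\to+\infty$; one has to exploit the boundary behavior \eqref{H7C} of the chemical potentials and use the pigeonhole reduction to a fixed index $i_0$ to rule out a positive liminf of $\rho$.
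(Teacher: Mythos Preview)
Your proof is correct and follows essentially the same route as the paper: strict monotonicity of $F$ from the positive definiteness of $\na^2 h^*_\theta$, the pointwise limits $\rho(q;x)\to 0$ as $q\to-\infty$ and $\rho(q;x)\to\infty$ as $q\to+\infty$ via Lemma~\ref{l A1} and \eqref{H7C}, and the continuity of $\widetilde{q}_0$ by a boundedness-plus-subsequence-plus-uniqueness argument. In fact you spell out in detail (notably the pigeonhole reduction for $q\to-\infty$) what the paper only asserts in one sentence, so your version is if anything more explicit.
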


\begin{proof}
Set $\pa_i=\pa/\pa\mu_i$ and $\pa_{ij}^2=\pa^2/\pa\mu_i\pa\mu_j$.
For every $\theta_0>0$, the function
$f_{\theta_0}(q_0) = \sum_{i=1}^N\pa_{i} h_{\theta_0}^*(\theta_0 q_0\vec{1})$
is strictly increasing, since $f_{\theta_0}'(q_0) = \theta_0\sum_{i,j=1}^N
\pa^2_{ij} h_{\theta_0}^*(\theta_0 q_0\vec{1})>0$,
due to the convexity of $h^*_\theta$.
It follows that $F_\theta(q_0)=\int_{\Omega}\sum_{i=1}^N\pa_i h_\theta^*
(\theta q_0\vec{1})\, dx$ is strictly increasing, provided that
$\theta\in L^\infty(\Omega)$ is uniformly positive in $\Omega$. Thus,
\eqref{eq.q0} has at most one solution $q_0\in\R$. To prove
the existence of a solution to \eqref{eq.q0}, we need to show that
$\lim_{q_0\to -\infty}F_\theta(q_0) = 0$ and
$\lim_{q_0\to +\infty}F_\theta(q_0) = \infty$.

For $\theta=e^w$, $w\in L^\infty(\Omega)$, let $\theta_m$, $\theta_M>0$
be such that $\theta_m\leq\theta\leq\theta_M$ a.e.~in $\Omega$. The fact that
for $\theta_0>0$, the mapping
$\lambda\in\R\mapsto \sum_{i=1}^N\pa_{i} h_{\theta_0}^*
(\lambda\vec{1})\in\R$ is increasing, implies that
$$
0\leq
\int_\Omega\sum_{i=1}^N\pa_i h_\theta^*(\theta_m q_0\vec{1})\, dx
\leq F_\theta(q_0) \leq
\int_\Omega\sum_{i=1}^N\pa_i h_\theta^*(\theta_M q_0\vec{1})\, dx .
$$
Therefore it is sufficient to show that
$$
\lim_{\lambda\to -\infty}\int_\Omega\sum_{i=1}^N\pa_i h_\theta^*
(\lambda\vec{1})\, dx = 0,\quad
\lim_{\lambda\to +\infty}\int_\Omega\sum_{i=1}^N\pa_i h_\theta^*
(\lambda\vec{1})\, dx = +\infty.
$$
By the monotonicity of $\lambda\mapsto \sum_{i=1}^N\pa_{i} h_{\theta(x)}^*
(\lambda\vec{1})$ for $\lambda\in\R$ and a.e.~$x\in\Omega$,
the dominated convergence theorem, and Fatou's lemma, it is sufficient to show that
\begin{equation}\label{lim.q0}
\lim_{\lambda\to -\infty}\sum_{i=1}^N\pa_i h_{\theta_0}^*
(\lambda\vec{1}) = 0,\quad
\lim_{\lambda\to +\infty}\sum_{i=1}^N\pa_i h_{\theta_0}^*
(\lambda\vec{1}) = \infty\quad\mbox{for all }\theta_0>0.
\end{equation}
Actually, the limits in \eqref{lim.q0} follow from the monotonicity of $\lambda
\mapsto \sum_{i=1}^N\pa_{i} h_{\theta_0}^*(\lambda\vec{1})$, Assumption
\eqref{H7C}, Lemma \ref{l A1}, and the fact that $\vec\mu=\na h_{\theta_0}(\vec\rho)$
is the inverse mapping of $\vec\rho=\na h_{\theta_0}^*(\vec\mu)$.
This means that \eqref{eq.q0} has exactly one solution $q_0\in\R$.

To prove the second part of the lemma, let $(w_n)_{n\in\N}\subset L^\infty(\Omega)$
be such that $w_n\to w$ strongly in $L^\infty(\Omega)$.
Define $\theta_n = e^{w_n}$, $\theta=e^w$. Clearly, there exist constants $L_1$,
$L_2>0$ such that $L_1\leq\theta_n\leq L_2$ a.e.~in $\Omega$, $n\in\N$.
The previous argument allows us to deduce that the corresponding sequence $(q_{0,n})$
is bounded in $\R$ and therefore (up to subsequences) convergent to a suitable
$q_0\in\R$. The fact that $h_\theta^*\in C^2((0,\infty)^N)$ depends smoothly on
$\theta$ implies that $\na h_{\theta_n}^*(\theta_n q_{0,n}\vec{1})\to \na h_{\theta}^*
(\theta q_{0}\vec{1})$ in $L^\infty(\Omega)$ and therefore the limits
$\theta$, $q_0$ satisfy \eqref{eq.q0}.
In particular, $q_0$ is uniquely determined by $w=\log\theta$. This finishes
the proof.
\end{proof}


\section{A priori estimates for smooth solutions}\label{sec.smooth}

This section is devoted to the derivation of suitable a priori estimates for
smooth solutions. Although we consider later weak solutions only, the computations
help us to identify the key estimates of the existence proof. In fact,
we need several regularizations for the full model, 
which may abstruse the main arguments.

We assume the existence of a smooth solution to the following Navier--Stokes--Fourier system with chemically reacting species:
\begin{align}
& \diver\left(\rho_i\vel -\sum_{j=1}^N M_{ij}\na\frac{\mu_j}{\theta} - M_i\na\frac{1}{\theta}\right) = r_i ,
\quad i=1,\ldots,N
\qquad\mbox{in }\Omega, \label{2.massbal} \\
& \diver(\rhotot\vel\otimes\vel - \St) + \na p = \rhotot \bm{b}
\qquad\mbox{in }\Omega, \label{2.mombal} \\
&\diver\left(\rhotot e\vel -\kappa(\theta)\na\theta - \sum_{i=1}^N M_i\na\frac{\mu_i}{\theta}\right) - (\St-p\,\mathbb{I}):\na\vel = 0
\qquad\mbox{in }\Omega, \label{2.enerbal}\\
 & \left(\sum_{j=1}^N M_{ij}\na\frac{\mu_j}{\theta} + M_i\na\frac{1}{\theta}\right)\cdot\bm{\nu} = 0, \quad i=1,\ldots,N
 \qquad\mbox{on }\pa\Omega, \label{2.bc.J} \\
& \vel\cdot\bm{\nu} = 0, \quad
(\mathbb{I}-\bm{\nu}\otimes\bm{\nu})(\mathbb{S}(\vel)\bm{\nu} + \alpha_1\vel) = 0\qquad\mbox{on }\pa\Omega,
\label{2.bc.vel} \\
& \left(\kappa(\theta)\na\theta + \sum_{i=1}^N M_i\na\frac{\mu_i}{\theta}\right)\cdot\bm{\nu} - \alpha_2(\theta_0-\theta) = 0 \qquad\mbox{on }\pa\Omega . \label{2.bc.theta}
\end{align}
Note that on this level, we can freely switch from the internal energy balance \eqref{2.enerbal} to the total energy balance  or the entropy equality; see Lemma \ref{lem.entbal} below. In order to use the procedure also during the construction of the solution, we immediately obtain the integrated entropy and total energy balances. To obtain the weak formulation of the entropy and total energy balances, we can proceed as in the proof of the lemma below, we just multiply the corresponding strong formulation additionally by a smooth test function $\psi$, which gives several additional terms containing the derivatives of $\psi$. In case of the entropy (in)equality, later on, we also require that the test function is nonnegative.

\begin{lemma}[Entropy and total energy balances]\label{lem.entbal}
Let $(\vec\rho,\vel,\theta)$ be a smooth solution to\linebreak
\eqref{2.massbal}--\eqref{2.bc.theta} such that $\theta>0$. Then
 the entropy equality
\begin{align}
  \int_\Omega\bigg(&\sum_{i,j=1}^N M_{ij}\na\frac{\mu_i}{\theta}\cdot
	\na\frac{\mu_j}{\theta}	+ \kappa(\theta)|\na\log\theta|^2
	+ \frac{1}{\theta}\St:\na\vel
	- \sum_{i=1}^N r_i\frac{\mu_i}{\theta}\bigg)\,dx \nonumber \\
	&= \alpha_2\int_{\pa\Omega}\frac{\theta-\theta_0}{\theta}\, ds \label{2.ent}
\end{align}
and the total energy equality
\begin{equation}\label{2.ener}
  \alpha_1\int_{\pa\Omega}|\vel|^2 \,ds
	+ \alpha_2\int_{\pa\Omega}(\theta-\theta_0)\,ds
	= \int_\Omega\rhotot\bm{b}\cdot\vel \,dx
\end{equation}
hold.
\end{lemma}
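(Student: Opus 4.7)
The plan is to derive both identities by combining the bulk equations \eqref{2.massbal}--\eqref{2.enerbal} with the thermodynamic identities implicit in \eqref{1.p}, and then integrating over $\Omega$ using the boundary conditions. The underlying algebraic input is twofold: (i) the continuity equation $\diver(\rhotot\vel)=0$, which follows by summing \eqref{2.massbal} over $i$ and invoking \eqref{1.M1} together with $\sum_i r_i=0$ (these also give $\sum_i \J_i=\bm{0}$); and (ii) the Gibbs-type identity $\na(\rhotot e)=\theta\na(\rhotot s)+\sum_{i=1}^N\mu_i\na\rho_i$ together with $\rhotot e + p = \theta\rhotot s + \sum_i \rho_i\mu_i$, both of which follow at once from the definitions in \eqref{1.p}.

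\emph{Total energy equality.} I first multiply \eqref{2.mombal} by $\vel$ and use $\diver(\rhotot\vel)=0$ to convert $\vel\cdot\diver(\rhotot\vel\otimes\vel)$ into $\diver(\tfrac12\rhotot|\vel|^2\vel)$. Combined with $\vel\cdot\diver\St=\diver(\St\vel)-\St{:}\na\vel$ and $\vel\cdot\na p=\diver(p\vel)-p\diver\vel$, this yields the kinetic energy equation
$$
\diver\bigl(\tfrac12\rhotot|\vel|^2\vel-\St\vel+p\vel\bigr)=-(\St-p\mathbb{I}){:}\na\vel+\rhotot\bm{b}\cdot\vel.
$$
Adding \eqref{2.enerbal} produces exactly the total energy balance \eqref{1.totEbal}, which I integrate over $\Omega$. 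The term $(\rhotot E+p)\vel\cdot\bm\nu$ vanishes because $\vel\cdot\bm\nu=0$. Writing $\St\vel\cdot\bm\nu=\vel\cdot(\mathbb{I}-\bm\nu\otimes\bm\nu)\St\bm\nu$ (since $\vel$ is tangential) and applying the Navier slip condition \eqref{2.bc.vel} gives $\St\vel\cdot\bm\nu=-\alpha_1|\vel|^2$; the boundary condition \eqref{2.bc.theta} gives $\bm{Q}\cdot\bm\nu=\alpha_2(\theta-\theta_0)$. These boundary contributions are precisely \eqref{2.ener}.

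\emph{Entropy equality.} Using the Gibbs identity together with $\rhotot e+p=\theta\rhotot s+\sum_i\rho_i\mu_i$, I rewrite
$$
\diver(\rhotot e\vel)=\theta\diver(\rhotot s\vel)+\sum_{i=1}^N\mu_i\diver(\rho_i\vel)-p\diver\vel.
$$
Substituting into \eqref{2.enerbal}, replacing $\diver(\rho_i\vel)$ by $r_i-\diver\J_i$ from \eqref{2.massbal}, dividing by $\theta$, and moving the $\mu_i/\theta$ and $1/\theta$ factors inside the divergences via $(\mu_i/\theta)\diver\J_i=\diver((\mu_i/\theta)\J_i)-\J_i\cdot\na(\mu_i/\theta)$ and the analogous identity for $\bm{Q}/\theta$, I obtain the pointwise entropy balance $\diver\bm{\mathcal{J}}=\Xi$, where $\bm{\mathcal{J}}$ and $\Xi$ are as in Section 1. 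Expanding $-\sum_i\J_i\cdot\na(\mu_i/\theta)+\bm{Q}\cdot\na(1/\theta)$ via \eqref{1.Q} and \eqref{1.J}, the cross-terms involving $M_i$ cancel exactly and leave $\sum_{i,j}M_{ij}\na(\mu_i/\theta)\cdot\na(\mu_j/\theta)+\kappa(\theta)|\na\log\theta|^2$. Integrating $\diver\bm{\mathcal{J}}=\Xi$ over $\Omega$, the contributions $\rhotot s\vel\cdot\bm\nu$ and $(\mu_i/\theta)\J_i\cdot\bm\nu$ vanish by $\vel\cdot\bm\nu=0$ and \eqref{2.bc.J} (recalling $\sum_j M_{ij}\na(\mu_j/\theta)+M_i\na(1/\theta)=-\J_i$), while $(\bm{Q}/\theta)\cdot\bm\nu=\alpha_2(\theta-\theta_0)/\theta$ by \eqref{2.bc.theta}; this is \eqref{2.ent}.

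\emph{Main obstacle.} There is no genuine analytical difficulty at this level: smoothness and strict positivity of $\theta$ justify every operation, and the derivation is pure bookkeeping. The only nontrivial conceptual input is the Gibbs identity, which encodes the thermodynamic consistency of the model and is what forces the cancellation making the entropy production $\Xi$ a quadratic form in the fluxes.
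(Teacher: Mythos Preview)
Your proof is correct and follows essentially the same route as the paper: the same combinations of \eqref{2.massbal}--\eqref{2.enerbal}, the same thermodynamic cancellation (your Gibbs identity is exactly what the paper uses to verify \eqref{2.aux2}), and the same treatment of the boundary terms. The only cosmetic difference is that you first derive the pointwise balances and then integrate, whereas the paper multiplies by the test functions $\mu_i/\theta$, $-1/\theta$, $\vel$ and integrates directly.
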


\begin{proof}
We multiply \eqref{2.massbal} by $\mu_i/\theta$ and sum over $i=1,\ldots,N$; 
we add this equation to that one obtained by multiplying \eqref{2.enerbal} by 
$-1/\theta$; and we integrate the resulting equation over $\Omega$. 
After a few integration by parts, the terms involving the coefficients $M_i$ 
cancel, and, taking into account the boundary conditions
\eqref{2.bc.J}--\eqref{2.bc.theta}, it follows that
\begin{align}
  &\int_\Omega\bigg( \rho e \vel\cdot \na\frac{1}{\theta}\bigg) + \sum_{i,j=1}^N M_{ij}\na\frac{\mu_i}{\theta}
	\cdot\na\frac{\mu_j}{\theta}
	+ \kappa(\theta)|\na\log\theta|^2\bigg)\, dx \label{2.aux} \\
	&\phantom{xx}{}
	+ \int_\Omega\bigg(\sum_{i=1}^N \diver(\rho_i\vel)\frac{\mu_i}{\theta}
	-\frac{p}{\theta}\diver\vel + \frac{1}{\theta}\St:\na\vel\bigg)\,dx
	= \alpha_2\int_{\pa\Omega}\frac{\theta-\theta_0}{\theta}\,ds
	+ \int_\Omega\sum_{i=1}^N r_i\frac{\mu_i}{\theta} \,dx. \nonumber
\end{align}
We claim that some of the terms cancel, namely
\begin{equation}\label{2.aux2}
  \int_\Omega\bigg(\rho e\vel\cdot \na\frac{1}{\theta} + \sum_{i=1}^N \diver(\rho_i\vel) \frac{\mu_i}{\theta} - \frac{p}{\theta}\diver\vel\bigg)\,dx = 0.
\end{equation}
To prove this, we use the thermodynamic relations \eqref{1.p} to deduce that
(recall that $h_\theta(\vec{\rho}) = \rho \psi(\vec{\rho},\theta)$)
\begin{align*}
  -\vel\cdot \sum_{i=1}^N & \nabla \rho_i\frac{\mu_i}{\theta}
	- \rho e\vel \cdot\na\frac{1}{\theta} + \frac{p\diver \vel}{\theta}
	-\sum_{i=1}^N \rho_i\mu_i \frac{\diver \vel}{\theta} \\
	&= \frac{\vel}{\theta}\cdot \bigg(\sum_{i=1}^N
	- \frac{\pa h_\theta}{\pa\rho_i} \na\rho_i
	+ \bigg(h_\theta-\theta \frac{\pa h_\theta}{\pa\theta}\bigg)
	\frac{\nabla\theta}{\theta}\bigg)
	- \frac{h_\theta}{\theta} \diver\vel  \\
  &= -\frac{\vel}{\theta}\cdot \na h_\theta - \frac{h_\theta}{\theta}
	\diver \vel - h_\theta\vel\cdot\nabla \frac{1}{\theta}
	= \diver \Big(\frac{\vel h_\theta}{\theta}\Big).
\end{align*}
Hence, we deduce \eqref{2.aux2} after an integration by parts,
and \eqref{2.aux} simplifies to \eqref{2.ent}.

Next, we multiply \eqref{2.mombal} by $\vel$ and add to the resulting equation
the energy balance \eqref{2.enerbal}, integrate over $\Omega$, and integrate
by parts. The integrals involving $\St:\na\vel$ and $p$ cancel and we end up with
$$
  -\int_\Omega\rhotot(\vel\otimes\vel):\na\vel \,dx
	- \int_{\pa\Omega}(\St\vel)\cdot\bm{\nu}\,ds
	= \alpha_2\int_{\pa\Omega}(\theta_0-\theta)\,ds
	+ \int_\Omega\rhotot\bm{b}\cdot\vel \,dx.
$$
The first term vanishes. Indeed, the sum of \eqref{2.massbal} from $i=1,\ldots,N$
yields $\diver(\rhotot\vel)=0$. Multiplying this equation by $|\vel|^2/2$ and
integrating over $\Omega$, an integration by parts gives
$$
  0 = \frac12\int_\Omega\rhotot\vel\cdot\na|\vel|^2 \,dx
	= \int_\Omega\rhotot(\vel\otimes\vel):\na\vel \,dx.
$$
By \eqref{2.bc.vel}, it yields the second identity \eqref{2.ener}, finishing the proof of the lemma.
\end{proof}

The entropy and total energy balances yield some a priori estimates. We define
$$
  q_i = \frac{\mu_i}{\theta}, \quad
	\Pi(\vec{q})_i = q_i - \frac{1}{N}\sum_{j=1}^N q_j, \quad i=1,\ldots,N,
$$
recalling that $\Pi=\mathbb{I}-\vec{1}\otimes\vec{1}/N$ projects onto
$\operatorname{span}\{\vec{1}\}^\perp$.

\begin{lemma}[Estimates from the entropy balance]\label{lem.est1}
The following a priori estimates hold:
\begin{align}
  \|\vel\|_{H^1(\Omega)}
	+ \|\Pi(\vec{q})\|_{H^1(\Omega)} &\le C, \label{2.vq} \\
  \|\na\log\theta\|_{L^2(\Omega)} + \|\na\theta^{\beta/2}\|_{L^2(\Omega)}	
	+ \|1/\theta\|_{L^1(\pa\Omega)} &\le C, \label{2.thetaL2} \\
\|\theta\|_{L^1(\pa\Omega)} + 	\|\log\theta\|_{H^1(\Omega)}
	+ \|\theta^{\beta/2}\|_{H^1(\Omega)}^{2/\beta}
	+ \|\theta\|_{L^{3\beta}(\Omega)}
	&\le C\big(1+\|\rhotot\|_{L^{6/5}(\Omega)}\big),
	\label{2.thetaH1}
\end{align}
where here and in the following, $C>0$ denotes a generic constant dependent only
on the given data.
\end{lemma}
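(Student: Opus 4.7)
The plan is to extract the first two estimates directly from the entropy equality \eqref{2.ent}, whose right-hand side is bounded above by $\alpha_2|\pa\Omega|$ (since $(\theta-\theta_0)/\theta\leq 1$), and then to combine with the total energy equality \eqref{2.ener} and standard embedding/Poincar\'e inequalities to obtain \eqref{2.thetaH1}. Each of the four terms on the left of \eqref{2.ent} is nonnegative under the hypotheses: by \eqref{1.M2} and the fact that the projector $\Pi$ commutes with $\nabla$, one has $\sum_{ij} M_{ij}\na q_i\cdot\na q_j\geq C_M|\na(\Pi\vec q)|^2$; by \eqref{1.r} combined with $\sum_i r_i=0$ (which implies $\sum r_i q_i=\sum r_i(\Pi\vec q)_i$), $-\sum r_i q_i\geq C_r|\Pi\vec q|^2$; by (H3), $\kappa(\theta)|\na\log\theta|^2\geq \kappa_1|\na\log\theta|^2$ and also $\geq \kappa_1(2/\beta)^2|\na\theta^{\beta/2}|^2$ (using $\kappa\geq\kappa_1(1+\theta)^\beta$); and by \eqref{1.S} together with $\lambda_1(\theta)/\theta\geq c_1$, $\theta^{-1}\St:\na\vel\geq 2c_1|\D(\vel)-\tfrac13(\diver\vel)\mathbb I|^2$. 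Summing these inequalities yields $\|\Pi\vec q\|_{H^1}\leq C$ and the gradient bounds in \eqref{2.thetaL2}, as well as an $L^2$ bound on the deviatoric symmetric gradient of $\vel$.

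To upgrade this to the full $H^1$-bound on $\vel$, I invoke the Korn-type inequality for the deviatoric part (\cite[(4.17.19)]{NoSt04}), which is applicable thanks to the Navier slip condition \eqref{2.bc.vel} and the non-axial symmetry of $\Omega$ in (H1); this completes \eqref{2.vq}. The remaining estimate $\|1/\theta\|_{L^1(\pa\Omega)}\leq C$ follows by rewriting the right-hand side of \eqref{2.ent} as $\alpha_2|\pa\Omega|-\alpha_2\int_{\pa\Omega}\theta_0/\theta\,ds$; the nonnegativity of the left-hand side forces $\int_{\pa\Omega}\theta_0/\theta\,ds\leq|\pa\Omega|$, and $\mbox{ess inf}_{\pa\Omega}\theta_0>0$ from (H2) gives the bound. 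This closes \eqref{2.thetaL2}.

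For \eqref{2.thetaH1}, I combine \eqref{2.ener} with H\"older's inequality and Sobolev embedding $H^1(\Omega)\hookrightarrow L^6(\Omega)$: the RHS is controlled by $\|\bm{b}\|_\infty\|\rhotot\|_{L^{6/5}}\|\vel\|_{L^6}\leq C\|\rhotot\|_{L^{6/5}}$ using \eqref{2.vq}, which yields $\|\theta\|_{L^1(\pa\Omega)}\leq C(1+\|\rhotot\|_{L^{6/5}})$. The elementary estimate $|\log\theta|\leq\theta+1/\theta$ and the previous bound on $\|1/\theta\|_{L^1(\pa\Omega)}$ give $\|\log\theta\|_{L^1(\pa\Omega)}\leq C(1+\|\rhotot\|_{L^{6/5}})$. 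Applying the Poincar\'e-trace inequality $\|f\|_{L^2}\leq C(\|\na f\|_{L^2}+\|f\|_{L^1(\pa\Omega)})$ to $f=\log\theta$ delivers the $\|\log\theta\|_{H^1}$-bound; applied to $f=\theta^{\beta/2}$ and raised to the power $2/\beta$, together with Sobolev $H^1\hookrightarrow L^6$ and $\|\theta\|_{L^{3\beta}}=\|\theta^{\beta/2}\|_{L^6}^{2/\beta}$, it yields the remaining terms of \eqref{2.thetaH1}, provided one controls $\|\theta^{\beta/2}\|_{L^1(\pa\Omega)}^{2/\beta}$ by the right-hand side.

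The main obstacle is precisely this last control. For $\beta\leq 2$, Jensen's inequality applied to the concave function $t\mapsto t^{\beta/2}$ gives immediately $\|\theta^{\beta/2}\|_{L^1(\pa\Omega)}^{2/\beta}\leq C\|\theta\|_{L^1(\pa\Omega)}$. For $\beta>2$, however, Jensen points in the wrong direction, and a more delicate argument is required: one interpolates $\theta^{\beta/2}$ in $L^1(\pa\Omega)$ between $L^{2/\beta}(\pa\Omega)$ (whose norm equals $\|\theta\|_{L^1(\pa\Omega)}^{\beta/2}$) and $L^4(\pa\Omega)$ (controlled by $\|\theta^{\beta/2}\|_{H^1}$ via the Sobolev trace embedding $H^1(\Omega)\hookrightarrow L^4(\pa\Omega)$ in three dimensions), and then applies Young's inequality with a small parameter to absorb the resulting $H^1$-contribution back into the left-hand side of the Poincar\'e-trace inequality, yielding the same conclusion.
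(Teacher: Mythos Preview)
Your argument is correct and for \eqref{2.vq}, \eqref{2.thetaL2}, and the case $\beta\le 2$ of \eqref{2.thetaH1} it coincides with the paper's proof essentially line by line (the paper packages the Korn step by citing Lemma~\ref{lem.korn} directly, but the content is the same).

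The only genuine divergence is in the case $\beta>2$. The paper does not interpolate on the boundary; instead it observes that $\na\theta\in L^2(\Omega)$ uniformly by splitting
\[
\int_\Omega|\na\theta|^2\,dx
=\int_{\{\theta\le K\}}\theta^2|\na\log\theta|^2\,dx
+\frac{4}{\beta^2}\int_{\{\theta>K\}}\theta^{2-\beta}|\na\theta^{\beta/2}|^2\,dx,
\]
which is bounded since $2-\beta<0$. Poincar\'e then gives $\|\theta\|_{H^1}\le C(1+\|\rho\|_{L^{6/5}})$, hence $\theta\in L^6$, and a short bootstrap yields $\theta^{\beta/2}\in L^1(\Omega)$ and finally $\theta^{\beta/2}\in H^1(\Omega)$. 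Your route---interpolating $\|\theta^{\beta/2}\|_{L^1(\pa\Omega)}$ between $\|\theta\|_{L^1(\pa\Omega)}^{\beta/2}$ and $\|\theta^{\beta/2}\|_{L^4(\pa\Omega)}$, invoking the trace embedding $H^1(\Omega)\hookrightarrow L^4(\pa\Omega)$, and absorbing via Young---is equally valid (the H\"older interpolation with the sub-unit exponent $2/\beta$ goes through since the conjugate exponents remain $\ge 1$). Your argument is more direct and handles all $\beta>2$ in one stroke without a bootstrap; the paper's version is slightly more elementary in that it avoids the trace theorem and works entirely with interior quantities, at the cost of the extra iteration step.
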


\begin{proof}
We claim that every term on the left-hand side of \eqref{2.ent} is nonnegative.
In view of \eqref{1.M2}, we need to consider only the last two terms.
We deduce from Hypothesis (H3) and the Korn inequality (Lemma \ref{lem.korn}
in Appendix \ref{app.aux}),
taking into account that $\Omega$ is not axially symmetric thanks to
Hypothesis (H1), that for all $\vel \in H^1_{\bm{\nu}}(\Omega;\R^3)$,
\begin{align*}
  \int_\Omega\frac{1}{\theta}\St:\mathbb{D}(\vel)\,dx
	\ge C\|\vel\|_{H^1}^2.
\end{align*}

The $L^2(\Omega)$ bound for $\na \Pi (\vec{q})$ is a consequence of
\eqref{1.M2}, and \eqref{1.r} gives an $L^2(\Omega)$ bound for $\Pi (\vec q)_i$,
$i=1,\ldots,N$. At this point, we need the nonvanishing reaction terms.
Thus, $\Pi(\vec q)$ is bounded in $H^1(\Omega)$.

By Hypothesis (H3),
\begin{equation*}
  \int_\Omega \kappa(\theta)|\na\log\theta|^2 \,dx
	\ge \kappa_1\int_\Omega\bigg(|\na\log\theta|^2
	+ \frac{4}{\beta^2}|\na\theta^{\beta/2}|^2\bigg)\,dx,
\end{equation*}
which gives the $L^2(\Omega)$ bounds for $\na\log\theta$ and $\na\theta^{\beta/2}$.
The entropy balance \eqref{2.ent} implies that $1/\theta$ is bounded in $L^1(\pa\Omega)$.
The total energy balance \eqref{2.ener} and the $H^1(\Omega)$ bound for $\vel$
together with the continuous embedding $H^1(\Omega)\hookrightarrow L^6(\Omega)$
show that
$$
  \int_{\pa\Omega}\alpha_2\theta \,ds \le \int_{\pa\Omega}\alpha_2\theta_0 \,ds
	+ C\|\bm{b}\|_{L^\infty(\Omega)}\|\rhotot\|_{L^{6/5}(\Omega)}
	\|\vel\|_{H^1(\Omega)} \le C(1+\|\rhotot\|_{L^{6/5}(\Omega)}).
$$
If $\beta<2$, we conclude that
$$
  \|\log\theta\|_{L^1(\pa\Omega)}+\|\theta^{\beta/2}\|_{L^1(\pa\Omega)}
	\le C\big(1+\|\theta\|_{L^1(\pa\Omega)}\big)
	\le C\big(1+\|\rhotot\|_{L^{6/5}(\Omega)}\big),
$$
and the Poincar\'e inequality yields the remaining estimates.
If $\beta>2$, we find that
\begin{align*}
  \int_\Omega|\na\theta|^2 dx
	&= \int_{\{\theta\le K\}}\theta^2|\na\log\theta|^2\, dx
  + \frac{4}{\beta^2}\int_{\{\theta>K\}}\theta^{2-\beta}|\na\theta^{\beta/2}|^2\, dx \\
	&\le K^2\int_\Omega|\na\log\theta|^2\, dx
	+ \frac{4}{\beta^2}K^{2-\beta}\int_\Omega|\na\theta^{\beta/2}|^2\, dx \le C.
\end{align*}
By the Poincar\'e inequality, $\|\theta\|_{H^1(\Omega)}\le
C(1+\|\rhotot\|_{L^{6/5}(\Omega)})$, and this allows us to control also
the $L^6(\Omega)$ norm of $\theta$. A bootstrapping argument then yields a control
of the $L^1(\Omega)$ norm of $\theta^{\beta/2}$. Applying the Poincar\'e inequality
to $\theta^{\beta/2}$ finishes the proof.
\end{proof}

Exploiting the bounds \eqref{prrr} for the pressure, we are able to derive an
$L^s(\Omega)$ bound for $\rhotot$ with $s>\gamma$, provided that $\gamma > 3/2$ 
and $\beta>2/3$.

\begin{lemma}[Estimate for the total mass density]\label{lem.dens}
Let $\nu$ be defined by \eqref{def.nu}. Then
there exists a constant $C>0$ depending only on the given data such that
$$
  \|\rhotot\|_{L^{\gamma+\nu}(\Omega)} \le C.
$$
\end{lemma}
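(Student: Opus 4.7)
The plan is to apply the Bogovskii operator technique, standard for steady compressible Navier--Stokes problems. I construct a test function $\bm{\Phi}\in W_0^{1,q}(\Omega;\R^3)$, $q\in(1,\infty)$, satisfying
$$
\operatorname{div}\bm{\Phi} = \rhotot^\nu - \overline{\rhotot^\nu},\qquad \overline{\rhotot^\nu}:=\frac{1}{|\Omega|}\int_\Omega \rhotot^\nu\,dx,
$$
together with the Bogovskii estimate $\|\nabla\bm{\Phi}\|_{L^q(\Omega)}\le C\|\rhotot\|_{L^{q\nu}(\Omega)}^{\nu}$. Because $\bm{\Phi}$ vanishes on $\pa\Omega$, it is admissible in \eqref{w.rhov} and the boundary term drops out. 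Using it as test function yields
$$
\int_\Omega p\,\rhotot^\nu\,dx = \overline{\rhotot^\nu}\int_\Omega p\,dx - \int_\Omega(\rhotot\vel\otimes\vel):\nabla\bm{\Phi}\,dx + \int_\Omega \St:\nabla\bm{\Phi}\,dx - \int_\Omega\rhotot\bm{b}\cdot\bm{\Phi}\,dx.
$$
By Lemma \ref{lem:H6}(iv), the left-hand side is bounded below by $c_p\|\rhotot\|_{L^{\gamma+\nu}(\Omega)}^{\gamma+\nu}$, so the task reduces to bounding the right-hand side by $\eps\|\rhotot\|_{L^{\gamma+\nu}}^{\gamma+\nu}+C$ and absorbing.

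For the convective term I use H\"older with the triple $(L^a,L^6,L^{(\gamma+\nu)/\nu})$, where $\frac{1}{a}+\frac{1}{3}+\frac{\nu}{\gamma+\nu}=1$, i.e.\ $a=\frac{3(\gamma+\nu)}{2\gamma-\nu}$. Combining the $H^1$-bound for $\vel$ from \eqref{2.vq} with the Bogovskii estimate $\|\nabla\bm{\Phi}\|_{L^{(\gamma+\nu)/\nu}}\le C\|\rhotot\|_{L^{\gamma+\nu}}^{\nu}$, this term is controlled by $C\|\rhotot\|_{L^a}\|\rhotot\|_{L^{\gamma+\nu}}^{\nu}$. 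The constraint $a\le \gamma+\nu$ is exactly $\nu\le 2\gamma-3$, which is enforced by the first branch in the definition \eqref{def.nu} of $\nu$. Interpolating $\|\rhotot\|_{L^a}$ between $L^1$ (fixed by \eqref{1.int}) and $L^{\gamma+\nu}$ yields a bound with exponent strictly below $\gamma+\nu$.

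For the viscous stress term I use Hypothesis (H3), $|\St|\le C(1+\theta)|\mathbb{D}(\vel)|$, and H\"older with the triple $(L^{3\beta},L^2,L^{6\beta/(3\beta-2)})$. The $L^{3\beta}$-norm of $\theta$ is controlled by $C(1+\|\rhotot\|_{L^{6/5}})$ via \eqref{2.thetaH1}, and $\|\nabla\bm{\Phi}\|_{L^{6\beta/(3\beta-2)}}\le C\|\rhotot\|_{L^{6\beta\nu/(3\beta-2)}}^{\nu}$. The requirement $\frac{6\beta\nu}{3\beta-2}\le \gamma+\nu$ rearranges precisely to $\nu\le \gamma\frac{3\beta-2}{3\beta+2}$, i.e.\ the second branch in \eqref{def.nu}. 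Once again a $L^1$--$L^{\gamma+\nu}$ interpolation of $\|\rhotot\|_{L^{6/5}}$ produces a sub-critical exponent in $\|\rhotot\|_{L^{\gamma+\nu}}$.

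The remaining two contributions are easier. The upper pressure bound \eqref{prrr} gives $\int_\Omega p\,dx\le C(1+\|\rhotot\|_{L^\gamma}^{\gamma}+\|\rhotot\|_{L^{6/5}}\|\theta\|_{L^{6}})$, while $\overline{\rhotot^\nu}$ is controlled via H\"older between the fixed $L^1$-mass and $L^{\gamma+\nu}$; the force term is handled by Sobolev embedding of $\bm{\Phi}\in W^{1,q}_0$. All resulting powers of $\|\rhotot\|_{L^{\gamma+\nu}}$ are strictly less than $\gamma+\nu$, so Young's inequality absorbs them into the left-hand side and yields the stated uniform bound. The main obstacle is the bookkeeping: one must verify for each term that the interpolation exponent is genuinely sub-critical, and the definition \eqref{def.nu} of $\nu$ is designed to be exactly the largest value for which the two dominant constraints (convective term and $\St$-term) are simultaneously satisfied.
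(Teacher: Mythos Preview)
Your overall strategy matches the paper's exactly (Bogovskii test function, same splitting into four terms), and your treatment of the convective and viscous-stress terms is correct and reproduces precisely the two constraints $\nu\le 2\gamma-3$ and $\nu\le\gamma\frac{3\beta-2}{3\beta+2}$ that define \eqref{def.nu}.

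However, your handling of the term $\overline{\rhotot^\nu}\int_\Omega p\,dx$ has a genuine gap. You write $\int_\Omega p\,dx\le C(1+\|\rhotot\|_{L^\gamma}^{\gamma}+\|\rhotot\|_{L^{6/5}}\|\theta\|_{L^{6}})$, but the bound \eqref{2.thetaH1} only gives $\theta\in L^{3\beta}$, and $L^6$ is unavailable when $\beta<2$. Even if you replace $L^6$ by $L^{3\beta}$ and pair it with $\rhotot\in L^{3\beta/(3\beta-1)}$, the resulting interpolation exponents are \emph{not} sub-critical near the borderline $\gamma\approx 3/2$, $\beta\approx 2/3$: a direct computation shows the total power of $\|\rhotot\|_{L^{\gamma+\nu}}$ coming from $\|\rhotot\|_{L^{6/5}}\|\rhotot\|_{L^{3\beta/(3\beta-1)}}$ exceeds $\gamma+\nu$ in that regime. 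So the claim ``all resulting powers \dots\ are strictly less than $\gamma+\nu$'' is not justified for this term.

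The paper's fix is to keep the full lower bound $\int_\Omega p\,\rhotot^\nu\,dx\ge c_p\int_\Omega(\rhotot^{\gamma+\nu}+\rhotot^{1+\nu}\theta)\,dx$ from \eqref{prrr}, not only the $\rhotot^{\gamma+\nu}$ part you retained. The extra term $\int_\Omega\rhotot^{1+\nu}\theta\,dx$ on the left is then used to absorb: when $\nu\le 1$ one splits $\int_\Omega\rhotot\theta\,dx=\int_{\{\rhotot\le K\}}+\int_{\{\rhotot>K\}}$, bounds the first piece by $K\|\theta\|_{L^1}$, and estimates the second by $K^{-\nu}\int_\Omega\rhotot^{1+\nu}\theta\,dx$, which is absorbed for $K$ large. (For $\nu>1$ one has $\gamma>2$ and a direct H\"older argument works.) This absorption trick is the missing ingredient.
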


\begin{proof}
The proof is based on estimates from the momentum balance \eqref{2.mombal}
using the Bogovskii operator $\B$. We refer to Theorem \ref{thm.bog} in
Appendix \ref{app.aux} for some properties of this operator. We multiply
\eqref{2.mombal} by $\bm{\phi}=\B(\rhotot^\nu-\langle \rhotot^\nu\rangle)$, where
$\langle\rhotot^\nu\rangle=|\Omega|^{-1}\int_\Omega\rhotot^\nu \,dx$. Then
\begin{equation}\label{2.aux4}
  \int_\Omega p\rhotot^\nu \,dx
	= \int_\Omega\big(p\langle\rhotot^\nu\rangle - \rhotot(\vel\otimes\vel):\na\bm{\phi}
	+ \St:\na\bm{\phi} - \rhotot\bm{b}\cdot\bm{\phi}\big)\,dx.
\end{equation}
 Recall that, by Hypothesis (H7), $\int_\Omega p \rhotot^\nu \, dx \geq c_p \int_\Omega (\rhotot^{\gamma +\nu} + \rhotot^{1+\nu}\theta) \, dx$.
We estimate the right-hand side of \eqref{2.aux4} term by term.
We start with two
delicate terms which lead to the restrictions on the exponent $\nu$.
We have for $\alpha>3/2$,
\begin{align*}
  \bigg|\int_\Omega\rhotot(\vel\otimes\vel):\na\bm{\phi}\,dx\bigg|
	&\le \|\vel\|_{L^6(\Omega)}^2\|\rhotot\|_{L^\alpha(\Omega)}
	\|\na\bm{\phi}\|_{L^{3\alpha/(2\alpha-3)}(\Omega)} \\
	&\le C\|\vel\|_{L^6(\Omega)}^2\|\rhotot\|_{L^\alpha(\Omega)}
	\|\rho^\nu - \langle\rho^\nu\rangle\|_{L^{3\alpha/(2\alpha-3)}(\Omega)} \\
	&\le C\|\vel\|_{L^6(\Omega)}^2\|\rhotot\|_{L^\alpha(\Omega)}
	\big(\|\rhotot^\nu\|_{L^{3\alpha/(2\alpha-3)}(\Omega)}
	+ \|\rhotot^\nu\|_{L^1(\Omega)}\big),
	\end{align*}
and choosing $\alpha = \gamma+\nu_1$ and $3\alpha/(2\alpha-3)
= (\gamma+\nu_1)/\nu_1$, we end up with $\nu_1 = 2\gamma-3$.

Next, in view of Hypothesis (H3),
	\begin{align*}
  \bigg|\int_\Omega\St:\na\bm{\phi}\,dx\bigg|
	&\le C(1+\|\theta\|_{L^{3\beta}(\Omega)}) \|\na \vel\|_{L^2(\Omega)} \|\na\bm{\phi}\|_{L^{6\beta/(3\beta-2)}(\Omega)} \\
	&\le C(1+\|\rhotot\|_{L^{6/5}(\Omega)}) \big(\|\rhotot^{\nu_2}\|_{L^{6\beta/(3\beta-2)}(\Omega)}
	+ \|\rhotot^{\nu_2}\|_{L^1(\Omega)}\big) \\
	&\le C\big(\|\rhotot\|_{L^{\gamma+\nu_2}(\Omega)}^{1+\nu_2}
	+ \|\rhotot\|_{L^1(\Omega)}\big),
	\end{align*}
	provided $6\beta/(3\beta-2) = (\gamma+\nu_2)/\nu_2$, i.e. $\nu_2 = \gamma (3\beta-2)/(3\beta+2)$. Furthermore, because of $3/2<\gamma+\nu$,
	\begin{align*}
  \bigg|\int_\Omega\rhotot\bm{b}\cdot\bm{\phi}\,dx\bigg|
	&\le \|\bm{b}\|_{L^\infty(\Omega)}\|\rhotot\|_{L^{3/2}(\Omega)}
	\|\bm{\phi}\|_{L^{3}(\Omega)}
	\le C\|\bm{b}\|_{L^\infty(\Omega)}\|\rhotot\|_{L^{3/2}(\Omega)}
	\|\bm{\phi}\|_{W^{1,3/2}(\Omega)} \\
	&\le C\|\bm{b}\|_{L^\infty(\Omega)}\|\rhotot\|_{L^{\gamma + \nu}(\Omega)}
	\big(\|\rhotot^\nu\|_{L^{3/2}(\Omega)} + \|\rhotot^\nu\|_{L^1(\Omega)}\big)
	\leq C (\|\rhotot\|_{L^{\gamma + \nu}(\Omega)}^{1+\nu}+1),
\end{align*}
since the restriction $\nu \leq 2\gamma-3$ yields $\frac 32 \nu < \gamma + \nu$.
Finally, by \eqref{prrr},
$$
  \bigg|\int_\Omega p\langle\rhotot^\nu\rangle \,dx\bigg|
	\le C_p\int_\Omega(1+\rhotot \theta+\rhotot^\gamma)\,dx\int_\Omega\rhotot^\nu \,dx.
$$
As we control the $L^1(\Omega)$ norm of the density (see \eqref{1.int}),
we can control $\int_{\Omega} \rho^\gamma \, dx \int_{\Omega} \rho^\nu \, dx$ by
$C\|\rhotot\|_{L^{\gamma +\nu}(\Omega)}^\lambda$ for some $\lambda < \gamma +\nu$,
by interpolating between the $L^1$ and $L^{\gamma +\nu}$ norms.
Hence, we only need to deal with the part of the first integral containing the temperature.
	
Let us first consider the case $\nu \leq 1$. Then $\int_\Omega\rhotot^\nu \,dx$
is bounded by a constant and
$$
  \int_\Omega\rhotot\theta\,dx = \int_{\{\rhotot\le K\}}\rhotot\theta\,dx
	+ \int_{\{\rhotot > K\}}\rhotot\theta\,dx
	\le K\int_\Omega\theta\,dx + K^{-\nu}\int_\Omega\theta\rhotot^{1+\nu}\,dx.
$$
The first term on the right-hand side is bounded, and the last term can be absorbed
by the left-hand side of \eqref{2.aux4} for sufficiently large $K$.
Next, note that for $\nu >1$ we have $2\gamma-3 >1$, i.e.\
$\gamma >2$. Then, by H\"older's inequality,
$$
  \int_\Omega \rhotot \theta\,dx \int_\Omega \rhotot^\nu \, dx
	\leq C \|\rhotot\|_{L^{\gamma}(\Omega)} \|\theta\|_{L^{\gamma/(\gamma-1)}(\Omega)}
  \|\rhotot\|_{L^{\nu}(\Omega)}.
$$
It follows from $\gamma >2$ and $\beta > 2/3$ that $\gamma/(\gamma-1) <3\beta$.
Hence, using once more $\gamma >2$,
$$
  \int_\Omega \rhotot \theta\,dx \int_\Omega \rhotot^\nu \,dx
	\leq C \|\rhotot\|_{L^{\gamma+\nu}(\Omega)}^{1+\nu} \|\rhotot\|_{L^{6/5}(\Omega)}
	\leq C   \|\rhotot\|_{L^{\gamma+\nu}(\Omega)}^{2+\nu}.
$$
Collecting all estimates, we deduce from $2+\nu<\gamma+\nu$ that
$$
  c_p \int_\Omega \rhotot^{\gamma +\nu}\, dx
	\le \int_\Omega p\langle\rhotot^\nu\rangle \, dx
	\leq C(\|\rhotot\|_{L^{\gamma + \nu}(\Omega)}^{\lambda}+1),
$$
where $\lambda < \gamma + \nu$. This leads to the desired estimate of $\rhotot$.
\end{proof}
	
\begin{lemma}[Estimate for the pressure]\label{lem.p}
For
$$
\alpha = \min\Big\{1+ \frac{\nu}{\gamma}, \frac{(1+\nu)3\beta}{\nu +3\beta}\Big\} >1,
$$
there exists a constant $C>0$  such that
$$
  \|p(\vec\rho,\theta)\|_{L^{\alpha}(\Omega)} \le C.
$$
\end{lemma}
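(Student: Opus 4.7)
The strategy is to invoke the upper bound $p(\vec\rho,\theta)\le C_p(1+\rhotot\theta+\rhotot^\gamma)$ from Lemma \ref{lem:H6}(iv) and to estimate the two nontrivial contributions $\rhotot^\gamma$ and $\rhotot\theta$ in $L^\alpha(\Omega)$ separately. Each of them will saturate exactly one of the two arguments of the minimum in the definition of $\alpha$.

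\textbf{The pure density term.} Since Lemma \ref{lem.dens} provides $\|\rhotot\|_{L^{\gamma+\nu}(\Omega)}\le C$, one gets for free $\|\rhotot^\gamma\|_{L^{(\gamma+\nu)/\gamma}(\Omega)}\le C$. Because $\alpha\le 1+\nu/\gamma=(\gamma+\nu)/\gamma$, this yields $\|\rhotot^\gamma\|_{L^\alpha(\Omega)}\le C$.

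\textbf{The $\rhotot\theta$ term.} A naive Hölder estimate based on $\rhotot\in L^{\gamma+\nu}$ and $\theta\in L^{3\beta}$ would yield an inferior exponent. The key observation is that the proof of Lemma \ref{lem.dens} produces more than just an $L^{\gamma+\nu}$ bound for $\rhotot$: revisiting identity \eqref{2.aux4} and using the pressure lower bound $p\ge c_p(\rhotot\theta+\rhotot^\gamma)$ from Hypothesis (H7) on the left-hand side, one extracts in addition
$$
\int_\Omega \rhotot^{1+\nu}\theta\, dx \le C.
$$
Combining this with the bound $\|\theta\|_{L^{3\beta}(\Omega)}\le C$ from Lemma \ref{lem.est1}, I would then write, for any $\alpha\in(1,1+\nu)$,
$$
(\rhotot\theta)^\alpha = (\rhotot^{1+\nu}\theta)^{\alpha/(1+\nu)}\,\theta^{\alpha\nu/(1+\nu)},
$$
and apply Hölder's inequality with the conjugate exponents $(1+\nu)/\alpha$ and $(1+\nu)/(1+\nu-\alpha)$. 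This gives
$$
\int_\Omega(\rhotot\theta)^\alpha\, dx\le \|\rhotot^{1+\nu}\theta\|_{L^1(\Omega)}^{\alpha/(1+\nu)}\,\|\theta\|_{L^{\alpha\nu/(1+\nu-\alpha)}(\Omega)}^{\alpha\nu/(1+\nu)},
$$
so the bound closes provided $\alpha\nu/(1+\nu-\alpha)\le 3\beta$. A direct rearrangement shows that this is equivalent to $\alpha\le 3\beta(1+\nu)/(\nu+3\beta)$, matching the second term in the definition of $\alpha$. Together with the previous step this yields $\|p(\vec\rho,\theta)\|_{L^\alpha(\Omega)}\le C$; that $\alpha>1$ follows from the assumption $\beta>2/3$.

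\textbf{Main obstacle.} The only genuinely delicate point is realizing that the correct interpolation must rest on the auxiliary bound $\rhotot^{1+\nu}\theta\in L^1(\Omega)$, which is a byproduct of the Bogovskii-type test-function argument in Lemma \ref{lem.dens} rather than of the $L^{\gamma+\nu}$ density bound alone. Once this is identified, the rest is an elementary Hölder interpolation.
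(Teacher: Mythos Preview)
Your proposal is correct and matches the paper's proof essentially verbatim: the paper also splits $p\le C_p(1+\rhotot^\gamma+\rhotot\theta)$, handles $\rhotot^\gamma$ via $\rhotot\in L^{\gamma+\nu}$, and treats $\rhotot\theta$ by the same H\"older interpolation between the auxiliary bound $\int_\Omega\rhotot^{1+\nu}\theta\,dx\le C$ (extracted from the Bogovskii argument in Lemma~\ref{lem.dens}) and $\theta\in L^{3\beta}$, arriving at the identical constraint $\alpha\nu/(1+\nu-\alpha)\le 3\beta$. Your identification of the ``main obstacle'' is exactly the point the paper exploits.
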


\begin{proof}
Because of \eqref{prrr}, we have
$p(\vec\rho,\theta) \le C_p(1+\rhotot^\gamma + \rhotot \theta)$.
Taking into account Lemma \ref{lem.dens}, it is sufficient to verify that
$\rhotot \theta \in L^{\alpha}(\Omega)$:
\begin{align*}
\int_\Omega (\rhotot \theta)^{\alpha} \, dx  &= \int_\Omega (\rhotot \theta^{1/(1+\nu)})^{\alpha} \theta^{\alpha \nu/(1+\nu)} \, dx  \\
&\leq \Big(\int_\Omega \rhotot^{1 + \nu} \theta \,dx\Big)^{\alpha/(1+\nu)} \Big(\int_\Omega \theta^{\alpha \nu/(1+\nu-\alpha)} \, dx\Big)^{(1+\nu-\alpha)/(1+\nu)} \leq C,
\end{align*}
provided that $\alpha \nu/(1+\nu-\alpha) \leq 3\beta$. This is true if
$\alpha = 3\beta(1+\nu)/(\nu +3\beta)$.
\end{proof}


\section{Weak sequential compactness for smooth solutions}\label{sec.comp}

In this section, we focus on the weak sequential stability of a weak solution and formulate it as an independent result. Then, in Section~\ref{sec.scheme}, we will just adapt the method introduced here and use it to prove the existence of a weak solution.  The main result of this part is the following theorem.

\begin{theorem}\label{CC.thm} Let Hypotheses (H1)--(H7) be satisfied. Let the sequence
$(\bm{b}_{\delta},\overline\rho_{\delta}, (\theta_0)_\delta)$  fulfil
\begin{equation}\label{MB1}
\begin{aligned}
\bm{b}_{\delta} &\to \bm{b} &&\textrm{strongly in } L^{p}(\Omega;\R^3) \textrm{ for all } p<\infty,\\
\bm{b}_{\delta} &\rightharpoonup^* \bm{b} &&\textrm{weakly}^* \textrm{ in } L^{\infty}(\Omega;\R^3),\\
\overline\rho_{\delta} &\to \overline\rho >0  &&\textrm{in }\mathbb{R},\\
(\theta_0)_\delta & \to \theta_0 &&\textrm{strongly in } L^{1}(\partial \Omega).
\end{aligned}
\end{equation}
Let $(\vec{\rho}_\delta,\vel_\delta,\theta_\delta)$ be a sequence of weak solutions
to \eqref{2.massbal}--\eqref{2.bc.theta},
corresponding to $\bm{b}_{\delta}$, $\overline\rho_{\delta}$ and $(\theta_0)_\delta$.
Let $\gamma > 3/2$ and $\beta>2/3$. Then $(\vec{\rho}_\delta,\vel_\delta,
\theta_\delta)$ satisfies the uniform bounds stated in  Lemmata
\ref{lem.est1}--\ref{lem.p}, and there exists a subsequence (not relabeld) such that
for $s = \min\{3\beta/(\beta+1),2\} \in (1,2]$,
\begin{equation}\label{cA:1}
\begin{aligned}
  \vec\rhotot_\delta\rightharpoonup \vec\rhotot
	&\quad\mbox{weakly in }L^{\gamma+\nu}(\Omega;\R^N), \quad \nu = \nu (\beta,\gamma) \text{ is from  Lemma } \ref{lem.dens}, \\
	\vel_\delta\rightharpoonup\vel &\quad\mbox{weakly in }H^1(\Omega;\R^3)
	\mbox{ and strongly in }L^q(\Omega;\R^3), \quad q<6, \\
  \theta_\delta\rightharpoonup \theta &\quad\mbox{weakly in }W^{1,s}(\Omega)
	\mbox{ and strongly in }L^q(\Omega), \quad q<3\beta,
\end{aligned}
\end{equation}
where the triple $(\vec{\rho},\vel,\theta)$ is a variational entropy solution corresponding to $(\bm{b},\overline\rho, \theta_0)$. In addition, if $\gamma > 5/3$ and $\beta>1$, then it is also a weak solution. Moreover, $\vec{\rho_\delta} \to \vec{\rho}$ strongly in $L^1(\Omega;\R^N)$.
\end{theorem}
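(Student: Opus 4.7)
The plan is to follow a Lions--Feireisl weak compactness approach, adapted to the multicomponent Navier--Stokes--Fourier system with cross-diffusion and a general free energy. The main novelty, as flagged in Section 1.5 of the excerpt, is to avoid Feireisl's oscillations defect measure estimates by directly exploiting the convexity of $h_\theta$ together with the strong compactness of the relative chemical potentials $\Pi(\vec q_\delta)$.

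First, I would extract weakly convergent subsequences from the uniform bounds in Lemmas \ref{lem.est1}--\ref{lem.p}, yielding \eqref{cA:1}. Rellich--Kondrachov then upgrades these to strong convergence of $\vel_\delta$ in $L^q(\Omega)$ for $q<6$, of $\theta_\delta$ in $L^q(\Omega)$ for $q<3\beta$, and of $\Pi(\vec q_\delta)$ in $L^q(\Omega)$ for $q<6$; positivity $\theta>0$ a.e.\ in $\Omega$ follows from the $H^1$ bound on $\log\theta_\delta$ combined with the Sobolev embedding. Most terms in the weak formulations of the balance laws pass to the limit by standard weak/strong pairings. The obstructive terms are those nonlinear in $\vec\rho_\delta$, in particular the pressure $p(\vec\rho_\delta,\theta_\delta)$ and the mobility coefficients $M_{ij}(\vec\rho_\delta,\theta_\delta)$; by Hypothesis (H4), these require strong compactness of the total density.

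The hard step is obtaining the strong convergence $\rho_\delta\to\rho$ in $L^1(\Omega)$. I would first establish the effective viscous flux identity: testing the momentum balance with $\B(T_k(\rho_\delta)-\langle T_k(\rho_\delta)\rangle)$, where $T_k$ is a truncation and $\B$ is the Bogovskii operator, and exploiting the div-curl cancellation in the convective term yields
$$\overline{\bigl(p(\vec\rho_\delta,\theta_\delta)-(2\lambda_1(\theta)+\lambda_2(\theta))\diver\vel_\delta\bigr)T_k(\rho_\delta)}=\overline{p(\vec\rho_\delta,\theta_\delta)-(2\lambda_1(\theta)+\lambda_2(\theta))\diver\vel_\delta}\cdot\overline{T_k(\rho_\delta)}.$$
Ordinarily one concludes via oscillations defect measure estimates, but those demand restrictive conditions on $\gamma$ and $\beta$. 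Instead, I would exploit $\mu_i=\partial h_\theta/\partial\rho_i$ and the strict convexity of $h_\theta$, which provides the monotonicity
$$\bigl(\nabla h_\theta(\vec\rho_\delta)-\nabla h_\theta(\vec\rho)\bigr)\cdot(\vec\rho_\delta-\vec\rho)\geq 0.$$
Since $p=\vec\rho\cdot\nabla h_\theta-h_\theta$ and since $\Pi(\vec q_\delta)$ and $\theta_\delta$ converge strongly (so that $\vec\mu_\delta/\theta_\delta$ converges strongly up to the direction $\vec 1$), a careful passage to the limit shows that both quantities \eqref{1.pT1} and \eqref{1.pT2} vanish. A renormalization argument for the continuity equation then upgrades this to the strong convergence of $\rho_\delta$. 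This convexity-based route replacing the oscillations defect measure is the main obstacle and the key new ingredient of the proof.

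Once $\rho_\delta\to\rho$ strongly, the strong convergence of the individual components $\rho_{i,\delta}\to\rho_i$ follows from the inverse relation $\vec\rho=\nabla h_\theta^*(\vec\mu)$ combined with the strong convergence of $\Pi(\vec q_\delta)$, $\theta_\delta$, and $\rho_\delta$, and the continuity properties of $h_\theta^*$ established in Lemma \ref{lem:H6}. At this stage all remaining nonlinearities---including $M_{ij}(\vec\rho_\delta,\theta_\delta)$ in the diffusion fluxes, $\St_\delta$, $\rho_\delta\vel_\delta\otimes\vel_\delta$, and $p(\vec\rho_\delta,\theta_\delta)$---pass to the limit, yielding a variational entropy solution corresponding to $(\bm b,\overline\rho,\theta_0)$. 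Under the stronger assumptions $\gamma>5/3$ and $\beta>1$, the additional integrability of $\rhotot E\vel$, $\St\vel$, and $p\vel$ is sufficient to upgrade the integrated total energy equality \eqref{t.rhovE} to the full weak formulation \eqref{w.rhovE}, producing a renormalized weak solution.
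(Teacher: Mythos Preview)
Your outline is correct and matches the paper's route: a priori bounds, effective viscous flux identity, convexity of $h_\theta$ combined with strong compactness of $\Pi(\vec q_\delta)$ in place of the oscillations defect measure, renormalization of the continuity equation, and finally strong convergence first of $\rho_\delta$ and then of $\vec\rho_\delta$. Two minor technical remarks: the coefficient in the effective viscous flux is $\frac43\lambda_1(\theta)+\lambda_2(\theta)$, not $2\lambda_1+\lambda_2$; and the paper derives the identity via the inverse Dirichlet Laplacian together with the div-curl lemma (Lemma~\ref{lem.eff}) rather than the Bogovskii test function, though either device is standard.

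The one place where your sketch conceals a genuine difficulty is the convexity step. The inequality $(\nabla h_\theta(\vec\rho_\delta)-\nabla h_\theta(\vec\rho))\cdot(\vec\rho_\delta-\vec\rho)\ge 0$ is not the mechanism that controls \eqref{1.pT1}: $\vec\rho$ is only the weak limit, $\nabla h_\theta(\vec\rho)$ need not equal the weak limit of $\vec\mu_\delta$, and in any case this inequality does not compare $p_\delta$ with $T_k(\rho_\delta)$. The paper passes to the convex conjugate, writing $p_\delta=h_{\theta_\delta}^*(\vec\mu_\delta)$ and $\rho_{\delta,i}=\partial_i h_{\theta_\delta}^*(\vec\mu_\delta)$, so that both factors in $W_k=\overline{p_\delta T_k(\rho_\delta)}-p\,\overline{T_k(\rho_\delta)}$ become functions of $\vec\mu_\delta$ alone. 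A two-index device and the decomposition $\mu_{\eps,i}-\mu_{\delta,i}=\Pro(\vec\mu_\eps-\vec\mu_\delta)_i+(\mu_{\eps,N}-\mu_{\delta,N})$ isolate a nonnegative quadratic in the non-compact scalar part, yielding $0\le W_k\le W_{k+1}$ (Lemma~\ref{lem.Wk}). The decisive further ingredient is the comparison $\theta(\overline{\rho_\delta T_k(\rho_\delta)}-\rho\,\overline{T_k(\rho_\delta)})\le K_2 W_k$, which follows from the ideal-gas lower bound \eqref{LMB21} in its dual form \eqref{H6C}; this is precisely what allows the renormalization argument to close. Note also the logical order: convexity by itself does not make $W_k$ vanish, it only supplies sign, monotonicity and the comparison above; the vanishing $W_k\equiv 0$ is obtained afterwards by combining the effective viscous flux identity with the renormalized continuity equation for both $(\rho_\delta,\vel_\delta)$ and the limit $(\rho,\vel)$ (Lemmas~\ref{lem.wk.bound}--\ref{thm.conv}).
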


We shall prove the theorem in several steps and each step is described in one of the following subsections.

\subsection{Convergence results based on a priori estimates}\label{sec.conv.smooth}

Based on Lemmata \ref{lem.est1}--\ref{lem.p}, we collect all weak convergence results that allow us to pass to the limit in the weak formulation. It remains to show that the densities $\vec\rho_\delta$ converge pointwise to identify the pressure and the chemical potentials as functions of the densities and the temperature.

\subsubsection*{Limit in the mass balance}

First, using the facts that (up to subsequences) $\vec\rho_\delta \to \vec \rho$ weakly in $L^{\gamma +\nu}(\Omega;\R^N)$, $\theta_\delta \to \theta$ strongly in $L^r(\Omega)$, $1\leq r<3\beta$, together with Hypothesis (H4), we see that
$$
  M_{ij}(\vec{\rho_\delta},\theta_\delta) \rightharpoonup
	\overline{M_{ij} (\vec{\rho_\delta},\theta_\delta)}, \quad
	M_i(\vec{\rho_\delta},\theta_\delta)/\theta_\delta \rightharpoonup
	\overline{M_{i} (\vec{\rho_\delta},\theta_\delta)/\theta_\delta}
  \quad\mbox{weakly in }L^1(\Omega),
$$
where $i$, $j=1,2,\dots, N$ as $\delta \to 0$ and a bar over a quantity denotes
its weak limit. Since the partial densities converge only weakly, we cannot generally
identify the weak limits with $M_{ij}(\vec{\rho},\theta)$ and
$M_i(\vec{\rho},\theta)$, respectively. Furthermore, due to \eqref{2.vq},
$$
\Pi\Big(\frac{\vec{\mu_\delta}}{\theta_\delta}\Big) \to \Pi(\vec{q})\quad
\mbox{strongly in }L^r(\Omega;\R^N),\ r<6,
$$
and we deduce from Hypothesis (H5) that
$$
\vec{r}\Big(\Pi\Big(\frac{\mu_\delta}{\theta_\delta}\Big),\theta_\delta\Big)
\to \vec{r}(\Pi(\vec{q}),\theta) \quad\mbox{strongly in }L^1(\Omega;\R^N).
$$
We do  not know at this moment whether $\vec{q} = \vec{\mu}/\theta$, where $\vec{\mu}$ is given by \eqref{1.p}. Therefore, letting $\delta \to 0$ in the weak formulation \eqref{w.rhoi} of the mass balance,
we infer from Hypotheses (H4) and (H5) that, for all
$\phi_1$,\dots,$\phi_N\in W^{1,\infty}(\Omega)$,
\begin{equation*}
   \sum_{i=1}^N\int_\Omega\bigg(-\rho_i\vel + \sum_{j=1}^N\overline{M_{ij}
	(\vec{\rho_\delta},\theta_\delta)}
	\na q_j - \overline{\frac{M_{i} (\vec{\rho_\delta},\theta_\delta)}{\theta_\delta}}
	\frac{\nabla \theta}{\theta}\bigg)
	\cdot\na\phi_i \, dx
	= \sum_{i=1}^n\int_\Omega r_i(\vec{q},\theta)\phi_i \, dx. 
\end{equation*}

\subsubsection*{Limit in the momentum balance}

By Lemma \ref{lem.p},
\begin{align}\label{MB3}
	p_\delta:=p(\vec\rho_\delta,\theta_\delta)\rightharpoonup \overline{p(\vec\rho_\delta,\theta_\delta)} =:p
	&\quad\mbox{weakly in }L^{\alpha}(\Omega)\mbox{ as }\delta\to 0.
\end{align}
At this point, it is not clear whether $\overline{p(\vec{\rho_\delta},\theta_\delta)}=:p = p(\rho,\theta)$ and this will be
proved later.
The weak convergence of (a subsequence of) $\vel_\delta$ in $H^1(\Omega)$
and the strong convergence of $(\theta_\delta)$ in $L^r(\Omega)$, $r\geq 2$, 
imply that
\begin{equation} \label{ConSt}
  \St_\delta \rightharpoonup \St\quad\mbox{weakly in }L^q(\Omega)
	\quad\mbox{for some }q\in [1,2),
\end{equation}
where $\St_\delta$ and $\St$ are the stress tensors \eqref{1.S} associated to
$\theta_\delta,\vel_\delta$ and $\theta,\vel$, respectively. We use Hypothesis (H3)
to control the viscosities.
Therefore, we can perform the limit $\delta\to 0$ in the weak formulation
\eqref{w.rhov} of the momentum balance; hence for all
$\bm{u}\in W_{\bm{\nu}}^{1,\infty}(\Omega)$,
\begin{equation*}
\int_\Omega(-\rhotot\vel\otimes\vel+\St(\theta, \nabla \vel)):\na\bm{u}\,dx
+ \int_{\pa\Omega}\alpha_1\vel\cdot\bm{u}\,ds
= \int_\Omega(\overline{p(\vec\rho_\delta,\theta_\delta)}\diver\bm{u} + \rhotot\bm{b}\cdot\bm{u})\,dx.
\end{equation*}

\subsubsection*{Limit in the entropy inequality}

In view of \eqref{1.p}, Hypothesis (H6) (in particular \eqref{grow_h_1} and
\eqref{grow_h_2}), and the bounds on the temperature and densities, we obtain
$$
\partial_\theta h_{\theta_\delta} (\vec{\rhotot_\delta}) = \rhotot_\delta s(\vec{\rhotot_\delta},\theta_\delta) \rightharpoonup \overline{\rhotot_\delta s(\vec{\rhotot_\delta},\theta_\delta)}\quad\mbox{weakly in }L^q(\Omega),\ q>6/5.
$$
Using the weak lower semicontinuity in several terms, the previous weak limits,
Hypotheses (H3)--(H6), and Lemma \ref{lem.fatou}
we conclude from \eqref{w.rhos} in the limit $\delta\to 0$ that
\begin{align*}
\int_\Omega &\bigg[\overline{\rhotot_\delta s(\vec{\rhotot_\delta},\theta_\delta)} \vel + \sum_{i=1}^N q_i \bigg( \sum_{j=1}^N \overline{\frac{M_{ij} (\vec{\rho_\delta},\theta_\delta)}{\theta_\delta}}\na q_j
- \overline{\frac{M_{i} (\vec{\rho_\delta},\theta_\delta)}{\theta_\delta}}\frac{\na \theta}{\theta} \bigg) \nonumber \\
&\phantom{xx}{}- \bigg( \frac {\kappa(\theta)}{\theta}\na\theta + \sum_{i=1}^N \overline{\frac{M_{i} (\vec{\rho_\delta},\theta_\delta)}{\theta_\delta}}\na q_i\bigg)\bigg] \cdot\nabla\Phi\, dx \nonumber \\
&\phantom{xx}{}+\int_\Omega \bigg(\sum_{i,j=1}^N
\overline{M_{ij}(\vec\rhotot_\delta,\theta_\delta)}\nabla \frac{\mu_i}{\theta}\cdot\nabla \frac{\mu_j}{\theta}
+ \kappa(\theta)|\nabla\log\theta|^2
+ \frac{\St : \na\vel}{\theta} -\sum_{i=1}^N r_i \frac{\mu_i}{\theta}\bigg) \Phi\, dx \nonumber  \\
& \leq \alpha_2\int_{\pa\Omega}\frac{\theta-\theta_0}{\theta}\Phi\, ds,
\end{align*}
for every $\Phi\in W^{1,\infty}(\Omega)$, $\Phi\geq 0$ a.e.~in $\Omega$.

\subsubsection*{Limit in the total energy balance}

The problem with the total energy balance is more complex. We can easily pass to
the limit if the test function is constant, yielding the
global total energy equality \eqref{t.rhovE}. To obtain
a suitable limit in the weak formulation \eqref{w.rhovE} of the total energy balance,
we have to assume that $\gamma > 5/3$ and $\beta>1$. This ensures that
$\gamma+\nu>2$ and
\begin{align*}
  \rhotot_\delta |\vel_\delta|^2 \vel_\delta \rightharpoonup \rhotot |\vel|^2 \vel
  &\quad\mbox{weakly in }L^r(\Omega;\R^3), \\
  \St(\theta_\delta,\nabla \vel_\delta)\vel_\delta \rightharpoonup \St(\theta,\nabla \vel)\vel &\quad\mbox{weakly in }L^r(\Omega;\R^3)
\end{align*}
for some $r>1$. Moreover, in view of Hypothesis (H6),
$$
\rhotot_\delta e(\vec{\rhotot_\delta},\theta_\delta)\vel_\delta \rightharpoonup \overline{\rhotot_\delta e(\vec{\rhotot_\delta},\theta_\delta)}\vel
\quad \mbox{weakly in }L^r(\Omega;\R^3)
$$
for some $r>1$. Therefore, letting $\delta \to 0$ in \eqref{w.rhovE}, it follows for all
$\varphi\in W^{1,\infty}(\Omega)$ that
\begin{align*}
 \int_\Omega&\bigg(-\frac 12 \rhotot  |\bm{v}|^2 \bm{v} - \overline{\rhotot_\delta e(\vec{\rhotot_\delta},\theta_\delta)}\vel+ \kappa(\theta) \nabla \theta  + \sum_{i=1}^N \overline{\frac{M_{i} (\vec{\rho_\delta},\theta_\delta)}{\theta_\delta}}\na q_i
\nonumber \\
&{}+\St \bm{v} -\overline{p(\vec\rho_\delta,\theta_\delta)}\bm{v}\bigg)\cdot\na \varphi \, dx
+\int_{\pa\Omega}(\alpha_1|\bm{v}|^2+ \alpha_2 (\theta-\theta_0))\varphi\, ds
= \int_\Omega\rhotot\bm{b}\cdot\bm{v} \varphi\, dx. 
\end{align*}

It remains to verify that
\begin{equation}\label{MB:end}
\vec\rhotot_\delta \to \vec\rhotot \quad \textrm{a.e. in } \Omega
\end{equation}
as well as to identify $\Pi(\vec{q})$ with $\Pi(\vec{\mu}/\theta)$, where $\vec{\mu}$ is given by \eqref{1.p}.
The rest of this section is devoted to the proof of \eqref{MB:end}.
For the sake of notational simplicity, we introduce the following notation
for the double limit $(\delta,\eps)\to 0$, i.e.,
$\overline{f(\vec\rho_\varepsilon,\vec\rho_\delta)}\in L^1(\Omega)$ is defined by
$$
\int_{\Omega} \overline{f(\vec \rho_\varepsilon,\vec\rho_\delta)} \chi\, dx := \lim_{\varepsilon \to 0} \lim_{\delta \to 0} \int_{\Omega}f(\vec\rho_\varepsilon,\vec\rho_\delta) \chi \, dx \quad \textrm{for all }\chi \in L^{\infty}(\Omega).
$$
We also do not mention explicitly that we are working with subsequences and therefore, we do not relabel any sequence. Since we use only countably many relabelings, such procedure can be made rigorous by the standard diagonal procedure.
Note that in all cases considered below, the order of the limit passages is not important; for the sake of clarity, we will assume that we let first $\delta \to 0$ and afterwards $\varepsilon \to 0$.

To end this first part, we also introduce the truncation function $T_k:\R_+\to\R_+$ of class $C^1(\R_{+})$, which will be needed later. For arbitrary $k\in\N$ and
$z\ge 0$, we set
\begin{equation}\label{def.Tk}
\begin{aligned}
  T_1(z) &:= \left\{\begin{array}{ll}
	z &\quad\mbox{for }0\le z\le 1, \\
	\mbox{concave, increasing, $C^1$-function} &\quad\mbox{for }1<z<3, \\
	2 &\quad\mbox{for }z\ge 3.
	\end{array}\right.\\
T_k(z)&:=kT_1(z/k).
\end{aligned}
\end{equation}


\subsection{Effective viscous flux}

We first focus on an effective viscous flux identity. We follow the procedure
developed in \cite{Fei01,Lio98} very closely, and the proof is presented here
for the sake of completeness.

\begin{lemma}[Effective viscous flux identity]\label{lem.eff}\sloppy
Let $(\bm{b}_{\delta},\rhotot_{\delta}, \St_{\delta}, \theta_{\delta}, \vel_{\delta})$ satisfy \eqref{1.S}, 
\eqref{MB1}, \eqref{cA:1}, \eqref{MB3}, and \eqref{ConSt}. Then it holds for every $k\in \mathbb{N}$ and $T_k$,
defined in \eqref{def.Tk}, that
\begin{equation}\label{evf}
  \overline{p_\delta T_k(\rhotot_\delta)}- p\overline{T_k(\rho_\delta)}
	= \bigg(\lambda_2(\theta)+\frac43\lambda_1(\theta)\bigg)
	\big(\overline{T_k(\rhotot_\delta)\diver \vel_\delta}
	- \overline{T_k(\rhotot_\delta)}\diver\vel\big).
\end{equation}
\end{lemma}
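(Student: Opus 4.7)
The plan is to follow the classical Lions--Feireisl procedure based on testing the momentum balance against the inverse Laplacian applied to the truncated density. For arbitrary $\psi\in C_c^\infty(\Omega)$, define
\[
  \bm\phi_\delta := \psi\, \mathcal{A}[T_k(\rhotot_\delta)], \qquad
  \bm\phi := \psi\, \mathcal{A}\big[\overline{T_k(\rhotot_\delta)}\big],
\]
where $\mathcal{A}[f] := \nabla \Delta^{-1}\tilde f$ and $\tilde f$ is the zero extension of $f$ to $\R^3$. Since $T_k$ is bounded and $\rhotot_\delta$ is uniformly bounded in $L^{\gamma+\nu}(\Omega)$, Calder\'on--Zygmund theory gives $\bm\phi_\delta$ uniformly bounded in $W^{1,q}(\Omega;\R^3)$ for every $q<\infty$, and Rellich compactness yields $\bm\phi_\delta\to \bm\phi$ strongly in $L^q(\Omega;\R^3)$. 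In particular, $\bm\phi_\delta$ is admissible in~\eqref{w.rhov}.

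Plugging $\bm\phi_\delta$ into \eqref{w.rhov} and using $\diver\mathcal{A}[f]=f$ isolates the product $\int_\Omega \psi\, p_\delta T_k(\rhotot_\delta)\,dx$. With $R_{ij}:=\partial_i\partial_j\Delta^{-1}$ and $\sum_i R_{ii}[f]=f$, decomposing $\St_\delta$ as in~\eqref{1.S} the trace part of $\St_\delta$ contributes the diagonal summand $(\lambda_2(\theta_\delta)-\tfrac23\lambda_1(\theta_\delta))\,T_k(\rhotot_\delta)\diver\vel_\delta$, while the symmetric-gradient part of $\St_\delta$ together with the convective term produce the off-diagonal combination
\[
  \int_\Omega \psi\,\big(2\lambda_1(\theta_\delta)\partial_i (v_\delta)_j-\rhotot_\delta (v_\delta)_i(v_\delta)_j\big)\,R_{ij}[T_k(\rhotot_\delta)]\,dx.
\]
All remaining contributions, involving $\nabla\psi$, the boundary term $\alpha_1\vel_\delta\cdot\bm\phi_\delta$, the body force $\rhotot_\delta \bm{b}_\delta\cdot\bm\phi_\delta$, and the low-order pressure remainder $p_\delta\,\mathcal{A}[T_k(\rhotot_\delta)]\cdot\nabla\psi$, pass to the limit in a routine way thanks to the strong convergence of $\bm\phi_\delta$ in $L^q$, the strong convergence of $\vel_\delta$ in $L^q$ ($q<6$) and $\theta_\delta$ in $L^q$ ($q<3\beta$), and the uniform $L^\alpha$ bound on $p_\delta$ from Lemma~\ref{lem.p}.

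The main obstacle is the convergence of the off-diagonal integral to the analogous expression with the $\delta$-quantities replaced by their weak limits. This is the classical effective-viscous-flux cancellation, which I would handle by the Murat--Tartar div-curl lemma applied to the pair $(\rhotot_\delta\vel_\delta,\, \mathcal{A}[T_k(\rhotot_\delta)])$, exactly as in~\cite[Section 3.7]{FeNo09} and~\cite{Fei01}. Two structural inputs come into play: (i) $\diver(\rhotot_\delta\vel_\delta)=0$ in $\mathcal{D}'(\Omega)$, obtained by summing~\eqref{w.rhoi} over $i=1,\dots,N$ and invoking $\sum_i r_i=0$ together with $\sum_i M_{ij}=\sum_i M_i=0$; and (ii) the renormalized identity
\[
  \diver\big(T_k(\rhotot_\delta)\vel_\delta\big)=\big(T_k(\rhotot_\delta)-\rhotot_\delta T_k'(\rhotot_\delta)\big)\diver\vel_\delta
\]
inherited from~\eqref{con-ren}, which provides the compactness of $\diver(T_k(\rhotot_\delta)\vel_\delta)$ needed to pair with the identically vanishing curl of $\mathcal{A}[T_k(\rhotot_\delta)]$ (since $\mathcal{A}[f]$ is a gradient), placing the setting squarely within the compensated-compactness framework.

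Subtracting the identity obtained by testing the limiting momentum equation against $\bm\phi$ and invoking the div-curl cancellation, the off-diagonal pieces combine to produce an extra factor $2\lambda_1(\theta)$ times the bracket $\overline{T_k(\rhotot_\delta)\diver\vel_\delta}-\overline{T_k(\rhotot_\delta)}\,\diver\vel$, on top of the diagonal contribution $(\lambda_2(\theta)-\tfrac23\lambda_1(\theta))$ times the same bracket. Summing the two produces the announced coefficient $\lambda_2(\theta)+\tfrac43\lambda_1(\theta)$, and since $\psi\in C_c^\infty(\Omega)$ is arbitrary, \eqref{evf} holds pointwise a.e.~in $\Omega$.
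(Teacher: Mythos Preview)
Your approach is valid and follows the classical Lions--Feireisl line with whole-space Riesz operators $\mathcal A=\nabla\Delta^{-1}$. The paper reaches \eqref{evf} by a related but more self-contained route: it solves the Dirichlet problem $\Delta\phi_\delta=T_k(\rho_\delta)$ in $\Omega$, sets $\bm\psi_\delta=\nabla\phi_\delta$, and applies the div--curl lemma directly to the matrix-valued pair $(\mathbb T_\delta-\rho_\delta\vel_\delta\otimes\vel_\delta,\;\nabla\bm\psi_\delta)$ --- the first factor has precompact divergence by the momentum equation itself, the second has zero curl since it is a Hessian. This yields $\overline{\mathbb T_\delta:\nabla^2\phi_\delta}=\overline{\mathbb T_\delta}:\nabla^2\phi$ in one stroke; a second div--curl application to $(\rho_\delta\vel_\delta,\;\nabla\bm\psi_\delta\,\vel_\delta)$ disposes of the convective piece. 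Then Egorov's theorem on $\theta_\delta$ freezes $\lambda_1(\theta)$, $\lambda_2(\theta)$, and a final integration-by-parts identity converts $\overline{\mathbb D(\vel_\delta):\nabla^2\phi_\delta}-\mathbb D(\vel):\nabla^2\phi$ into $\overline{\diver\vel_\delta\,\Delta\phi_\delta}-\diver\vel\,\Delta\phi$.

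Two points in your sketch need tightening. First, the div--curl pair you name, $(\rho_\delta\vel_\delta,\,\mathcal A[T_k(\rho_\delta)])$, yields convergence of $\rho_\delta\vel_\delta\cdot\mathcal A[T_k(\rho_\delta)]$, which is not the off-diagonal integral you must control; the correct pairing is between the rows of $\mathbb T_\delta-\rho_\delta\vel_\delta\otimes\vel_\delta$ and the rows of $\nabla\mathcal A[T_k(\rho_\delta)]$, exactly as the paper does. Second, your input~(ii), the renormalized continuity equation, is \emph{not} used in the steady effective-viscous-flux proof --- it enters only later, for the strong-convergence argument of Lemma~\ref{lem.renorm}. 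What \emph{does} require explicit care, and what you gloss over, is the temperature-dependent viscosity: one cannot integrate by parts through $\lambda_1(\theta_\delta)$, so identifying the weak limit of $\lambda_1(\theta_\delta)\,\mathbb D(\vel_\delta):R[T_k(\rho_\delta)]$ is the delicate point. The paper handles it by first passing to the limit in the full contraction (via div--curl on the whole momentum tensor) and only afterwards splitting off $\lambda_1(\theta)$ via Egorov; in your decomposition the analogous freezing step should be made explicit.
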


\begin{proof}
Thanks to our assumptions and since we consider the proper, not relabeled subsequence, all terms in \eqref{evf} are well defined.
We introduce an auxiliary function $\phi_\delta$ as the solution to
\begin{equation}\label{sol1}
  \Delta\phi_\delta = T_k(\rhotot_\delta)\quad\mbox{in }\Omega, \quad
	\phi_\delta=0\quad\mbox{on }\pa \Omega.
\end{equation}
As $\pa\Omega$ is of class $C^2$, we have $\phi_\delta\in W^{2,q}(\Omega)$
for all $q<\infty$. (The proof would also work for open bounded domains $\Omega$
by arguing locally, since the regularity holds true away from the boundary.)
Since $(T_k(\rho_\delta))$ is bounded in $L^\infty(\Omega)$,
the sequence $(\phi_\delta)$ is bounded in $W^{2,q}(\Omega)$ for all $q<\infty$,
implying, up to a subsequence, that $\phi_\delta\to\phi$ weakly in $W^{2,q}(\Omega)$
and strongly in $W^{1,q}(\Omega)$ for all $q<\infty$, where $\phi$ solves
\begin{equation}\label{sol2}
  \Delta\phi = \overline{T_k(\rhotot_\delta)}\quad\mbox{in }\Omega, \quad
	\phi=0\quad\mbox{on }\pa \Omega.
\end{equation}
We set $\vcg{\psi}_\delta:=\na\phi_\delta$ and $\vcg{\psi}:=\na\phi$. The convergence properties of $\phi_{\delta}$ yield the strong convergence $\vcg{\psi}_{\delta}\to \vcg{\psi}$ in $L^r(\Omega; \R^3)$ for any $1\leq r\leq \infty$ and the weak convergence $\na \vcg{\psi}_{\delta} \rightharpoonup \na \vcg{\psi}$ in $L^{q}(\Omega; \mathbb{R}^{3\times 3})$ for all $q<\infty$.

Relation \eqref{w.rhov} is a weak formulation of
$$
  \diver(-\mathbb{T}_\delta + \rhotot_\delta\vel_\delta\otimes\vel_\delta)
	= \rhotot_\delta\bm{b}_{\delta},
	\quad\mbox{where }\mathbb{T}_\delta = -p_\delta\mathbb{I} + \St_\delta,
$$
and therefore, we can apply the div-curl lemma (see Lemma \ref{lem.divcurl})
to the matrix-valued functions
$\mathbb{T}_\delta-\rhotot_\delta\vel_\delta\otimes\vel_\delta$
and $\na\vcg{\psi}_\delta$. Since the divergence of the former sequence
is bounded in $L^{r}(\Omega;\R^3)$ for some $r>1$
and the curl of $\na\vcg{\psi}_\delta$ vanishes, we infer that
$$
  (\mathbb{T}_\delta-\rhotot_\delta\vel_\delta\otimes\vel_\delta):\na\vcg{\psi}_\delta
  \rightharpoonup \overline{\mathbb{T}_\delta
	-\rhotot_\delta\vel_\delta\otimes\vel_\delta}:\na\vcg{\psi}
	\quad\mbox{weakly in }L^1(\Omega).
$$
Since $\vel_\delta\otimes\vel_\delta\to \vel\otimes\vel$ strongly in $L^{q/2}(\Omega)$
for $q<6$ and $\rhotot_\delta\rightharpoonup\rhotot$ weakly in $L^{\gamma+\nu}(\Omega)$
for $\gamma>3/2$, the product converges weakly to the product of the limits, i.e.
$$
  \overline{\rhotot_\delta\vel_\delta\otimes\vel_\delta}
	=\rhotot\vel\otimes\vel \quad\mbox{in }L^1(\Omega).
$$
Hence,
\begin{equation}\label{3.aux1}
  \overline{(\mathbb{T}_\delta-\rhotot_\delta\vel_\delta\otimes\vel_\delta)
	:\na\vcg{\psi}_\delta} = \overline{\mathbb{T}_\delta}:\na\vcg{\psi}
	- (\rhotot\vel\otimes\vel):\na\vcg{\psi} \quad \textrm{a.e.\ in }\Omega,
\end{equation}
which is the starting point of further investigations.

First, we focus on the term involving the tensorial product of velocities. Note that
the sum of \eqref{w.rhoi} gives $\diver(\rhotot_\delta\vel_\delta)=0$ in the weak
sense and that
$$
 \operatorname{curl}(\na\vcg{\psi}_\delta \vel_\delta)=\na \left(\na\vcg{\psi}_\delta \vel_\delta\right) - \left(\na \left( \na\vcg{\psi}_\delta \vel_\delta\right)\right)^T=  \na\vcg{\psi}_\delta \left(\na \vel_\delta\right)^T-\na \vel_\delta\left(\na\vcg{\psi}_\delta \right)^T
$$
is bounded in $L^q(\Omega; \mathbb{R}^{3\times 3})$ for $q<2$ due to the properties of $\vcg{\psi}_{\delta}$ and \eqref{cA:1}. Second, \eqref{cA:1} implies that
$(\rhotot_\delta\vel_\delta)$ is bounded in $L^{s}(\Omega;\R^3)$ for some $s>6/5$ and
$(\na\vcg{\psi}_\delta \vel_\delta)$ is bounded in $L^{q}(\Omega;\R^3)$ for all $q<6$. Therefore, using the div-curl lemma again,
$$
  \overline{(\rhotot_\delta\vel_\delta\otimes\vel_\delta):\na\vcg{\psi}_\delta}
  = \overline{\rhotot_\delta\vel_\delta}\cdot\overline{\na\vcg{\psi}_\delta \vel_\delta} = (\rhotot\vel) \cdot (\na\vcg{\psi} \vel)
	= (\rhotot\vel\otimes\vel):\na\vcg{\psi} \quad\mbox{in }L^1(\Omega)
$$
(and thus a.e.\ in $\Omega$),
where the second equality follows from the a.e.\ convergence of $(\vel_{\delta})$. Hence, we deduce from \eqref{3.aux1} that
$$
  \overline{\mathbb{T}_\delta:\na\vcg{\psi}_\delta} = \overline{\mathbb{T}_\delta}:\na\vcg{\psi}.
$$
Recall that
$\overline{\mathbb{T}_\delta}=\overline{\St_\delta}-\overline{p_\delta}\mathbb{I}
=\overline{\St_\delta}-p\mathbb{I}$. In view of the definitions of
$\vcg{\psi}_{\delta}$ and $\vcg{\psi}$, this shows that
\begin{equation*}
  \overline{\St_\delta:\na^2\phi_\delta - p_\delta\Delta\phi_\delta}
	= \overline{\St_\delta}:\na^2\phi - p\Delta\phi.
\end{equation*}
Finally, by the definitions of $\phi_\delta$ and $\phi$ (see \eqref{sol1} and \eqref{sol2}, respectively), we obtain
\begin{equation}\label{start2}
  \overline{p_\delta T_k(\rhotot_\delta)} - p\overline{T_k(\rhotot_\delta)}
	=\overline{\St_\delta:\na^2\phi_\delta} -  \overline{\St_\delta}:\na^2\phi
	\quad \textrm{a.e.\ in }\Omega.
\end{equation}
The left-hand side corresponds to that one of \eqref{evf}. It remains to identify
the terms on the right-hand side.

The right-hand side is uniquely defined, so we just need to identify it almost everywhere in $\Omega$. Using convergences \eqref{cA:1} and the Egorov theorem, we can find for any $\varepsilon>0$ a measurable set $\Omega_{\varepsilon}\subset \Omega$ such that $|\Omega\setminus \Omega_{\varepsilon}|\le \varepsilon$ and
$\theta_{\delta} \to \theta$ strongly in $L^{\infty}(\Omega_{\varepsilon})$.
Consequently, using definition \eqref{1.S} of the viscous stress tensor,
the previous convergence result, and convergences \eqref{cA:1} again, we can
identify the weak limits in $\Omega_{\varepsilon}$ and conclude
(with the help of \eqref{sol1} and \eqref{sol2}) that
\begin{align*}
\overline{\St_\delta:\na^2\phi_\delta}
&=2\lambda_1(\theta)\overline{\bigg(\mathbb{D}(\vel_\delta) - \frac13\diver\vel_\delta\mathbb{I}\bigg):\nabla^2 \phi_{\delta}}
	+ \lambda_2(\theta)\overline{\diver\vel_\delta\mathbb{I}:\nabla^2 \phi_{\delta}} \\
&=2\lambda_1(\theta)\overline{\mathbb{D}(\vel_\delta):\nabla^2 \phi_{\delta}}
	+ \bigg(\lambda_2(\theta)-\frac23\lambda_1(\theta)\bigg)\overline{\diver\vel_\delta\Delta \phi_{\delta}}, \\
\overline{\St_\delta}:\na^2\phi &=2\lambda_1(\theta)\mathbb{D}(\vel):\nabla^2 \phi
	+ \bigg(\lambda_2(\theta)-\frac23\lambda_1(\theta)\bigg)\diver\vel\Delta \phi
	\quad\mbox{a.e. in }\Omega_\eps.
\end{align*}
Then, substituting these expressions into \eqref{start2},
\begin{align}
  \overline{p_\delta T_k(\rhotot_\delta)} - p\overline{T_k(\rhotot_\delta)}
	&=2\lambda_1(\theta)\left(\overline{\mathbb{D}(\vel_\delta)
	:\nabla^2\phi_{\delta}}-\mathbb{D}(\vel):\nabla^2 \phi\right) \nonumber \\
	&\phantom{xx}{}+ \bigg(\lambda_2(\theta)-\frac23\lambda_1(\theta)\bigg)
	\big(\overline{\diver\vel_\delta\Delta \phi_{\delta}} -\diver\vel\Delta \phi\big)
	\quad\mbox{a.e. in }\Omega_\eps.
  \label{start3}
\end{align}
But since $|\Omega\setminus \Omega_{\varepsilon}|\le \varepsilon$ for any
$\varepsilon>0$, relation \eqref{start3} holds a.e.\ in $\Omega$.

Thus, it remains to identify the first term, i.e.,
we wish to relate the difference involving $\na^2\phi_\delta$ and $\na^2\phi$
with a difference involving $\Delta \phi_{\delta}$ and $\Delta \phi$. For this, let
$\chi\in C_0^\infty(\Omega)$ be a test function. In the following, we use only
formal manipulations which can, however, be justified by approximation by smooth
functions. It follows that
\begin{align*}
&\int_{\Omega} \big(\overline{\mathbb{D}(\vel_\delta):\nabla^2\phi_{\delta}}-\mathbb{D}(
\vel):\nabla^2 \phi\big)\chi\, dx= \int_{\Omega} \big(\overline{\na \vel_\delta:\nabla^2\phi_{\delta}}-\na
\vel:\nabla^2 \phi\big)\chi\, dx \\
&=\int_{\Omega} \big(\overline{\na (\vel_\delta  \chi):\nabla^2\phi_{\delta}}-\na(
\vel \chi):\nabla^2 \phi\big)\, dx -\int_{\Omega} \big(\overline{(\vel_\delta\otimes \nabla \chi):\nabla^2\phi_{\delta}}-(
\vel\otimes \nabla \chi):\nabla^2 \phi\big)\, dx.
\end{align*}
The second term vanishes because of the strong convergence of $\vel_{\delta}$.
Thus, integrating by parts twice,
\begin{align*}
\int_{\Omega} &\big(\overline{\na (\vel_\delta  \chi):\nabla^2\phi_{\delta}}-\na(
\vel \chi):\nabla^2 \phi\big)\, dx = \int_{\Omega} \big(\overline{\diver(\vel_\delta  \chi)\Delta\phi_{\delta}}-\diver(
\vel \chi)\Delta \phi\big)\, dx\\
&= \int_{\Omega} \big(\overline{\diver\vel_\delta  \Delta\phi_{\delta}}-\diver
\vel \Delta \phi\big) \chi\, dx+ \int_{\Omega} \big(\overline{(\vel_\delta  \cdot \na \chi)\Delta\phi_{\delta}}-(
\vel \cdot \na \chi)\Delta \phi\big)\, dx,
\end{align*}
and the second term again vanishes because of the strong convergence of $\vel_{\delta}$.
As the test function $\chi$ is arbitrary, we infer that
$$
\overline{\mathbb{D}(\vel_\delta):\nabla^2\phi_{\delta}}-\mathbb{D}(
\vel):\nabla^2 \phi=\overline{\diver\vel_\delta  \Delta\phi_{\delta}}-\diver
\vel \Delta \phi \quad\mbox{a.e. in }\Omega.
$$
Thus, inserting this expression into \eqref{start3} and using the properties of $\phi_{\delta}$ and $\phi$, i.e., \eqref{sol1} and \eqref{sol2}, respectively, we deduce \eqref{evf} a.e. in $\Omega$.
\end{proof}

\subsection{Estimates based on the convexity of the free energy}

In this part, we show that the left-hand side of the effective viscous flux identity \eqref{evf} gives us the important description of the possible oscillations of the total density $\rhotot_{\delta}$, provided we assume the convexity of the free energy with respect to the partial densities. This is summarized in the following lemma.

\begin{lemma}\label{lem.Wk}
Let the free energy satisfy Hypothesis (H6) and the sequence $(\vec{\rhotot}_{\delta}, \theta_{\delta}, \vec{\mu}_{\delta}, p_{\delta})$
with $\theta_\delta>0$ a.e.\ in $\Omega$ fulfil
\begin{align}
  \vec{\rhotot}_{\delta}\rightharpoonup \vec{\rhotot}
  &\quad\textrm{weakly in } L^1(\Omega; \mathbb{R}^N),\label{Kon1}\\
  p_{\delta}\rightharpoonup p &\quad\textrm{weakly in } L^1(\Omega),\label{Kon7}\\
  \theta_{\delta}\to \theta &\quad\textrm{strongly in } L^1(\Omega),\label{Kon2}\\
  \ln \theta_{\delta}\to \ln \theta &\quad\textrm{strongly in } L^1(\Omega),
	\nonumber \\ 
  \Pi(\vec{\mu}_{\delta}/\theta_{\delta}) \to
	\overline{\Pi(\vec{\mu}_{\delta}/\theta_{\delta})}
	&\quad\textrm{strongly in } L^1(\Omega; \mathbb{R}^N),\label{Kon4}\\
  \vec{\mu}_{\delta}=\na_{\vec{\rhotot}} h_{\theta_{\delta}}(\vec{\rhotot}_{\delta})
	&\quad\textrm{a.e. in }\Omega, \nonumber \\ 
  p_{\delta}=p(\vec{\rhotot}_{\delta},\theta_{\delta})
	&\quad\textrm{a.e. in }\Omega, \nonumber 
\end{align}
where $p(\vec{\rhotot}_{\delta},\theta_{\delta})
:= -h_{\theta_{\delta}}(\vec{\rhotot}_{\delta})+\vec{\mu}_{\delta} \cdot
\vec{\rhotot}_{\delta}=h^*_{\theta_{\delta}}(\vec{\mu}_{\delta})$ and $\pa_i h_\theta^*(\vec{\mu}_\delta) = \rho_{\delta,i}$, where $\pa_i=\pa/\pa\mu_i$.
For $k\in \mathbb{N}$, define (for a proper subsequence)
\begin{equation}\label{Wk}
  W_k := \overline{p_\delta T_k(\rhotot_\delta)}
	- p \overline{T_k(\rho_\delta)},
\end{equation}
where $\rhotot_{\delta}:=\sum_{i=1}^N \rhotot_{\delta,i}$ and
$\rhotot:=\sum_{i=1}^N \rhotot_i$.
Then for all $k\in \N$,
\begin{align}
  0\le W_k \le W_{k+1} &\quad\textrm{a.e. in }\Omega,\label{Wk1}\\
  0\le \theta \big(\overline{\rhotot_{\delta}T_k(\rhotot_{\delta})}
	- \rhotot \overline{T_k(\rhotot_{\delta})}\big)\le K_2 W_k
	&\quad\textrm{a.e. in }\Omega,\label{Wk2}
\end{align}
where $K_2>0$ is given in \eqref{H6C}.
\end{lemma}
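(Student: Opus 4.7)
The plan is to freeze the ``passive'' variables $\theta$ and $\vec{q}:=\Pi(\vec\mu/\theta)$ (both of which converge strongly by hypothesis) and view both $p$ and $\rho$ as scalar, monotone functions of the remaining degree of freedom $\lambda:=N^{-1}\sum_i\mu_i/\theta$. Writing $\vec\mu/\theta = \vec{q}+\lambda\vec{1}$ and defining, for fixed $\theta>0$ and $\vec{q}\in\operatorname{span}\{\vec{1}\}^\perp$,
\begin{equation*}
F(\lambda) := \sum_{i=1}^N \frac{\pa h^*_\theta}{\pa\mu_i}\bigl(\theta(\vec{q}+\lambda\vec{1})\bigr), \qquad G(\lambda) := h^*_\theta\bigl(\theta(\vec{q}+\lambda\vec{1})\bigr),
\end{equation*}
the strict convexity of $h^*_\theta$ from Lemma~\ref{lem:H6}(i) gives $F'(\lambda) = \theta\sum_{i,j}\pa^2 h^*_\theta/\pa\mu_i\pa\mu_j > 0$, while $G'(\lambda) = \theta F(\lambda) = \theta\rho$. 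Hence $F$ is a bijection and $p = (G\circ F^{-1})(\rho) =: P(\rho;\theta,\vec{q})$ is strictly increasing in $\rho$. The key sharpening comes from \eqref{H6C}:
\begin{equation*}
\frac{\pa P}{\pa\rho}(\rho;\theta,\vec{q}) = \frac{G'(\lambda)}{F'(\lambda)} = \frac{\theta\rho}{\theta\sum_{i,j}\pa^2 h^*_\theta/\pa\mu_i\pa\mu_j} \geq \frac{\theta}{K_2},
\end{equation*}
so that $P(s_1;\theta,\vec{q})-P(s_2;\theta,\vec{q})\geq (\theta/K_2)(s_1-s_2)$ whenever $s_1\geq s_2\geq 0$.

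\textbf{Chebyshev via Young measures.} Let $\nu_x$ denote the Young measure on $\R_+$ generated by $(\rho_\delta)$. By the strong convergences \eqref{Kon2}--\eqref{Kon4}, the triple $(\rho_\delta,\theta_\delta,\vec{q}_\delta)$ with $\vec{q}_\delta:=\Pi(\vec\mu_\delta/\theta_\delta)$ generates the product Young measure $\nu_x\otimes\delta_{(\theta(x),\vec{q}(x))}$. Since $p_\delta = P(\rho_\delta;\theta_\delta,\vec{q}_\delta)$ and $P$ is continuous in $(\rho,\theta,\vec{q})$ by Lemma~\ref{lem:H6}(ii), truncating $\rho_\delta$ by means of the bound $\rho_\delta\in L^{\gamma+\nu}$ and passing to a subsequence yields
\begin{equation*}
\overline{p_\delta T_k(\rho_\delta)}(x) = \int_{\R_+} P(s;\theta(x),\vec{q}(x))\,T_k(s)\,d\nu_x(s),\quad p(x) = \int_{\R_+}P(s;\theta(x),\vec{q}(x))\,d\nu_x(s).
\end{equation*}
Both $s\mapsto P(s;\theta(x),\vec{q}(x))$ and $s\mapsto T_k(s)$ being nondecreasing on $\R_+$, Fubini's theorem gives the Chebyshev identity
\begin{equation*}
W_k(x) = \tfrac{1}{2}\iint_{\R_+^2}\bigl[P(s_1)-P(s_2)\bigr]\bigl[T_k(s_1)-T_k(s_2)\bigr]\,d\nu_x(s_1)\,d\nu_x(s_2) \geq 0,
\end{equation*}
and inserting the pointwise lower bound from the previous paragraph produces
\begin{equation*}
W_k(x) \geq \tfrac{\theta(x)}{2K_2}\iint_{\R_+^2}(s_1-s_2)\bigl(T_k(s_1)-T_k(s_2)\bigr)\,d\nu_x(s_1)\,d\nu_x(s_2) = \tfrac{\theta(x)}{K_2}\bigl[\overline{\rho_\delta T_k(\rho_\delta)}(x) - \rho(x)\overline{T_k(\rho_\delta)}(x)\bigr],
\end{equation*}
which is the right inequality in \eqref{Wk2}; the left inequality in \eqref{Wk2} follows from the same Chebyshev identity applied to the two nondecreasing functions $s\mapsto s$ and $s\mapsto T_k(s)$.

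\textbf{Monotonicity in $k$ and the main obstacle.} For $W_{k+1}\geq W_k$ it suffices, by subtraction, to apply the Chebyshev identity above with $T_k$ replaced by $T_{k+1}-T_k$; the required monotonicity of this difference follows from $(T_{k+1}-T_k)'(z) = T_1'(z/(k+1)) - T_1'(z/k) \geq 0$, which uses only the concavity of $T_1$ (so that $T_1'$ is nonincreasing) together with $T_{k+1}(0)-T_k(0)=0$. The main technical obstacle is the Young-measure step: one must justify that composition with $P$ commutes with the weak limit. This requires uniform continuity of $P(\cdot;\theta,\vec{q})$ on compact sets in all three arguments (provided by \eqref{H6A} together with Lemma~\ref{l A1}), while dealing with the possible degeneracy of $\theta_\delta$ near $0$ and $\infty$ via the $H^1$-control of $\log\theta_\delta$ and $\theta_\delta^{\beta/2}$ from Lemma~\ref{lem.est1} and truncating $\rho_\delta$ as indicated.
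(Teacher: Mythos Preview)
Your argument is correct and takes a genuinely different route from the paper. The paper works with a \emph{double sequence}: it writes $W_k=\tfrac12\,\overline{(p_\eps-p_\delta)(T_k(\rho_\eps)-T_k(\rho_\delta))}$, expands each difference by interpolating in $\vec\mu$-space along $\vec z_t=t\vec\mu_\eps+(1-t)\vec\mu_\delta$, and splits $\vec\mu_\eps-\vec\mu_\delta$ into the strongly convergent part $\Pro(\vec\mu_\eps-\vec\mu_\delta)$ (with $\Pro(\vec u)_i:=u_i-u_N$) and the scalar $\mu_{\eps,N}-\mu_{\delta,N}$; after Egorov localisation the cross terms vanish and the surviving term $I^1_{\eps,\delta}$, quadratic in $\mu_{\eps,N}-\mu_{\delta,N}$, is nonnegative by convexity of $h^*_\theta$, while \eqref{Wk2} follows by comparing $I^1$ with its analogue $\widehat I^1$ via \eqref{H6C}. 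Your inversion $\rho\mapsto P(\rho;\theta,\vec q)$ collapses all of this to a scalar Young measure plus a Chebyshev covariance inequality, which is conceptually cleaner: the monotonicity of $P(\cdot;\theta,\vec q)$ repackages exactly the convexity the paper extracts by hand, and the bound $\partial_\rho P\ge\theta/K_2$ is precisely \eqref{H6C} after the chain rule. Two small corrections: the reference to $\rho_\delta\in L^{\gamma+\nu}$ is neither available among the hypotheses nor needed---weak $L^1$ convergence of $p_\delta$ and $\vec\rho_\delta$ already yields the equi-integrability that justifies the Young-measure representation of $\overline{p_\delta T_k(\rho_\delta)}$ and $\overline{\rho_\delta T_k(\rho_\delta)}$; and the continuous extension of $P$ to $s=0$ (so that $\nu_x$ may charge $\{0\}$) follows from \eqref{LMB1}, which forces $h^*_\theta(\theta(\vec q+\lambda\vec 1))\to 0$ as $\lambda\to-\infty$. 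What the paper's heavier route buys is that it is entirely elementary (no Young measures) and isolates $I^1_{\eps,\delta}$ explicitly, a structure reused verbatim in the companion Lemma~\ref{lem.Qk} on $Q_k$; your approach would handle $Q_k$ equally well by applying Chebyshev to $P$ and the nondecreasing function $s\mapsto T_k(s)-sT_k'(s)$.
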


\begin{proof}
{\em Step 1: Introduction of a proper set.} We start the proof by introducing the proper subsets of $\Omega$. Indeed, since we know that all weak limits exist, we need to identify them on sufficiently large subsets of $\Omega$. Hence, using \eqref{Kon2}--\eqref{Kon4} and the Egorov theorem, we know that for arbitrary $\eta>0$, there exists a measurable set $\Omega_{\eta}$ such that $|\Omega \setminus \Omega_{\eta}|\le \eta$ and
\begin{align*}
  \theta_{\delta}\to \theta &\quad\textrm{strongly in }
	L^{\infty}(\Omega_{\eta}), \\ 
  \ln \theta_{\delta}\to \ln \theta &\quad\textrm{strongly in }
	L^{\infty}(\Omega_{\eta}), \\ 
  \Pi(\vec{\mu}_{\delta}/\theta_{\delta}) \to
	\overline{\Pi(\vec{\mu}_{\delta}/\theta_{\delta})}
	&\quad\textrm{strongly in } L^{\infty}(\Omega_{\eta}; \mathbb{R}^N), 
\end{align*}	
and consequently also
\begin{equation}
  \Pi \vec{\mu}_{\delta} \to \overline{\Pi\vec{\mu}_{\delta}}
	\quad\textrm{strongly in } L^{\infty}(\Omega_{\eta}; \mathbb{R}^N).\label{Kon5E}
\end{equation}
Furthermore, introducing the linear mapping $\Pro$: $\R^N\to\R^N$ by
$\Pro(\vec{u})_i:=u_i-u_N$ for $i=1,\ldots,N$, we know that
$|\Pro(\vec{u})|\le C|\Pi \vec{u}|$, since
$$ |\Pro(\vec{u})|^2 = \sum_{i=1}^N\left|
\left(u_i - \frac{1}{N}\sum_{j=1}^N u_j\right) - 
\left(u_N - \frac{1}{N}\sum_{j=1}^N u_j\right)\right|^2\leq
2(N+1)|\Pi(\vec{u})|^2$$
for every $u\in\R^N$, and due to the linearity of $\Pro$, we deduce from \eqref{Kon5E} that
\begin{equation*}
  \Pro \vec{\mu}_{\delta} \to \overline{\Pro \vec{\mu}_{\delta}}
	\quad\textrm{strongly in } L^{\infty}(\Omega_{\eta}; \mathbb{R}^N). 
\end{equation*}
Therefore, we just need to show that \eqref{Wk1}--\eqref{Wk2} holds true
a.e.\ in $\Omega_{\eta}$, and since $|\Omega \setminus \Omega_{\eta}|\le \eta$,
the equalities will necessarily hold also a.e.\ in $\Omega$.

\medskip\noindent{\em Step 2: Using the conjugate $h_{\theta}^*$.}
We characterize $W_k$ as
\begin{equation*} 
W_k=\frac 12\overline{(p(\vec{\rhotot}_\eps,\theta_\eps)-p(\vec{\rhotot}_\delta,\theta_\delta))(T_k(\rhotot_\eps)-T_k(\rhotot_\delta))}\quad \textrm{a.e. in }\Omega.
\end{equation*}
Thus, we need to compute the differences
$p(\vec{\rhotot}_\eps,\theta_\eps)-p(\vec{\rhotot}_\delta,\theta_\delta)$
and $T_k(\rhotot_\eps)-T_k(\rhotot_\delta)$ for $\eps>0$, $\delta>0$.
As the pressure is equal to the Legendre transform of $h_\theta$, we can write
\begin{equation}\label{pred}
\begin{aligned}
  p(\vec{\rhotot}_\eps,\theta_\eps)-p(\vec{\rhotot}_\delta,\theta_\delta)
	&= h_{\theta_\eps}^*(\vec\mu_\eps) - h_{\theta_\delta}^*(\vec\mu_\delta)\\
&=(h_{\theta}^*(\vec\mu_\eps) - h_{\theta}^*(\vec\mu_\delta))+(h_{\theta_\eps}^*(\vec\mu_\eps) - h_{\theta}^*(\vec\mu_\eps))+(h_{\theta}^*(\vec\mu_\delta) - h_{\theta_\delta}^*(\vec\mu_\delta))\\
&=: Y^1_{\eps,\delta} + Y^2_{\eps} + Y^3_{\delta}.
\end{aligned}
\end{equation}
The first term $ Y^1_{\eps,\delta}$ is formulated as
$$
 Y^1_{\eps,\delta}= \int_0^1\frac{d}{dt}h_\theta^*(\vec z_t)\, dt
	= \int_0^1\sum_{i=1}^N\pa_i h_\theta^*(\vec z_t)(\mu_{\eps,i}-\mu_{\delta,i})\, dt,
$$
where $\vec z_t:= t\vec\mu_\eps+(1-t)\vec\mu_\delta$. Using the decomposition
\begin{equation}\label{3.P}
  \mu_{\eps,i}-\mu_{\delta,i} = \Pro(\vec\mu_\eps-\vec\mu_\delta)_i
	+ (\mu_{\eps,N}-\mu_{\delta,N}), \quad i=1,\ldots,N,
\end{equation}
it follows that
\begin{equation}\label{3.e1}
   Y^1_{\eps,\delta}
	= \int_0^1\bigg(\sum_{i=1}^{N-1}\pa_i h_\theta^*(\vec z_t)
	\Pro(\vec\mu_\eps-\vec\mu_\delta)_i + \sum_{i=1}^N\pa_i h_\theta^*(\vec z_t)
	(\mu_{\eps,N}-\mu_{\delta,N})\bigg)\,dt.
\end{equation}
In a very similar way, using the fact that $\vec{\rhotot}_{\eps}=\nabla h^*_{\theta_{\eps}}(\vec{\mu}_{\eps})$, we find that
\begin{align}
  T_k (\rhotot_\eps)&-T_k(\rhotot_\delta)
	= T_k\bigg(\sum_{i=1}^N\pa_i h_{\theta_\eps}^*(\vec\mu_\eps)\bigg)
	- T_k\bigg(\sum_{i=1}^N\pa_i h_{\theta_{\delta}}^*(\vec\mu_\delta)\bigg) \nonumber \\
  &= \bigg[T_k\bigg(\sum_{i=1}^N\pa_i h_{\theta}^*(\vec\mu_\eps)\bigg)
	- T_k\bigg(\sum_{i=1}^N\pa_i h_{\theta}^*(\vec\mu_\delta)\bigg)\bigg] \nonumber \\
  &\phantom{xx}{}
	+ \bigg[T_k\bigg(\sum_{i=1}^N\pa_i h_{\theta_\eps}^*(\vec\mu_\eps)\bigg)
	-T_k\bigg(\sum_{i=1}^N\pa_i h_{\theta}^*(\vec\mu_\eps)\bigg)\bigg] \nonumber \\
  &\phantom{xx}{} + \bigg[T_k\bigg(\sum_{i=1}^N\pa_i h_{\theta}^*(\vec\mu_\delta)\bigg)
	- T_k\bigg(\sum_{i=1}^N\pa_i h_{\theta_{\delta}}^*(\vec\mu_\delta)\bigg)\bigg]
  =: Z^1_{\eps,\delta} + Z^2_{\eps}+Z^3_{\delta}, \label{pred2}
\end{align}
and rewrite $Z^1_{\eps,\delta}$ as
\begin{align}
  Z^1_{\eps,\delta}&= \int_0^1 \frac{d}{ds}T_k\bigg(\sum_{i=1}^N\pa_i h_\theta^*(\vec z_s)\bigg)\, ds
	= \int_0^1\Lambda_k(s)\sum_{i,j=1}^N\pa_{ij}^2 h_\theta^*(\vec z_s)
	(\mu_{\eps,j}-\mu_{\delta,j})\, ds, \label{3.e2}
\end{align}
where we defined for arbitrary $0\le s\le 1$ and $k\in\N$,
$$
  \Lambda_k(s) := T_k'\bigg(\sum_{i=1}^N\pa_i h_\theta^*(\vec z_s)\bigg)
	= T_1'\bigg(\frac{1}{k}\sum_{i=1}^N\pa_i h_\theta^*(\vec z_s)\bigg).
$$
The sum over $i,j=1,\ldots,N$ in \eqref{3.e2}
can be rewritten, using \eqref{3.P}, as
$$
  \sum_{i,j=1}^N \pa_{ij}^2 h_\theta^*(\vec z_s)
	(\mu_{\eps,i}-\mu_{\delta,i})
	= \sum_{i,j=1}^{N}\pa_{ij}^2 h_\theta^*(\vec z_s)
	\big( \Pro(\vec\mu_\eps-\vec\mu_\delta)_i
	+ (\mu_{\eps,N}-\mu_{\delta,N})\big)
$$
such that
\begin{align}
  Z^1_{\eps,\delta}&= \int_0^1 \Lambda_k(s)\left(\sum_{i,j=1}^{N}
	\pa_{ij}^2 h_\theta^*(\vec z_s)
	\Pro(\vec\mu_\eps-\vec\mu_\delta)_i
	+ (\mu_{\eps,N}-\mu_{\delta,N})\sum_{i,j=1}^N\pa_{ij}^2 h_\theta^*(\vec z_s)
	\right)\, ds. \label{3.e22}
\end{align}

\medskip\noindent {\em Step 3: Proof of \eqref{Wk1}.}
We restrict our analysis to $\Omega_{\eta}$, since for every fixed $k\in \N$,
\begin{equation}\label{smalle}
\sup_{\eps, \delta>0}\int_{\Omega \setminus \Omega_{\eta}}\big|(p(\vec{\rhotot}_\eps,\theta_\eps)-p(\vec{\rhotot}_\delta,\theta_\delta))(T_k(\rhotot_\eps)-T_k(\rhotot_\delta))\big|\, dx \to 0 \quad\mbox{as }\eta\to 0,
\end{equation}
which follows from assumption \eqref{Kon7}. Furthermore, we define the sets
$$
\Omega_{\eta,R}^{\eps,\delta}:= \{x\in \Omega_{\eta}\colon \; \rhotot_{\eps}(x)+\rhotot_{\delta}(x)\le R\}.
$$
Similarly as before, we again have for every fixed $k\in \N$,
\begin{equation}\label{smalle2}
\sup_{\eps, \delta>0}\int_{\Omega_{\eta} \setminus \Omega_{\eta,R}^{\eps,\delta}}\big|(p(\vec{\rhotot}_\eps,\theta_\eps)-p(\vec{\rhotot}_\delta,\theta_\delta))(T_k(\rhotot_\eps)-T_k(\rhotot_\delta))\big|\, dx\to 0 \quad\mbox{as }R\to\infty.
\end{equation}
Therefore, we focus on the behavior of the sequence on the sets $\Omega_{\eta,R}^{\eps,\delta}$.
We use Hypothesis~(H6) to show that some terms $Y_{\eps,\delta}^j$
and $Z_{\eps,\delta}^j$ vanish in the limit. Since (by construction) $\rhotot_{\eps}(x)+\rhotot_{\delta}(x) \leq R$ for $x\in\Omega_{\eta,R}^{\eps,\delta}$, it follows
on this set that
\begin{equation}\label{sss1}
\sum_{i=1}^N (\pa_i h_{\theta_{\eps}}^* (\vec{\mu}_{\eps})+\pa_i h_{\theta_{\delta}}^* (\vec{\mu}_{\delta}))\le R.
\end{equation}
Consequently, since we know that $\theta_{\delta}$, $\theta_{\eps}$, and $\theta$
are bounded from below and above in $\Omega_{\eta}$, Hypothesis (H6) (and its consequence \eqref{H6A}) implies that
$$
\sum_{i=1}^N (\pa_i h_{\theta}^* (\vec{\mu}_{\eps})+\pa_i h_{\theta}^* (\vec{\mu}_{\delta}))\le C(R,\omega,\eta) \quad\mbox{a.e.\ in }\Omega_{\eta,R}^{\eps,\delta},
$$
where $\omega$ refers to the modulus of continuity of $h_\theta^*$,
introduced in \eqref{H6A}.
Finally, thanks to the continuity, for any $t\in (0,1)$ and
$\vec z_t= t\vec\mu_\eps+(1-t)\vec\mu_\delta$,
\begin{equation}\label{ztb}
\sum_{i=1}^N \pa_i h_{\theta}^* (\vec{z}_{t})\le C(R,\omega,\eta, h^*_\theta)
\quad\mbox{a.e.\ in }\Omega_{\eta,R}^{\eps,\delta},
\end{equation}
and Hypothesis (H6) (used to show \eqref{H6C}) implies  that
\begin{equation}\label{ztb2}
| \pa_{ij} h_{\theta}^* (\vec{z}_{t})|\le C(R,\omega,\eta, h^*_\theta)
\quad\mbox{a.e.\ in }\Omega_{\eta,R}^{\eps,\delta}.
\end{equation}

With these auxiliary results, we can now focus on the limiting process. Let the function $\chi\in L^{\infty}(\Omega)$ be arbitrary and nonnegative. It follows from definition \eqref{pred}, the uniform convergence of $(\theta_\eps)$ in $\Omega_{\eta}$, estimate \eqref{sss1}, and Hypothesis (H6) that
$$
\limsup_{\eps \to 0} \limsup_{\delta\to 0} \int_{\Omega_{\eta,R}^{\eps,\delta}}
\big(|Y^2_{\eps}| + |Y^3_{\delta}|\big)\, dx =0.
$$
Similarly, again thanks to Hypothesis (H6) and the proper definition of the set $\Omega_{\eta,R}^{\eps,\delta}$,
$$
\limsup_{\eps \to 0} \limsup_{\delta\to 0} \|Z^2_{\eps} + Z^3_{\delta}\|_{L^{\infty}(\Omega_{\eta,R}^{\eps,\delta})}=0.
$$
Hence, using the weak convergence \eqref{Kon7} and definitions \eqref{pred} and \eqref{pred2},
\begin{align*}
  \lim_{\eps \to 0}&\lim_{\delta \to 0} \int_{\Omega_{\eta,R}^{\eps,\delta}}
	\big(p(\vec\rho_\eps,\theta_\eps)-p(\vec\rho_\delta,\theta_\delta)\big)
	\big(T_k(\rhotot_\eps)-T_k(\rhotot_\delta)\big)\chi\, dx\\
&=\lim_{\eps \to 0}\lim_{\delta \to 0} \int_{\Omega_{\eta,R}^{\eps,\delta}} Y^1_{\eps,\delta}Z^1_{\eps,\delta}\chi\, dx
=\lim_{\eps \to 0}\lim_{\delta \to 0} \int_{\Omega_{\eta,R}^{\eps,\delta}}
	(I^1_{\eps,\delta} + I_{\eps,\delta}^2 + I_{\eps,\delta}^3)\chi\, dx,
\end{align*}
where we identify the terms $I^j_{\eps,\delta}$ for $j=1,2,3$ with the help of
\eqref{3.e1} and \eqref{3.e22} as
\begin{align}
	\label{I1}
	 I_{\eps,\delta}^1 &= (\mu_{\eps,N}-\mu_{\delta,N})^2
	\int_0^1\int_0^1\sum_{i,j,\ell=1}^N\pa_i h_\theta^*(\vec z_t)
	\pa_{j\ell}^2 h_\theta^*(\vec z_s)\Lambda_k(s)\,ds \, dt, \\
	\nonumber 
   I_{\eps,\delta}^2 &= (\mu_{\eps,N}-\mu_{\delta,N})
	\int_0^1\int_0^1\sum_{i,j,\ell=1}^N\big(\pa_i h_\theta^*(\vec z_t)
	\pa_{j\ell}^2 h_\theta^*(\vec z_s) + \pa_j h_\theta^*(\vec z_t)
	\pa_{i\ell}^2 h_\theta^*(\vec z_s)\big) \\ \nonumber
	\nonumber 
	&\quad \times \Pro(\vec\mu_\eps-\vec\mu_\delta)_i
	\Lambda_k(s)\, ds \, dt, \\
	\nonumber
	 I_{\eps,\delta}^3 &=\int_0^1\int_0^1\sum_{i,j,\ell=1}^{N}
	\pa_i h_\theta^*(\vec z_t)\pa_{j\ell}^2 h_\theta^*(\vec z_s)
	\Pro(\vec\mu_\eps-\vec\mu_\delta)_i\Pro(\vec\mu_\eps-\vec\mu_\delta)_j
	\Lambda_k(s)\, ds \, dt.
\end{align}

We start with the easiest term, which is $I_{\eps,\delta}^3$. We deduce from
\eqref{ztb} and \eqref{ztb2} and the uniform convergence of $\Pi \vec{\mu}_{\eps}$ in $\Omega_{\eta}$ (and consequently also of $\Pro (\vec{\mu}_{\eps})$) that
\begin{equation}\label{Ic3}
\lim_{\eps \to 0}\lim_{\delta \to 0}  \int_{\Omega_{\eta,R}^{\eps,\delta}}|I_{\eps,\delta}^3|\chi\, dx =0.
\end{equation}
Next, since $h_\theta^*$ is convex, the Hessian of $h_\theta^*$
is positive semidefinite, i.e.\ $\sum_{j,\ell=1}^N\pa_{j\ell}^2 h_\theta^*(\vec z_s)\ge 0$. Furthermore, we know that the function $\pa_i h_\theta^*(\vec{z}_t)$
is nonnegative (recall that $\pa_i h_\theta^*(\vec{z}_t) = \rho_i$)
and then it obviously follows that $I_{\eps,\delta}^1\ge 0$.

It remains to analyze the integral $I_{\eps,\delta}^2$. For this, we use the
Cauchy--Schwarz and Young inequalities for some $\kappa>0$, and
the positive semi-definiteness of $(\pa_{j\ell}^2 h_\theta^*)$:
\begin{align*}
  |I_{\eps,\delta}^2| &\le |\mu_{\eps,N}-\mu_{\delta,N}|
  |\Pro(\vec\mu_\eps-\vec\mu_\delta)|
	\int_0^1\int_0^1\sum_{i,j,\ell=1}^N\pa_i h_\theta^*(\vec z_t)
	\pa_{j\ell}^2 h_\theta^*(\vec z_s)	\Lambda_k(s)\, ds \, dt\\
  &\phantom{xx}{}+\int_0^1\bigg|\sum_{i,\ell=1}^N
	\pa_{i\ell}^2 h_\theta^*(\vec z_s)
	\bigg(|\mu_{\eps,N}-\mu_{\delta,N}|^2\Lambda_k(s)\int_0^1\sum_{j=1}^N\pa_j
	h_\theta^*(\vec z_t)\, dt\bigg)^{1/2} \\
  &\phantom{xx}{}\times \Pro(\vec\mu_\eps-\vec\mu_\delta)_i
	\bigg(\Lambda_k(s)\int_0^1\sum_{j=1}^N\pa_j h_\theta^*(\vec z_t)\, dt\bigg)^{1/2}
	\bigg|\, ds \\
  &\le 2\big(I_{\eps,\delta}^1\big)^{1/2}\bigg(|\Pro(\vec\mu_\eps-\vec\mu_\delta)|^2
	\int_0^1\int_0^1\sum_{i,j,\ell=1}^N\pa_i h_\theta^*(\vec z_t)
	\pa_{j\ell}^2 h_\theta^*(\vec z_s)	\Lambda_k(s)\, ds \, dt\bigg)^{1/2}\\
  &\le C(R,\eta)\big(I_{\eps,\delta}^1\big)^{1/2}|\Pro(\vec\mu_\eps-\vec\mu_\delta)|
	\le \kappa I_{\eps,\delta}^1+ C(R,\eta,\kappa)|\Pro(\vec\mu_\eps-\vec\mu_\delta)|^2,
\end{align*}
where in the last line we used \eqref{ztb}--\eqref{ztb2}. As $\kappa>0$ can be taken arbitrarily small, we use the uniform convergence of $(\Pro (\vec\mu_\eps))$ and \eqref{Ic3} to deduce that for any $\chi \in L^\infty(\Omega)$,
\begin{equation}\label{Comp23}
  \lim_{\eps \to 0}\lim_{\delta \to 0} \int_{\Omega_{\eta,R}^{\eps,\delta}}
  (p(\vec\rho_\eps,\theta_\eps)-p(\vec\rho_\delta,\theta_\delta))
	\big(T_k(\rhotot_\eps)-T_k(\rhotot_\delta)\big)\chi\, dx
  = \lim_{\eps \to 0}\lim_{\delta \to 0}
	\int_{\Omega_{\eta,R}^{\eps,\delta}} I^1_{\eps,\delta}\chi\, dx\ge 0.
\end{equation}
This inequality, together with \eqref{smalle} and \eqref{smalle2}, shows that $W_k\ge 0$. Moreover, since for all $k$ we know that $\Lambda_k(s)\le \Lambda_{k+1}(s)$ (it follows from $T'_k\le T_{k+1}'$), we see from the definition of $I^1_{\eps,\delta}$ that $W_k \le W_{k+1}$.

\medskip\noindent {\em Step 4: Proof of \eqref{Wk2}.} Here, we can repeat the
arguments from Step 3 almost step by step. Indeed, we have
$$
2\theta\big(\overline{\rhotot_{\delta}T_k(\rhotot_{\delta})} - \rhotot \overline{T_k(\rhotot_{\delta})}\big)=\theta\overline{(\rhotot_{\delta}-\rhotot_{\eps})(T_k(\rhotot_{\delta})-T_k(\rhotot_{\eps}))}.
$$
Again, we just need to identify the inequality on the set $\Omega_{\eta,R}^{\eps,\delta}$, since the remaining parts vanish due to \eqref{Kon1}.
Proceeding in the same way as in \eqref{pred2}, we write
\begin{equation*}
\begin{aligned}
  \rhotot_\eps-\rhotot_\delta
  &= \sum_{i=1}^N\big(\pa_i h_{\theta}^*(\vec\mu_\eps)
	- \pa_i h_{\theta}^*(\vec\mu_\delta)\big)
	+ \sum_{i=1}^N\big(\pa_i h_{\theta_\eps}^*(\vec\mu_\eps)
	-\pa_i h_{\theta}^*(\vec\mu_\eps)\big)\\
  &\phantom{xx}{} + \sum_{i=1}^N\left(\pa_i h_{\theta}^*(\vec\mu_\delta)
	- \pa_i h_{\theta_{\delta}}^*(\vec\mu_\delta)\right)
	=:\widehat{Z}^1_{\eps,\delta} + \widehat{Z}^2_{\eps}+\widehat{Z}^3_{\delta}
\end{aligned}
\end{equation*}
and rewrite $\widehat{Z}^1_{\eps,\delta}$ as (compare with \eqref{3.e22})
\begin{equation*}
  \widehat{Z}^1_{\eps,\delta}= \int_0^1 \bigg(\sum_{i,j=1}^{N}
	\pa_{ij}^2 h_\theta^*(\vec z_s)\Pro(\vec\mu_\eps-\vec\mu_\delta)_i
	+ (\mu_{\eps,N}-\mu_{\delta,N})\sum_{i,j=1}^N\pa_{ij}^2 h_\theta^*(\vec z_s)
	\bigg)\, ds. 
\end{equation*}
Hence, repeating the procedure from the previous step, we deduce that for any bounded nonnegative function $\chi$,
\begin{equation}\label{comp24}
\lim_{\eps \to 0}\lim_{\delta \to 0} \int_{\Omega_{\eta,R}^{\eps,\delta}} \theta (\rho_\eps-\rho_\delta)
	\big(T_k(\rhotot_\eps)-T_k(\rhotot_\delta)\big)\chi\, dx =\lim_{\eps \to 0}\lim_{\delta \to 0} \int_{\Omega_{\eta,R}^{\eps,\delta}} \widehat{I}^1_{\eps,\delta}\chi\, dx,
\end{equation}
where
\begin{equation}
	\label{I1A}
	 \widehat{I}_{\eps,\delta}^1 = \theta(\mu_{\eps,N}-\mu_{\delta,N})^2
	\int_0^1\int_0^1\sum_{i,j,\ell,m=1}^N\pa_{\ell m} h_\theta^*(\vec z_t)
	\pa_{ij}^2 h_\theta^*(\vec z_s)\Lambda_k(s)\,ds \, dt.
\end{equation}
Hypothesis (H6), namely its consequence \eqref{H6C}, definitions \eqref{I1} and \eqref{I1A} show that $\widehat{I}_{\eps,\delta}^1\le K_2 I_{\eps,\delta}^1$ and comparing \eqref{Comp23} and \eqref{comp24} then leads to \eqref{Wk2}.
\end{proof}

The next lemma combines the results coming from the convexity of $h_\theta$ and the effective viscous flux identity, which finally lead to a uniform bound on $(W_k)$,
defined in \eqref{Wk}.

\begin{lemma}\label{lem.wk.bound}
Let all assumptions of Lemma~\ref{lem.Wk} and \eqref{cA:1} be satisfied. Then there exists a constant $C>0$ such that for all $k\in \mathbb{N}$,
\begin{equation}\label{bound.Wk}
\int_{\Omega}\frac{W_k}{\lambda_1(\theta)+\lambda_2(\theta)}\, dx
\le C \sup_{\delta>0}\int_{\Omega}\frac{\lambda_1(\theta_{\delta})+\lambda_2(\theta_\delta)}{\theta_\delta} (\diver \vel_\delta)^2\, dx.
\end{equation}
\end{lemma}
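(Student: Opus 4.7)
The plan is to combine the effective viscous flux identity of Lemma~\ref{lem.eff} with a weighted Young inequality tailored to the viscous dissipation on the right-hand side.

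First I would start from Lemma~\ref{lem.eff}, which gives
$$
W_k = \Big(\lambda_2(\theta)+\tfrac{4}{3}\lambda_1(\theta)\Big)\big(\overline{T_k(\rhotot_\delta)\diver\vel_\delta} - \overline{T_k(\rhotot_\delta)}\,\diver\vel\big).
$$
By Hypothesis (H3) we have $\lambda_1\ge c_1(1+\theta)>0$ and $0\le\lambda_2\le c_2(1+\theta)$, so the ratio $(\lambda_2+\tfrac{4}{3}\lambda_1)/(\lambda_1+\lambda_2)$ is bounded above by $4/3$. Since $W_k\ge 0$ by \eqref{Wk1}, the bracketed weak-limit quantity is nonnegative, and equals the commutator $\overline{(T_k(\rhotot_\delta)-\overline{T_k(\rhotot_\delta)})(\diver\vel_\delta-\diver\vel)}$. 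Hence it suffices to estimate the integral of this commutator against the constant test function by $C\sup_\delta\int(\lambda_1(\theta_\delta)+\lambda_2(\theta_\delta))/\theta_\delta\,(\diver\vel_\delta)^2\,dx$.

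Next I would apply the weighted Young inequality at the $\delta$-level with the weight $w_\delta:=(\lambda_1(\theta_\delta)+\lambda_2(\theta_\delta))/\theta_\delta$:
$$
(T_k(\rhotot_\delta)-\overline{T_k})(\diver\vel_\delta-\diver\vel)\le \frac{(T_k-\overline{T_k})^2}{2w_\delta} + \frac{w_\delta}{2}(\diver\vel_\delta-\diver\vel)^2,
$$
and then pass to the weak limit and integrate. The dissipation term is handled directly: writing $(\diver\vel_\delta-\diver\vel)^2\le 2(\diver\vel_\delta)^2+2(\diver\vel)^2$, the $(\diver\vel_\delta)^2$-part contributes exactly the right-hand side of \eqref{bound.Wk} (up to a multiplicative constant), while the $(\diver\vel)^2$-part is controlled by the same supremum via lower semicontinuity, using that $w(\theta_\delta)\to w(\theta)$ strongly by (H3) and the strong convergence of $\theta_\delta$, together with $\diver\vel_\delta\rightharpoonup\diver\vel$ weakly in $L^2(\Omega)$.

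The hard part will be bounding the density term $\int\overline{(T_k-\overline{T_k})^2/w_\delta}\,dx$ \emph{uniformly in} $k$. Here Hypothesis (H3) yields $1/w_\delta=\theta_\delta/(\lambda_1(\theta_\delta)+\lambda_2(\theta_\delta))\le 1/c_1$, so it reduces to bounding $\int\overline{(T_k-\overline{T_k})^2}\,dx$. The key observation is the pointwise inequality $T_k(\rhotot_\delta)^2\le T_k(\rhotot_\delta)\,\rhotot_\delta\le\rhotot_\delta^{\,2}$ (and analogously $\overline{T_k}^{\,2}\le\rhotot^{2}$), which yields $(T_k-\overline{T_k})^2\le 2(\rhotot_\delta^{\,2}+\rhotot^{2})$, a bound independent of $k$. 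Using Lemma~\ref{lem.dens} and the uniform estimate $\rhotot_\delta\in L^{\gamma+\nu}(\Omega)$ (with $\gamma+\nu\ge 2$ in the regimes where a genuine $L^2$-bound is available), this integral is bounded by a constant depending only on the data. Combining the two estimates and absorbing the constant factors into $C$ gives \eqref{bound.Wk}. The main obstacle is precisely this $k$-independent control of the density term; in the borderline regime where $\gamma+\nu<2$ one would need to replace the above crude bound by a refined Young split using the $L^\alpha$-estimate of the pressure from Lemma~\ref{lem.p}, or invoke the monotone bound \eqref{Wk2} of Lemma~\ref{lem.Wk} together with the higher integrability of $\rhotot_\delta\theta_\delta$ to trade the squared term against the controlled quantity $\theta(\overline{\rhotot_\delta T_k(\rhotot_\delta)}-\rhotot\,\overline{T_k(\rhotot_\delta)})$.
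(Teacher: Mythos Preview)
Your approach has a genuine gap that you yourself identify but do not close. The Young-inequality split produces a bound of the shape
\[
\int_\Omega\frac{W_k}{\lambda_1(\theta)+\lambda_2(\theta)}\,dx
\;\le\; C_1\!\int_\Omega\overline{(T_k(\rhotot_\delta)-\overline{T_k(\rhotot_\delta)})^2}\,dx
\;+\; C_2\,S,
\]
where $S$ is the right-hand side of \eqref{bound.Wk}. The first term is not controlled by $S$; you bound it by $C\|\rhotot\|_{L^2(\Omega)}^2$, which requires $\gamma+\nu\ge 2$. This fails precisely in the range $\gamma$ close to $3/2$ that the paper covers (for instance if $\nu=2\gamma-3$ then $\gamma+\nu=3\gamma-3<2$ whenever $\gamma<5/3$). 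Even when $\rhotot\in L^2(\Omega)$, the resulting inequality is $\le C_1'+C_2S$ rather than the stated $\le CS$, so the lemma as written is not obtained.

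The paper avoids this entirely by a \emph{self-absorbing} estimate rather than a direct Young split. The key is to use \eqref{Wk2} (which you mention only as a possible fix) from the start: since $T_k$ is $1$-Lipschitz and nondecreasing, one has pointwise
\[
(T_k(\rhotot_\eps)-T_k(\rhotot_\delta))^2\le(\rhotot_\eps-\rhotot_\delta)(T_k(\rhotot_\eps)-T_k(\rhotot_\delta)),
\]
and after taking weak limits and multiplying by $\theta/(\lambda_1(\theta)+\lambda_2(\theta))$, inequality \eqref{Wk2} gives
\[
A_k:=\int_\Omega\overline{\frac{\theta_\delta}{\lambda_1(\theta_\delta)+\lambda_2(\theta_\delta)}\,(T_k(\rhotot_\eps)-T_k(\rhotot_\delta))^2}\,dx
\;\le\; C\int_\Omega\frac{W_k}{\lambda_1(\theta)+\lambda_2(\theta)}\,dx.
\]
Now the effective viscous flux identity \eqref{evf} rewrites the right-hand side as (a constant times) $\int_\Omega\overline{\diver\vel_\delta\,(T_k(\rhotot_\delta)-T_k(\rhotot_\eps))}\,dx$, and \emph{Cauchy--Schwarz} (not Young) with the weight $w_\delta=(\lambda_1(\theta_\delta)+\lambda_2(\theta_\delta))/\theta_\delta$ bounds this by $C\,S^{1/2}A_k^{1/2}$. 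The chain $A_k\le C\,S^{1/2}A_k^{1/2}$ closes on itself to give $A_k\le C^2 S$, and a second application of Cauchy--Schwarz then yields \eqref{bound.Wk}. No $L^2$ bound on $\rhotot$ is ever invoked, and no additive constant appears. Your suggestion to ``trade the squared term against $\theta(\overline{\rhotot_\delta T_k(\rhotot_\delta)}-\rhotot\,\overline{T_k(\rhotot_\delta)})$'' is exactly this mechanism; carrying it out is the paper's proof.
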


Consequently, if the right-hand side in \eqref{bound.Wk} is finite,
we infer from the monotonicity of $(W_k)$ (see \eqref{Wk1}) and monotone
convergence that the sequence
$(W_k/(\lambda_1(\theta) + \lambda_2(\theta))_{k\in\N}$ is strongly converging
in $L^1(\Omega)$ to a nonnegative integrable function.

\begin{proof}
We start the proof with a simple inequality, using \eqref{Wk2}, \eqref{Wk},
and \eqref{evf}:
\begin{align*}
  &\overline{\frac{\theta_{\delta}}{\lambda_1(\theta_{\delta})
	+\lambda_2(\theta_\delta)}(T_k(\rhotot_\eps)-T_k(\rhotot_\delta))^2}
  = \frac{\theta}{\lambda_1(\theta)+\lambda_2(\theta)}
	\overline{(T_k(\rhotot_\eps)-T_k(\rhotot_\delta))^2} \\
	&\quad\le \frac{\theta}{\lambda_1(\theta)+\lambda_2(\theta)}
	\overline{(\rhotot_\eps-\rhotot_\delta)(T_k(\rhotot_\eps)-T_k(\rhotot_\delta))}
  \le \frac{CW_k}{\lambda_1(\theta)+\lambda_2(\theta)} \\
	&\quad = C\frac{\lambda_2(\theta)+\frac43\lambda_1(\theta)}{
	\lambda_1(\theta)+\lambda_2(\theta)}\big(\overline{T_k(\rhotot_\delta)
	\diver \vel_\delta} - \overline{T_k(\rhotot_\delta)}\diver\vel\big).
\end{align*}
Using \eqref{evf} and \eqref{Wk2} again, we observe that
$\overline{T_k(\rhotot_\delta)\diver \vel_\delta}
- \overline{T_k(\rhotot_\delta)}\diver\vel\ge 0$, leading to
\begin{align*}
  \overline{\frac{\theta_{\delta}}{\lambda_1(\theta_{\delta})
	+\lambda_2(\theta_\delta)}(T_k(\rhotot_\eps)-T_k(\rhotot_\delta))^2}&\le C
	\big(\overline{T_k(\rhotot_\delta)\diver \vel_\delta}
	- \overline{T_k(\rhotot_\delta)}\diver\vel\big) \nonumber \\
  &= C\overline{\diver \vel_\delta (T_k(\rhotot_\delta) - T_k(\rhotot_\eps))}.
\end{align*}
It follows from the Cauchy--Schwarz inequality that
\begin{align*}
  \int_\Omega&\overline{\frac{\theta_{\delta}}{\lambda_1(\theta_{\delta})
	+\lambda_2(\theta_\delta)}(T_k(\rhotot_\eps)-T_k(\rhotot_\delta))^2}\,dx
	\le C\int_\Omega\overline{\diver \vel_\delta (T_k(\rhotot_\delta)
	- T_k(\rhotot_\eps))}\,dx \\
	&\le C\bigg(\int_{\Omega} \frac{\lambda_1(\theta_{\delta})
	+\lambda_2(\theta_\delta)}{\theta_{\delta}}(\diver \vel_\delta)^2
	\, dx\bigg)^{1/2} \bigg(\int_{\Omega} \frac{\theta_{\delta}(T_k(\rhotot_\eps)
	-T_k(\rhotot_\delta))^2}{\lambda_1(\theta_{\delta})+\lambda_2(\theta_\delta)}\, dx
	\bigg)^{1/2},
\end{align*}
which yields
\begin{align*}
\int_{\Omega}\overline{\frac{\theta_{\delta}(T_k(\rhotot_\eps)-T_k(\rhotot_\delta))^2
}{\lambda_1(\theta_{\delta})+\lambda_2(\theta_\delta)}}\, dx
\le C\sup_{\delta>0}\int_{\Omega} \frac{\lambda_1(\theta_{\delta})+\lambda_2(\theta_\delta)}{\theta_{\delta}}(\diver \vel_\delta)^2\, dx, \\
\int_{\Omega} \overline{\diver \vel_\delta (T_k(\rhotot_\delta)- T_k(\rhotot_\eps))}\, dx \le C\sup_{\delta>0}\int_{\Omega} \frac{\lambda_1(\theta_{\delta})+\lambda_2(\theta_\delta)}{\theta_{\delta}}(\diver \vel_\delta)^2\, dx.
\end{align*}
We infer that
\begin{align*}
  \int_\Omega\frac{W_k}{\lambda_1(\theta)+\lambda_2(\theta)}\,dx
	&= \int_\Omega\frac{\lambda_2(\theta)+\frac43\lambda_1(\theta)}{\lambda_1(\theta)
	+\lambda_2(\theta)}\big(\overline{T_k(\rho_\delta)\diver\vel_\delta}
	-\overline{T_k(\rho_\delta)}\diver\vel\big)\,dx \\
	&= \int_\Omega\frac{\lambda_2(\theta)+\frac43\lambda_1(\theta)}{\lambda_1(\theta)
	+\lambda_2(\theta)}\overline{\diver\vel_\delta(T_k(\rho_\delta)-T_k(\rho_\eps))}
	\,dx \\
  &\le C\sup_{\delta>0}\int_{\Omega} \frac{\lambda_1(\theta_{\delta})
	+\lambda_2(\theta_\delta)}{\theta_{\delta}}(\diver \vel_\delta)^2\, dx,
\end{align*}
finishing the proof.
\end{proof}

The following lemma is the key step in the proof. It shows that we
can use the truncation function $T_k(\rhotot)$ instead of $\rhotot$ in all estimates,
and this change creates only a small error, which can be neglected in a suitable
topology.

\begin{lemma}\label{lem.Qk}
Let all assumptions of Lemmata~\ref{lem.Wk} and \ref{lem.wk.bound} be satisfied and let
\begin{equation*}
  \sup_{\delta>0}\int_{\Omega} \frac{\lambda_1(\theta_{\delta})+\lambda_2(\theta_\delta)
  }{\theta_{\delta}}(\diver \vel_\delta)^2\, dx<\infty.
\end{equation*}
The quantities
\begin{align}\label{def.Qk}
  Q_k &:= \overline{p_{\eps}(T_k(\rhotot_\eps)-\rhotot_\eps T_k'(\rhotot_\eps))}-p\overline{(T_k(\rhotot_\eps)-\rhotot_\eps T_k'(\rhotot_\eps))},\\
  O_k &:= T_k(\rhotot)-\overline{T_k(\rhotot_{\eps})}\label{def.Ok}
\end{align}
are nonnegative and satisfy for all $k\in\N$,
\begin{equation}
\theta O_k^2 \le CW_k \quad \textrm{and} \quad \lim_{k\to\infty} \int_{\Omega}  \frac{Q_k+\theta O_k^2}{\lambda_1(\theta)+\lambda_2(\theta)}\, dx=0.\label{sOO}
\end{equation}
\end{lemma}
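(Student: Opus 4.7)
The plan is to mirror the proof of Lemma~\ref{lem.Wk}: exploit the convexity of $h_\theta^*$ to sign $Q_k$, pair with Jensen's inequality to dominate $\theta O_k^2$ by the already-controlled $W_k$, and then push both quantities to zero in the limit using the monotone majorant from Lemma~\ref{lem.wk.bound}.

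For the nonnegativities, $O_k\geq 0$ is immediate from Jensen applied to the concave $T_k$ along the weak convergence $\rho_\eps\rightharpoonup\rho$: $\overline{T_k(\rho_\eps)}\leq T_k(\rho)$. For $Q_k$, I set $g_k(\rho):=T_k(\rho)-\rho T_k'(\rho)$, rewrite $Q_k=\tfrac12\overline{(p_\eps-p_\delta)(g_k(\rho_\eps)-g_k(\rho_\delta))}$ via the double-limit convention, and repeat Step~3 of the proof of Lemma~\ref{lem.Wk}: after localizing on $\Omega_{\eta,R}^{\eps,\delta}$ (the complementary contribution controlled by~\eqref{smalle}--\eqref{smalle2}) and performing the chain-rule/decomposition manipulations via $h_\theta^*$ with $\mu_{\eps,i}-\mu_{\delta,i}=\Pro(\vec\mu_\eps-\vec\mu_\delta)_i+(\mu_{\eps,N}-\mu_{\delta,N})$, the sign reduces to the nonnegativity of the leading quadratic
\[
(\mu_{\eps,N}-\mu_{\delta,N})^2\int_0^1\!\int_0^1 g_k'(\tilde\rho_s)\sum_{i=1}^N\pa_i h_\theta^*(\vec z_t)\sum_{j,\ell=1}^N\pa_{j\ell}^2 h_\theta^*(\vec z_s)\,ds\,dt,
\]
which is nonnegative since $g_k'(\rho)=-\rho T_k''(\rho)\geq 0$ by concavity of $T_k$, $\pa_i h_\theta^*=\rho_i\geq 0$, and $\sum_{j,\ell}\pa_{j\ell}^2 h_\theta^*\geq 0$ is the trace of a positive semidefinite matrix; the cross terms are absorbed by Cauchy--Schwarz through the strong convergence of $\Pi(\vec\mu_\eps)$ on $\Omega_\eta$ exactly as in~\eqref{Comp23}. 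The bound $\theta O_k^2\leq CW_k$ follows by applying Jensen to the convex map $x\mapsto(T_k(\rho)-x)^2$ at the weak limit, giving $O_k^2\leq\overline{(T_k(\rho_\eps)-T_k(\rho))^2}$; together with the pointwise inequality $(T_k(a)-T_k(b))^2\leq(a-b)(T_k(a)-T_k(b))$ (since $T_k$ is $1$-Lipschitz and nondecreasing), passing to the weak limit, multiplying by $\theta$, and invoking~\eqref{Wk2} yields $\theta O_k^2\leq K_2 W_k$.

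To drive both integrals to zero as $k\to\infty$, I would first use Lemma~\ref{lem.wk.bound} combined with the monotonicity~\eqref{Wk1} to deduce that $W_k/(\lambda_1+\lambda_2)\nearrow W_\infty/(\lambda_1+\lambda_2)\in L^1(\Omega)$, which provides the integrable dominant for $\theta O_k^2/(\lambda_1+\lambda_2)$. The uniform integrability of $(\rho_\eps)$ in $L^{\gamma+\nu}$ gives $T_k(\rho_\eps)\to\rho_\eps$ in $L^1$ uniformly in $\eps$, so $\overline{T_k(\rho_\eps)}\to\rho$ and $O_k\to 0$ in $L^1$ (and a.e.\ along a subsequence), and dominated convergence closes the $\theta O_k^2$-integral. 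For $Q_k$, I reproduce the proof of Lemma~\ref{lem.eff} with $g_k$ in place of $T_k$ (valid because $g_k(\rho_\delta)\leq 2k$ is bounded in $L^\infty$) to obtain the effective-viscous-flux identity
\[
Q_k=\Big(\lambda_2(\theta)+\tfrac43\lambda_1(\theta)\Big)\,\overline{g_k(\rho_\delta)\big(\diver\vel_\delta-\diver\vel\big)},
\]
and Cauchy--Schwarz together with $g_k(\rho_\delta)\leq\rho_\delta\mathbf{1}_{\rho_\delta>k}$ and uniform integrability of $\rho_\delta^2$ (available when $\gamma+\nu>2$) delivers $\int Q_k/(\lambda_1+\lambda_2)\,dx\to 0$.

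The hardest step is the threshold regime $\gamma+\nu\leq 2$, where the direct Cauchy--Schwarz argument breaks down; this is handled by interpolating $g_k(\rho_\delta)\leq 2k^{1-\lambda}\rho_\delta^\lambda\mathbf{1}_{\rho_\delta>k}$ with an optimal exponent $\lambda\in[0,1]$, combined with the temperature weight $(\lambda_1(\theta)+\lambda_2(\theta))^{-1}\leq C(1+\theta)^{-1}$ and the $L^{3\beta}(\Omega)$-integrability of $\theta$ from Lemma~\ref{lem.est1} to recover the necessary decay.
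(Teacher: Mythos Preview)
Your treatment of $O_k$---both the nonnegativity via concavity of $T_k$ and the bound $\theta O_k^2\le K_2 W_k$ via $(T_k(a)-T_k(b))^2\le(a-b)(T_k(a)-T_k(b))$ followed by~\eqref{Wk2} and dominated convergence---matches the paper. Your nonnegativity argument for $Q_k$ is also correct (minor slip: $\sum_{j,\ell}\pa_{j\ell}^2 h_\theta^*$ is $\vec 1^\top D^2h_\theta^*\,\vec 1$, not the trace, but still nonnegative by positive semidefiniteness).

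The genuine gap is in your argument that $\int_\Omega Q_k/(\lambda_1+\lambda_2)\,dx\to 0$. Your route via the effective viscous flux identity for $g_k$ and Cauchy--Schwarz requires control of $\int_{\{\rho_\delta>k\}}\frac{\theta_\delta}{\lambda_1+\lambda_2}g_k(\rho_\delta)^2\,dx$. Since $\theta/(\lambda_1+\lambda_2)\le C$ by Hypothesis~(H3), the temperature weight does not help, and your interpolation $g_k(\rho_\delta)\le 2k^{1-\lambda}\rho_\delta^\lambda\mathbf 1_{\{\rho_\delta>k\}}$ combined with $|\{\rho_\delta>k\}|\le Ck^{-(\gamma+\nu)}$ yields
\[
\int_{\{\rho_\delta>k\}} k^{2(1-\lambda)}\rho_\delta^{2\lambda}\,dx
\le Ck^{2(1-\lambda)}\,k^{-(\gamma+\nu)(1-2\lambda/(\gamma+\nu))}
= Ck^{2-(\gamma+\nu)},
\]
which is \emph{independent of $\lambda$} and does not tend to zero when $\gamma+\nu\le 2$. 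So the threshold case is not handled.

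The paper avoids this issue entirely by a pointwise comparison: writing $\widetilde\Lambda_k(s)=-\rho(s)T_k''(\rho(s))$ with $\rho(s)=\sum_i\pa_i h_\theta^*(\vec z_s)$, one uses the elementary inequality $-sT_1''(s)\le C(T_1'(s/2)-T_1'(2s))$ to get $\widetilde\Lambda_k\le C(\Lambda_{2k}-\Lambda_{k/2})$ and hence, by comparing the leading quadratic terms (your $I^1$-type expressions), the pointwise bound
\[
0\le Q_k\le C(W_{2k}-W_{k/2})\quad\text{a.e.\ in }\Omega.
\]
Since $(W_k/(\lambda_1+\lambda_2))_k$ is nondecreasing and bounded in $L^1(\Omega)$ by Lemma~\ref{lem.wk.bound}, it converges strongly in $L^1(\Omega)$ by monotone convergence, so $\int_\Omega(W_{2k}-W_{k/2})/(\lambda_1+\lambda_2)\,dx\to 0$. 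This works for all $\gamma>3/2$, $\beta>2/3$ with no case distinction, and is precisely the mechanism the paper highlights as new in the introduction.
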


\begin{proof}
For the identification of $Q_k$, we can repeat the proof of Lemma~\ref{lem.Wk} step by
step, where instead of \eqref{pred2}--\eqref{3.e2}, we use a similar computation for
$(T_k(\rhotot_\eps)-\rhotot_\eps T_k'(\rhotot_\eps))-(T_k(\rhotot_\delta)-\rhotot_\delta T_k'(\rhotot_\delta))$. Heuristically, this means that we replace
$\Lambda_k$, which corresponds to the derivative of $T_k(s)$, by
$\widetilde{\Lambda}_k$, which corresponds to the derivative of $T_k(s)-sT_k'(s)$.
This leads to the identification of the limit (compare with \eqref{Comp23}):
\begin{equation*}
\begin{aligned}
&\lim_{\eps \to 0}\lim_{\delta \to 0} \int_{\Omega_{\eta,R}^{\eps,\delta}}  \big(p(\vec\rho_\eps,\theta_\eps)-p(\vec\rho_\delta,\theta_\delta)\big)
	\big(T_k(\rhotot_\eps)-\rhotot_\eps T_k'(\rhotot_\eps)-T_k(\rhotot_\delta)+\rhotot_\delta T_k'(\rhotot_\delta)\big)\chi\, dx\\
&\quad =\lim_{\eps \to 0}\lim_{\delta \to 0} \int_{\Omega_{\eta,R}^{\eps,\delta}} \widetilde{I}^1_{\eps,\delta}\chi\, dx,
\end{aligned}
\end{equation*}
where the term
$$
  \widetilde{I}_{\eps,\delta}^1 := \theta(\mu_{\eps,N}-\mu_{\delta,N})^2
	\int_0^1\int_0^1\sum_{i=1}^N\pa_i h_\theta^*(\vec z_t)\sum_{j,\ell=1}^N
	\pa_{j\ell}^2 h_\theta^*(\vec z_s)\widetilde\Lambda_k(s)\, ds \, dt
$$
corresponds to \eqref{I1}, just replacing the function $\Lambda_k$ by
$$
  \widetilde{\Lambda}_k(s) := -\sum_{i=1}^N\pa_i h_\theta^*(\vec z_s)\,
	T_k''\bigg(\sum_{i=1}^N\pa_i h_\theta^*(\vec z_s)\bigg)
	= -\frac{1}{k}\sum_{i=1}^N\pa_i h_\theta^*(\vec z_s)\,
	T_1''\bigg(\frac{1}{k}\sum_{i=1}^N\pa_i h_\theta^*(\vec z_s)\bigg).
$$
Since $T_1$ is concave, we have $\widetilde{\Lambda}_k\ge 0$. An elementary computation shows that there exists $C>0$ such that for all $s\ge 0$,
$$
  -sT_1''(s) \le C(T_2'(s)-T_{1/2}'(s)) = C(T_1'(s/2)-T_1'(2s)),
$$
which implies that $\widetilde{\Lambda}_k(s)\le C(\Lambda_{2k}(s)-\Lambda_{k/2}(s))$.
Therefore,
\begin{equation}\label{3.QW}
  0 \le Q_k \le C(W_{2k}-W_{k/2}).
\end{equation}
Finally, by Lemma \ref{lem.Wk}, $(W_k)$ is monotone and, by Lemma \ref{lem.wk.bound},
$(W_k/(\lambda_1(\theta)+\lambda_2(\theta)))$ is bounded in $L^1(\Omega)$ and
monotone as well. We deduce from the monotone convergence theorem that
$(W_k/(\lambda_1(\theta)+\lambda_2(\theta)))$ is strongly converging in
$L^1(\Omega)$ which implies that
$$
  \lim_{k\to \infty} \int_{\Omega} \frac{W_{2k}-W_{k/2}}{\lambda_1(\theta)
	+\lambda_2(\theta)}\, dx =0.
$$
Inequality \eqref{3.QW} then directly leads to
$$
  \lim_{k\to \infty} \int_\Omega \frac{Q_k}{\lambda_1(\theta)+\lambda_2(\theta)}
	\, dx = 0.
$$

Now, we focus on $O_k$, defined in \eqref{def.Ok}. Since $T_k$ is concave,
$O_k$ is nonnegative. First, we show that $O_k\to 0$ a.e.\ in $\Omega$. As
$\vec{\rhotot}_\eps \rightharpoonup \vec \rhotot$ weakly
in $L^1(\Omega;\R^N)$, the sequence
$(\rhotot_{\eps})$ is uniformly equiintegrable. Therefore, by the weak lower semicontinuity of the $L^1(\Omega)$ norm,
\begin{align*}
\int_{\Omega}|O_k|\, dx &\le \int_{\Omega}|\rhotot - T_k(\rhotot)| \, dx + \liminf_{\eps \to 0} \int_{\Omega} |\rhotot_{\eps}-T_k(\rhotot_{\eps})|\, dx \\
&\leq\int_{\{\rhotot >k\}}|\rhotot - k| \, dx + \liminf_{\eps \to 0} \int_{\{\rhotot_{\eps} >k\}} |\rhotot_{\eps}-k|\, dx\\
&\leq\int_{\{\rhotot >k\}}\rhotot \, dx + \liminf_{\eps \to 0} \int_{\{\rhotot_{\eps} >k\}} \rhotot_{\eps}\, dx.
\end{align*}
Because of the weak convergence of $(\vec{\rhotot}_\eps)$, it holds that
$|\{\rhotot >k\}|+|\{\rhotot_{\eps} >k\}|\le C/k$ and consequently, the uniform
integrability of $(\rhotot_{\eps})$ shows that
$$
  \lim_{k\to \infty}\int_{\Omega}|O_k|\, dx =0,
$$
which implies for a subsequence that $O_k \to 0$ a.e.\ in $\Omega$. Second,
we show the proper bound on $O_k$, which will enable us to use the dominated
convergence theorem. We deduce from the weak lower semicontinuity of the
$L^2(\Omega)$ norm that
$$
  |O_k|^2 = |\overline{T_k(\rhotot_{\eps})}-T_{k}(\rhotot)|^2 \le \overline{|T_k(\rhotot_{\eps})-T_{k}(\rhotot)|^2}.
$$
Substituting the algebraic inequality
$$
  |T_k(s_1)-T_k(s_2)|^2 \le s_1T_k(s_1) +s_2 T_k(s_2) - s_1T_k(s_2) - s_2T_k(s_1)
$$
into the relation for $O_k$, we obtain
$$
|O_k|^2 \le \overline{\rhotot_{\eps}T_k(\rhotot_{\eps}) +\rhotot T_k(\rhotot) - \rhotot_{\eps}T_k(\rhotot) - \rhotot T_k(\rhotot_{\eps})}=\overline{\rhotot_{\eps}T_k(\rhotot_{\eps})} - \rhotot \overline{T_k(\rhotot_{\eps})}.
$$
Consequently, \eqref{Wk2} implies that
$$
\begin{aligned}
\frac{\theta|O_k|^2}{\lambda_1(\theta)+\lambda_2(\theta)}
\le \frac{K_2 W_k}{\lambda_1(\theta)+\lambda_2(\theta)} \quad \textrm{a.e. in }\Omega.
\end{aligned}
$$
Arguing as before, the right-hand side is convergent in $L^1(\Omega)$, and we can
apply the dominated convergence theorem to conclude the second part of
\eqref{sOO}.
\end{proof}

\subsection{Renormalized continuity equation}

We prove that the weak limit $\rhotot$ of $(\rhotot_\delta)$ is a renormalized
solution to the mass continuity equation. Note that the proof is different to the standard ones. Indeed, for our purpose, we just need  the pressure having at least linear growth with respect to the density, which is not the case in other works.

\begin{lemma}\label{lem.renorm}
Let all assumptions of Lemmata~\ref{lem.Wk} and \ref{lem.wk.bound} be satisfied and let the sequence $(\rhotot_{\delta},\vel_{\delta})$ satisfy \eqref{con-ren}, i.e., it solves for every $b \in C^{1}(\mathbb{R})$ with compactly supported derivative and all $\psi\in W^{1,\infty}(\Omega)$ the renormalized continuity equation
\begin{equation}\label{ren.cont.eq}
  -\int_\Omega b(\rhotot_{\delta})\vel_{\delta}\cdot\na\psi dx
	+ \int_\Omega\big(b'(\rhotot_{\delta})\rhotot_{\delta} - b(\rhotot_{\delta})\big)
	\diver\vel_{\delta}\psi \, dx = 0.
\end{equation}
Then the weak limit $(\rhotot, \vel)$ satisfies it as well.
\end{lemma}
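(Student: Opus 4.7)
The plan is to reduce the problem to the truncated density $T_k(\rhotot_\delta)$---whose weak limit $S_k:=\overline{T_k(\rhotot_\delta)}$ lies in $L^\infty(\Omega)$ and thus satisfies the hypotheses of the classical DiPerna--Lions renormalization theory---and then to let $k\to\infty$, controlling the error terms by the decay of the oscillation quantities $O_k$ and $Q_k$ supplied by Lemmas~\ref{lem.Wk} and \ref{lem.Qk}. This route bypasses the restriction $\rhotot\in L^2(\Omega)$ required by a direct application of DiPerna--Lions and is in line with the novelty of the paper announced in the introduction.

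First, I apply \eqref{ren.cont.eq} to $\rhotot_\delta$ with the renormalization $b=T_k$. Since $T_k(\rhotot_\delta)$ and $g_k(\rhotot_\delta):=T_k(\rhotot_\delta)-\rhotot_\delta T_k'(\rhotot_\delta)$ are bounded in $L^\infty(\Omega)$, they admit weak-$*$ subsequential limits. Combining this with the strong convergence $\vel_\delta\to\vel$ in $L^q(\Omega;\R^3)$ for $q<6$ and the weak $L^2$-convergence of $\diver\vel_\delta$, passage to $\delta\to 0$ yields an equation involving the weak-times-weak limit $\overline{g_k(\rhotot_\delta)\diver\vel_\delta}$. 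To handle this product I apply the Bogovskii/div-curl argument that proves Lemma~\ref{lem.eff} verbatim, now with the bounded function $g_k$ in place of $T_k$, which gives
$$\overline{g_k(\rhotot_\delta)\diver\vel_\delta}-\overline{g_k(\rhotot_\delta)}\,\diver\vel=\frac{Q_k}{\lambda_2+\tfrac{4}{3}\lambda_1}.$$
Substituting back, I obtain the perturbed continuity equation in $\mathcal{D}'(\Omega)$,
$$\diver(S_k\vel)+R_k\,\diver\vel=\frac{Q_k}{\lambda_2+\tfrac{4}{3}\lambda_1},\qquad R_k:=\overline{\rhotot_\delta T_k'(\rhotot_\delta)-T_k(\rhotot_\delta)}.$$

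Because $S_k\in L^\infty(\Omega)$ and $\vel\in H^1(\Omega)$, the classical DiPerna--Lions commutator argument applies directly to this equation, so that for every admissible $b\in C^1(\R)$ with $b(0)=0$ and $b'\in C_c(\R)$,
$$\diver\big(b(S_k)\vel\big)+\big(S_k b'(S_k)-b(S_k)\big)\diver\vel=b'(S_k)\bigg[\frac{Q_k}{\lambda_2+\tfrac{4}{3}\lambda_1}-R_k\diver\vel\bigg]\qquad\text{in }\mathcal{D}'(\Omega).$$
I then let $k\to\infty$. From $O_k=T_k(\rhotot)-S_k\to 0$ a.e.\ (Lemma~\ref{lem.Qk}) and $T_k(\rhotot)\to\rhotot$ pointwise, I conclude $S_k\to\rhotot$ a.e.\ and in $L^p$ for every $p<\gamma+\nu$. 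The quantity $R_k$ vanishes pointwise on $\{\rhotot_\delta\le k\}$ in the pre-limit and satisfies $\|R_k\|_{L^1(\Omega)}\le C\,k^{1-(\gamma+\nu)}\to 0$ by Chebyshev applied to $\rhotot_\delta\in L^{\gamma+\nu}$; Lemma~\ref{lem.Qk} provides $Q_k/(\lambda_1+\lambda_2)\to 0$ in $L^1(\Omega)$. Exploiting the compact support of $b'$ (so that $b'(S_k)$ is supported on $\{S_k\le M\}$ for some $M$) together with dominated convergence, I pass to the limit in the above identity and obtain \eqref{con-ren} for $(\rhotot,\vel)$.

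The main obstacle is controlling the weak-times-weak product $\overline{g_k(\rhotot_\delta)\diver\vel_\delta}$, which does not factor naively; this is precisely what the effective viscous flux identity resolves, trading the product for a manageable remainder $Q_k$ that vanishes by Lemma~\ref{lem.Qk} as $k\to\infty$. A secondary but delicate point is the convergence of $b'(S_k)R_k\diver\vel$ to zero: it is handled by combining the compact support of $b'$, the decay of $\|R_k\|_{L^1}$, and the dominating bound $|R_k|\le 3\,\overline{\rhotot_\delta\mathbf{1}_{\{\rhotot_\delta>k\}}}\le 3\,\rhotot$, thereby avoiding the higher integrability of $\rhotot$ that a classical oscillation-defect-measure approach would require.
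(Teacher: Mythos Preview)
Your approach is essentially identical to the paper's: both use $b=T_k$ at the $\delta$-level, pass to the limit, invoke the effective viscous flux identity for $g_k=T_k-\mathrm{id}\cdot T_k'$ to replace the weak--weak product by $Q_k$, renormalize the resulting transport equation for $S_k=\overline{T_k(\rhotot_\delta)}\in L^\infty$ via DiPerna--Lions, and let $k\to\infty$ using the decay of $Q_k$ and $R_k$.

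There is, however, a small gap in your final step. Your domination $|R_k|\le 3\rhotot$ gives only $b'(S_k)R_k\to 0$ in $L^p$ for $p<\gamma+\nu$, which fails to pair with $\diver\vel\in L^2$ when $\gamma+\nu<2$ (precisely the regime the lemma is meant to cover). The paper uses instead the sharper concavity bound
\[
0\le T_k(s)-sT_k'(s)\le T_k(s),\qquad\text{hence}\qquad |R_k|\le S_k,
\]
so that on the support of $b'(S_k)$ (where $S_k\le M$ by compact support of $b'$) one has the uniform pointwise bound $|b'(S_k)R_k|\le \|b'\|_\infty M$. Combined with $R_k\to 0$ in $L^1$ this yields $b'(S_k)R_k\to 0$ strongly in every $L^q$, $q<\infty$, and the pairing with $\diver\vel$ goes through without any integrability assumption on $\rhotot$ beyond $L^1$. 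Replace your bound by this one and the argument closes.
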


\begin{proof}
We use \eqref{ren.cont.eq} with $b=T_k$:
$$
  -\int_\Omega T_k(\rhotot_\delta)\vel_\delta\cdot\na\psi \, dx
	+\int_\Omega\big(\rhotot_\delta T_k'(\rhotot_\delta)-T_k(\rhotot_\delta)\big)
	\diver \vel_\delta\psi \, dx =0,
$$
pass to the limit $\delta\to 0$, and use the fact that
$\vel_\delta\to \vel$ strongly in $L^2(\Omega)$ to infer that
$$
  -\int_\Omega \overline{T_k(\rhotot_\delta)}\vel\cdot\na\psi \, dx
	+ \int_\Omega\overline{(\rhotot_\delta T_k'(\rhotot_\delta)-T_k(\rhotot_\delta))
	\diver\vel_\delta}\psi \, dx=0,
$$
which is the weak formulation of
\begin{equation*}
\diver (\overline{T_k(\rhotot_\delta)}\vel) + \overline{(\rhotot_\delta T_k'(\rhotot_\delta)-T_k(\rhotot_\delta))
	\diver\vel_\delta}=0 \quad\textrm{in the sense of distributions}.
\end{equation*}
Since $\overline{T_k(\rhotot_\delta)}\in L^{\infty}(\Omega)$ and $\vel \in
W^{1,2}(\Omega)$, this equation can be renormalized; see, e.g.,
\cite[Theorem 10.29]{NoSt04}.
It follows for any Lipschitz continuous function $\beta$ with compactly supported
derivative and any function $\psi\in W^{1,\infty}(\Omega)$ that
\begin{equation}\label{sstt}
\begin{split}
-\int_{\Omega}&
b\big(\overline{T_k(\rhotot_\delta)}\big) \vel\cdot \nabla \psi\, dx
+ \int_{\Omega}\big(\overline{T_k(\rhotot_\delta)}
b'\big(\overline{T_k(\rhotot_\delta)}\big)
-b\big(\overline{T_k(\rhotot_\delta)}\big)\big)\diver\vel\psi\, dx\\
&= \int_\Omega b'\big(\overline{T_k(\rhotot_\delta)}\big)
\overline{(T_k(\rhotot_\delta)-\rhotot_\delta T_k'(\rhotot_\delta))
	\diver\vel_\delta}\psi \, dx.
\end{split}
\end{equation}
To identify and estimate the term on the right-hand side, we use definition
\eqref{def.Qk} of $Q_k$, the effective viscous flux identity \eqref{evf},
and a similar identity with $\rhotot_\delta T_k'(\rhotot_\delta)$ instead of
$T_k(\rhotot_\delta)$:
\begin{equation*}
\begin{split}
  Q_k&=\bigg(\lambda_2(\theta)+\frac43\lambda_1(\theta)\bigg)
	\big(\overline{(T_k(\rhotot_\delta)-\rhotot_\delta T_k'(\rhotot_\delta))\diver \vel_\delta}
	- \overline{(T_k(\rhotot_\delta)-\rhotot_\delta T_k'(\rhotot_\delta))}\diver\vel\big).
\end{split}
\end{equation*}
We insert this relation into \eqref{sstt} to obtain
\begin{equation}\label{sstt2}
\begin{split}
-\int_{\Omega}&
b\big(\overline{T_k(\rhotot_\delta)}\big) \vel\cdot \nabla \psi\, dx
+ \int_{\Omega}\big(\overline{T_k(\rhotot_\delta)}
b'\big(\overline{T_k(\rhotot_\delta)}\big)
-b\big(\overline{T_k(\rhotot_\delta)}\big)\big)\diver\vel\psi\, dx\\
&= \int_\Omega b'\big(\overline{T_k(\rhotot_\delta)}\big)
\overline{(T_k(\rhotot_\delta)-\rhotot_\delta T_k'(\rhotot_\delta))}
\diver\vel\psi \, dx + \int_\Omega \frac{b'(\overline{T_k(\rhotot_\delta)})Q_k
}{\lambda_2(\theta)+\frac43\lambda_1(\theta)}\psi \, dx.
\end{split}
\end{equation}

Our goal is  to perform the limit $k\to \infty$ in \eqref{sstt2}
to recover \eqref{ren.cont.eq}
with $(\rhotot_\delta,\vel_\delta)$ replaced by $(\rhotot,\vel)$. We first show that
$\overline{T_k(\rhotot_\delta)} \to \rhotot$ as $k\to \infty$ a.e.\ in $\Omega$.
To this end, we recall the weak lower semicontinuity of the $L^1(\Omega)$ norm
leading to
\begin{align*}
  \int_\Omega\big|\overline{T_k(\rhotot_\delta)}-\rhotot\big|\, dx
	&\le \liminf_{\delta\to 0}\int_\Omega|T_k(\rhotot_\delta)-\rhotot_\delta|\,dx
	\leq \liminf_{\delta\to 0}\int_{\{\rhotot_\delta\ge k\}}(\rhotot_\delta-T_k(\rhotot_\delta))\,dx \\
	&\le \sup_{\delta >0}\int_{\{\rhotot_\delta\ge k\}}\rhotot_\delta\,dx.
\end{align*}
Since $(\rhotot_{\delta})$ is weakly convergent in $L^1(\Omega)$
(assumption \eqref{Kon1}), we have $|\{\rhotot_\delta\ge k\}|\le C/k$, and since
the sequence is uniformly equiintegrable, we deduce that
\begin{equation}\label{3.b1}
  \limsup_{k\to \infty} \int_\Omega\big|\overline{T_k(\rhotot_\delta)}-\rhotot\big|\, dx
  \le \limsup_{k\to \infty} \bigg(\sup_{\delta > 0}\int_{\{\rhotot_\delta\ge k\}}
	\rhotot_\delta\,dx\bigg)=0
\end{equation}
which shows the pointwise convergence of $\overline{T_k(\rhotot_\delta)}$
(at least for a subsequence). Similarly,
\begin{equation}\label{3.b2}
  \lim_{k\to\infty}\int_\Omega\big|\overline{T_k(\rhotot_\delta)-\rhotot_\delta
	T_k'(\rhotot_\delta)}\big|\, dx = 0.
\end{equation}
The concavity of $T_k$ implies that $|T_{k}(s) - sT_k'(s)|\le T_k(s)$ and
consequently,
$$
\big|\overline{T_k(\rhotot_\delta)-\rhotot_\delta
	T_k'(\rhotot_\delta)}\big| \le \overline{T_k(\rhotot_\delta)}
	\quad \textrm{a.e. in } \Omega.
$$
Thus, since $b$ is bounded and its derivative is compactly supported, there
exists $C>0$ only depending on $b$ (but not on $k$) such that
$$
  \big|b\big(\overline{T_{k} (\rhotot_{\delta})}\big)\big|
	+ \big|\overline{T_k(\rhotot_\delta)} b'\big(\overline{T_k(\rhotot_\delta)}
	\big) - b\big(\overline{T_k(\rhotot_\delta)}\big)\big|
	+ \big|b'\big(\overline{T_k(\rhotot_\delta)}\big)
	\overline{(T_k(\rhotot_\delta)-\rhotot_\delta T_k'(\rhotot_\delta))}\big|\le C.
$$
This estimate and \eqref{3.b1}--\eqref{3.b2} lead to
\begin{align*}
  b\big(\overline{T_k(\rhotot_\delta)}\big) \to b(\rho)
	&\quad\textrm{strongly in }L^q(\Omega), \\
	b'\big(\overline{T_k(\rhotot_\delta)}\big)(\overline{T_k(\rhotot_\delta)
	-\rhotot_\delta T_k'(\rhotot_\delta)}) \to 0 &\quad\textrm{strongly in }L^q(\Omega), \\
	b'\big(\overline{T_k(\rhotot_\delta)}\big)\overline{T_k(\rhotot_\delta)}
	- b\big(\overline{T_k(\rhotot_\delta)}\big)
	\to b'(\rhotot)\rhotot-b(\rhotot) &\quad\textrm{strongly in }L^q(\Omega)
\end{align*}
for any $q<\infty$.
These convergence results, estimate \eqref{sOO} for $Q_k$, and the
boundedness of $b'$ allow us to pass
to the limit $k\to\infty$ in \eqref{sstt2} which gives \eqref{ren.cont.eq}
for $(\rhotot,\vel)$.
\end{proof}

\subsection{Strong convergence of the densities}

We finally show that $\vec{\rhotot}_{\delta}\to \vec{\rhotot}$ a.e.\
in $\Omega$. First, we prove that the total mass density $\rhotot_{\delta}$
converges pointwise and then, exploiting the compactness of $\Pi \vec{\mu}_{\delta}$
and the convexity of the free energy, we deduce the compactness of the sequence
$(\vec{\rhotot}_{\delta})$. We start with the convergence of the total mass densities.

\begin{lemma}[Strong convergence of the total mass densities]\label{thm.conv}\sloppy\ \
Let the assumptions of Lemmata~\ref{lem.eff}--\ref{lem.wk.bound} be satisfied and let
$(\rhotot_{\delta},\vel_{\delta})$ solve the renormalized
mass continuity equation \eqref{ren.cont.eq}. Then
\begin{equation}\label{sc.den}
\rhotot_{\delta} \to \rhotot \qquad \textrm{strongly in }L^1(\Omega)\mbox{ as }
\delta\to 0.
\end{equation}
\end{lemma}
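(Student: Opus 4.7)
My plan is to combine the effective viscous flux identity \eqref{evf} with the bounds on $W_k$ and $O_k$ from Lemmas \ref{lem.Wk}--\ref{lem.Qk}, exploiting that both $(\rhotot_\delta,\vel_\delta)$ (by hypothesis) and the limit $(\rhotot,\vel)$ (by Lemma \ref{lem.renorm}) solve the renormalized continuity equation. The first step is to test \eqref{ren.cont.eq} with the renormalization function $b_k(s):=s\int_1^s T_k(z)/z^2\, dz$, chosen precisely so that $s\,b_k'(s)-b_k(s)=T_k(s)$, and with the constant test function $u\equiv 1$. Although $b_k'$ is only bounded and not compactly supported, this choice is admissible by a standard approximation argument using the improved integrability $\rhotot_\delta\in L^{\gamma+\nu}(\Omega)$ from Lemma \ref{lem.dens}. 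This yields
\begin{equation*}
\int_\Omega T_k(\rhotot_\delta)\diver\vel_\delta\, dx = 0,\qquad \int_\Omega T_k(\rhotot)\diver\vel\, dx = 0.
\end{equation*}

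Passing to the weak limit $\delta\to 0$ in the first identity and using the effective viscous flux identity \eqref{evf} in the form $\overline{T_k(\rhotot_\delta)\diver\vel_\delta}=\overline{T_k(\rhotot_\delta)}\diver\vel+W_k/(\lambda_2(\theta)+\tfrac{4}{3}\lambda_1(\theta))$, together with $\overline{T_k(\rhotot_\delta)}=T_k(\rhotot)-O_k$ and the second identity above, leads to
\begin{equation*}
\int_\Omega\frac{W_k}{\lambda_2(\theta)+\tfrac{4}{3}\lambda_1(\theta)}\, dx = \int_\Omega O_k\diver\vel\, dx.
\end{equation*}
A weighted Cauchy--Schwarz inequality estimates the right-hand side by
\begin{equation*}
\bigg(\int_\Omega\frac{\theta\,O_k^2}{\lambda_1(\theta)+\lambda_2(\theta)}\, dx\bigg)^{1/2}\bigg(\int_\Omega\frac{\lambda_1(\theta)+\lambda_2(\theta)}{\theta}(\diver\vel)^2\, dx\bigg)^{1/2},
\end{equation*}
and the first factor vanishes as $k\to\infty$ by \eqref{sOO} in Lemma \ref{lem.Qk}, while the second is finite (being the lower semicontinuous limit of the analogous quantity for $\vel_\delta$, which is controlled by the entropy estimate from Lemma \ref{lem.est1}). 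Since $W_k\ge 0$ is nondecreasing in $k$ by \eqref{Wk1}, monotone convergence then forces $W_k=0$ a.e.\ in $\Omega$ for every fixed $k$.

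With $W_k=0$, \eqref{Wk2} yields $\overline{\rhotot_\delta T_k(\rhotot_\delta)}=\rhotot\,\overline{T_k(\rhotot_\delta)}$ a.e., and the algebraic inequality $(T_k(s_1)-T_k(s_2))^2\le s_1 T_k(s_1)+s_2 T_k(s_2)-s_1 T_k(s_2)-s_2 T_k(s_1)$ used in the proof of Lemma \ref{lem.Qk} gives $\overline{|T_k(\rhotot_\delta)-T_k(\rhotot)|^2}=0$, i.e.\ $T_k(\rhotot_\delta)\to T_k(\rhotot)$ strongly in $L^2(\Omega)$ for each fixed $k$. The strong $L^1(\Omega)$ convergence of $\rhotot_\delta$ to $\rhotot$ then follows from
\begin{equation*}
\|\rhotot_\delta-\rhotot\|_{L^1}\le \|\rhotot_\delta-T_k(\rhotot_\delta)\|_{L^1}+\|T_k(\rhotot_\delta)-T_k(\rhotot)\|_{L^1}+\|T_k(\rhotot)-\rhotot\|_{L^1},
\end{equation*}
where the outer two terms are made uniformly small by choosing $k$ large (by Chebyshev and the $L^{\gamma+\nu}$-bound on $(\rhotot_\delta)$ from Lemma \ref{lem.dens}), and the middle term vanishes as $\delta\to 0$ for each fixed $k$.

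The main obstacle is the rigorous justification of using $b_k$, which has bounded but not compactly supported derivative, in \eqref{ren.cont.eq} as originally formulated. This is overcome by truncating $b_k$ outside a large ball in the density variable so that its derivative becomes compactly supported, and then letting the truncation parameter tend to infinity while exploiting the uniform $L^{\gamma+\nu}$ integrability of $\rhotot_\delta$ and $\rhotot$. A secondary subtle point is the pointwise (not merely integrated) validity of the effective viscous flux identity \eqref{evf}, which is precisely what Lemma \ref{lem.eff} provides and what allows the replacement of $\overline{T_k(\rhotot_\delta)\diver\vel_\delta}$ inside the integral by $\overline{T_k(\rhotot_\delta)}\diver\vel+W_k/(\lambda_2(\theta)+\tfrac{4}{3}\lambda_1(\theta))$.
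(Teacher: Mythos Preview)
Your proof is correct and follows essentially the same route as the paper: obtain $\int_\Omega T_k(\rhotot_\delta)\diver\vel_\delta\,dx=0$ and $\int_\Omega T_k(\rhotot)\diver\vel\,dx=0$ from the renormalized equation, combine with the effective viscous flux identity to isolate $\int_\Omega W_k/(\lambda_2+\tfrac43\lambda_1)\,dx$, bound the remaining $O_k$-term by weighted Cauchy--Schwarz and \eqref{sOO}, and conclude $W_k\equiv 0$ by monotonicity. The only differences are cosmetic: the paper reaches $\int_\Omega T_k(\rhotot)\diver\vel\,dx=0$ by approximating $T_k$ from below by compactly supported $f$ and using $b_f(s)=-s\int_s^\infty f(t)t^{-2}\,dt$ (so no extra integrability of $\rhotot$ is invoked at this step), and it phrases the final compactness via the two-parameter limit $\lim_{\eps}\lim_{\delta}\int_\Omega(\rhotot_\delta-\rhotot_\eps)(T_k(\rhotot_\delta)-T_k(\rhotot_\eps))\,dx=0$ rather than your one-parameter comparison with the fixed limit $\rhotot$.

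One small inaccuracy: your $b_k(s)=s\int_1^s T_k(z)z^{-2}\,dz$ satisfies $b_k(s)=s\ln s$ near $0$, so $b_k'(s)=\ln s+1$ is \emph{not} bounded near the origin. This does not break the argument, because the combination $s\,b_k'(s)-b_k(s)=T_k(s)$ that actually appears in the integral is bounded, and the approximation (cutting off $b_k'$ near both $0$ and $\infty$) still passes to the limit by dominated convergence with majorant $2k|\diver\vel|\in L^1$. The paper's choice of $b_f$ with $f$ compactly supported in $(0,\infty)$ sidesteps this issue entirely.
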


\begin{proof}
The idea of the proof is to use the effective flux identity \eqref{evf} and combine it with the properties of $W_k$, defined in \eqref{Wk}. We prove the key property
\begin{equation}
\lim_{k\to \infty} \int_{\Omega}\big( \overline{T_{k}(\rhotot_{\delta})\diver \vel_{\delta}}-\overline{T_{k}(\rhotot_{\delta})}\diver \vel\big)\, dx =0. \label{zero22}
\end{equation}
We claim that if $(\rhotot, \vel)$ or
$(\rhotot_{\delta}, \vel_{\delta})$, respectively, are renormalized solutions, then
\begin{equation}
0=\int_{\Omega} T_k(\rhotot) \diver \vel\, dx = \int_{\Omega} T_k(\rhotot_{\delta}) \diver \vel_{\delta}\, dx. \label{nula}
\end{equation}
For this purpose, we choose $\psi=1$ in \eqref{ren.cont.eq} such that for any bounded function $b$ with compactly supported derivative, we have
$$
\int_{\Omega} \big(b'(\rhotot)\rhotot - b(\rhotot)\big)\diver \vel\, dx =0,
$$
and a similar identity holds for $\rhotot_{\delta}$. For any continuous
compactly supported function $f$ in $(0,\infty)$, we define
$$
  b_f(s):=-s\int_{s}^{\infty}\frac{f(t)}{t^2}\,dt, \quad s\ge 0.
$$
Then $b_{f}$ is bounded, has a compactly supported derivative, and it holds that
$b_{f}'(s) s -b_f (s) = f(s)$ for $s\ge 0$. Hence,
$$
  0=\int_{\Omega}\big(b_{f}'(\rhotot) \rhotot - b_f (\rhotot)\big)\diver \vel \,dx
  = \int_{\Omega} f(\rhotot)\diver \vel\, dx.
$$
Let $(f^m)$ be a sequence of compactly supported functions satisfying
$f^m\nearrow T_k$ pointwise as $m\to\infty$. Choosing $f=f^m$ in the previous
identity and passing to the limit $m\to\infty$ leads to \eqref{nula}.
Of course, the same argument applies to $(\rhotot_{\delta}, \vel_{\delta})$.
Thus,
\begin{equation*}
  \int_\Omega \overline{T_k(\rhotot_\delta)\diver\vel_\delta}\,  dx =\lim_{\delta \to 0}\int_\Omega T_k(\rhotot_\delta)\diver\vel_\delta \,dx = 0,
\end{equation*}
and the first part in \eqref{zero22} vanishes. It remains to estimate the second part.
It follows from \eqref{def.Ok}, \eqref{sOO}, \eqref{nula} and the
Cauchy--Schwarz inequality that
\begin{align*}
\lim_{k\to\infty}&\left|\int_\Omega \overline{T_k(\rhotot_\delta)} \diver\vel \, dx\right| = \lim_{k\to\infty}\left|\int_\Omega ( \overline{T_k(\rhotot_\delta)}-T_k(\rhotot) )\diver\vel \, dx\right|
\le \lim_{k\to\infty}\int_\Omega | O_k|\, |\diver\vel| \, dx \nonumber \\
&\le  \left(\int_\Omega \frac{\lambda_1(\theta)+\lambda_2(\theta)} {\theta}
|\diver\vel|^2 \, dx\right)^{1/2} \lim_{k\to\infty}\left(\int_\Omega \frac{\theta O_k^2}{\lambda_1(\theta)+\lambda_2(\theta)} \, dx\right)^{1/2}=0, 
\end{align*}
and \eqref{zero22} holds true.
Thus, using \eqref{evf} and \eqref{Wk} in \eqref{zero22}, it follows that
\begin{align*}
  0 &= \lim_{k\to\infty}\int_\Omega\big(\overline{T_k(\rhotot_\delta)\diver\vel_\delta}
	- \overline{T_k(\rhotot_\delta)}\diver\vel\big)\, dx \\
	&= \lim_{k\to\infty}\int_\Omega\frac{\overline{p_\delta T_k(\rhotot_\delta)}
	- p\overline{T_k(\rhotot_\delta)}}{\lambda_2(\theta)+\frac43\lambda_1(\theta)}\,dx
  = \lim_{k\to\infty}\int_\Omega\frac{W_k}{\lambda_2(\theta)
	+ \frac43 \lambda_1(\theta)}\, dx.
\end{align*}
Hypothesis (H3) and the nonnegativity of $W_k$ yield $\lim_{k\to\infty}\int_\Omega W_k\,dx=0$, which means that $\lim_{k\to\infty} W_k=0$ a.e.~in $\Omega$. The fact 
that $W_k\le W_{k+1}$ for $k\geq 1$ a.e.~in $\Omega$ implies that
\begin{equation*}
W_k \equiv 0 \quad \textrm{a.e. in } \Omega,\ k\in\N. 
\end{equation*}
Hence, \eqref{Wk2} shows that $\theta(\overline{\rhotot_\delta T_k(\rhotot_\delta)}
-\rhotot\overline{T_k(\rhotot_\delta)})=0$ in $\Omega$ and so, since $\theta>0$ a.e.\ in $\Omega$,
\begin{equation}\label{finale}
  0 = \int_\Omega\big(\overline{\rhotot_\delta T_k(\rhotot_\delta)}
  -\rhotot\overline{T_k(\rhotot_\delta)}\big)dx
  = \lim_{\eps \to 0} \lim_{\delta \to 0} \int_{\Omega} (\rhotot_\delta
	- \rhotot_{\eps})(T_k(\rhotot_{\delta}) - T_k(\rhotot_{\eps}))\, dx.
\end{equation}
Finally, using the weak lower semicontinuity of the $L^1(\Omega)$ norm,
the Cauchy--Schwarz inequality, and \eqref{finale},
\begin{align*}
&\lim_{\eps \to 0} \int_{\Omega} |\rhotot_{\eps}-\rhotot|\, dx\le \lim_{\eps \to 0} \lim_{\delta\to 0}\int_{\Omega} |\rhotot_{\eps}-\rhotot_{\delta}|\, dx\\
 &\le  \lim_{\eps \to 0} \lim_{\delta\to 0}\int_{\{\rhotot_{\delta}>k\}\cup\{\rhotot_{\eps}>k\}} |\rhotot_{\eps}-\rhotot_{\delta}|\, dx + \lim_{\eps \to 0} \lim_{\delta\to 0}\int_{\Omega} |T_k(\rhotot_{\eps})-T_k(\rhotot_{\delta})|\, dx\\
&\le \sup_{\eps, \delta >0} \int_{\{\rhotot_{\delta}>k\}\cup\{\rhotot_{\eps}>k\}} |\rhotot_{\eps}-\rhotot_{\delta}|\, dx + |\Omega|^{1/2}\lim_{\eps \to 0} \lim_{\delta\to 0} \left(\int_{\Omega} |T_k(\rhotot_{\eps})-T_k(\rhotot_{\delta})|^2\, dx\right)^{1/2}\\
&\le \sup_{\eps, \delta >0} \int_{\{\rhotot_{\delta}>k\}\cup\{\rhotot_{\eps}>k\}} |\rhotot_{\eps}-\rhotot_{\delta}|\, dx + |\Omega|^{1/2}\lim_{\eps \to 0} \lim_{\delta\to 0} \left(\int_{\Omega} (\rhotot_{\eps}-\rhotot_{\delta})(T_k(\rhotot_{\eps})-T_k(\rhotot_{\delta}))\, dx\right)^{1/2}\\
&= \sup_{\eps, \delta >0} \int_{\{\rhotot_{\delta}>k\}\cup\{\rhotot_{\eps}>k\}} |\rhotot_{\eps}-\rhotot_{\delta}|\, dx.
\end{align*}
The left-hand side is independent of $k$ and therefore, we may perform the limit
$k\to \infty$ also on the right-hand side. Then, thanks to uniform equiintegrability of $(\rhotot_{\delta})$, we deduce \eqref{sc.den}, which finishes the proof.
\end{proof}

We end this subsection, and also the proof of Theorem~\ref{CC.thm}, by the following lemma, which shows the compactness of the vector $\vec{\rhotot}$ of mass densities. In addition, we show that either the total mass density vanishes, i.e., all partial mass densities are zero, or all partial mass densities are strictly positive a.e.\ in $\Omega$.

\begin{lemma}[Strong convergence of the vector of mass densities]\label{thm.conv.total}
Let all assumptions of Lemmata~\ref{lem.eff}--\ref{lem.wk.bound} be satisfied and
let $(\rhotot_{\delta},\vel_{\delta})$ solve the renormalized
mass continuity equation \eqref{ren.cont.eq}. Then
\begin{equation}\label{sc.den.2}
\vec{\rhotot}_{\delta} \to \vec{\rhotot} \textrm{ strongly in }
L^1(\Omega; \mathbb{R}^N)\quad\mbox{as }\delta\to 0.
\end{equation}
Moreover, for a.e.\ $x\in \Omega$ and all $i=1,\ldots,N$,
\begin{equation}\label{aeposit}
\rhotot(x)>0 \implies \rhotot_i (x) >0.
\end{equation}
\end{lemma}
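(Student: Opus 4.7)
The plan is to combine the strong convergence $\rho_\delta \to \rho$ in $L^1(\Omega)$ provided by Lemma \ref{thm.conv} with the hypothesized compactness of the relative chemical potentials $\Pi(\vec{\mu}_\delta/\theta_\delta)$ to recover a.e.\ convergence of the full chemical-potential vector $\vec{\mu}_\delta$, and then transfer that compactness to the partial densities via the homeomorphism $\nabla h^*_\theta : \R^N \to \R^N_+$ from Lemma \ref{lem:H6}. Writing $\vec{\mu}_\delta = \Pi\vec{\mu}_\delta + \bar{\mu}_\delta\vec{1}$ with $\bar{\mu}_\delta = N^{-1}\sum_i \mu_{\delta,i}$, the only piece of information not already at hand is control of the scalar field $\bar{\mu}_\delta$ on the set $\{\rho > 0\}$.

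To obtain this control, fix $\eta > 0$ and apply Egorov to the strong convergences of $\rho_\delta$, $\theta_\delta$, $\log\theta_\delta$, and $\Pi(\vec{\mu}_\delta/\theta_\delta)$ to select a set $\Omega_\eta \subset \Omega$ with $|\Omega\setminus\Omega_\eta|<\eta$ on which all four sequences converge uniformly and on which $\theta$ lies in a fixed compact subinterval of $(0,\infty)$; in particular $\Pi\vec{\mu}_\delta$ is then uniformly bounded on $\Omega_\eta$. Split $\Omega_\eta$ into $\{\rho = 0\}$ and $\{\rho \ge \kappa\}$ for small $\kappa > 0$. On the first piece the sandwich $0 \le \rho_{\delta,i} \le \rho_\delta \to 0$ yields $\rho_{\delta,i} \to 0 = \rho_i$ a.e.\ directly. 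On the second piece I claim $(\bar{\mu}_\delta)$ is uniformly bounded. The upper bound is immediate from Lemma \ref{l A1}: if $\bar{\mu}_\delta \to +\infty$ at some point, then $\min_i \mu_{\delta,i} \to +\infty$ there (as $\Pi\vec{\mu}_\delta$ is uniformly bounded) while $\theta_\delta$ and $\rho_\delta$ stay bounded, forcing $\rho_\delta \to 0$ and contradicting $\rho \ge \kappa$. The lower bound uses \eqref{H7C}: if $\bar{\mu}_\delta \to -\infty$, all $\mu_{\delta,i} \to -\infty$; since $\sum_i \rho_{\delta,i} \ge \kappa$, a pigeonhole together with extraction of a further subsequence yields an index $i^*$ with $\rho_{\delta,i^*}$ trapped in a fixed compact subinterval of $(0,\infty)$, and then \eqref{H7C} gives $\mu_{\delta,i^*} \ge -C$, a contradiction.

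With $(\vec{\mu}_\delta)$ bounded on $\{\rho \ge \kappa\}\cap \Omega_\eta$, extract an a.e.-convergent subsequence $\vec{\mu}_\delta \to \vec{\mu}$; the joint continuity of $\nabla h^*_\theta$ in $(\vec{\mu},\theta)$ recorded in \eqref{H6A} implies $\vec{\rho}_\delta = \nabla h^*_{\theta_\delta}(\vec{\mu}_\delta) \to \nabla h^*_\theta(\vec{\mu})$ a.e.\ on this set, and since $\nabla h^*_\theta$ maps $\R^N$ into $\R^N_+$, each component of the limit is strictly positive. Uniqueness of the weak limit identifies the a.e.\ limit with $\vec{\rho}$, which simultaneously proves \eqref{aeposit}. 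Diagonalizing $\eta\to 0$ and $\kappa\to 0$ yields a.e.\ convergence $\vec{\rho}_\delta \to \vec{\rho}$ on all of $\Omega$; finally, the $L^{\gamma+\nu}(\Omega)$ bound on the partial densities obtained from Lemma \ref{lem.dens} (with $\gamma+\nu>1$) provides equi-integrability, so Vitali's theorem upgrades the a.e.\ convergence to strong convergence in $L^1(\Omega;\R^N)$, which is \eqref{sc.den.2}.

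The main obstacle is the uniform upper and lower bound on $\bar{\mu}_\delta$ on positive-density sets. This is precisely where the asymptotic structure of the free energy embodied in Hypothesis (H6a), through the pressure estimate \eqref{prrr}/Lemma \ref{l A1} on the one hand and the one-sided bound \eqref{H7C} on the other, plays a role that goes beyond the information already encoded in the compactness of $\Pi\vec{\mu}_\delta$.
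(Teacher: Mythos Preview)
Your overall strategy matches the paper's closely: use Egorov to localize to sets where $\rho_\delta$, $\theta_\delta$, $\log\theta_\delta$, $\Pi\vec\mu_\delta$ converge uniformly, then on $\{\rho\ge\kappa\}\cap\Omega_\eta$ derive two-sided bounds on the full chemical-potential vector $\vec\mu_\delta$. Your bounds are obtained exactly as in the paper (pigeonhole plus \eqref{H7C} for the lower bound; your use of Lemma~\ref{l A1} for the upper bound is equivalent to the paper's second implication in its display~\eqref{kappa}). The positivity statement \eqref{aeposit} via $\nabla h^*_\theta:\R^N\to\R^N_+$ is also the paper's argument.

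There is, however, a genuine gap at the step ``With $(\vec\mu_\delta)$ bounded on $\{\rho\ge\kappa\}\cap\Omega_\eta$, extract an a.e.-convergent subsequence $\vec\mu_\delta\to\vec\mu$.'' Uniform $L^\infty$-boundedness of a sequence of measurable functions does \emph{not} furnish an a.e.-convergent subsequence (e.g.\ $\sin(kx)$ on $(0,2\pi)$). You have $\Pi\vec\mu_\delta$ converging uniformly, so the issue reduces to the scalar field $\bar\mu_\delta$, for which you only have a bound. What is missing is a \emph{uniqueness} argument identifying the pointwise accumulation points: at a.e.\ $x$, any subsequential limit $t$ of the bounded real sequence $\bar\mu_\delta(x)$ must satisfy $\sum_{i=1}^N\pa_i h^*_{\theta(x)}\big(t\vec 1+\overline{\Pi\vec\mu}(x)\big)=\rho(x)$, and this equation has a unique solution because $t\mapsto\sum_{i,j}\pa^2_{ij}h^*_\theta(t\vec 1+c)>0$ by the positive definiteness of $\nabla^2 h^*_\theta=(\nabla^2 h_\theta)^{-1}$ coming from \eqref{LMB21}. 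Only then does the bounded sequence $\bar\mu_\delta(x)$ actually converge.

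The paper bypasses this detour entirely: once $\vec\mu_\delta$ is bounded on $\Omega_{\eta,\kappa}$ it uses the strict monotonicity of $\nabla h_\theta$ directly on the partial densities,
\[
0\le(\vec\rho_\eps-\vec\rho_\delta)\cdot(\nabla h_\theta(\vec\rho_\eps)-\nabla h_\theta(\vec\rho_\delta))
=(\vec\rho_\eps-\vec\rho_\delta)\cdot(\vec\mu_\eps-\vec\mu_\delta),
\]
splits $\vec\mu_\eps-\vec\mu_\delta$ into its $\Pi$-part (which vanishes in the limit) and its $\vec 1$-part (which pairs against $\rho_\eps-\rho_\delta$ and is bounded), and concludes the whole expression tends to zero; strict convexity then forces $|\vec\rho_\eps-\vec\rho_\delta|\to 0$. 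This avoids ever proving that $\bar\mu_\delta$ itself converges and is a cleaner way to close the argument. Either route works, but you must supply the missing uniqueness step if you keep yours.
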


\begin{proof}
Taking into account the strong convergence results, derived in this section,
Egorov's theorem implies that for any $\eta>0$, there exists a set $\Omega_{\eta}$
such that $|\Omega \setminus \Omega_{\eta}|\le \eta$ and
\begin{align*}
\rhotot_{\delta} \to \rhotot &\quad\textrm{strongly in } L^{\infty}(\Omega_{\eta}),\\
\Pi \vec{\mu}_{\delta} \to \overline{\Pi \vec{\mu}_{\delta}}
&\quad\textrm{strongly in } L^{\infty}(\Omega_{\eta}; \mathbb{R}^N),\\
\theta_{\delta} \to \theta &\quad\textrm{strongly in } L^{\infty}(\Omega_{\eta}),\\
\ln \theta_{\delta} \to \ln \theta &\quad\textrm{strongly in }
L^{\infty}(\Omega_{\eta}).
\end{align*}

We collect some facts about the free energy density 
$h_{\theta}$. Since \eqref{LMB2} and \eqref{H7C} hold and the mapping 
$(\vec{\rhotot},\theta)\in\R^N_+\times\R_+\mapsto \na_{\vec{\rhotot}}
h_{\theta}(\vec{\rhotot})\in\R^N$ is continuous, for every $\kappa >0$, there exists $C(\kappa)>0$ such that for all $\theta \in (\kappa, \kappa^{-1})$,
\begin{equation}
\begin{split}
  \rho_i >\kappa &\implies \mu_i = \frac{\pa h_{\theta}}{\pa\rho_i}(\vec{\rhotot})
  \ge -C(\kappa),\\
  |\vec{\rho}| \le \kappa^{-1} &\implies \mu_i = \frac{\pa h_{\theta}}{\pa\rho_i}
	(\vec{\rho})\le C(\kappa) \textrm{ for all } i=1,\ldots, N.\label{kappa}
\end{split}
\end{equation}

Finally, we focus on the strong convergence of the densities. We consider two cases:
the total density vanishes or the total density remains positive.
The first case is easy, so we assume that the total density is positive.
We define the set
$$
\Omega_{\eta,\kappa}:=\{x\in \Omega_{\eta};\, \rhotot(x)\ge 2N\kappa\}.
$$
Thanks to the uniform convergence of $(\rhotot_{\delta})$,
there exists $\delta_0>0$ such that for all $0<\delta<\delta_0$, it holds that
$\rhotot_{\delta}(x)\ge N\kappa$ for $x\in \Omega_{\eta,\kappa}$. Consequently, for
$x\in \Omega_{\eta,\kappa}$ and $0<\delta<\delta_0$, there exists
$i\in\{1,\ldots,N\}$ such that
$$
  \kappa\le \rho_{\delta,i}(x) \le C(\eta),
$$
since $(\rho_{\delta,i})$ converges uniformly in $\Omega_{\eta,\kappa}\subset
\Omega_\eta$. Therefore, using \eqref{kappa}, we infer that
$|\mu_{\delta,i}(x)|\le C(\kappa,\eta)$. In addition, thanks to the uniform
convergence of $(\rho_{\delta})$ and \eqref{kappa},
$\mu_{\delta,j}(x)\le C_1(\eta,\kappa)$ for $j=1,\ldots,N$.
By the uniform convergence of $\Pi \vec{\mu}_{\delta}$, we obtain
$$
  \sum_{j=1}^N \mu_{\delta,j}(x)
	= N\mu_{\delta,i}(x)-N(\Pi\vec{\mu}_\delta)_i (x)
	\ge -C_2(\eta,\kappa,N) \quad\mbox{for }x\in\Omega_{\eta,\kappa}.
$$
The bounds $\mu_{\delta,j}\le C(\eta,\kappa)$ for $j=1,\ldots,N$ and $\sum_{j=1}^N\mu_{\delta,j}\ge -C_2(\eta,\kappa,N)$ in $\Omega_{\eta,\kappa}$
imply
$$
  \mu_{\delta,j}\ge -C_2(\eta,\kappa,N) - \sum_{\ell\neq j}\mu_{\delta,\ell}
	\ge C_2(\eta,\kappa,N) - (N-1)C(\eta,\kappa) \quad\mbox{in }
	\Omega_{\eta,\kappa},\ j=1,\ldots,N.
$$
We conclude that
\begin{equation}\label{mubund}
C_3(\kappa,N,\eta)\le \mu_{\delta,j}(x) \le C_4(\kappa, \eta,N)
\quad\mbox{for }x\in\Omega_{\eta,\kappa},\ j=1,\ldots,N.
\end{equation}
Consequently, since $\vec{\mu}_{\delta}=\na_{\vec{\rho}} h_{\theta_{\delta}}
(\vec{\rho}_{\delta})$ and $\theta_\delta\to\theta$ a.e.\ in $\Omega$, we have
\begin{equation}\label{mubund1}
\lim_{\delta \to 0} |\vec{\mu}_{\delta}(x)
-\na_{\vec{\rho}} h_{\theta(x)}(\vec{\rho}_{\delta}(x))|=0,\quad
\mbox{for }x\in\Omega_{\eta,\kappa}.
\end{equation}
Using the convexity of $h_{\theta}$, \eqref{mubund1},
and the compactness of $(\rhotot_{\delta})$, $(\Pi \vec{\mu}_{\delta})$,
we have in $\Omega_{\eta,\kappa}$,
\begin{align*}
0&\le \lim_{\eps \to 0}\lim_{\delta \to 0}(\vec{\rhotot}_{\eps}-\vec{\rhotot}_{\delta}) \cdot (\na_{\vec{\rhotot}} h_{\theta}(\vec{\rhotot}_{\eps})-\na_{\vec{\rhotot}} h_{\theta}(\vec{\rhotot}_{\delta}))= \lim_{\eps \to 0}\lim_{\delta \to 0}(\vec{\rhotot}_{\eps}-\vec{\rhotot}_{\delta}) \cdot (\vec{\mu}_{\eps} - \vec{\mu}_{\delta})\\
&=\lim_{\eps \to 0}\lim_{\delta \to 0}(\vec{\rhotot}_{\eps}-\vec{\rhotot}_{\delta}) \cdot ((\vec{\mu}_{\eps} - \vec{\mu}_{\delta})-\Pi ((\vec{\mu}_{\eps} - \vec{\mu}_{\delta})))\le C\lim_{\eps \to 0}\lim_{\delta \to 0}|\rhotot_{\eps}-\rhotot_{\delta}|=0.
\end{align*}
The last but one equality follows from the strong convergence of
$\Pi\vec{\mu}_\eps$, while the last inequality follows from \eqref{mubund}.
Hence, using the strict convexity of $h_\theta$, this gives
$$
 \lim_{\eps \to 0}\lim_{\delta \to 0}|\vec{\rhotot}_{\eps}(x)-\vec{\rhotot}_{\delta}(x)|=0 \quad \text{ a.e. in } \Omega_{\eta,k}.
$$
Consequently,  using the weak lower semicontinuity of the $L^1(\Omega)$ norm,
\begin{align*}
\lim_{\delta\to 0}\int_{\Omega_{\eta}} |\vec{\rho}_{\delta}-\vec{\rho}|\, dx
&\le\lim_{\delta\to 0} \int_{\Omega_{\eta,\kappa}}
|\vec{\rho}_{\delta}-\vec{\rho}|\, dx + C\kappa \\
&\le \lim_{\eps\to 0} \lim_{\delta\to 0} \int_{\Omega_{\eta,\kappa}}
|\vec{\rho}_{\eps}-\vec{\rho}_{\delta}|\, dx + C\kappa = C\kappa.
\end{align*}
The limit $\kappa\to 0$ gives
$$
\lim_{\delta\to 0}\int_{\Omega_{\eta}} |\vec{\rho}_{\delta}-\vec{\rho}|\, dx =0.
$$
Thanks to the uniform equiintegrability of $(\vec{\rho}_{\delta})$, this limit
implies \eqref{sc.den.2}.

Relation \eqref{aeposit} can be proved by using the approach presented before. Indeed,
since $\vec{\rho}$ and $\Pi (\vec{\mu})$ 
are finite a.e.\ in $\Omega$, then
$\rhotot(x)>0$ implies the existence of some $i\in\{1,\ldots,N\}$ such that
$\rho_i(x)>0$. But then the whole vector $\vec{\mu}(x)$ must be finite and
consequently, $\rho_j(x)>0$ for all $j=1,\ldots, N$, which equals \eqref{aeposit}.
\end{proof}

To finish the proof of Theorem \ref{CC.thm}, recall that
$$
\vec{\mu_\delta} = \pa_{\vec{\rhotot}} h_{\theta_\delta}(\vec{\rho_\delta}) \to \pa_{\vec{\rhotot}} h_{\theta}(\vec{\rho})=: \vec{\mu}\quad\mbox{a.e. in }\Omega
\setminus\{\rhotot=0\}.
$$
Since we cannot exclude that $\rhotot=0$ on a set of positive measure, we redefine
$\vec{\mu}$ and set $\vec{\mu}:=0$ on $\{\rhotot=0\}$.
On the other hand, we know that
$$
\Pi\Big(\frac{\vec{\mu_\delta}}{\theta_\delta}\Big) \to \Pi (\vec{q})
\quad\mbox{a.e. in }\Omega,
$$
and therefore also
$$
\Pi(\vec{\mu_\delta}) =\theta_\delta \Pi\Big(\frac{\vec{\mu_\delta}}{\theta_\delta}\Big) \to \theta \Pi(\vec{q})\quad\mbox{a.e. in }\Omega.
$$
Therefore, $\Pi(\vec{\mu}) = \theta \Pi(\vec{q})$ a.e.\ in $\Omega
\setminus\{\rhotot =0\}$. This shows that $\sum_{j=1}^NM_{ij}\na q_j
= \sum_{j=1}^N M_{ij}$ $\times\na(\mu_j/\theta)$ as required in \eqref{w.rhos}.


\section{The approximate scheme}\label{sec.scheme}

\subsection{Formulation of the approximate equations}

In this subsection, we specify the approximate system, leading to a sequence
of smooth, approximate solutions.

\subsubsection{Internal energy balance.}
We replace the total energy balance \eqref{w.rhovE} by the internal energy balance
\eqref{2.enerbal},
whose weak formulation reads as
\begin{align}
\nonumber
\int_\Omega&\Big( -\rhotot e \bm v + \kappa(\theta)\nabla\theta + \sum_{i=1}^N
M_i\nabla\left( \frac{\mu_i}{\theta} \right) \Big)\cdot\na\phi_0 \, dx \\
\nonumber
&{}+ \int_\Omega(p\diver\bm  v - \St : \nabla \bm v)\phi_0 \, dx
+ \alpha_2\int_{\pa\Omega}\phi_0 (\theta-\theta_0)\, ds = 0\quad
\mbox{for all }\phi_0\in H^1(\Omega).
\end{align}

\subsubsection{Parameters of the approximate problem}
For the sake of clarity, we state here the parameters employed in the approximate scheme. A more detailed explanation is given in the part that follows:
\begin{center}
\begin{tabular}{ll}
$\eps\in (0,1)$: & lower-order regularization in all equations;\\
$\xi\in (0,1)$: & higher-order regularization in the internal energy equation;\\
$\delta\in (0,\xi/2)$: &
higher-order regularization in the partial mass densities equations;\\
$\chi\in (0,1)$: & quasilinear regularization in the partial mass densities equations;\\
$n\in\N$: & Galerkin approximation in the momentum equation;\\
$\eta\in (0,1)$: & regularization in the free energy and heat conductivity.
\end{tabular}
\end{center}

\subsubsection{Levels of approximations}
Let $(\vk{k})_{k\in\N}\subset \{\bm{v} \in C^\infty(\overline{\Omega}), \,
\bm{v}\cdot \vcg{\nu} =0 \text{ on }\partial \Omega\}$ be a complete
orthonormal system for $L^2(\Omega)$ and let
$X_n$ be the subspace of $L^2(\Omega)$ generated by $\vk{1},\ldots,\vk{n}$.
Since $X_n$ has dimension $n<\infty$, the quantity
\begin{align}\label{def.Kn}
K(n) := \sup_{\mathbf{u}\in X_n\setminus\{\mathbf{0}\}}\frac{\int_\Omega
|D^2 (|\mathbf{u}|^2)|^2 \, dx}{\int_\Omega |\mathbf{u}|^4 \, dx}
\end{align}
is finite. Let $\overline\rho > 0$ be an arbitrary positive constant (which in the
end will be the mean density of the solution).
Let $q_0=\widetilde{q}_0(\log\theta)$, $\vec{\mathfrak{R}} = (\mathfrak{R}_1,
\ldots,\mathfrak{R}_N)$, $\mathfrak{R}_i = \widetilde{\mathfrak{R}}_i(\log\theta)$
($i=1,\ldots,N$), $\mathfrak{R}=\sum_{j=1}^N\mathfrak{R}_j$, defined in
Lemma \ref{lem.refrho}. Note that $q_0$ is actually a constant.

Let $L\geq\max\{\beta, 3\beta-2, \gamma+\nu, 2\beta_0\}$ be arbitrary, where $\nu>0$
is as in Lemma \ref{lem.dens} and $\beta_0\geq 0$ is as in Hypothesis (H6), formula
\eqref{H6.b}. We introduce the ``regularized'' free energy for the approximate system,
\begin{equation}\label{h.reg}
h_\theta^{(\eta)}(\vec\rho) = h_\theta(\vec\rho) + \eta \overline{n}^{L}
+\eta \sum_{i=1}^N f_{\alpha_0}(\rho_i) + \eta |\vec\rho|^2,
\end{equation}
where $\alpha_0\in (\frac12,1)$ is the same as in Hypothesis (H6), formula
\eqref{H6.b}, and
$$
f_{\alpha_0}(s) =
\frac{s^{2(1-\alpha_0)}}{2(1-\alpha_0)(1-2\alpha_0)} ,
\qquad s\geq 0.
$$
All other thermodynamic quantities appearing in the approximate equations that are
defined in terms of the free energy are modified correspondingly:
\begin{align*}
\mu^{(\eta)}_i &= \mu_i + \eta L \frac{\overline{n}^{L-1}}{m_i}
+\eta f_{\alpha_0}'(\rho_i) + 2\eta \rho_i,\quad i=1,\ldots,N,\\
p^{(\eta)} &= p + \eta(L-1)\overline{n}^L
+ \eta\sum_{i=1}^N\frac{\rho_i^{2(1-\alpha_0)}}{2(1-\alpha_0)}
+\eta |\vec\rho|^2,\\
\rho e^{(\eta)} &= \rho e + \eta \overline{n}^{L}
+\eta \sum_{i=1}^N f_{\alpha_0}(\rho_i) +
\eta |\vec\rho|^2,\\
\rho s^{(\eta)} &= \rho s .
\end{align*}
We point out that $h_\theta^{(\eta)}$ satisfies the property from Lemma
\ref{l A1} and Hypothesis (H6); in particular, \eqref{H6.b} holds with the same
constants, since the mixed second-order derivatives of the free energy are not
changed by the terms added in \eqref{h.reg}.

We also need to regularize the heat conductivity:
\begin{equation}\label{reg.kappa}
\kappa^{(\eta)}(\theta) = \kappa(\theta) + \eta\theta^L .
\end{equation}

\subsubsection{Approximate problem}
The approximate equations involve $q_1,\ldots,q_N,\log\theta\in$ $H^2(\Omega)$,
$\vel\in X_n$ as unknown and are given as follows:
\begin{align}
\nonumber
 0 &= \sum_{i=1}^N\int_\Omega \bigg(-\rho_i\vel + \sum_{j=1}^N M_{ij}\na q_j
+ M_i\nabla\bigg(\frac{1}{\theta} \bigg) \bigg)\cdot\nabla\phi_i \, dx
- \sum_{i=1}^N\int_\Omega r_i\phi_i \, dx \\
\label{app.rhoi}
&\phantom{xx}{}+\delta\sum_{i=1}^N\int_\Omega D^2 q_i : D^2\phi_i \, dx
+\eps\sum_{i=1}^N\int_\Omega (\rho_i - \mathfrak{R}_i)
\phi_i \, dx\\
\nonumber
&\phantom{xx}{}+\eps^3 \sum_{i=1}^N\int_\Omega\left(
(1+\chi |\na q_i|^2)\na q_i \cdot\na\phi_i
+ (1+|q_i-q_0|^{1/2})(q_i-q_0) \phi_i \right) \, dx; \\
\nonumber
 0 &= \int_{\pa\Omega}\alpha_1\bm u\cdot \vel \, ds +
\int_\Omega \left( -\rho \vel\otimes\vel + \St \right):\nabla \bm u\, dx -
\int_\Omega (p\diver\bm u  + \rho\bm b \cdot\bm u)\, dx\\
\nonumber 
&\phantom{xx}{}+\eps^3\sum_{i=1}^d\sum_{j=1}^N\int_\Omega v_i\bigg(
(1+\chi |\na q_j|^2)\nabla q_j \cdot\nabla u_i
+ \frac{1}{2}(1+|q_j-q_0|^{1/2})(q_j-q_0) u_i \bigg)\,dx\\
\nonumber
&\phantom{xx}{}+\eps\int_\Omega\rho \vel \cdot\bm u \, dx
+\delta K(n)\int_\Omega |\vel|^2 \vel\cdot\bm u \, dx; \\
\nonumber
 0 &= \int_\Omega\bigg( -\rho e \vel + \kappa^{(\eta)}(\theta)\nabla\theta
+ \sum_{i=1}^N M_i\nabla q_i \bigg)\cdot\na\phi_0 \, dx
+ \int_\Omega(p\diver\vel - \St :\nabla\vel)\phi_0 \, dx\\
\label{app.rhoe}
&\phantom{xx}{}+ \int_{\pa\Omega} \alpha_2
(\theta-\theta_0)\phi_0\, ds
+\delta\int_\Omega (1+\theta) D^2 (\log\theta) : D^2\phi_0 \, dx \\
\nonumber
&\phantom{xx}{}+\xi\int_\Omega\left( (1+\theta)(1+|\na\log\theta|^2)
\na(\log\theta ) \cdot\na\phi_0 + (\log\theta)\phi_0\right) \, dx
-\eps\int_\Omega\mathfrak{R}\frac{|v|^2}{2}\phi_0 \, dx
\end{align}
for all $\phi_1,\ldots,\phi_N\in H^2(\Omega)$, $\mathbf{u}\in X_n$, and
$\phi_0\in H^2(\Omega)$. Given $q_1,\ldots,q_n$, $\theta$, we define
\begin{equation} \label{4.10a}
  \vec\rho:=\na (h_\theta^{(\eta)})^*(\theta\vec{q}) \quad\mbox{and}\quad
	\rho:=\sum_{i=1}^n\rho_i.
\end{equation}
The system will be reformulated as a fixed-point problem for an operator
$F : X\times [0,1]\to X$ defined by means of a linearized problem.


\subsection{Existence of a solution to the approximate equations}

We formulate \eqref{app.rhoi}--\eqref{app.rhoe} as a fixed-point problem for
a suitable operator.

\subsubsection{Reformulation as a fixed-point problem}
Define the spaces
$$
  V = W^{1,4}(\Omega; \R^N)\times X_n \times W^{1,4}(\Omega),\quad
  V_0 = H^2(\Omega;\R^N)\times X_n \times H^2(\Omega).
$$
Let $(\vec q\,^*,\vel^*,w^*)\in V$ and $\sigma\in [0,1]$ be arbitrary.
Noting that $\vec\rho(\mu^*,\theta^*)$ is defined as in \eqref{4.10a} and
$(\rho e)^* = (\rho e)(\vec\rho\,^*,\theta^*)$ is defined by means of \eqref{1.p},
we set for $i,j=1,\ldots,N$:
\begin{align*}
&\theta^* = e^{w^*},\quad \mu_i^* = \theta^* q_i^*,\quad
\vec\rho\,^* = \vec\rho(\vec\mu\,^*,\theta^*),\quad
\rho^* = \sum_{k=1}^N\rho_k^*,\quad
p^* = p(\rho^*,\theta^*) ,\\
& M_i^*=M_i(\vec\rho\,^*,\theta^*),\quad M_{ij}^*=M_{ij}(\vec\rho\,^*,\theta^*), \quad
r_i^* = r_i(\Pi(\vec{\mu}^*/\theta^*),\theta^*).
\end{align*}
The task is to find a solution $(\vec q,\bm v,w)\in V_0$ to the linear system
\begin{align}
\label{lin.rhoi}
  0 &= \sigma\sum_{i=1}^N\int_\Omega \bigg( -\rho_i^* \vel^*
	+ \sum_{j=1}^N M_{ij}^*\na q_j^* -  M_i^*\frac{\nabla w^*}{\theta^*} \bigg)
	\cdot\nabla\phi_i \,dx - \sigma\sum_{i=1}^N\int_\Omega r_i^*\phi_i \,dx \\
\nonumber
  &\phantom{xx}{}+\delta\sum_{i=1}^N\int_\Omega D^2 q_i : D^2\phi_i \,dx
+\sigma\eps\sum_{i=1}^N\int_\Omega (\rho_i^* - \widetilde{\mathfrak{R}}_i(w))
\phi_i \,dx\\
\nonumber
  &\phantom{xx}{}+\eps^3 \sum_{i=1}^N\int_\Omega\left(
(1+\chi |\na q_i^*|^2)\na q_i \cdot\na\phi_i
+ (1+|q_i^*-\widetilde q_0(w)|^{1/2})(q_i-\widetilde q_0(w))\phi_i\right)\,dx; \\
\label{lin.rhov}
  0 &= \int_{\pa\Omega}\alpha_1 \bm u\cdot \bm v
\,ds + \int_\Omega \St(\theta^*,\D(v)):\nabla \bm u\, dx \\
\nonumber
  &\phantom{xx}{}+ \sigma\int_\Omega\left(\left( -\rho^* \bm v^*\otimes \bm v^*
\right):\nabla\bm u -p^*\diver \bm u  - \rho^* \bm b \cdot \bm u \right)\, dx \\
\nonumber
  &\phantom{xx}{}+\eps^3\sum_{i=1}^d\sum_{j=1}^N\int_\Omega v_i^* \bigg(
(1+\chi |\na q_j^*|^2)\nabla q_j \cdot\nabla u_i + \frac{1}{2}
(1+|q_j-\widetilde q_0(w)|^{1/2})(q_j-\widetilde{q}_0(w)) u_i \bigg)\, dx\\
\nonumber
  &\phantom{xx}{}+\sigma\eps\int_\Omega\rho^* \bm v \cdot \bm u \,dx
+\delta K(n)\int_\Omega |\bm v^*|^2 \bm v\cdot \bm u \,dx; \\
\label{lin.rhoe}
  0 &= \sigma\int_\Omega\bigg( -(\rho e)^* \bm v^* + \kappa^{(\eta)}(\theta^*)\theta^*
	\nabla w^* + \sum_{i=1}^N M_i^*\nabla q_i^* \bigg)\cdot\na\phi_0 \,dx \\
\nonumber
  &\phantom{xx}{}+ \sigma\int_\Omega(p^*\diver \bm v^*
	- \St^* : \nabla \bm v^*)\phi_0 \,dx
+ \sigma\int_{\pa\Omega} \alpha_2(\theta^*-\theta_0^*)\phi_0\,ds\\
\nonumber
  &\phantom{xx}{}+\xi\int_\Omega\left( (1+e^{w^*})(1+|\na w^*|^2)\na w \cdot\na\phi_0
+ w\phi_0\right) \,dx \\
\nonumber
  &\phantom{xx}{}-\sigma\eps
\int_\Omega\widetilde{\mathfrak{R}}(w^*)\frac{|\bm v^*|^2}{2}\phi_0 \, dx
+\delta\int_\Omega (1+e^{w^*}) D^2 w : D^2\phi_0 \,dx
\end{align}
for all $\phi_1,\ldots,\phi_N\in H^2(\Omega)$, $\bm u\in X_n$, and
$\phi_0\in H^2(\Omega)$.

Lax--Milgram lemma ensures the existence of a unique solution
$(\vec q,\bm v,w)\in H^2(\Omega ; \R^N)$ $\times X_n\times H^2(\Omega)$ to
\eqref{lin.rhoi}--\eqref{lin.rhoe}. More precisely, first we solve \eqref{lin.rhoe}
for $w$, then insert $w$ in \eqref{lin.rhoi}
and solve this equation for $\vec q$, and finally insert both $\vec q$
and $w$ in \eqref{lin.rhov} and solve this equation for $\bm v$.
As a consequence, we have the mapping
$$
  F:V\times [0,1]\to V, \quad ((\vec q\,^*,\bm v^*,w^*),\sigma)\mapsto
	(\vec q,\bm v,w).
$$
We aim to show that $F(\cdot,1)$ has a fixed point in $V$. Then this fixed point solves
\eqref{app.rhoi}--\eqref{app.rhoe}. To this end, we apply the
Leray--Schauder theorem.

The operator $F$ has the following properties:
\begin{enumerate}
\item $F(\cdot,0)$ is constant. Clearly, if $\sigma=0$ then $w=0$, which implies
that $\vec q=\widetilde{q}_0(0)\vec 1$, which in turn implies that $\vel=\mathbf{0}$.
\item $F : V\times [0,1]\to V$
is continuous. Standard arguments show that $F$ is sequentially continuous.
\item $F : V\times [0,1]\to V$
is compact. Indeed, the compactness of $F$ follows from the fact that the
image of $F$ is bounded in $V_0$ (consequence of Lax--Milgram's Lemma), the compact Sobolev embedding
$H^2(\Omega)\hookrightarrow W^{1,4}(\Omega)$ (which holds, e.g., for $d=3$),
and the fact that $X_n$ is finite dimensional.
\end{enumerate}
It remains to prove that the set
$$\{ (\vec q,\bm v,w)\in V\, : \,
F((\vec q,\bm v,w),\sigma) = (\vec q,\bm v,w) \} $$
is bounded in $V$ uniformly with respect to $\sigma\in [0,1]$.
If this is shown, the existence of a fixed point for $F(\cdot,1)$ in $V$ follows from
the Leray--Schauder theorem.

Let $(\vec q,\bm v,w)\in V_0$ satisfy
$F((\vec q,\bm v,w),\sigma) = (\vec q,\bm v,w)$ for some $\sigma\in [0,1]$.
This means that the following relations hold:
\begin{align}
\label{fix.rhoi}
  0 &= \sigma\sum_{i=1}^N\int_\Omega \Big( -\rho_i \bm v + \sum_{j=1}^N M_{ij}\na q_j -  M_i\frac{\nabla w}{\theta} \Big)\cdot\nabla\phi_i \,dx - \sigma\sum_{i=1}^N\int_\Omega r_i\phi_i \,dx \\
\nonumber
  &\phantom{xx}{}+\delta\sum_{i=1}^N\int_\Omega D^2 q_i : D^2\phi_i \,dx
+\sigma\eps\sum_{i=1}^N\int_\Omega
(\rho_i - \mathfrak{R}_i)\phi_i \,dx\\
\nonumber
  &\phantom{xx}{}+\eps^3 \sum_{i=1}^N\int_\Omega\left(
(1+\chi |\na q_i|^2)\na q_i \cdot\na\phi_i + (1+|q_i-q_0|^{1/2})(q_i-q_0) \phi_i \right) \,dx; \\
\label{fix.rhov}
  0 &= \int_{\pa\Omega}\alpha_1 \bm u\cdot \bm v
\, ds + \int_\Omega \St:\bm u \, dx +
\sigma\int_\Omega \left( -\rho \bm v\otimes \bm v\right):\nabla \bm u\, dx
-\sigma\int_\Omega (p\diver \bm u  + \rho \bm b \cdot \bm u) \, dx\\
\nonumber
  &\phantom{xx}{}+\eps^3\sum_{i=1}^3\sum_{j=1}^N\int_\Omega v_i\Big(
(1+\chi |\na q_j|^2)\nabla q_j\cdot\nabla u_i + \frac{1}{2} (1+|q_j-q_0|^{1/2})(q_j-q_0) u_i \Big)\, dx\\
\nonumber
  &\phantom{xx}{}+\sigma\eps\int_\Omega\rho \bm v \cdot \bm u \,dx
+\delta K(n)\int_\Omega |\bm v|^2 \bm v\cdot \bm u \,dx; \\
\label{fix.rhoe}
  0 &= \sigma\int_\Omega\Big( -\rho e \bm v + \kappa^{(\eta)}(\theta)\theta\nabla w + \sum_{i=1}^N M_i\nabla q_i \Big)\cdot\na\phi_0 \,dx
+ \sigma\int_\Omega(p\diver \bm v - \St : \nabla \bm v)\phi_0 \,dx\\
\nonumber
  &\phantom{xx}{}+ \sigma\int_{\pa\Omega} \alpha_2(\theta-\theta_0)\phi_0\,ds
+\delta\int_\Omega (1+e^{w}) D^2 w : D^2\phi_0 \,dx\\
\nonumber
  &\phantom{xx}{}-\sigma\eps\int_\Omega\mathfrak{R}\frac{|\bm v|^2}{2}\phi_0 \,dx
+\xi\int_\Omega\left( (1+e^{w})(1+|\na w|^2)\na w \cdot\na\phi_0
+ w\phi_0\right) \,dx
\end{align}
for all $\phi_1,\ldots,\phi_N\in H^2(\Omega)$, $\bm u\in X_n$, and
$\phi_0\in H^2(\Omega)$.
We need to show that $(\vec{q},\vel,w)$ can be uniformly bounded in $V$ with
respect to $\sigma$. To find such estimate,
we derive global entropy and energy inequalities.

\subsubsection{Global entropy inequality for approximate solutions.}
Let us choose $\phi_i = q_i-{q}_0$ ($i=1,\ldots,N$) and
$\phi_0 = -\exp(-w) = -1/\theta$ in \eqref{fix.rhoi} and \eqref{fix.rhoe},
respectively, and sum the equations.
By arguing like in the derivation of \eqref{2.ent}, we find that
\begin{align}\nonumber
\sigma\int_\Omega & \bigg(\sum_{i,j=1}^N M_{ij}\nabla q_i\cdot\nabla q_j
+ \kappa^{(\eta)}(\theta)|\nabla w|^2 -\sum_{i=1}^N r_i q_i
+\frac{1}{\theta} \St : \nabla \bm v\bigg)\,dx\\
\label{ei.1}
  &{}+\delta\sum_{i=1}^N \|D^2 q_i\|_{L^2(\Omega)}^2
+\eps^3 \sum_{i=1}^N\int_\Omega \big( \chi |\na q_i|^4 + |\na q_i|^2 +
|q_i-q_0|^{5/2} + (q_i-q_0)^2\big)\,dx\\
\nonumber
  &{}+\sigma\eps\int_\Omega\mathfrak{R}\frac{|\bm v|^2}{2\theta}\,dx
+\sigma\eps\int_\Omega
\sum_{i=1}^N (\rho_i - \mathfrak{R}_i)(q_i-q_0)\,dx  +  R
= \sigma\int_{\pa\Omega}\alpha_2\frac{\theta-\theta_0}{\theta}\,ds,
\end{align}
where we defined
\begin{align*}
R &= \int_\Omega (1+\theta)\Big(
\delta D^2 (\log\theta) : D^2 \Big(-\frac{1}{\theta}\Big) + \xi
(1+|\na\log\theta|^2)\na(\log\theta) \cdot\na \Big(-\frac{1}{\theta}\Big) \Big) \, dx \\
&\phantom{xx}{}+\xi\int_\Omega\frac{1}{\theta}\log\frac{1}{\theta}\,dx.
\end{align*}
We deduce from the construction of $q_0$ and $\mathfrak R_i$ (Lemma
\ref{lem.refrho}) and the convexity of $h_\theta^{(\eta)}$ that
\begin{equation*}
\sum_{i=1}^N (\rho_i - \mathfrak{R}_i)(q_i-q_0) =
\frac{1}{\theta}\sum_{i=1}^N (\rho_i - \mathfrak{R}_i)\bigg(
\frac{\pa h_\theta^{(\eta)}}{\pa\rho_i}(\vec\rho) -
\frac{\pa h_\theta^{(\eta)}}{\pa\rho_i}(\vec{\mathfrak{R}}) \bigg)\geq 0.
\end{equation*}
Furthermore, straightforward computations and the assumption $\delta\leq\xi/2$ show that
\begin{equation}\label{est.R}
R \geq c\delta\int_\Omega (1+\theta)\theta^{-3}|D^2\theta|^2 \,dx
+ \frac{\xi}{2}\int_\Omega\bigg( (1+\theta^{-1})|\na\log\theta|^4
+\frac{1}{\theta}\log\frac{1}{\theta} \bigg)\,dx
\end{equation}
for some constant $c>0$.
Putting \eqref{ei.1}--\eqref{est.R} together gives the
{\em global entropy inequality}: 
\begin{align}\label{ei.fix}
\sigma & \int_\Omega\left(
\sum_{i,j=1}^N M_{ij}\nabla q_i\cdot\nabla q_j
+ \kappa^{(\eta)}(\theta)|\nabla\log\theta|^2
-\sum_{i=1}^N r_i q_i + \frac{1}{\theta} \St : \nabla \bm v
\right)\,dx\\
\nonumber
&{}+\eps^3\sum_{i=1}^N\int_\Omega (\chi |\na q_i|^4 + |\na q_i|^2 + |q_i-q_0|^{5/2} + (q_i-q_0)^2)\,dx
+ \delta\int_\Omega (1+\theta)\theta^{-3}|D^2\theta|^2 \,dx\\
\nonumber
&{}+\delta\sum_{i=1}^N \|D^2 q_i\|_{L^2(\Omega)}^2
+ \xi \int_\Omega\left( (1+\theta^{-1})|\na\log\theta|^4
+\frac{1}{\theta}\log\frac{1}{\theta} \right)\,dx
+\sigma\int_{\pa\Omega}\alpha_2\frac{\theta_0-\theta}{\theta}\,ds
\leq 0 .
\end{align}

\subsubsection{(Almost) mass conservation}
Choosing $\phi_i=1$ ($i=1,\ldots,N$) in \eqref{fix.rhoi} leads to
\begin{align*}
\sigma\int_\Omega\sum_{i=1}^N (\rho_i - \mathfrak{R}_i)\,dx =
\eps^2\int_\Omega\sum_{i=1}^N (1+|q_i-q_0|^{1/2})(q_0-q_i) \,dx .
\end{align*}
However, $\int_\Omega\sum_{i=1}^N \mathfrak{R}_i dx = |\Omega|\overline\rho$
by construction (see Lemma \ref{lem.refrho}), thus \eqref{ei.fix} shows that
\begin{equation}\label{mass.fix}
\sigma\bigg| \frac{1}{\Omega}\int_\Omega\rho \,dx - \overline{\rho}
\bigg|\leq C\eps^{1/5} .
\end{equation}

\subsubsection{Global total energy inequality for approximate solutions}
We choose now $\phi_i=-\frac{1}{2}|\bm v|^2$ $(i=1,\ldots,N)$, $\bm u=\bm v$,
$\phi_0=1$ in \eqref{fix.rhoi}--\eqref{fix.rhoe} and sum the equations.
By arguing like in the derivation of \eqref{2.ener}, we obtain
\begin{align}\label{energy.2}
\int_{\pa\Omega}&\alpha_1 |\bm v|^2 \,ds +
\sigma\int_{\pa\Omega}\alpha_2(\theta-\theta_0)\,ds
+\xi\int_\Omega \log\theta \,dx
+\sigma\eps\int_\Omega
\rho\frac{|\bm v|^2}{2} \,dx + (1-\sigma) \int_\Omega \St:\nabla \bm v\, dx\\
\nonumber
&= \sigma\int_\Omega \rho \bm b \cdot \bm v \, dx
-\delta K(n)\int_\Omega |\bm v|^4 \,dx
+\delta\int_\Omega D^2\bigg(\sum_{i=1}^N q_i\bigg): D^2\frac{|\bm v|^2}{2}\,dx.
\end{align}
It follows from \eqref{ei.fix} that
$$
  -\xi\int_\Omega\log\theta \,dx
  \le \xi\int_{\{\theta<1\}}\log\frac{1}{\theta}\,dx
  \le \xi\int_\Omega\frac{1}{\theta}\log\frac{1}{\theta} \,dx \leq C ,
$$
while the Cauchy--Schwarz inequality, definition \eqref{def.Kn}, and
\eqref{ei.fix} yield
\begin{align*}
\delta\bigg|\int_\Omega & D^2\bigg(\sum_{i=1}^N q_i\bigg): D^2\frac{|\bm v|^2}{2}
\,dx\bigg|
\leq \frac{\delta}{2}\sum_{i=1}^N\|D^2 q_i\|_{L^2(\Omega)}
\|D^2 (|\bm v|^2)\|_{L^2(\Omega)}\\
&\leq
\frac{\delta}{2} \sqrt{K(n)}\sum_{i=1}^N\|D^2 q_i\|_{L^2(\Omega)}
\|\bm v\|_{L^4(\Omega)}^{2}
\leq \frac{\delta}{8}\sum_{i=1}^N\|D^2 q_i\|_{L^2(\Omega)}^2
+ \frac{\delta}{2}K(n)\int_\Omega |\bm v|^4 \,dx \\
&\leq C + \frac{\delta}{2}K(n)\int_\Omega |\bm v|^4 \,dx .
\end{align*}
Therefore, \eqref{energy.2} leads to the {\em total energy inequality}
(for approximate solutions):
\begin{align}\label{energy.fix}
\int_{\pa\Omega}\alpha_1 |\bm v|^2 \,ds
&+\sigma\int_{\pa\Omega}\alpha_2(\theta-\theta_0)\,ds
+\frac{\delta K(n)}{2}\int_\Omega |\bm v|^4 \,dx\\
\nonumber
&{}+\sigma\eps\int_\Omega
\rho\frac{|\bm v|^2}{2} \,dx
\leq C + \sigma\int_\Omega \rho \bm b \cdot \bm v \, dx .
\end{align}
The right-hand side of \eqref{energy.fix} is estimated as
$$
\sigma\bigg|\int_\Omega \rho \bm b \cdot \bm v \, dx\bigg|
\leq C\sigma\int_\Omega \rho |\bm v| \, dx\leq
C\frac{\sigma}{\eps}\int_\Omega\rho \,dx
+ \frac{\sigma\eps}{4}\int_\Omega\rho |\bm v|^2 \,dx,
$$
which, together with \eqref{mass.fix}, \eqref{energy.fix}, gives the estimate
\begin{equation}\label{4.23a}
\int_{\pa\Omega}\alpha_1 |\bm v|^2 \,ds
+\sigma\int_{\pa\Omega}\alpha_2(\theta-\theta_0)\,ds
+\frac{\delta K(n)}{2}\int_\Omega |\bm v|^4 \,dx
+\sigma\eps\int_\Omega
\rho\frac{|\bm v|^2}{2} \,dx
\leq C(1+\eps^{-1}).
\end{equation}

\subsubsection{Conclusion: existence for the approximate problem}
Inequalities \eqref{ei.fix}, \eqref{mass.fix}, and \eqref{4.23a}
yield a $\sigma$-uniform bound for $(\vec q,\bm v,\log\theta)$ in the space
$V = W^{1,4}(\Omega; \R^N)\times X_n \times W^{1,4}(\Omega)$.
Leray--Schauder's fixed-point theorem implies the existence of a fixed point
$(\vec q,\bm v,\log\theta)$ for $F(\cdot,1)$, that is, a solution
$(\vec q,\bm v,\log\theta)\in H^2(\Omega;\R^N)\times X_n \times H^2(\Omega)$
to \eqref{app.rhoi}--\eqref{app.rhoe}. Moreover, estimates \eqref{ei.fix},
\eqref{mass.fix}, and \eqref{4.23a} hold with $\sigma=1$.


\subsection{Uniform estimates for approximate solutions}

An analogous version of Lemma \ref{lem.est1} holds also for the approximate solutions.
The proof employs \eqref{ei.fix}, \eqref{energy.fix} with $\sigma=1$ in place of
\eqref{2.ent}, and \eqref{2.ener} and is basically identical to the proof of Lemma
\ref{lem.est1}.
In short, \eqref{2.vq}--\eqref{2.thetaH1} are fulfilled also by the approximate
solutions built in the previous subsection. Here, we state some additional
estimates that are satisfied by the approximate solutions and that follow immediately
from \eqref{ei.fix}, \eqref{energy.fix}, and Lemma \ref{lem.refrho}:
\begin{align}
\label{tmp.est.1}
\|D^2 \vec q\|_{L^2(\Omega)} &\leq C\delta^{-1/2},\\
\label{tmp.est.2}
\|\nabla\vec q\|_{L^4(\Omega)} &\leq C\eps^{-3/4}\chi^{-1/4},\\
\label{tmp.est.2b}
\|\nabla\vec q\|_{L^2(\Omega)} +
\|\vec q - q_0\vec{1}\|_{L^2(\Omega)} &\leq C\eps^{-3/2},\\
\label{tmp.est.2c}
\|\vec q - q_0\vec 1\|_{L^{5/2}(\Omega)} & \leq C\eps^{-6/5},\\
\nonumber 
\|\theta^{-1/2}D^2\log\theta\|_{L^2(\Omega)}
+\|D^2\log\theta\|_{L^2(\Omega)} &\leq C\delta^{-1/2},\\
\label{tmp.est.4}
\|\theta^{-1/4}\nabla\log\theta\|_{L^4(\Omega)}
+\|\log\theta\|_{W^{1,4}(\Omega)} &\leq C\xi^{-1/4},\\
\nonumber 
\|\bm v\|_{L^4(\Omega)} &\leq C |\delta K(n)|^{-1/4} ,\\
\label{tmp.est.6}
\left\|\sqrt\rho \bm v\right\|_{L^2(\Omega)}
 &\leq C\eps^{-1/2} ,\quad i=1,\ldots,N,\\
\nonumber 
|q_0| &\leq C(\eps),\\
\label{tmp.est.8}
\|\mathfrak{R}_i\|_{L^1(\Omega)} &\leq \overline{\rho} ,\quad i=1,\ldots,N,\\
\label{tmp.est.9}
\|\theta^{L/2}\|_{H^1(\Omega)} &\leq C\eta^{-1/2}.
\end{align}

We prove an estimate for $\na\vec\rho\in L^2(\Omega;\R^{N\times 3})$.
On the one hand, Young's inequality gives
$$
\sum_{i=1}^N\na\rho_i\cdot\na\mu_i =
\sum_{i=1}^N\na\rho_i\cdot (\theta\na q_i + q_i\na\theta)
\leq \frac{\eta}{2}|\na\vec\rho|^2 + \frac{\|\theta\|_{L^\infty(\Omega)}^2}{2\eta}
\big(|\na\vec q|^2 + |\vec q|^2 |\na\log\theta|^2\big).
$$
On the other hand, we have
\begin{align*}
  \sum_{i=1}^N & \na\rho_i\cdot\na\mu_i =
  \sum_{i=1}^N\na\rho_i\cdot\left(
  \sum_{j=1}^N\frac{\pa^2 h_\theta^{(\eta)}}{\pa\rho_i\pa\rho_j}\na\rho_j
  +\frac{\pa^2 h_\theta^{(\eta)}}{\pa\rho_i\pa\theta}\na\theta \right)\\
  &\geq \sum_{i,j=1}^N \frac{\pa^2 h_\theta^{(\eta)}}{\pa\rho_i\pa\rho_j}
	\na\rho_i\cdot\na\rho_j
  -\frac{\eta}{2}\sum_{i=1}^N\rho_i^{-2\alpha_0}|\na\rho_i|^2
  -\frac{|\na\log\theta|^2}{2\eta}\|\theta\|_{L^\infty(\Omega)}^2
  \sum_{i=1}^N \rho_i^{2\alpha_0}
	\bigg|\frac{\pa^2 h_\theta^{(\eta)}}{\pa\rho_i\pa\theta}\bigg|^2 .
\end{align*}
We conclude from the these inequalities, Hypothesis (H6), more precisely \eqref{H6.b},
and \eqref{h.reg} that
\begin{align*}
  \eta & |\na\vec\rho|^2
  +\eta\sum_{i=1}^N\rho_i^{-2\alpha_0}|\na\rho_i|^2\\
  &\leq
  C(\|\theta\|_{L^\infty(\Omega)})\frac{|\na\log\theta|^2}{\eta}
  (1+|\vec\rho|^{2\beta_0})
  +C\frac{\|\theta\|_{L^\infty(\Omega)}^2}{\eta}(
  |\na\vec q|^2 + |\vec q|^2 |\na\log\theta|^2) .
\end{align*}
As $\vec{q}$ and $\log \theta$ are essentially bounded functions, also $\vec \rho$
is essentially bounded (this bound is uniform with respect to $\delta$ and $n$).
Therefore, we deduce from the previous estimate and bound \eqref{tmp.est.4} that
$$
  \|\nabla\vec\rho\|_{L^2(\Omega)}^2\leq C(\xi,\eta)\big(1+
  \|\na\vec q\|_{L^2(\Omega)}^2 + \|\vec q - q_0\vec 1\|_{L^4(\Omega)}^2
  +q_0^2 \big) .
$$
In view of Lemma \ref{lem.refrho} and the $\eps$-uniform $L^\infty(\Omega)$ bound
for $\log\theta$, $q_0 = \widetilde{q}_0(\log\theta)$ is bounded in $\eps$, so
we infer from \eqref{tmp.est.2b} the desired gradient bound for $\vec\rho$:
\begin{align}\label{narho.eps}
\|\nabla\vec\rho\|_{L^2(\Omega)}\leq C\eps^{-3/2},
\end{align}
where the constant $C>0$ is independent of $\eps$.


\section{Weak sequential compactness for approximate solutions}
\label{sec_conv}

\subsection{Limits $\delta\to 0$, $n\to\infty$}

The bounds \eqref{2.vq}--\eqref{2.thetaH1} and \eqref{tmp.est.1}--\eqref{tmp.est.8}
allow us to extract subsequences (not relabeled) such that, as $\delta\to 0$,
\begin{align*}
  &\vec q^{(\delta)}\rightharpoonup \vec q\quad \mbox{weakly in }W^{1,4}(\Omega;\R^N),
	\quad\vec q^{(\delta)}\to\vec q\quad \mbox{strongly in }L^{\infty}(\Omega;\R^N),\\
  &\log\theta^{(\delta)}\rightharpoonup \log\theta\quad \mbox{weakly in }
	W^{1,4}(\Omega), \quad
  \log\theta^{(\delta)}\to \log\theta\quad\mbox{strongly in }L^{\infty}(\Omega),\\
  &\bm v^{(\delta)}\rightharpoonup \bm v\quad \mbox{weakly in }H^1(\Omega;\R^3),
	\quad\bm v^{(\delta)}\to \bm v\quad\mbox{strongly in }L^{6-\nu}(\Omega;\R^3)
	\mbox{ for all }\nu>0,\\
  &\delta |D^2\vec q^{(\delta)}| + \delta K(n) |\bm v^{(\delta)}|^3
	+ \delta (1+\theta^{(\delta)}) |D^2\log\theta^{(\delta)}| \to 0\quad
  \mbox{strongly in }L^{4/3}(\Omega).
\end{align*}
The $L^\infty$ bound of $\log \theta^{(\delta)}$ implies that $\theta^{(\delta)}$
is bounded away from zero and infinity. The $W^{1,4}(\Omega)$ bound yields the
strong convergence of (a subsequence of) $\theta^{(\delta)}$ and thus the
convergence $\log\theta^{(\delta)}\to \log\theta$ a.e.\ and uniformly.
As a consequence of the strong $L^\infty(\Omega)$ convergence of $\vec q^{(\delta)}$
and $\log\theta^{(\delta)}$, we have
$$
  \vec\rho^{(\delta)}\to\vec\rho\quad\mbox{strongly in }L^\infty(\Omega;\R^N),
	\quad\min_{i=1,\ldots,N}\mbox{ess} \inf_\Omega\rho_i > 0.
$$
The previous convergences are sufficient to pass to the limit $\delta\to 0$ in
\eqref{app.rhoi}--\eqref{app.rhoe} and to obtain
an analogous system of equations without the terms proportional to $\delta$.
The terms that are nonlinear in $\na\vec q$, $\na\log\theta$ are monotone,
so we can use Minty's monotonicity technique to identify the limit of these terms.

The limit $n\to\infty$ is carried out in the same way as the limit $\delta\to 0$.
We have chosen not to take the two limits simultaneously because of the term
$\delta K(n)\int_{\Omega}|\bm v|^2 \bm v\cdot \bm u\, dx$ in \eqref{fix.rhov},
which vanishes as $\delta\to 0$ ($n\in\N$ fixed) but not necessarily when
both $\delta\to 0$, $n\to\infty$. The two limits could be carried out
simultaneously provided that one assumes that $\delta K(n)\to 0$.

After taking the limits $\delta\to 0$, $n\to\infty$ (in this order),
we are left with the following system:
\begin{align}
\label{app2.rhoi}
  0 &= \sum_{i=1}^N\int_\Omega \bigg( -\rho_i \bm v + \sum_{j=1}^N M_{ij}\na q_j
	+ M_i\nabla\bigg(\frac{1}{\theta} \bigg) \bigg)\cdot\nabla\phi_i \,dx
	- \sum_{i=1}^N\int_\Omega r_i\phi_i \,dx \\
\nonumber
  &\phantom{xx}{}+\eps^3 \sum_{i=1}^N\int_\Omega\big(
(1+\chi |\na q_i|^2)\na q_i \cdot\na\phi_i
+ (1+|q_i-q_0|^{1/2})(q_i-q_0) \phi_i \big) \,dx\\
\nonumber
  &\phantom{xx}{}+\eps\sum_{i=1}^N\int_\Omega (\rho_i - \mathfrak{R}_i)
\phi_i \,dx; \\
\label{app2.rhov}
  0 &= \int_{\pa\Omega}\alpha_1 \bm u\cdot \bm v \,ds +
\int_\Omega \left( -\rho \bm v\otimes \bm v + \St \right):\nabla \bm u\, dx -
\int_\Omega (p\diver \bm u  + \rho \bm b \cdot \bm u) \,dx\\
\nonumber
  &\phantom{xx}{}+\eps^3\sum_{i=1}^d\sum_{j=1}^N\int_\Omega v_i\bigg(
(1+\chi |\na q_j|^2)\nabla q_j \cdot\nabla u_i + \frac{1}{2}(1+|q_j-q_0|^{1/2})
(q_j-q_0) u_i \bigg)\,dx\\
\nonumber
  &\phantom{xx}{}+\eps\int_\Omega\rho \bm v \cdot \bm u \,dx; \\
\label{app2.rhoe}
  0 &= \int_\Omega\bigg( -\rho e \bm v + \kappa(\theta)\nabla\theta
	+ \sum_{i=1}^N M_i\nabla q_i \bigg)\cdot\na\phi_0 \,dx
	+ \int_\Omega(p\diver \bm v - \St : \nabla \bm v)\phi_0 \,dx\\
\nonumber
  &\phantom{xx}{}+ \int_{\pa\Omega} \alpha_2(\theta-\theta_0)\phi_0\,ds
-\eps\int_\Omega\mathfrak{R}\frac{|\bm v|^2}{2}\phi_0 \,dx\\
\nonumber
  &\phantom{xx}{}+\eps\int_\Omega\big( (1+\theta)(1+|\na\log\theta|^2)\na(\log\theta )
	\cdot\na\phi_0 + (\log\theta)\phi_0\big) \,dx
\end{align}
for all $\phi_1,\ldots,\phi_N\in W^{1,4}(\Omega)$, $\bm u\in W^{1,12}(\Omega;\R^3)$,
and $\phi_0\in W^{1,4}(\Omega)$.


\subsection{Limit $\chi\to 0$.}

Since the estimate of $\nabla \vec{q}\,^{(\chi)}$ in $L^4(\Omega;\R^{N\times 3})$
blows up when $\chi \to 0$, we lose the control of $\vec{\rho}$ in
$L^\infty(\Omega;\R^N)$. Therefore, we need to establish $\chi$-independent estimates
of $\rho$, which is the goal of the following lemma.

\subsubsection{Estimate for $\rho$ via the Bogovskii operator}
The following result is the analogue of Lemma \ref{lem.dens} for the approximate equations.

\begin{lemma}\label{lem.prhonu}
Let $L$, $\eta$ be as in \eqref{h.reg} and $0<\zeta\leq L/11$. It holds that
\begin{equation}
  \int_\Omega\rho^{\gamma+\zeta}dx + \eta\int_\Omega\rho^{L+\zeta}dx
	\leq C(1+ \eps^{(L+\zeta)/(2L)}), \label{est.p.rho.nu}
\end{equation}
where $C>0$ is independent of $\eta$ and $\eps$.
\end{lemma}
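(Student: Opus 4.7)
The strategy is to adapt the Bogovskii-operator proof of Lemma~\ref{lem.dens} to the approximate momentum equation \eqref{app2.rhov}, with careful tracking of the dependence of every constant on $\eta$ and $\eps$. I would test \eqref{app2.rhov} with $\bm{\phi}:=\B\bigl(\rho^{\zeta}-\langle\rho^{\zeta}\rangle\bigr)$, where $\langle\rho^\zeta\rangle:=|\Omega|^{-1}\int_\Omega\rho^\zeta\,dx$ and $\B$ is the Bogovskii operator of Theorem~\ref{thm.bog}, which supplies $\bm{\phi}\in W^{1,q}_0(\Omega;\R^3)$ together with $\|\bm{\phi}\|_{W^{1,q}(\Omega)}\le C\|\rho\|_{L^{q\zeta}(\Omega)}^\zeta$ for every $q\in(1,\infty)$. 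Integration by parts in the pressure term produces
\begin{equation*}
\int_\Omega p\,\rho^{\zeta}\,dx
= \langle\rho^{\zeta}\rangle\int_\Omega p\,dx
+ \int_\Omega\bigl(\St-\rho\,\bm{v}\otimes\bm{v}\bigr):\nabla\bm{\phi}\,dx
- \int_\Omega\rho\,\bm{b}\cdot\bm{\phi}\,dx + \mathcal{R}_{\eps,\chi},
\end{equation*}
where $\mathcal{R}_{\eps,\chi}$ collects the contributions of the $\eps$- and $\eps^3$-regularisation terms present in \eqref{app2.rhov}.

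The modified free energy \eqref{h.reg} together with Lemma~\ref{lem:H6}(iv) gives the pointwise bound $p\ge c_p\rho^\gamma + \eta(L-1)\overline{n}^L$, so the left-hand side is coercive: $\int_\Omega p\,\rho^\zeta\,dx\ge c\bigl(\int_\Omega\rho^{\gamma+\zeta}\,dx + \eta\int_\Omega\rho^{L+\zeta}\,dx\bigr)$. The convective, viscous, and body-force integrals on the right are then treated exactly as in Lemma~\ref{lem.dens}: using the entropy bounds \eqref{2.vq}--\eqref{2.thetaH1} and the temperature estimate $\|\theta\|_{L^{3\beta}(\Omega)}\le C(1+\|\rho\|_{L^{6/5}(\Omega)})$, together with the Bogovskii bound on $\bm{\phi}$, each such integral is dominated by $C\|\rho\|_{L^{L+\zeta}(\Omega)}^{\alpha}$ for some $\alpha<L+\zeta$. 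The smallness assumption $\zeta\le L/11$ is precisely what guarantees that all relevant exponents of $\rho$ stay below $L+\zeta$, so Young's inequality absorbs these contributions into the coercive term $\eta\int_\Omega\rho^{L+\zeta}\,dx$, with constants depending on $\eta^{-1}$ but independent of $\eps$. The mean-value term $\langle\rho^\zeta\rangle\int_\Omega p\,dx$ is controlled by interpolating $\rho^\zeta$ between $L^1(\Omega)$ (known from the almost-mass conservation \eqref{mass.fix}) and $L^{(L+\zeta)/\zeta}(\Omega)$.

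The $\eps$-dependent term $\eps\int_\Omega\rho\,\bm{v}\cdot\bm{\phi}\,dx$ is the source of the factor $\eps^{(L+\zeta)/(2L)}$ in \eqref{est.p.rho.nu}: Cauchy--Schwarz combined with \eqref{tmp.est.6} and \eqref{mass.fix} gives
$$\eps\,\|\sqrt{\rho}\|_{L^2}\|\sqrt{\rho}\,\bm{v}\|_{L^2}\|\bm{\phi}\|_{L^\infty}\le C\eps^{1/2}\|\rho\|_{L^{L+\zeta}}^{\zeta},$$
and balancing $\eps^{1/2}$ against $\|\rho\|_{L^{L+\zeta}}^\zeta$ by Young's inequality against the $\eta$-coercive term produces exactly the exponent $(L+\zeta)/(2L)$. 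The main obstacle is the careful bookkeeping required for the two $\eps^3$-regularisation integrals in \eqref{app2.rhov}, which involve $\nabla q_j$ and $q_j-q_0$ for which \eqref{tmp.est.2b}--\eqref{tmp.est.2c} only provide bounds that blow up as $\eps^{-3/2}$ and $\eps^{-6/5}$, respectively. The $\eps^3$ prefactor was precisely engineered so that, after combining with $\|\bm{v}\|_{H^1(\Omega)}\le C$ and the Bogovskii bound, these contributions come out $\eps$-uniformly bounded; verifying this cancellation and absorbing the residual $\rho^\zeta$-factors into the coercive term is the principal technical step of the proof.
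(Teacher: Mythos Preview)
Your overall strategy is correct and matches the paper's: test \eqref{app2.rhov} with $\bm\phi=\B(\rho^\zeta-\langle\rho^\zeta\rangle)$, extract the coercive term $\int_\Omega p\rho^\zeta\,dx\ge c(\int\rho^{\gamma+\zeta}+\eta\int\rho^{L+\zeta})$, handle the standard fluid terms as in Lemma~\ref{lem.dens}, and estimate the $\eps$-regularization remainder $G$ separately. Your identification of the $\eps\int\rho\,\bm v\cdot\bm\phi$ term as the source of the exponent $(L+\zeta)/(2L)$ via $\|\sqrt\rho\|_{L^2}\|\sqrt\rho\,\bm v\|_{L^2}\|\bm\phi\|_{L^\infty}\le C\eps^{1/2}\|\bm\phi\|_{W^{1,12}}$ followed by Young's inequality is exactly the paper's argument.

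However, you have misattributed the origin of the restriction $\zeta\le L/11$, and this is tied to a genuine omission in your treatment of the $\eps^3$-terms. The standard convective/viscous/body-force terms, when the computations of Lemma~\ref{lem.dens} are repeated with $\beta=\gamma=L$ (using the $\eta$-regularized heat conductivity and pressure), yield only the much weaker constraint $\zeta\le L(3L-2)/(3L+2)$; they are absorbed into $\int\rho^{\gamma+\zeta}$ with an $\eta$-independent constant, not into the $\eta$-term. The binding constraint $\zeta\le L/11$ comes instead from the $\eps^3\chi|\nabla q_j|^2\nabla q_j\cdot\nabla u_i$ piece of the regularization, which you do not mention: estimating it by $\eps^3\chi\|\bm v\|_{L^6}\|\nabla\vec q\|_{L^4}^3\|\nabla\bm\phi\|_{L^{12}}$ and invoking \eqref{tmp.est.2} (not just \eqref{tmp.est.2b}--\eqref{tmp.est.2c}) gives $C\eps^{3/4}\chi^{1/4}\|\nabla\bm\phi\|_{L^{12}}$. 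It is this $L^{12}$-norm that forces $\|\rho^\zeta\|_{L^{12}}\le\|\rho\|_{L^{L+\zeta}}^\zeta$, i.e.\ $12\zeta\le L+\zeta$. Since you flagged the $\eps^3$-bookkeeping as the principal technical step, be sure to include this term and the estimate \eqref{tmp.est.2} explicitly.
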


\begin{proof}
Let $\B$ be the Bogovskii operator (see Theorem \ref{thm.bog}).
Choosing $\bm u = \B(\rho^\zeta - \langle\rho^\zeta\rangle)$ in \eqref{app2.rhov}
and carrying out the computations contained in Lemma \ref{lem.dens}, it follows that
\begin{align}\label{intprho.1}
  &\int_\Omega p\rho^\zeta \,dx
	\leq  C(1+\|\rho\|_{L^{\gamma+\zeta}(\Omega)}^\lambda) + G, \quad\mbox{where} \\
  \nonumber
  & G = \eps^3\sum_{i=1}^d\sum_{j=1}^N\int_\Omega v_i
	\bigg((1+\chi |\na q_j|^2)\nabla q_j \cdot\nabla u_i
	+ \frac{1}{2}(1+|q_i-q_0|^{1/2})(q_j-q_0) u_i \bigg)\,dx \\
  \nonumber
  &\phantom{G=} +\eps\int_\Omega\rho \bm v \cdot \bm u \,dx .
\end{align}
Note that due to our regularized problem, the computations in Lemma \ref{lem.dens}
are performed for the choice $\beta=\gamma=L$, hence it leads to the restriction
$\zeta \leq (3L-2)L/(3L+2)$.

The terms in $G$ remain to be estimated. It holds that
\begin{align*}
  |G| &\leq C\eps^3\chi\|\bm v\|_{L^6(\Omega)}\|\nabla\vec q\|_{L^4(\Omega)}^3
	\|\nabla \bm u\|_{L^{12}(\Omega)}
  +C\eps^3\|\bm v\|_{L^6(\Omega)}\|\nabla\vec q\|_{L^2(\Omega)}\|\nabla
	\bm u\|_{L^{3}(\Omega)} \\
  &\phantom{xx}{} + C\eps^3\|\bm v\|_{L^6(\Omega)}\|\vec q - q_0\vec 1\|_{L^2(\Omega)}
	\|\bm u\|_{L^3(\Omega)}
+ C\eps^3\|\bm v\|_{L^6(\Omega)}\|\vec q - q_0\vec 1\|_{L^{5/2}(\Omega)}^{3/2}
\|\bm u\|_{L^{30/7}(\Omega)}\\
  &\phantom{xx}{} + C\eps\|\sqrt\rho\|_{L^2(\Omega)}\|\sqrt\rho v\|_{L^2(\Omega)}
	\|\bm u\|_{L^\infty(\Omega)}.
\end{align*}
From \eqref{tmp.est.2}--\eqref{tmp.est.2c}, \eqref{tmp.est.6}, quasi mass conservation
\eqref{mass.fix}, as well as the uniform $H^1$ bound \eqref{2.vq} for $\bm v$ and
Sobolev's embedding $H^1(\Omega)\hookrightarrow L^6(\Omega)$, we infer that
\begin{align*}
|G| &\leq C\eps^{3/4}\chi^{1/4}\|\nabla \bm u\|_{L^{12}(\Omega)} +
C\eps^{3/2}\|\nabla \bm u\|_{L^{3}(\Omega)} +
 C\eps^{3/2}\|\bm u\|_{L^3(\Omega)} + C\eps^{1/2}\|\bm u\|_{L^\infty(\Omega)}\\
&\leq C\eps^{1/2}\|\bm u\|_{W^{1,12}(\Omega)} .
\end{align*}
It follows from Theorem \ref{thm.bog} that
$\|\bm u\|_{W^{1,12}(\Omega)}\leq C\|\rho^\zeta - \langle\rho^\zeta\rangle
\|_{L^{12}(\Omega)}$, such that
$$
  |G|\leq C\eps^{1/2}\big(1 + \|\rho\|_{L^{12\zeta}(\Omega)}^\zeta\big).
$$
Therefore, \eqref{intprho.1} implies that
$$
  \int_\Omega p\rho^\zeta\, dx \leq  C(1+\|\rho\|_{L^{\gamma+\zeta}(\Omega)}^\lambda)
	+ C\eps^{1/2}\|\rho\|_{L^{12\zeta}(\Omega)}^\zeta.
$$
Since $p\geq c\rho^{\gamma} + c\eta\rho^{L}$, while $\lambda<\gamma+\zeta$, we find that
$$
  \int_\Omega\rho^{\gamma+\zeta}dx + \eta\int_\Omega\rho^{L+\zeta}dx\leq
  C + C\eps^{1/2}\left( \int_\Omega\rho^{L+\zeta}dx \right)^{\frac{\zeta}{L+\zeta}},
$$
which leads to \eqref{est.p.rho.nu}.
This finishes the proof of the lemma.
\end{proof}

\begin{remark}\label{r grad.dens}\rm
Due to our assumption $L\geq 2\beta_0$, we still control $\nabla \vec{\rho}$
in $L^2(\Omega;\R^{N\times2})$ independently of $\chi$; we use the argument
between \eqref{tmp.est.9} and \eqref{narho.eps}.
\end{remark}

\subsubsection{Limit passage in the equations}
 At this point, since we have a $(\eps,\chi)$-uniform bound for $\rho$ (given by
\eqref{est.p.rho.nu} and Remark \ref{r grad.dens}), we can take the limit $\chi\to 0$.
Moreover, we still have a uniform $H^1(\Omega)$ bound for $\vec q$ and a uniform
$W^{1,4}(\Omega)$ bound for $\log\theta$, so we deduce via compact Sobolev embedding that for $\chi\to 0$ (up to subsequences),
\begin{align*}
  \vec q^{(\chi)}\to\vec q &\quad\mbox{strongly in }L^{6-\nu}(\Omega;\R^N)
  \mbox{ for all }\nu>0,\\
  \log\theta^{(\chi)}\to\log\theta &\quad\mbox{strongly in }L^\infty(\Omega).
\end{align*}
As before, it is not difficult to justify that $\theta^{(\chi)}\to\theta$ a.e.\ in
$\Omega$. We deduce from the continuity of the free energy $h_\theta$ that
$\vec\rho^{(\chi)}$ is also a.e.~convergent in $\Omega$. Bound \eqref{est.p.rho.nu}
then implies that
$$
  \vec\rho^{(\chi)}\to\vec\rho\quad\mbox{strongly in }L^r(\Omega;\R^N)
	\quad\mbox{for all } r< \frac{12}{11}L.
$$
The limit $\chi\to 0$ in \eqref{app2.rhoi}--\eqref{app2.rhoe} is carried out in a
similar way as the previous limits $\delta\to 0$, $n\to\infty$. This leads
to the following limiting system:
\begin{align}
\label{app3.rhoi}
  0 &= \sum_{i=1}^N\int_\Omega \bigg( -\rho_i \bm v + \sum_{j=1}^N M_{ij}\na q_j
	+ M_i\nabla\bigg(\frac{1}{\theta} \bigg) \bigg)\cdot\nabla\phi_i \,dx
	- \sum_{i=1}^N\int_\Omega r_i\phi_i \,dx \\
\nonumber
  &\phantom{xx}{}+\eps^3 \sum_{i=1}^N\int_\Omega\big(\na q_i \cdot\na\phi_i
  + (1+|q_i-q_0|^{1/2})(q_i-q_0) \phi_i \big) \,dx
  +\eps\sum_{i=1}^N\int_\Omega (\rho_i - \mathfrak{R}_i)\phi_i \,dx; \\
\label{app3.rhov}
  0 &= \int_{\pa\Omega}\alpha_1 \bm u\cdot \bm v\,ds +
  \int_\Omega( -\rho \bm v\otimes \bm v + \St):\nabla\bm u\, dx
	- \int_\Omega (p\diver \bm u  + \rho \bm b \cdot \bm u) \,dx\\
\nonumber
  &\phantom{xx}{}+\eps^3\sum_{i=1}^d\sum_{j=1}^N\int_\Omega v_i\bigg(
  \nabla q_j \cdot\nabla u_i + \frac{1}{2}(1+|q_j-q_0|^{1/2})(q_j-q_0) u_i \bigg)\,dx
  +\eps\int_\Omega\rho \bm v \cdot \bm u \,dx; \\
\label{app3.rhoe}
  0 &= \int_\Omega\bigg( -\rho e \bm v + \kappa(\theta)\nabla\theta
	+ \sum_{i=1}^N M_i\nabla q_i \bigg)\cdot\na\phi_0 \,dx
	+ \int_\Omega(p\diver \bm v - \St : \nabla \bm v)\phi_0 \,dx\\
\nonumber
  &\phantom{xx}{}+ \int_{\pa\Omega} \alpha_2(\theta-\theta_0)\phi_0\,ds
  -\eps\int_\Omega\mathfrak{R}\frac{|\bm v|^2}{2}\,dx\\
  \nonumber
  &\phantom{xx}{}+\xi\int_\Omega\left( (1+\theta)(1+|\na\log\theta|^2)\na(\log\theta )
	\cdot\na\phi_0 + (\log\theta)\phi_0\right) \,dx
\end{align}
for all $\phi_1,\ldots,\phi_N\in H^{1}(\Omega)\cap L^\infty(\Omega)$,
$\bm u\in W^{1,3}(\Omega;\R^3)$, and $\phi_0\in W^{1,4}(\Omega)$.

\subsubsection{Entropy and total energy balance equations}
We will derive the balance equations for the entropy and total energy. It was not
possible to derive the latter equation as long as the $p$-Laplacian regularization
in terms of $\vec q$ was included in the mass densities equation.

Let $\psi\in W^{1,\infty}(\Omega)$ with $\psi\geq 0$ be arbitrary.
By choosing $\phi_i=(q_i-q_0)\psi$ in \eqref{app3.rhoi} and $\phi_0=-\frac{1}{\theta}
\psi$ in \eqref{app3.rhoe} and proceeding as in the derivation of \eqref{ei.fix},
we obtain
\begin{align}\nonumber
  \int_\Omega \bigg( & \rho s \vel + \sum_{i=1}^N q_i \bigg( \sum_{j=1}^N M_{ij}\na q_j
	+ M_i\na\frac{1}{\theta} \bigg) - \frac{1}{\theta}\bigg( \kappa(\theta)\na\theta
	+ \sum_{i=1}^N M_i\na q_i\bigg)\bigg) \cdot\nabla\psi\, dx\\
\label{entr.bal}
  &\phantom{xx}{}+\int_\Omega\bigg(\sum_{i,j=1}^N M_{ij}\nabla q_i\cdot\nabla q_j
  + \kappa^{(\eta)}(\theta)|\nabla\log\theta|^2
  -\sum_{i=1}^N r_i q_i + \frac{1}{\theta} \St : \nabla \bm v\bigg)\psi \,dx\\
\nonumber
  &\phantom{xx}{}+\eps^3\int_\Omega\sum_{i=1}^N\na q_i\cdot (q_i-q_0)\na\psi \,dx
  -\xi\int_\Omega (1+\theta^{-1})(1+|\na\log\theta|^2)\na\log\theta\cdot\na\psi \,dx\\
  \nonumber
  &= \int_{\pa\Omega}\alpha_2\frac{\theta-\theta_0}{\theta}\psi \,ds
  - \xi \int_\Omega\frac{1}{\theta}\log\frac{1}{\theta}\psi \,dx .
\end{align}

Next, let $\varphi\in W^{1,\infty}(\Omega)$. By choosing $\phi_i=-\frac12|\bm v|^2
\varphi$ in \eqref{app3.rhoi}, $\bm u=\bm v\varphi$ in \eqref{app3.rhov},
and $\phi_0=\varphi$ in \eqref{app3.rhoe} and
proceeding as in the derivation of \eqref{energy.fix}, it follows that
\begin{align}
\label{toten.bal}
\int_\Omega( & -\rhotot E \bm{v} - \bm{Q} +\St \bm{v} -p\bm{v})\cdot\na \varphi \, dx
+\int_{\pa\Omega}(\alpha_1|\bm{v}|^2+ \alpha_2 (\theta-\theta_0))\varphi\, ds\\
\nonumber
&{}+\xi\int_\Omega (1+\theta)(1+|\na\log\theta|^2)\na(\log\theta )
\cdot\na\varphi \,dx\\
\nonumber
&{}+\eps\int_\Omega \rho\frac{|\bm v|^2}{2}\varphi \,dx
+\xi\int_\Omega\log\theta \varphi \,dx
= \int_\Omega\rhotot\bm{b}\cdot\bm{v} \varphi\, dx.
\end{align}


\subsection{Limit $\eps\to 0$}

When taking the limit $\eps\to 0$, we can argue as in Subsection
\ref{sec.conv.smooth} and prove that $\bm v^{(\eps)}$ is (up to a subsequence)
strongly convergent in $L^{6-\delta}(\Omega;\R^3)$, $\theta^{(\eps)}$ is strongly
convergent in $L^{3\beta-\delta}(\Omega)$ for all $\delta>0$, and $\rho^{(\eps)}$
is weakly convergent in $L^{L+\nu}(\Omega)$.
On the other hand, $\log\theta^{(\eps)}$ is strongly convergent in
$L^\infty(\Omega)$, thanks to \eqref{tmp.est.4}, which is better than the
convergence property of $\theta$ in Subsection \ref{sec.conv.smooth}.
As in  Subsection \ref{sec.conv.smooth}, we can also show the convergence of the
various terms appearing in equations \eqref{app3.rhoi}, \eqref{app3.rhov},
\eqref{entr.bal}, and \eqref{toten.bal}, which do not involve the regularization.
In this subsection, we only show that the additional $\eps$-dependent regularizing
terms vanish in the limit $\eps\to 0$.

Taking into account estimates \eqref{tmp.est.2b}, \eqref{tmp.est.2c},
\eqref{tmp.est.8} and the uniform $L^1(\Omega)$ bounds for $\rho$, we see that
for every $\phi_1,\ldots,\phi_N\in H^1(\Omega)\cap L^\infty(\Omega)$,
$$
\eps^3 \sum_{i=1}^N\int_\Omega\left(\na q_i \cdot\na\phi_i
+ (1+|q_i-q_0|^{1/2})(q_i-q_0) \phi_i \right) \,dx
+\eps\sum_{i=1}^N\int_\Omega (\rho_i - \mathfrak{R}_i)
\phi_i \,dx\to 0
$$
as $\eps\to 0$. Furthermore, it follows from \eqref{tmp.est.2b}, \eqref{tmp.est.2c}
and the uniform bounds for $\rho$ and $\bm v$ in $L^{6/5}(\Omega)$ and $L^{6}(\Omega)$,
respectively, that, for every $\bm u\in W^{1,3}(\Omega)$,
$$
\eps^3\sum_{i=1}^d\sum_{j=1}^N\int_\Omega v_i\bigg(
\nabla q_j \cdot\nabla u_i + \frac{1}{2}(1+|q_j-q_0|^{1/2})(q_j-q_0) u_i \bigg)\,dx
+\eps\int_\Omega\rho \bm v \cdot \bm u \,dx\to 0
$$
as $\eps\to 0$. Estimates \eqref{tmp.est.2b} and \eqref{tmp.est.2c} imply that
$$
\eps^3\int_\Omega\sum_{i=1}^N\na q_i\cdot (q_i-q_0)\na\psi \,dx\to 0
\quad\mbox{as }\eps\to 0.
$$
Finally, the uniform $L^{L+\nu}(\Omega)$ bound for $\rho$ and the $L^6(\Omega;\R^3)$
bound for $\bm v$ give
\begin{align*}
\eps\int_\Omega
\rho\frac{|\bm v|^2}{2}\varphi \,dx\to 0\quad
\mbox{as }\eps\to 0.
\end{align*}

\subsubsection{Effective viscous flux}
We want to show that Lemma \ref{lem.eff} holds for approximate solutions.
For this, we only need to check that the div-curl Lemma can be applied.
It follows from \eqref{app3.rhov} that, in the distributional sense,
\begin{align*}
  \diver(&\rho^{(\eps)} \bm v^{(\eps)}\otimes \bm v^{(\eps)} - \mathbb{T}^{(\eps)})
	- \rho^{(\eps)}\bm b^{(\eps)} \\
  &= \eps^3\diver\bigg(\bm v^{(\eps)} \otimes \nabla\sum_{j=1}^N q_j\bigg)
  -\frac{1}{2}\eps^3\sum_{j=1}^N (1+|q_i^{(\eps)}-q_0^{(\eps)}|^{1/2})
  (q_i^{(\eps)}-q_0^{(\eps)})v^{(\eps)}-\eps\rho^{(\eps)}\bm v^{(\eps)} .
\end{align*}
By arguing as in the previous subsection, we see that the right-hand side of this
identity tends to zero strongly in $W^{-1,3/2}(\Omega)$. Hence,
$\diver(\rho^{(\eps)} \bm v^{(\eps)}\otimes \bm v^{(\eps)} - \mathbb{T}^{(\eps)})$
is relatively compact in $W^{-1,r}(\Omega)$ for some $r>1$.

We claim that $\diver(\rho^{(\eps)}\bm v^{(\eps)})$ is relatively compact in
$W^{-1,r}(\Omega)$ for some $r>1$. Indeed, we infer from \eqref{app3.rhoi} that
$$
\diver(\rho^{(\eps)}\bm v^{(\eps)}) =
\eps^3 \sum_{i=1}^N\Delta q_i
-\eps^3 \sum_{i=1}^N (1+|q_i-q_0|^{1/2})(q_i-q_0)
-\eps (\rho - \mathfrak{R}) .
$$
Once again, by arguing as in the previous subsection, we find that
$\diver(\rho^{(\eps)}\bm v^{(\eps)})\to 0$ strongly in $W^{-1,3/2-\delta}(\Omega)$
for every $\delta>0$.
We conclude that Lemma \ref{lem.eff} effectively holds for the approximate solutions constructed in the previous subsections.

\subsubsection{Estimates based on the convexity of the free energy}
Lemmata \ref{lem.Wk}--\ref{lem.Qk} also hold for the approximate solutions,
as they only require the properties of the free energy stated in the introduction,
which still hold true for the regularized free energy introduced in \eqref{h.reg},
and the estimates in Lemmata \ref{lem.est1}--\ref{lem.p}, which are satisfied by the
approximate solutions.

\subsubsection{Renormalized continuity equation.}
We prove that the weak limit $\rho$ of $(\rho^{(\eps)})$ satisfies the
renormalized continuity equation \eqref{ren.cont.eq} by mimicking
the proof of Lemma \ref{lem.renorm}. Note that the choice of the test function below can be, due to higher regularity of $\rho^{(\varepsilon)}$, justified by 
approximation.
Choosing $\phi_i = T_k'(\rho^{(\eps)})\psi$ ($i=1,\ldots,N$),
$\psi\in W^{1,\infty}(\Omega)$  in \eqref{app3.rhoi}
(this is possible since $\rho^{(\eps)}\in H^1(\Omega)$ thanks to
\eqref{narho.eps}) yields
\begin{equation}\label{rce.1}
  -\int_\Omega T_k(\rhotot^{(\eps)})\vel^{(\eps)}\cdot\na\psi \,dx
  +\int_\Omega\big(T_k'(\rhotot^{(\eps)})\rhotot^{(\eps)}
  -T_k(\rhotot^{(\eps)})\big)\diver\vel^{(\eps)}\psi \, dx =
  R^{(\eps)}_k(\psi),
\end{equation}
where
\begin{align*}
R^{(\eps)}_k(\psi) &=
-\eps^3 \sum_{i=1}^N\int_\Omega\left(\na q_i^{(\eps)} \cdot\na (T_k'(\rho^{(\eps)})\psi)
+ (1+|q_i^{(\eps)}-q_0^{(\eps)}|^{1/2})(q_i^{(\eps)}-q_0^{(\eps)})T_k'(\rho^{(\eps)})
\psi \right) \,dx\\
&\phantom{xx}{}-\eps\sum_{i=1}^N\int_\Omega (\rho_i^{(\eps)} - \mathfrak{R}_i^{(\eps)})
T_k'(\rho^{(\eps)})\psi \,dx .
\end{align*}
Taking the limit inferior $\eps\to 0$ in both sides of \eqref{rce.1} leads to
\begin{equation}\label{rce.2}
  \bigg|-\int_\Omega \overline{T_k(\rhotot^{(\eps)})}\vel\cdot\na\psi \,dx
  +\int_\Omega\overline{\big(T_k'(\rhotot^{(\eps)})\rhotot^{(\eps)}
  -T_k(\rhotot^{(\eps)})\big)\diver\vel^{(\eps)}}\psi \, dx\bigg|\leq
  \liminf_{\eps\to 0}|R^{(\eps)}_k(\psi)|.
\end{equation}
Thanks to estimates \eqref{tmp.est.2b} and \eqref{tmp.est.2c}, we infer that
\begin{align*}
  \liminf_{\eps\to 0}|R^{(\eps)}_k(\psi)|
  &\leq \liminf_{\eps\to 0}\eps^3\int_\Omega |\na\vec{q}^{(\eps)}||\na\rho^{(\eps)}|
	|T_k''(\rho^{(\eps)})||\psi| \,dx\\
  &\leq \frac{C}{k}\liminf_{\eps\to 0}\eps^3\int_{\{ k\leq\rho^{(\eps)}\leq 3k\}}
	|\na\vec{q}^{(\eps)}||\na\rho^{(\eps)}| |\psi| \,dx .
\end{align*}
We replace $\psi$ by $b'(\overline{T_k(\rho^{(\eps)})})\psi$ in \eqref{rce.2}.
Similarly as above, we have to justify this choice by approximation; note that all integrals below are finite.
We exploit the definition of the truncation operator $T_k$ to
find that
\begin{align}\label{rce.3}
  \bigg| -&\int_{\Omega}
  b\big(\overline{T_k(\rhotot^{(\eps)})}\big) \vel\cdot \nabla \psi\, dx
	+ \int_{\Omega}\diver \vel \Big(\overline{T_k(\rhotot^{(\eps)})} b'\big(
	\overline{T_k(\rhotot^{(\eps)})}\big)-b\big(\overline{T_k(\rhotot^{(\eps)})}\big)
	\Big)\psi\, dx \\
\nonumber
  &\phantom{xx}{} - \int_\Omega b'\big(\overline{T_k(\rhotot^{(\eps)})}\big)
  \overline{\big(T_k(\rhotot^{(\eps)})-\rhotot^{(\eps)} T_k'(\rhotot^{(\eps)})\big)
	\diver\vel^{(\eps)}}\psi \, dx\bigg|\\
\nonumber
  &\leq \frac{C}{k}\liminf_{\eps\to 0}\eps^3\int_{\{ k\leq\rho^{(\eps)}\leq 3k\}}
	|\na\vec{q}^{(\eps)}||\na\rho^{(\eps)}|
  \big|b'(\overline{T_k(\rho^{(\eps)})})\big| |\psi| \,dx .
\end{align}
It is proved in Lemma \ref{lem.renorm} that the left-hand side of
\eqref{rce.3} converges to the left-hand side of the renormalized continuity
equation \eqref{ren.cont.eq} as $k\to\infty$. Therefore, it is sufficient to prove
that the right-hand side of \eqref{rce.3} tends to zero as $k\to\infty$. Indeed,
since $b'$ is bounded and \eqref{tmp.est.2b} and \eqref{narho.eps} hold,
it follows that
\begin{align*}
  \liminf_{\eps\to 0}\,&\eps^3\int_{\{ k\leq\rho^{(\eps)}\leq 3k\}}
	|\na\vec{q}^{(\eps)}||\na\rho^{(\eps)}|
  |b'(\overline{T_k(\rho^{(\eps)})})| |\psi| \,dx\\
  &\leq \liminf_{\eps\to 0}\eps^3\|\na\vec{q}^{(\eps)}\|_{L^2(\Omega)}
  \|\na\rho^{(\eps)}\|_{L^2(\Omega)}\leq C ,
\end{align*}
meaning that the right-hand side of \eqref{rce.3} tends to zero as $k\to\infty$.
We conclude that the weak limit $(\rho,\vel)$ of $(\rho^{(\eps)},\vel^{(\eps)})$
satisfies the renormalized continuity equation \eqref{ren.cont.eq}.

\subsubsection{Limit $\eps\to 0$: conclusion}
Lemmata \ref{thm.conv} and \ref{thm.conv.total} hold as they are not influenced
by the approximation. We conclude that $\rho_i^{(\eps)}\to\rho_i$ a.e.~in $\Omega$.


\subsection{Limit $\xi\to 0$}

Due to the uniform bounds in Lemmata
\ref{lem.est1} and \ref{lem.prhonu} (fulfilled by the approximating sequence),
we deduce that, up to subsequences, $\Pi\vec{q}^{(\xi)}\to\Pi\vec{q}$,
$\log\theta^{(\xi)}\to\log\theta$, $(\theta^{(\xi)})^{\beta/2}\to\theta^{\beta/2}$
weakly in $H^1(\Omega)$ and strongly in $L^{6-\delta}(\Omega)$ for every $\delta>0$,
and $\rho^{(\xi)}\rightharpoonup\rho$ weakly in $L^{L+\nu}(\Omega)$, as
$\xi\to 0$. Moreover,
the a.e.~convergence of $\rho^{(\xi)}$ is proved in an analogous way as in the
compactness part, since no $\xi$-dependent regularizing terms appear in either the
linear momentum or the partial mass densities equations. We only need to show
that the regularizing terms in the entropy balance equation \eqref{entr.bal} and
in the total energy balance equation \eqref{toten.bal} vanish in the limit $\xi\to 0$.

To this end, we note that, for any $\psi\in W^{1,\infty}(\Omega)$ with $\psi\geq 0$,
we have
$$
\limsup_{\xi\to 0}\bigg( - \xi \int_\Omega
\frac{1}{\theta}\log\frac{1}{\theta}\psi \,dx \bigg) \leq
\limsup_{\xi\to 0}\xi \int_{\{\theta\geq 1\}}
\frac{\log\theta}{\theta}\psi \,dx = 0 .
$$
Next, we point out that \eqref{tmp.est.4} and \eqref{2.thetaL2} imply that
$$
\|\theta^{-1/4}\|_{W^{1,4}(\Omega)}\leq C\xi^{-1/4},
$$
which, thanks to Gagliardo--Nirenberg inequality and the $L^1(\Omega)$ bound for
$1/\theta$, leads to
$$
\| \theta^{-1/4} \|_{L^\infty(\Omega)}\leq
C\| \theta^{-1/4} \|_{L^4(\Omega)}^{1/4}
\| \theta^{-1/4} \|_{W^{1,4}(\Omega)}^{3/4}
\leq C\xi^{-3/16} .
$$
We deduce from this estimate and \eqref{tmp.est.4} that
$$
\xi\int_\Omega (1+\theta^{-1})(1+|\na\log\theta|^2)\na\log\theta\cdot\na\psi \,dx\to 0
\quad\mbox{as }\xi\to 0.
$$
It follows from \eqref{tmp.est.9}, Sobolev's embedding, and assumption
$L\geq\gamma+\nu>4/3$ that
$$
\|\theta\|_{L^{4}(\Omega)}^{L/2}\leq
C\|\theta\|_{L^{3L}(\Omega)}^{L/2} =
C\|\theta^{L/2}\|_{L^6(\Omega)}\leq C\|\theta^{L/2}\|_{H^1(\Omega)}\leq C(\eta) .
$$
Together with \eqref{tmp.est.4} and the uniform $L^2(\Omega)$ bound for $\log\theta$,
this imply that
$$
\xi\int_\Omega (1+\theta)(1+|\na\log\theta|^2)\na\log\theta \cdot\na\varphi \,dx
+\xi\int_\Omega\log\theta \varphi \,dx
\to 0\qquad\mbox{as }\xi\to 0.
$$
This finishes the part related to the limit $\xi\to 0$.


\subsection{Limit $\eta\to 0$}

As in the previous limit, it holds (up to subsequences) that
$\Pi\vec{q}^{(\eta)}\to\Pi\vec{q}$, $\log\theta^{(\eta)}\to\log\theta$,
$(\theta^{(\eta)})^{\beta/2}\to\theta^{\beta/2}$
weakly in $H^1(\Omega)$ and strongly in $L^{6-\delta}(\Omega)$ for any $\delta>0$
as $\eta\to 0$.
Lemma \ref{lem.dens} can be shown to hold true as in the compactness part,
since the regularizing terms in the linear momentum equations have been removed in
the limit $\eps\to 0$.
The proof of the a.e.~convergence of $\rho^{(\eta)}$
is identical to that one in the compactness
part; in particular, Lemma \ref{thm.conv.total} holds, as it only employs the
properties of the free energy stated in Hypothesis (H6).

We show that the regularizing term, coming from the heat conductivity
$\kappa^{(\eta)}$ defined in \eqref{reg.kappa}, vanishes in the entropy and
total energy balance equations in the limit $\eta\to 0$. The strongest of these
terms is $\eta (\theta^{(\eta)})^L\na\theta^{(\eta)}$, which appears in the
total energy balance. It follows from
\eqref{tmp.est.9} and \eqref{2.thetaL2}--\eqref{2.thetaH1} via the
Gagliardo--Nirenberg inequality and assumption $L\geq 3\beta-2$ that, as $\eta\to 0$,
\begin{align*}
\eta\int_\Omega (\theta^{(\eta)})^L |\na\theta^{(\eta)}| \,dx
&\leq C\eta\|(\theta^{(\eta)})^{(L+2)/2}\|_{L^2(\Omega)}
\|\na(\theta^{(\eta)})^{L/2}\|_{L^2(\Omega)}\\
&\leq C\eta\|(\theta^{(\eta)})^{\beta/2}\|_{L^2(\Omega)}^{\lambda (L+2)/\beta}\|(\theta^{(\eta)})^{L/2}\|_{H^1(\Omega)}^{1+(1-\lambda)(L+2)/2}\to 0,
\end{align*}
where $\lambda = 2\beta(L-1)/((L-\beta)(L+2))\in [0,1]$
satisfies $1+(1-\lambda)(L+2)/2 < 2$ if and only if $\beta>1$. This is the
condition under which the total energy balance can be obtained.
On the other hand, it is possible to see that the term
$\eta (\theta^{(\eta)})^L\na\log\theta^{(\eta)}$,
which appears in the entropy balance, tends to zero strongly as $\eta\to 0$
for $\beta>2/3$.

Concerning the regularization in the mass densities,
Lemma \ref{lem.prhonu} yields (remember that we have already taken the limit
$\eps\to 0$) for some $\zeta>0$,
\begin{equation*}
\int_\Omega\rho^{L+\zeta}\,dx\leq C\eta^{-1}.
\end{equation*}
As a consequence, the regularizing terms in the pressure $p$ and internal energy
density $\rho e$ tend to zero strongly in $L^1(\Omega)$ as $\eta\to 0$.
The entropy density $\rho s$ does not contain regularizing terms.
Finally, let us turn our attention to the chemical potentials
$$
\mu_i^{(\eta)} = \pa_{\rho_i}h_{\theta^{(\eta)}}(\vec\rho^{(\eta)})
+\eta L \frac{(\overline{n}^{(\eta)})^{L-1}}{m_i}
+\eta f_{\alpha_0}'(\rho_i^{(\eta)}) + 2\eta \rho_i^{(\eta)},
\quad i=1,\ldots,N.
$$
Clearly, $\eta L (\overline{n}^{(\eta)})^{L-1}/m_i + 2\eta \rho_i^{(\eta)}\to 0$
strongly in $L^1(\Omega)$. The only problematic term is
$\eta f_{\alpha_0}'(\rho_i^{(\eta)})$, as it might be singular for $\rho_i\to 0$.
Since $\alpha_0<1$, it follows that $s\mapsto sf_{\alpha_0}'(s)$, $\R^+\to\R$ has a finite limit
for $s\to 0^+$, so $\rho_i^{(\eta)}\mu_i^{(\eta)}$ is a.e.~convergent in $\Omega$.
We know from \eqref{aeposit} that, up to sets of measure zero,
$\cup_{i=1}^N\{\rho_i=0\} = \{\rho=0\}$, which implies (together with
the strong convergence of $\vec\rho^{(\eta)}$, $\theta^{(\eta)}$) that
the a.e.~limit $\vec\mu$ of $\vec\mu^{(\eta)}$ is equal to
$\na h_\theta(\vec\rho)$ on $\{\rho>0\}$.
This finishes the proof of Theorem \ref{thm.ex}.

\begin{remark}[Integral of source term in partial mass balance]\label{last_remark}\rm
By integrating the partial mass balance equation \eqref{1.massbal} and taking
into account the boundary conditions \eqref{bc.J}, \eqref{bc.vel},
we find that $\int_\Omega r_i dx = 0$ for $i=1,\ldots,N$. This seemingly additional
constraint is, in fact, satisfied by the solution constructed in Theorem \ref{thm.ex}.
Indeed, for $j=1,\ldots,N$, choose $\phi_i = \delta_{ij}$ (being the
Kronecker delta) in \eqref{app.rhoi} and use estimates \eqref{mass.fix},
\eqref{tmp.est.2b}, \eqref{tmp.est.2c}, and \eqref{tmp.est.8}.
A straightforward computation yields
$$
\bigg| \int_\Omega r_j(\Pi(\vec\mu^{(\eps)}/\theta^{(\eps)}),\theta^{(\eps)}) \,dx
\bigg| \leq C(\eps + \eps^{6/5}) \leq C\eps .
$$
The strong convergence of $\Pi(\vec\mu^{(\eps)}/\theta^{(\eps)})$,
$\theta^{(\eps)}$ and Hypothesis (H5) imply that $\int_\Omega r_j(\Pi(\vec\mu/\theta)$,
$\theta) \,dx = 0$ in the limit $\eps\to 0$.
\qed
\end{remark}


\begin{appendix}

\section{Auxiliary results}\label{app.aux}

For the convenience of the reader, we collect some results needed in this paper.
For the first lemma, which is proved in \cite[Theorem 10.11]{NoSt04},
we introduce the space
$$
  L_{0}^{p}(\Omega) = \bigg\{u\in L^p(\Omega):\int_\Omega u \, dx=0\bigg\},
	\quad 1<p<\infty.
$$

\begin{theorem}[Bogovskii operator]\label{thm.bog}
Let $\Omega\subset\R^d$ ($d\ge 2$) be a bounded domain with Lipschitz boundary
and let $1<p<\infty$. Then there exists a bounded linear operator
$\B:L_0^p(\Omega)\to W_0^{1,p}(\Omega)$ such that
$\diver\B(u)=u$ in the sense of distributions  for all $u\in L_0^p(\Omega)$, and
there exists $C>0$ such that for all $u\in L_0^p(\Omega)$,
$$
  \|\B(u)\|_{W^{1,p}(\Omega)} \le C\|u\|_{L^p(\Omega)}.
$$
\end{theorem}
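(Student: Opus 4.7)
The plan is to follow Bogovski\u{\i}'s classical construction, first building $\B$ explicitly on a model domain (an open bounded set that is star-shaped with respect to a ball) and then patching local solutions together to cover the general Lipschitz case. The guiding idea is that on a star-shaped domain $G\subset\R^d$ with respect to a ball $B\Subset G$, one can write down a kernel $N(x,y)$, supported in the segments joining $B$ to $\pa G$, such that
\[
  \B_G(u)(x) := \int_G u(y)\, N(x,y)\, dy
\]
satisfies $\diver \B_G(u)=u$ pointwise for smooth $u$ with $\int_G u\, dy=0$, and whose derivatives $\pa_i \B_G(u)$ are, modulo lower-order pieces, singular integrals of Calder\'on--Zygmund type applied to $u$.

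First I would treat the star-shaped model case. After choosing a cutoff $\omega\in C_c^\infty(B)$ with $\int \omega =1$, the explicit Bogovski\u{\i} kernel is
\[
  N_k(x,y) = \int_0^{\infty} \omega\bigl(y+r(x-y)\bigr)\,(x-y)_k\,(1+r)^{d-1}r^{d-1}\, dr,\quad k=1,\dots,d,
\]
and $\B_G(u)=(N_1\ast' u,\dots,N_d\ast' u)$ in an appropriate sense. A direct (smooth-function) computation gives $\diver \B_G(u)=u-\omega \int_G u\,dy = u$ whenever $\int_G u =0$, and $\B_G(u)\in C_c^\infty(G)^d$ if $u\in C_c^\infty(G)$ with zero mean. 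The key analytic step is the $L^p$-boundedness of $\pa_i \B_G$: one decomposes $\pa_i N_k(x,y)$ into a genuine Calder\'on--Zygmund kernel of the form $K(x,x-y)$ satisfying the cancellation and size conditions of T(1)-type theorems, plus a remainder kernel with an integrable singularity; the former is handled by the standard Calder\'on--Zygmund theory for $1<p<\infty$, and the latter by a direct convolution/Young estimate. This yields
\[
  \|\nabla\B_G(u)\|_{L^p(G)}\le C(d,p,G,\omega)\,\|u\|_{L^p(G)}
\]
for all $u\in C_c^\infty(G)\cap L_0^p(G)$, and Poincar\'e's inequality (since $\B_G(u)\in W^{1,p}_0(G)$) gives the $L^p$-estimate for $\B_G(u)$ itself. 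Extension to all of $L_0^p(G)$ follows by density.

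Next I would extend this to a general bounded Lipschitz domain $\Omega\subset\R^d$ by a partition argument. The Lipschitz assumption guarantees that $\Omega$ admits a finite cover by open sets $U_1,\dots,U_M$ such that each $\Omega_j := U_j\cap\Omega$ is star-shaped with respect to some ball $B_j\Subset\Omega_j$; see, e.g., \cite[Lemma 3.4.1]{Gal}. Given $u\in L_0^p(\Omega)$, the task is to write $u=\sum_{j=1}^M u_j$ with $\operatorname{supp} u_j\subset \Omega_j$ and $\int_{\Omega_j} u_j\,dy=0$, so that one can define
\[
  \B(u) := \sum_{j=1}^M \widetilde{\B_{\Omega_j}(u_j)},
\]
the tilde denoting extension by zero. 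The required decomposition is constructed inductively: pick $\phi_1\in C_c^\infty(\Omega_1\cap\Omega_2)$ with $\int\phi_1=1$ and set $u_1:= \chi_{\Omega_1}u - \phi_1 \int_{\Omega_1}u\, dy$, transferring the defect of mean to $u_2$, and so on; at each step the redistributed mass lies in an overlap region so the next domain still contains its support, and the overall mean zero of $u$ ensures the procedure closes at step $M$. Each $u_j$ satisfies $\|u_j\|_{L^p(\Omega_j)}\le C\|u\|_{L^p(\Omega)}$, so applying the star-shaped estimate to each term yields
\[
  \|\B(u)\|_{W^{1,p}(\Omega)}\le \sum_{j=1}^M \|\B_{\Omega_j}(u_j)\|_{W^{1,p}_0(\Omega_j)}\le C\|u\|_{L^p(\Omega)},
\]
while $\diver \B(u) = \sum_j \widetilde{u_j} = u$ distributionally, and the vanishing of each $\B_{\Omega_j}(u_j)$ on $\pa\Omega_j$ gives $\B(u)\in W^{1,p}_0(\Omega)$. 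Linearity is built in, and density of $C_c^\infty\cap L_0^p$ in $L_0^p$ yields a bounded linear extension to the whole space.

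The main technical obstacle will be the Calder\'on--Zygmund analysis of $\pa_i N_k(x,y)$: verifying that after splitting off the integrable part one is left with a genuine standard kernel (size bound $|K(x,z)|\lesssim |z|^{-d}$, H\"older regularity in $z$, and the $T(1)$/cancellation condition) is computational but straightforward; no part of the argument is conceptually hard once this singular-integral bound is in hand, since the gluing step is purely organizational.
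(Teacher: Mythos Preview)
The paper does not actually prove this theorem: it is stated in Appendix~\ref{app.aux} and immediately attributed to \cite[Theorem~10.11]{NoSt04}, with no argument given. Your sketch reproduces precisely the classical Bogovski\u{\i} construction that underlies that reference (explicit kernel on domains star-shaped with respect to a ball, Calder\'on--Zygmund bounds for the gradient, then a finite decomposition into star-shaped pieces for the general Lipschitz case), so in spirit you are supplying what the paper omits.

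Two minor points. First, your kernel formula is slightly garbled: the standard Bogovski\u{\i} kernel involves an integral of the form $\int_1^\infty \omega(y+s(x-y))(x_k-y_k)s^{d-1}\,ds$ (or an equivalent reparametrization), not the factor $(1+r)^{d-1}r^{d-1}$ you wrote; this does not affect the strategy but should be corrected if you write out details. Second, in the patching step you implicitly assume $\Omega_1\cap\Omega_2\neq\emptyset$, and more generally that the star-shaped pieces can be ordered into a chain with successive overlaps; this uses the connectedness of $\Omega$ and should be stated when setting up the cover. Also, your citation key \texttt{Gal} is not in the paper's bibliography, so it would not compile as written.
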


Next, we recall the div-curl lemma, developed by Murat and Tartar and proved in,
e.g., \cite[Theorem 10.21]{FeNo09}.
We introduce the curl of a vector-valued function
$\bm{w}=(w_j)$ by setting $(\operatorname{curl}\bm{w})_{ij}
=\pa w_j/\pa x_i-\pa w_i/\pa x_j$.

\begin{lemma}[Div-curl lemma]\label{lem.divcurl}
Let $\Omega\subset\R^n$ be an open set, $1/p+1/q=1/r<1$, and $s>1$.
Let $\bm{u}_\delta\rightharpoonup \bm{u}$ weakly in $L^p(\Omega;\R^n)$
and $\bm{w}_\delta\rightharpoonup \bm{w}$ weakly in $L^q(\Omega;\R^n)$ as $\delta\to 0$.
If $(\diver\bm{u}_\delta)$ is precompact in $W^{-1,s}(\Omega)$ and
$(\operatorname{curl}\bm{w}_\delta)$ is precompact in
$W^{-1,s}(\Omega;\R^{n\times n})$, then
$$
  \bm{u}_\delta\cdot\bm{w}_\delta\rightharpoonup\bm{u}\cdot\bm{w}
	\quad\mbox{weakly in }L^r(\Omega)\quad\mbox{as }\delta\to 0.
$$
\end{lemma}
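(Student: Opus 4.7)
The plan is to prove this classical Div--Curl lemma (due to Murat and Tartar) via the Helmholtz decomposition combined with Rellich's compactness theorem. First, by replacing $\bm{u}_\delta,\bm{w}_\delta$ with $\bm{u}_\delta-\bm{u},\bm{w}_\delta-\bm{w}$, I reduce to the case $\bm{u}=\bm{w}=\bm 0$ (the compactness hypotheses are preserved since $\diver\bm{u}$ is automatically the strong $W^{-1,s}$-limit of the precompact sequence $\diver\bm{u}_\delta$). Because weak convergence in $L^r(\Omega)$ is tested against $C_c^\infty(\Omega)$, the statement is local: multiplying by a cut-off $\chi\in C_c^\infty(\Omega)$ and extending by zero, I may assume both sequences are supported in a fixed ball $B\subset\R^n$. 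The commutator terms $\bm{u}_\delta\cdot\na\chi$ and $\bm{w}_\delta\cdot\na\chi$ are bounded in $L^p$ and $L^q$ respectively, hence compact in $W^{-1,\tilde s}$ for some $\tilde s>1$ by Rellich, so the compactness hypotheses persist.

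Next, I produce the Helmholtz decomposition by solving $\Delta\Phi_\delta=\bm{u}_\delta$ and $\Delta\Psi_\delta=\bm{w}_\delta$ on $\R^n$ componentwise via the Newtonian potential; Calder\'on--Zygmund yields $\Phi_\delta,\Psi_\delta$ bounded in $W^{2,p}(\R^n)$, $W^{2,q}(\R^n)$. Setting $p_\delta:=\diver\Phi_\delta$, $\bm{a}_\delta:=\bm{u}_\delta-\na p_\delta$ and $r_\delta:=\diver\Psi_\delta$, $\bm{b}_\delta:=\bm{w}_\delta-\na r_\delta$, one has $\diver\bm{a}_\delta=0$, $\operatorname{curl}\bm{b}_\delta=0$, $\Delta p_\delta=\diver\bm{u}_\delta$ and $\Delta(\operatorname{curl}\Psi_\delta)=\operatorname{curl}\bm{w}_\delta$. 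The hypotheses make $\na p_\delta$ precompact in $L^s$ and $\operatorname{curl}\Psi_\delta$ precompact in $W^{1,s}$, and Rellich on bounded supports upgrades this to strong convergence $\na p_\delta\to\bm 0$ and $\bm{b}_\delta\to\bm 0$ in $L^s$ along a subsequence (the limits vanish because $\bm{u}=\bm{w}=\bm 0$).

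Expanding
$$
\bm{u}_\delta\cdot\bm{w}_\delta=\na p_\delta\cdot\na r_\delta+\na p_\delta\cdot\bm{b}_\delta+\bm{a}_\delta\cdot\bm{b}_\delta+\bm{a}_\delta\cdot\na r_\delta,
$$
the first three summands are weak-$\times$-strong pairings in which the strongly-vanishing factor forces the product to tend to zero. The fourth, genuinely weak-$\times$-weak term is handled by integration by parts against an arbitrary test $\varphi\in C_c^\infty(\Omega)$: since $\diver\bm{a}_\delta=0$,
$$
\int_\Omega\varphi\,\bm{a}_\delta\cdot\na r_\delta\,dx=-\int_\Omega r_\delta\,\bm{a}_\delta\cdot\na\varphi\,dx,
$$
and because $r_\delta$ is bounded in $W^{1,q}(B)$ it is precompact in $L^q(B)$ by Rellich, while $\bm{a}_\delta\rightharpoonup\bm 0$ weakly in $L^p$; this pairing is again weak-$\times$-strong and tends to zero. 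Undoing the localization gives $\bm{u}_\delta\cdot\bm{w}_\delta\rightharpoonup 0$ in $L^r(\Omega)$, as required.

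The main technical obstacle is the exponent bookkeeping: Rellich only yields strong convergence in $L^t$ for $t$ strictly below the Sobolev conjugate $s^*=ns/(n-s)$, and these exponents must be matched via H\"older's inequality to the weakly-convergent factor so that each product lands in $L^r$ with $1/r=1/p+1/q$. Fortunately the hypothesis $1/r<1$ guarantees $\bm{u}_\delta\cdot\bm{w}_\delta$ is uniformly bounded in $L^r$, so by reflexivity it suffices to identify the distributional limit, which the above pairing accomplishes; interpolation handles the borderline cases where the Sobolev conjugate does not immediately hit the required exponent.
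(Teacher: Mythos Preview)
The paper does not prove this lemma at all; it is stated in Appendix~\ref{app.aux} as a quoted auxiliary result with a reference to \cite[Theorem~10.21]{FeNo09}. Your proposal therefore goes well beyond what the paper does, supplying an actual proof sketch along the classical Murat--Tartar route via Helmholtz decomposition.

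Your argument is essentially correct, with one misstatement that happens not to damage the proof. You claim $\operatorname{curl}\bm{b}_\delta=0$, but in fact
\[
\operatorname{curl}\bm{b}_\delta=\operatorname{curl}\bm{w}_\delta-\operatorname{curl}\nabla r_\delta=\operatorname{curl}\bm{w}_\delta,
\]
which is not zero in general. Fortunately you never use this claim: what you need is strong convergence $\bm{b}_\delta\to\bm{0}$, and that follows from the vector identity $\Delta=\nabla\diver+\diver\operatorname{curl}$ (with the paper's convention $(\operatorname{curl}\bm{w})_{ij}=\partial_i w_j-\partial_j w_i$), which gives
\[
\bm{b}_\delta=\bm{w}_\delta-\nabla\diver\Psi_\delta=\Delta\Psi_\delta-\nabla\diver\Psi_\delta=\diver(\operatorname{curl}\Psi_\delta).
\]
Your correct observation that $\operatorname{curl}\Psi_\delta$ is precompact in $W^{1,s}$ then yields precompactness of $\bm{b}_\delta$ in $L^s$; interpolating against the $L^q$ bound gives the required strong convergence in $L^t$ for $t<q$, exactly as you indicate in your final paragraph. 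The remaining pieces---the localization step, the four-term splitting, the integration by parts for $\bm{a}_\delta\cdot\nabla r_\delta$ exploiting $\diver\bm{a}_\delta=0$, and the reflexivity argument reducing weak $L^r$ convergence to distributional convergence---are all standard and correctly outlined.
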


We need the following version of the Korn inequality, proved e.g. in
\cite[Lemma 2.1]{JNP14}.

\begin{lemma}[Korn's inequality]\label{lem.korn}
Let $\Omega\subset\R^d$ ($d\ge 2$) be a bounded, not axially symmetric
domain with Lipschitz boundary. Let the stress tensor fulfil Hypothesis (H3). Then there exists $C>0$, only depending on $d$
and $\Omega$, such that for all $\bm{u}\in W^{1,2}(\Omega;\R^d)$ with
$\bm{u}\cdot \vcg{\nu}=0$ on $\pa \Omega$,
$$
  \|\bm{u}\|^2_{W^{1,2}(\Omega)} \le C \int_{\Omega} \frac{\St(\theta,\na\bm{u})
	:\na\bm{u}}{\theta}\, dx, \quad
  \|\bm{u}\|^2_{W^{1,2}(\Omega)} \le C \int_{\Omega} \St(\theta,\na\bm{u})
	:\na\bm{u}\, dx.
$$
\end{lemma}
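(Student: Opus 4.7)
The plan is to reduce both claimed inequalities to a single Korn-type estimate for the trace-free symmetric gradient, and then to prove that estimate by combining a generalized Korn--Nečas inequality with a compactness-contradiction argument that uses the non-axial-symmetry of $\Omega$.

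First, since $\St$ is symmetric, $\St:\nabla\bm u = \St:\mathbb{D}(\bm u)$, and a direct computation from \eqref{1.S} yields
\begin{equation*}
\St(\theta,\nabla\bm u):\nabla\bm u = 2\lambda_1(\theta)\Big|\mathbb{D}(\bm u) - \tfrac{1}{3}\diver(\bm u)\mathbb{I}\Big|^2 + \lambda_2(\theta)(\diver\bm u)^2.
\end{equation*}
By Hypothesis (H3), $\lambda_1(\theta)\ge c_1(1+\theta)\ge c_1>0$ and $\lambda_2(\theta)\ge 0$, and in addition $\lambda_1(\theta)/\theta\ge c_1(1+\theta)/\theta\ge c_1$ for $\theta>0$. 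Consequently both $\St:\nabla\bm u$ and $\St:\nabla\bm u/\theta$ dominate $2c_1|\mathbb{D}(\bm u) - \tfrac{1}{3}\diver(\bm u)\mathbb{I}|^2$, so the two claims in the lemma reduce to establishing
\begin{equation}\label{plan.korn}
\|\bm u\|_{W^{1,2}(\Omega)}^2 \le C\int_\Omega\Big|\mathbb{D}(\bm u) - \tfrac{1}{3}\diver(\bm u)\mathbb{I}\Big|^2\,dx
\end{equation}
for every $\bm u\in W^{1,2}(\Omega;\R^3)$ with $\bm u\cdot\bm\nu=0$ on $\pa\Omega$.

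Next I would invoke the generalized Korn--Nečas inequality on Lipschitz domains,
\begin{equation*}
\|\bm u\|_{W^{1,2}(\Omega)}^2 \le C\Big(\big\|\mathbb{D}(\bm u) - \tfrac{1}{3}\diver(\bm u)\mathbb{I}\big\|_{L^2(\Omega)}^2 + \|\bm u\|_{L^2(\Omega)}^2\Big),
\end{equation*}
whose standard proof rests on Nečas's negative-norm lemma. To absorb the lower-order $L^2$ term in \eqref{plan.korn} I would argue by contradiction: supposing the optimal constant is infinite, one extracts a sequence $(\bm u_n)$ with $\bm u_n\cdot\bm\nu=0$ on $\pa\Omega$, $\|\bm u_n\|_{L^2(\Omega)}=1$, and $\|\mathbb{D}(\bm u_n) - \tfrac{1}{3}\diver(\bm u_n)\mathbb{I}\|_{L^2(\Omega)}\to 0$. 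Combined with the Korn--Nečas bound this gives a uniform $W^{1,2}$ bound, so a subsequence converges weakly in $W^{1,2}$ and strongly in $L^2$ to some $\bm u$ with $\|\bm u\|_{L^2(\Omega)}=1$, $\bm u\cdot\bm\nu=0$ on $\pa\Omega$, and $\mathbb{D}(\bm u) - \tfrac{1}{3}\diver(\bm u)\mathbb{I}\equiv 0$ in $\Omega$.

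The main obstacle, and the only non-routine part, is to rule out such a nonzero $\bm u$ by means of Hypothesis (H1). Vanishing of the deviatoric symmetric gradient characterizes $\bm u$ as a conformal Killing vector field on $\Omega$; in three dimensions the space of such fields is finite-dimensional and spanned by translations, rotations, uniform dilations, and special conformal transformations. The slip condition $\bm u\cdot\bm\nu=0$ means that the flow generated by $\bm u$ maps $\pa\Omega$ into itself, but the flows of translations, dilations, and special conformal maps cannot preserve a bounded $C^2$-surface. The only remaining possibility is that $\bm u$ generates a one-parameter group of rotations about some axis, which would make $\Omega$ axially symmetric, contradicting Hypothesis (H1). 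Hence $\bm u\equiv 0$, contradicting $\|\bm u\|_{L^2}=1$, and \eqref{plan.korn} follows. This geometric rigidity step is exactly what \cite[Lemma 2.1]{JNP14} establishes in detail, and it is the only place in the argument where the non-axial-symmetry of $\Omega$ is actually used.
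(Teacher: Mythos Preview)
The paper does not actually prove this lemma; it simply states it and cites \cite[Lemma~2.1]{JNP14} for the proof. Your sketch is correct and follows precisely the standard route that the cited reference takes: reduce both inequalities to a Korn-type bound for the deviatoric symmetric gradient via Hypothesis~(H3), invoke the Korn--Ne\v{c}as inequality with a lower-order $L^2$ term, and then remove that term by a compactness--contradiction argument in which the kernel is identified as the space of conformal Killing fields tangent to $\partial\Omega$, which is trivial when $\Omega$ is bounded and not axially symmetric. So your proposal supplies exactly the argument the paper outsources to \cite{JNP14}.
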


The following result is a slight generalization of Fatou's lemma.

\begin{lemma}\label{lem.fatou}
Let $\Omega\subset\R^d$ ($d\ge 1$) be a bounded domain, and let $({\mathbb M}_k)\subset
L^\infty(\Omega;\R^{N\times N})$ be a sequence of symmetric, positive semi-definite
matrices and $(\vec {v_k})\subset L^2(\Omega;\R^N)$ be such that
$$
  \vec{v}_k\rightharpoonup \vec{v}\quad\mbox{weakly in }L^2(\Omega;\R^N), \quad
	{\mathbb M}_k\to {\mathbb M}\quad\mbox{a.e. in }\Omega
$$
as $k\to\infty$. Then
$$
  \liminf_{k\to\infty}\int_\Omega \vec{v}_k\cdot {\mathbb M}_k \vec{v}_k \, dx\ge \int_\Omega \vec{v}\cdot {\mathbb M} \vec{v} \, dx.
$$
\end{lemma}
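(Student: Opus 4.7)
The plan rests on the elementary convexity identity for symmetric positive semi-definite matrices: for every measurable $\vec{w}:\Omega\to\R^N$,
\begin{equation*}
0\le (\vec{v}_k-\vec{w})\cdot \mathbb{M}_k(\vec{v}_k-\vec{w})
= \vec{v}_k\cdot\mathbb{M}_k\vec{v}_k - 2\vec{v}_k\cdot\mathbb{M}_k\vec{w}
+ \vec{w}\cdot\mathbb{M}_k\vec{w},
\end{equation*}
which gives the pointwise bound
\begin{equation*}
\vec{v}_k\cdot\mathbb{M}_k\vec{v}_k \ge 2\vec{v}_k\cdot\mathbb{M}_k\vec{w} - \vec{w}\cdot\mathbb{M}_k\vec{w}.
\end{equation*}
The main obstacle is that although $\mathbb{M}_k\in L^\infty(\Omega;\R^{N\times N})$ for each $k$, we have no uniform $L^\infty$ bound in $k$; indeed, the limit $\mathbb{M}$ is only a.e.\ finite, so $\mathbb{M}_k\vec{w}$ may fail to lie in $L^2(\Omega)$ globally. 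We therefore localise on a large subset of $\Omega$ using Egorov's theorem.

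For each $\eta>0$, by Egorov's theorem and the a.e.\ finiteness of $\mathbb{M}$, we can choose a measurable set $\Omega_\eta\subset\Omega$ with $|\Omega\setminus\Omega_\eta|<\eta$ such that $\mathbb{M}_k\to\mathbb{M}$ uniformly on $\Omega_\eta$ and $\mathbb{M}\in L^\infty(\Omega_\eta;\R^{N\times N})$; in particular $\sup_k\|\mathbb{M}_k\|_{L^\infty(\Omega_\eta)}<\infty$. We can arrange that the sets $\Omega_\eta$ are increasing as $\eta\to 0$ and exhaust $\Omega$ up to a null set. Fix any bounded measurable $\vec{w}:\Omega\to\R^N$ supported in $\Omega_\eta$. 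Since $\vec{v}_k\cdot\mathbb{M}_k\vec{v}_k\ge 0$ a.e., restricting the integral to $\Omega_\eta$ and applying the pointwise bound yields
\begin{equation*}
\int_\Omega \vec{v}_k\cdot\mathbb{M}_k\vec{v}_k\,dx
\ge \int_{\Omega_\eta}\bigl(2\vec{v}_k\cdot\mathbb{M}_k\vec{w} - \vec{w}\cdot\mathbb{M}_k\vec{w}\bigr)\,dx.
\end{equation*}

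Passing to the liminf as $k\to\infty$ is now routine. By the uniform convergence $\mathbb{M}_k\to\mathbb{M}$ on $\Omega_\eta$ and the boundedness of $\vec{w}$, we have $\mathbb{M}_k\vec{w}\to\mathbb{M}\vec{w}$ strongly in $L^2(\Omega_\eta;\R^N)$, so weak-strong convergence gives $\int_{\Omega_\eta}\vec{v}_k\cdot\mathbb{M}_k\vec{w}\,dx\to\int_{\Omega_\eta}\vec{v}\cdot\mathbb{M}\vec{w}\,dx$, while $\int_{\Omega_\eta}\vec{w}\cdot\mathbb{M}_k\vec{w}\,dx\to\int_{\Omega_\eta}\vec{w}\cdot\mathbb{M}\vec{w}\,dx$ by dominated convergence. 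Hence
\begin{equation*}
\liminf_{k\to\infty}\int_\Omega \vec{v}_k\cdot\mathbb{M}_k\vec{v}_k\,dx
\ge \int_{\Omega_\eta}\bigl(2\vec{v}\cdot\mathbb{M}\vec{w} - \vec{w}\cdot\mathbb{M}\vec{w}\bigr)\,dx.
\end{equation*}

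To conclude, since $\mathbb{M}\in L^\infty(\Omega_\eta)$, bounded functions are dense in $L^2(\Omega_\eta;\R^N)$ with respect to the seminorm induced by $\mathbb{M}$. Approximating $\vec{v}|_{\Omega_\eta}$ by a sequence of bounded $\vec{w}_m$ (for instance the pointwise truncations of $\vec{v}$ at level $m$) and passing to the limit $m\to\infty$ yields
\begin{equation*}
\liminf_{k\to\infty}\int_\Omega \vec{v}_k\cdot\mathbb{M}_k\vec{v}_k\,dx
\ge \int_{\Omega_\eta}\vec{v}\cdot\mathbb{M}\vec{v}\,dx.
\end{equation*}
Finally, since the integrand $\vec{v}\cdot\mathbb{M}\vec{v}$ is nonnegative and $\Omega_\eta\nearrow\Omega$ up to null sets as $\eta\to 0$, the monotone convergence theorem gives $\int_{\Omega_\eta}\vec{v}\cdot\mathbb{M}\vec{v}\,dx\to\int_\Omega\vec{v}\cdot\mathbb{M}\vec{v}\,dx$, which completes the proof.
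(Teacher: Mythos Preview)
Your proof is correct and shares the same overall architecture as the paper's: both use Egorov's theorem to obtain sets $\Omega_\eta$ on which $\mathbb{M}_k\to\mathbb{M}$ uniformly, restrict the integral to $\Omega_\eta$ via nonnegativity of the integrand, and then exhaust $\Omega$ by an increasing family of such sets using monotone convergence. The only difference lies in how the liminf inequality is established on each $\Omega_\eta$. The paper writes $\vec{v}_k\cdot\mathbb{M}_k\vec{v}_k = \vec{v}_k\cdot(\mathbb{M}_k-\mathbb{M})\vec{v}_k + \vec{v}_k\cdot\mathbb{M}\vec{v}_k$, shows the first piece vanishes by uniform convergence and the $L^2$ bound on $\vec{v}_k$, and then invokes weak lower semicontinuity of the convex functional $\vec{u}\mapsto\int_{\Omega_\eta}\vec{u}\cdot\mathbb{M}\vec{u}\,dx$ as a known result. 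You instead prove this step from scratch via the Minty-type inequality $\vec{v}_k\cdot\mathbb{M}_k\vec{v}_k\ge 2\vec{v}_k\cdot\mathbb{M}_k\vec{w}-\vec{w}\cdot\mathbb{M}_k\vec{w}$ followed by an approximation of $\vec{v}$ by bounded truncations. Your route is a little more self-contained; the paper's is a little more concise. Both are valid.
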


\begin{proof}
Since $({\mathbb M}_k)$ converges a.e., Egorov's theorem implies that
there exists $\Omega^{(1)}\subset\Omega$ such that
$|\Omega\setminus\Omega^{(1)}|<1/2$ and ${\mathbb M}_k\to {\mathbb M}$ uniformly in
$\Omega^{(1)}$. We consider
\begin{align*}
  \int_\Omega \vec{v}_k\cdot {\mathbb M}_k\vec{v}_k \, dx
	&\ge \int_{\Omega^{(1)}}\vec{v}_k\cdot {\mathbb M}_k\vec{v}_k \, dx
	= \int_{\Omega^{(1)}}\vec{v}_k\cdot({\mathbb M}_k-{\mathbb M})\vec{v}_k \, dx
	+ \int_{\Omega^{(1)}}\vec{v}_k\cdot {\mathbb M}\vec{v}_k \, dx.
\end{align*}
As ${\mathbb M}_k\to {\mathbb M}$ uniformly in $\Omega^{(1)}$ and
$(\vec{v}_k)$ is bounded in $L^2(\Omega;\R^N)$, it follows that
\begin{equation}\label{a.Omega}
  \liminf_{k\to\infty}\int_\Omega \vec{v}_k\cdot {\mathbb M}_k\vec{v}_k \, dx
	\ge \liminf_{k\to\infty}\int_{\Omega^{(1)}}\vec{v}_k\cdot {\mathbb M}\vec{v}_k \, dx
	\ge \int_{\Omega^{(1)}} \vec{v}\cdot {\mathbb M}\vec{v} \, dx.
\end{equation}
The last inequality is a consequence of the weak lower semicontinuity
of strongly continuous convex functionals. Applying Egorov's theorem
again, there exists $\Omega^{(1)}\subset\Omega^{(2)}\subset\Omega$
such that $|\Omega\setminus\Omega^{(2)}|<1/4$ and
${\mathbb M}_k\to {\mathbb M}$ uniformly in $\Omega^{(2)}$. Then \eqref{a.Omega} also
holds with $\Omega^{(1)}$ replaced by $\Omega^{(2)}$.
We obtain an increasing sequence $\Omega^{(m)}$ of subsets of $\Omega$
such that $|\Omega\setminus\Omega^{(m)}|< 2^{-m}$ and
${\mathbb M}_k\to {\mathbb M}$ uniformly in $\Omega^{(m)}$. Replacing $\Omega^{(1)}$ by
$\Omega^{(m)}$ in \eqref{a.Omega} and taking the supremum for $m\ge 1$ leads to
$$
  \liminf_{k\to\infty}\int_\Omega \vec{v}_k\cdot {\mathbb M}_k\vec{v}_k \, dx
	\ge \sup_{m\ge 1}\int_{\Omega^{(m)}} \vec{v}\cdot {\mathbb M}\vec{v} \, dx.
$$
Since $\vec{v}\cdot {\mathbb M}\vec{v}\ge 0$ a.e.\ in $\Omega$ and $(\Omega^{(m)})$
is an increasing sequence of subsets of $\Omega$ such that
$\lim_{m\to\infty}|\Omega\setminus\Omega^{(m)}| = 0$, we conclude the proof.
\end{proof}


\section{Example for the free energy}\label{app.energy}

We show that the free energy density
\begin{equation*}
  h_\theta(\vec\rho) = \theta\sum_{i=1}^N\frac{\rho_i}{m_i}\log\frac{\rho_i}{m_i}
	+ \bigg(\sum_{i=1}^n\frac{\rho_i}{m_i}\bigg)^\gamma - c_W\theta\log\theta,
\end{equation*}
where $\gamma>1$, satisfies Hypothesis (H6). We compute the partial derivatives
\begin{align*}
  \mu_i &= \pa_i h_\theta(\vec\rho)
	= \frac{\theta}{m_i}\bigg(1+\log\frac{\rho_i}{m_i}\bigg)
	+ \frac{\gamma}{m_i}\bigg(\sum_{j=1}^N\frac{\rho_j}{m_j}\bigg)^{\gamma-1}, \\
  \pa_{ij}^2 h_\theta(\vec\rho) &= \frac{\gamma(\gamma-1)}{m_im_j}
	\bigg(\sum_{k=1}^N\frac{\rho_k}{m_k}\bigg)^{\gamma-2} + \frac{\theta}{\rho_im_i}
	\delta_{ij},\\
	\pa_\theta \pa_i h_\theta(\vec{\rho}) &= \frac{1}{m_i} \bigg(1+\log\frac{\rho_i}{m_i}\bigg),
\end{align*}
where $\pa_i=\pa/\pa\rho_i$, $\pa_{ij}^2=\pa^2/(\pa\rho_i\pa\rho_j)$, and
$\pa_\theta=\pa/\pa\theta$. Clearly, the Hessian 
$(\pa_{ij}^2 h_\theta(\vec\rho))_{i,j}$ is positive definite for 
$\vec{\rho}\in \R^N_+$ and $\theta \in \R_+$. It follows directly from the 
above relations that \eqref{LMB1}--\eqref{LMB21} hold. Moreover, 
$h_\theta(\vec{\rho})$ is smooth with respect to $\theta$ in this region and 
satisfies growth conditions \eqref{H6.b}--\eqref{grow_h_1} as well as 
\eqref{H7C}. Hence, it remains to verify \eqref{AUXc}.

We write the Hessian $D^2h_\theta(\vec\rho)$ as the sum of the two matrices
$\mathbb{A}$ and $\mathbb{B}$ with coefficients
$$
  \mathbb{A}_{ij} = \frac{\gamma(\gamma-1)}{m_im_j}
	\bigg(\sum_{j=1}^N\frac{\rho_j}{m_j}\bigg)^{\gamma-2}, \quad
	\mathbb{B}_{ij} = \frac{\theta\delta_{ij}}{\rho_im_i}.
$$
We can write 
\begin{equation*}
  D^2h_\theta(\vec\rho)^{-1}= (\mathbb{A}+\mathbb{B})^{-1}
	= \mathbb{B}^{-1/2}\big(\mathbb{I} + \mathbb{B}^{-1/2}\mathbb{A}\mathbb{B}^{-1/2}
	\big)^{-1}\mathbb{B}^{-1/2}.
\end{equation*}
It follows that
$$
\sum_{i=1}^N\mathbb{B}^{-1/2}_{ji}\pa_\theta \pa_i h_\theta(\vec{\rho})=\sum_{i=1}^N\delta_{ij}\sqrt{\frac{\rho_im_i}{\theta}}\frac{1}{m_i} \bigg(1+\log\frac{\rho_i}{m_i}\bigg)=\sqrt{\frac{\rho_j}{\theta m_j}} \bigg(1+\log\frac{\rho_j}{m_j}\bigg).
$$
Thus, since $\theta\in (\kappa, \kappa^{-1})$ and $\rho\le \kappa^{-1}$, we see that $\sum_{i=1}^N\mathbb{B}^{-1/2}_{ji}\pa_\theta \pa_i h_\theta(\vec{\rho})$ is bounded as well as the matrices $\mathbb{B}^{-1/2}$ and $(\mathbb{I} + \mathbb{B}^{-1/2}\mathbb{A}\mathbb{B}^{-1/2})^{-1}$. We infer that \eqref{AUXc} holds, concluding the proof..
\end{appendix}


\section*{Data Availability Statement}

No data has been used in this work.


\end{document}